\title[Primal-Dual Goemans--Williamson for Weighted Fractional Cut Covers]{%
  A Primal-Dual Extension of the Goemans--Williamson Algorithm for the
  Weighted Fractional Cut-Covering Problem%
}
\author[N. Benedetto Proença]{Nathan Benedetto Proença\textsuperscript{1\P\textdagger}}
\address{%
  \textsuperscript{1}Department of Combinatorics and Optimization, University of Waterloo
}
\thanks{%
  \textsuperscript{\P}%
  Research of this author was supported in part by a
  Discovery Grant from the Natural Sciences and Engineering
  Research Council (NSERC) of Canada%
}
\author[M.K. de Carli Silva]{Marcel K. de Carli Silva\textsuperscript{2\textasteriskcentered}}
\address{%
  \textsuperscript{2}Institute of Mathematics and Statistics, University of São Paulo%
}
\thanks{%
  \textsuperscript{\textasteriskcentered}%
  This work was partially supported by Conselho Nacional de
  Desenvolvimento Científico e Tecnológico (CNPq).%
}
\author[C.M. Sato]{Cristiane M. Sato\textsuperscript{3\textasteriskcentered}}
\address{%
  \textsuperscript{3}Center for Mathematics, Computing and Cognition, Federal University of the ABC Region%
}
\author[L. Tunçel]{Levent Tunçel\textsuperscript{1\P}}
\thanks{%
  \textsuperscript{\textdagger}%
  Corresponding Author: Nathan Benedetto Proença.
  Affiliation: University of Waterloo.
  E-mail: \texttt{n2benede@uwaterloo.ca}%
}
\date{November 1, 2024}
\begin{document}

\begin{abstract}
  We study a weighted generalization of the fractional cut-covering
  problem, which we relate to the maximum cut problem via antiblocker
  and gauge duality.
  This relationship allows us to introduce a semidefinite programming
  (SDP) relaxation whose solutions may be rounded into fractional cut
  covers by sampling via the random hyperplane technique.
  We then provide a \(1/\GWalpha\)-approximation algorithm for the
  weighted fractional cut-covering problem, where
  \(\GWalpha \approx 0.878\) is the approximation factor of the
  celebrated Goemans\nobreakdash--Williamson algorithm for the maximum
  cut problem.
  Nearly optimal solutions of the SDPs in our duality framework allow
  one to consider instances of the maximum cut and the fractional
  cut-covering problems as primal-dual pairs, where cuts and
  fractional cut covers simultaneously certify each other's
  approximation quality.
  We exploit this relationship to introduce new combinatorial
  certificates for both problems, as well as a randomized
  polynomial-time algorithm for producing such certificates.
  In~particular, we~show how the Goemans--Williamson algorithm
  implicitly approximates a weighted instance of the fractional
  cut-covering problem, and how our new algorithm explicitly
  approximates a weighted instance of the maximum cut problem.
  We conclude by discussing the role played by geometric
  representations of graphs in our results, and by proving our
  algorithms and analyses to be optimal in several aspects.
\end{abstract}

\maketitle

\newlength{\eqboxwidth}
\settowidth{\eqboxwidth}{\(\coloneqq\)}
\newcommand{\eqaligned}{\mathrel{\makebox[\eqboxwidth][r]{=}}}

\section{Introduction}

Let \(G = (V, E)\) be a simple graph.
For every \(S \subseteq V\), the \emph{cut} with \emph{shore}~\(S\) is
the set~\(\delta(S) \subseteq E\) of edges which have precisely one
vertex in \(S\).
For every nonnegative vector \(z \in \Reals_+^E\) indexed by the
edges, the \emph{fractional cut-covering number} of \((G, z)\) is
\begin{equation}
  \label{eq:cut-covering-problem}
  \fcc(G, z)
  \coloneqq
  \min\setst[\Big]{
    \iprodt {\ones} y
  }{
    y \in \Reals_+^{\Powerset{V}},\,
    \sum_{S \subseteq V} y_S \incidvector{\delta(S)} \ge z
  },
\end{equation}
where the power set of~\(V\) is denoted by~\(\Powerset{V}\), the
incidence vector of \(T \subseteq U\) is
\(\incidvector{T} \in \set{0, 1}^U\), and the vector of all-ones
is~\(\ones\).
When \(z\) is integer-valued, the integer solutions
of~\cref{eq:cut-covering-problem} correspond to multisets of cuts
which cover each edge \(e \in E\) at least \(z_e\) times, thus
explaining the name ``fractional cut-covering''.
The unweighted version of this graph parameter --- i.e., \(\fcc(G)
\coloneqq \fcc(G, \ones)\) --- is used by \nameandcite{Samal2006} to
prove non-existence of cut-continuous functions between certain graphs.
Such functions are maps between the edge sets of graphs that arise in
the study of certain graph flow conjectures
\cite{DeVosNesetrilEtAl2007}.
A \emph{fractional cut cover of \((G,z)\)} is a feasible solution
of~\cref{eq:cut-covering-problem}.

For every \(w \in \Reals_+^E\), the maximum weight of a cut
of~\((G,w)\) is
\begin{equation}
  \label{eq:maxcut-problem}
  \mc(G, w) \coloneqq
  \max\setst{\iprodt{w}{\incidvector{\delta(S)}}}{S \subseteq V}.
\end{equation}
As larger cuts intuitively give rise to smaller covers, this suggests
a combinatorial relationship between
\cref{eq:maxcut-problem,eq:cut-covering-problem}.
The problem of computing \(\mc(G, w)\) is known as the \emph{maximum
cut problem}, and it is one of Karp's original NP-hard problems
\cite{Karp1972}.
Goemans and Williamson's approximation
algorithm~\cite{GoemansWilliamson1995} for this problem is one of the
most celebrated applications of semidefinite programming.
We denote by \(\Sym{V}\) the Euclidean space of real symmetric
\(V \times V\) matrices, and by \(\Psd{V} \subseteq \Sym{V}\) the cone
of \emph{positive semidefinite matrices}, i.e., the set of
symmetric matrices with nonnegative eigenvalues.
The \emph{Laplacian of \(G\)} is the linear function \(\Laplacian_G
\colon \Reals^{E} \to \Reals^{V \times V}\) defined by
\begin{equation}
  \Laplacian_G(w)
  \coloneqq
  \sum_{ij \in E} w_{ij}(e_i - e_j)(e_i - e_j)^\transp \in \Reals^{V \times V}
  \mathrlap{
    \qquad
    \text{for each \(w \in \Reals^{E}\)},
  }
\end{equation}
where \(\setst{e_i}{i \in V} \subseteq \set{0,1}^V\) are the canonical
basis vectors.
The \emph{trace inner product} of \(A, B \in \Reals^{V \times V}\)is
\(\iprod{A}{B} \coloneqq \trace(A^\transp B)\).
The linear function
\(\diag \colon \Reals^{V \times V} \to \Reals^{V}\) extracts the
diagonal of a square matrix, and its adjoint
\(\Diag \colon \Reals^V \to \Reals^{V \times V}\) builds a diagonal
matrix from its argument such that \(\Diag(x)_{ii} = x_i\) for every
\(i \in V\).
Write \(X \succeq Y\) or \(Y \preceq X\) for symmetric matrices \(X\)
and~\(Y\) if \(X-Y\) is positive semidefinite.
Goemans and Williamson's approximation algorithm implies that the
optimal value of the semidefinite program (SDP)
\begin{subequations}
  \label{eq:GW-intro-pd}
  \begin{align}
    \label{eq:GW-intro-def}
    \GW(G, w)
    &\coloneqq \max\setst{
      \iprod{\tfrac{1}{4}\Laplacian_G(w)}{Y}
    }{
      Y \in \Psd{V},\,
      \diag(Y) = \ones
    }
    \\
    \label{eq:GW-gaugef-rho}
    &\eqaligned
    \min\setst{
      \rho
    }{
      \rho \in \Reals_{+},\,
      x \in \Reals^V,\,
      \rho \geq \iprodt{\ones}{x},\,
      \Diag(x) \succeq \tfrac{1}{4}\Laplacian_G(w)
    }
  \end{align}
\end{subequations}
satisfies
\begin{equation}
  \label{eq:GW-approximation}
  \GWalpha \GW(G, w)
  \le \mc(G, w)
  \le \GW(G, w)
  \mathrlap{
    \qquad
    \text{for each \(w \in \Reals_+^{E}\)},
  }
\end{equation}
where
\begin{equation}
  \label{eq:GWalpha-def}
  \GWalpha
  \coloneqq \min_{0 < \vartheta \le \pi}
  \frac{2}{\pi}\frac{\vartheta}{1 - \cos \vartheta}
  \approx 0.878
\end{equation}
is the approximation factor.
\Cref{eq:GW-gaugef-rho} follows from SDP Strong Duality, since both
primal and dual SDPs have Slater points.

The \emph{norm} of a vector \(u \in \Reals^d\) is
\(\norm{u} \coloneqq \sqrt{\iprodt{u}{u}}\).
For a fixed real number \(t \ge 1\), a \emph{vector \(t\)-coloring of G} is
a function \(f \colon V \to \Reals^d\) assigning a unit-norm vector
\(f(i) \in \Reals^d\) to each \(i \in V\) such that
\((t - 1)\iprodt{f(i)}{f(j)} \le -1\) for every \(ij \in E\).
Vector colorings were first introduced in
\cite{KargerMotwaniEtAl1998}.
The smallest value \(t\) for which a graph has a vector \(t\)-coloring
is called the \emph{vector chromatic number} of~\(G\), denoted
by~\(\chivec(G)\).
\Citeauthor{Samal2015}~\cite[Theorem~5.2]{Samal2015} defined a map
from fractional cut covers to vector colorings, thus proving that
\(G\) has a vector \(t\)-coloring such that
\begin{equation}
  \label{eq:fcc-vector-coloring-lowerbound}
  2\paren[\bigg]{1 - \frac{1}{t}}
  \le \fcc(G).
\end{equation}
Neto and Ben-Ameur \cite[Proposition~17]{NetoBen-Ameur2019} tightened
the relationship between fractional cut covers and vector colorings
by showing that
\begin{equation}
  \label{eq:fcc-vector-coloring-upperbound}
  \fcc(G) \le \frac{\pi}{\arccos(1/(1 - t))}
\end{equation}
for every vector \(t\)-coloring of \(G\) such that \(t > 1\).
Assume \(E \neq \emptyset\), so \(t \ge 2\), and set \(\zeta \coloneqq
1/(1 - t)\), so \(\zeta \in [-1, 0)\).
Then~\cref{eq:GWalpha-def} implies that
\begin{equation}
  \label{eq:GWalpha-reciprocal-intro}
  \frac{\pi}{\arccos(1/(1 - t))}
  = \frac{\pi}{\arccos(\zeta)}
  = \frac{\pi}{2}
  \frac{1 - \zeta}{\arccos(\zeta)}
  \frac{2}{1 - \zeta}
  \le \frac{1}{\GWalpha} \frac{2}{1 - \zeta}
  = \frac{1}{\GWalpha}2\paren[\bigg]{1 - \frac{1}{t}}.
\end{equation}
Putting it all together, \cite{NetoBen-Ameur2019} combines
\cref{eq:fcc-vector-coloring-lowerbound},
\cref{eq:fcc-vector-coloring-upperbound},
\cref{eq:GWalpha-reciprocal-intro}, and monotonicity of \(x \mapsto (1
- 1/x)\) to conclude that
\begin{equation}
  \label{eq:vector-coloring-approximates-fcc}
  2\paren[\bigg]{
    1 - \frac{1}{\chivec(G)}
  }
  \le \fcc(G)
  \le \frac{1}{\GWalpha}
  2\paren[\bigg]{
    1 - \frac{1}{\chivec(G)}
  }.
\end{equation}
As \cite[Corollary~4]{NetoBen-Ameur2019} points out, the inequalities
in~\cref{eq:vector-coloring-approximates-fcc} provide a
polynomial-time computable approximation for the unweighted \emph{number}
\(\fcc(G)\), since \(\chivec(G)\) is the optimal value of an SDP which
can be approximated to any given precision in polynomial time.

We invite the reader to compare \cref{eq:GW-approximation}
and~\cref{eq:vector-coloring-approximates-fcc}.
Both describe constant-factor approximations that are computable from
the optimal values of SDPs, and furthermore, both approximation factors
are \(\GWalpha\).
This work exploits and extends the ideas
underlying~\cref{eq:vector-coloring-approximates-fcc}.
For every \(z \in \Reals_+^E\), define
\begin{equation}
  \label{eq:GW-polar-def-intro}
  \GW^{\polar}(G, z)
  \coloneqq \min \setst{
    \mu
  }{
    \mu \in \Reals_+,\,
    Y \in \Psd{V},\,
    \diag(Y) = \mu \ones,\,
    \tfrac{1}{4}\Laplacian_G^*(Y) \ge z
  }
\end{equation}
where \(\Laplacian_G^* \colon \Reals^{V \times V} \to \Reals^E\) is
the adjoint of the Laplacian, i.e.,
\begin{equation}
  \label{eq:Laplacian-adjoint-def}
  \paren[\big]{\Laplacian_G^*(Y)}_{ij}
  =
  Y_{ii} + Y_{jj} - Y_{ij} - Y_{ji}
  \qquad
  \text{
    for each
    \(Y \in \Reals^{V \times V}\) and \(ij \in E\).
  }
\end{equation}
If \(y\) is a fractional cut cover for~\((G,z)\), then
\begin{inlinemath}
  (\mu,Y)
  \coloneqq
  \paren[\big]{
    \iprodt{\ones}{y},
    \sum_{S \subseteq V}
    y_S \oprodsym{(\ones - 2\incidvector{S})}
  }
\end{inlinemath}
is feasible for the SDP~\cref{eq:GW-polar-def-intro}.
If \(f \colon V \to \Reals^d\) is a vector \(t\)-coloring for some
\(t > 1\), we may define \(\mu \coloneqq 2(1-1/t)\) and \(Y
\in \Psd{V}\) by \(Y_{ij} \coloneqq \mu\iprodt{f(i)}{f(j)}\) for every
\(i, j \in V\).
Then \(\diag(Y) = \mu\ones\) and \(\tfrac{1}{4}\Laplacian_G^*(Y) \ge
\ones\), as
\begin{inlinemath}
  \tfrac{1}{4}\paren[\big]{\Laplacian_G^*(Y)}_{ij}
  = \tfrac{1}{2} \mu (1 - \iprodt{f(i)}{f(j)})
  \ge \tfrac{1}{2}\mu\paren[\big]{1 + \tfrac{1}{t-1}}
  = 1
\end{inlinemath}
for every \(ij \in E\).
In this manner, the feasible solutions for~\cref{eq:GW-polar-def-intro}
capture the geometry of vector colorings which
enables~\cref{eq:vector-coloring-approximates-fcc}.
Using \(\GW^{\polar}\),
we~strengthen~\cref{eq:vector-coloring-approximates-fcc} to all
nonnegative weights:
\begin{equation}
  \label{eq:GW-polar-approx-pledge}
  \GW^{\polar}(G, z)
  \le \fcc(G, z)
  \le \frac{1}{\GWalpha} \GW^{\polar}(G, z)
  \mathrlap{
    \qquad
    \text{for each \(z \in \Reals_+^{E}\)}.
  }
\end{equation}
This weighted generalization
of~\cref{eq:vector-coloring-approximates-fcc} stands as the proper
fractional cut-covering analogue to~\cref{eq:GW-approximation} for the
maximum cut problem.

The similarity between~\cref{eq:GW-approximation}
and~\cref{eq:GW-polar-approx-pledge} is the starting point of this
work, whose main contributions include:
\begin{enumerate}[(i)]
\item pinpointing the relationship between the maximum cut
  problem~\cref{eq:maxcut-problem} and the fractional cut-covering
  number~\cref{eq:cut-covering-problem} to gauge and antiblocker
  duality \cite{Fulkerson1971,Fulkerson1972};
\item introducing~\cref{eq:GW-polar-def-intro} as the dual parameter
  to~\cref{eq:GW-intro-def}, immediately obtaining
  that~\cref{eq:GW-polar-approx-pledge} is equivalent
  to~\cref{eq:GW-approximation} via a precisely defined bound
  conversion procedure \cite[Sections~6
  and~7]{BenedettoProencadeCarliSilvaEtAl2021};
\item describing a randomized approximation algorithm, dual to the
  Goemans--Williamson algorithm, which rounds any nearly optimal
  solution of the SDP~\cref{eq:GW-polar-def-intro} to a
  \((1/\GWalpha)\)-approximately optimal fractional cut cover of
  \((G,z)\) with very sparse support;
\item pairing instances of the maximum cut and fractional cut-covering
  problems so that one can obtain approximately optimal solutions for
  \emph{both} instances by solving a \emph{single}~SDP, and so that
  their approximate optimality can be certified by a simultaneous,
  (mostly) combinatorial certificate;
\item showing our algorithms to be best possible in several aspects;
\item clarifying the role played by geometric representation of graphs
  in the aforementioned results.
\end{enumerate}
Our algorithms run in polynomial time in the real-number machine model
(see \cite{BlumCuckerEtAl1998}) with access to two additional oracles:
one computing Cholesky factorizations and one sampling from a standard
normal distribution.
These assumptions streamline our arguments while still building
towards a strongly polynomial-time implementation on a probabilistic
Turing machine.
The access to a Cholesky factorization oracle amounts to assuming
exact square root computation.
For our purposes, efficient algorithms that lead to rational
approximations of the square-root are sufficient, since the
probabilistic nature of our algorithms and the slacks in our analyses
make our algorithms and analyses robust to small enough precision
errors.
In particular, our situation is different than assuming sum of
square-roots problem can be solved in polynomial time.
(For related complexity issues, see, for instance,
\cite{AllenderBurgisserKjeldgaardMiltersen2008} and references
therein.)
The access to an oracle sampling numbers from a standard normal
distribution encapsulates a yet subtler issue.
As even the representation of continuously supported random variables
on Turing machines poses a nontrivial question, our oracle assumption
decouples our analyses from implementation details that are beyond the
scope of this paper.

\subsection{Organization of the Text}

In order to facilitate reading, we unveil these results in increasing
order of abstraction.
We start at \Cref{sec:rounding} by exhibiting a novel randomized
approximation algorithm for the weighted fractional cut covering
problem.
The connection between \(\GW\) and \(\GW^{\polar}\) is the main theme
of \Cref{sec:certificates}.
In \Cref{sec:duality} we express the relationship between both
optimization problems via antiblocker
\cite{Fulkerson1971,Fulkerson1972} and gauge duality
\cite{BenedettoProencadeCarliSilvaEtAl2021}.
We then show how computing either one of the parameters \(\GW\) or
\(\GW^{\polar}\) implicitly computes the other parameter, and how this
can be leveraged to provide simultaneous combinatorial certificates
for the approximate optimality of certain cuts and fractional cut
covers.
The existential results for certificates we~prove in
\Cref{sec:existence-of-certificates} are refined into efficient
algorithms in \Cref{subsec:algorithimic_certicates}.
We recover the role played by vector colorings in this introduction by
relating our approach to geometric representations of graphs in
\Cref{sec:geometric-graphs}.
\Cref{sec:possible-improvements} discusses possible improvements to
our approximation algorithms by collecting noteworthy instances of our
optimization problems: either instances where simpler approaches lead
to degenerate behavior, or instances which show our bounds to be tight.

\subsection{Notation}

For each \(n \in \Naturals\), denote as usual
\([n] \coloneqq \set{1,\dotsc,n}\).
The set of nonnegative real numbers is denoted by \(\Reals_+\), and
the set of positive real numbers is~\(\Reals_{++}\).
Let \(U\) be a finite set.
We~denote by \(\Reals^U\) the real vector space indexed by entries in
\(U\).
For each \(i \in U\), we denote by \(e_i \in \set{0,1}^U\) the
\(i\)-th canonical basis vector.
The \(1\)-norm of a vector \(z \in \Reals^{U}\) is
\(\norm[1]{z} \coloneqq \sum_{i \in U} \abs{z_i}\), the
\(\infty\)-norm of~\(z\) is
\(\norm[\infty]{z} \coloneqq \max\setst{\abs{z_i}}{i \in U}\), and the
support of~\(z\) is
\(\supp(z) \coloneqq \setst{i \in U}{z_i \neq 0}\).

\section{A Randomized Rounding Algorithm for Weighted Fractional Cut Covering}
\label{sec:rounding}

Let \(V\) be a finite set.
The set
\begin{equation*}
  \elliptope{V} \coloneqq \setst{Y \in \Psd{V}}{\diag(Y) = \ones}
\end{equation*}
is commonly referred to as the \emph{elliptope}.
We adopt (extended) Minkowski set operations and write
\(\mu \elliptope{V} \coloneqq \setst{Y \in \Psd{V}}{\diag(Y) =
  \mu\ones}\) for every \(\mu \in \Reals\), and
\(R \elliptope{V} \coloneqq \setst{\mu Y}{\mu\in R,\ Y \in
  \elliptope{V}}\) for each \(R \subseteq \Reals\).
For every \(Y \in \Reals_+\elliptope{V}\) and nonzero
\(h \in \Reals^V\), define
\begin{equation}
\label{eq:GWrv-def}
  \GWrv(Y, h) \coloneqq \setst{i \in V}{\iprodt{e_i}{Y^{\half}h} \ge 0},
\end{equation}
where \(Y^{\half}\) is the unique positive semidefinite square root
of~\(Y\).
\Cref{eq:GWrv-def} describes a possible implementation of the random
hyperplane technique used by \nameandcite{GoemansWilliamson1995} to
sample a shore of a~cut.
Let \((\Omega, \Sigma, \prob)\) be a probability space and let the
random variable \(h \colon \Omega \to \Reals^V\) be a uniformly
distributed unit vector.
For every \(Y \in \Reals_{++}\elliptope{V}\), we~denote by
\(\GWrv(Y) \colon \Omega \to \Powerset{V}\) the random variable given
by
\begin{equation*}
  \GWrv(Y) \colon \omega \in \Omega \mapsto \GWrv(Y, h(\omega))
  \subseteq V,
\end{equation*}
which samples shores.
It is proved in~\cite{GoemansWilliamson1995} that
\begin{equation}
  \label{eq:GW-Y-edge-marginal}
  \prob\paren[\big]{
    ij \in \delta(\GWrv(Y))
  } = \frac{\arccos(Y_{ij})}{\pi}
  \qquad
  \text{
    for every \(ij \in E\) and \(Y \in \elliptope{V}\).
  }
\end{equation}
\Cref{alg:1} leverages the connection between the elliptope and
probability distributions on \(\Powerset{V}\) given by~\(\GWrv\).
We will exploit semidefinite programming to produce a matrix
\(Y \in \Reals_{++}\elliptope{V}\) from which we sample a fractional
cut cover in (randomized) polynomial time by repeated sampling from
\(\GWrv(Y)\).
This section is devoted to proving correctness of \Cref{alg:1}:
namely, that it produces an approximately optimal fractional cut cover
with high probability in polynomial time.

The algorithm works roughly as follows.
First it creates a new weight vector~\(\hat{z}\) from the input weight
vector \(z \in \Reals_+^{E}\) by rounding up the edge weights that
are too small relative to \(\norm[\infty]{z}\).
Then it obtains a nearly optimal solution \((\mu, Y)\) for the
SDP relaxation \cref{eq:GW-polar-def-intro}, which is used to sample
cuts \(\delta(\GWrv(Y))\).
The rounding~up of the weights ensures that every edge has a
significant probability of being in the random cut
\(\delta(\GWrv(Y))\).
The~algorithm builds a fractional cut cover
by using this sampling procedure \(T\) times independently, obtaining a
vector of support size \(T\), which is then scaled.
The parameter \(T\) is defined in the range \(\Theta(\ln(\card{V}))\)
so that it is large enough to guarantee that we obtain a fractional
cut cover with high probability.
Our pseudocode abstracts away the important work of carefully choosing
data structures: in particular, one needs to exploit the sparse nature
of the fractional cut cover produced in its representation.

\begin{algorithm}
  \caption{SDP-based randomized approximation algorithm for fcc}
  \label{alg:1}
  \begin{algorithmic}[1]
    \algrenewcommand\algorithmicrequire{\textbf{Parameters:}}
    \Require a constant approximation factor \(\beta \in (0, \GWalpha)\)
    parameterizes the algorithm \textsc{ApproxFcc${}_{\beta}$}.
    As~in~\cref{eq:5,eq:3}, define the following constants in terms of
    \(\beta\):
    \begin{equation*}
      \tau \coloneqq 1 - \frac{\beta}{\GWalpha} \in (0,1),
      \qquad
      \sigma \coloneqq
      \eps \coloneqq
      \Chernoff \coloneqq
      \tfrac{\tau}{3}
      \in (0,\tfrac{1}{3}),
      \qquad
      \text{and}
      \qquad
      C
      \coloneqq
      {81\sqrt{2}\pi}/{\tau^{5/2}}.
    \end{equation*}
    \algrenewcommand\algorithmicrequire{\textbf{Input:}}
    \Require a graph $G = (V,E)$ and edge weights $z \in \Reals_+^E$
    \algrenewcommand\algorithmicensure{\textbf{Output:}} \Ensure
    \Call{ApproxFcc${}_{\beta}$}{$G, z$} returns a fractional cut cover of
    \((G,z)\) with high probability with objective value bounded above by
    \(\tfrac{1}{\beta}\fcc(G, z)\) and support size bounded above by
    \(T \coloneqq \ceil{C\ln(\card{V})}\), as in~\cref{eq:3}
    \Procedure{ApproxFcc${}_{\beta}$}{$G, z$}
    \State $(V, E) \gets G$
    \State\label{step:preconditioning}%
    $\hat{z}_{e} \gets $
    \Call{max}{$z_{e}, \tfrac{1}{2}\eps\norm[\infty]{z}$}
    \textbf{for each} $e \in E$
    \Comment Round up \(z\)
    \State\label{step:sdp-approximate-solving}%
    Find a feasible $(\mu, Y)$ for $\GW^{\polar}(G, \hat{z})$ in
    \cref{eq:GW-polar-def-intro} with objective value
    $\mu \leq \GW^{\polar}(G, \hat{z}) +\sigma
    \norm[\infty]{z}$ \State $y \gets 0 \in \Reals_+^{\Powerset{V}}$
    \CountUp{$T$}
    \State\label{step:GW-sampling}%
    $S \gets \GWrv(Y)$
    \Comment Sample a shore \(S \subseteq V\) via the random hyperplane technique
    \State $y_{S} \gets y_S + 1$
    \EndCountUp
    \State \textbf{return} $\dfrac{\mu}{(1-\Chernoff)\GWalpha} \dfrac{1}{T} y$
    \EndProcedure
  \end{algorithmic}
\end{algorithm}

Neto and Ben-Ameur \cite[Section~4]{NetoBen-Ameur2019} already scale
the probability distribution on \(\Powerset{V}\) given by \(\GWrv(Z)\)
to define a fractional cut cover from some \(Z \in \Psd{V}\) arising
from a vector coloring.
Our novel formulation~\cref{eq:GW-polar-def-intro} allow us to place
this construction in the weighted setting with  \Cref{prop:sdp-rounding}.

\begin{proposition}
\label{prop:sdp-rounding}
Let \(G = (V, E)\) be a graph and let \(z \in \Reals_+^E\).
Let \((\mu,Y)\) be feasible for~\cref{eq:GW-polar-def-intro}.
Set \(y \in \Reals^{\Powerset{V}}\) by
\begin{equation}
  \label{eq:6}
  y_S \coloneqq
  \dfrac{\mu}{\GWalpha} \prob\paren{\GWrv(Y) = S}
  \mathrlap{
    \qquad
    \text{for every }S \subseteq V.
  }
\end{equation}
Then \(y\) is a fractional cut cover for \((G, z)\) with objective
value \(\iprodt{\ones}{y} = \tfrac{1}{\GWalpha}\mu\).
In particular,
\begin{equation}
  \label{eq:sdp-rounding}
  z_{ij} \le \dfrac{\mu}{\GWalpha} \prob(ij \in \delta(\GWrv(Y)))
  \mathrlap{
    \qquad
    \text{for every \(ij \in E\).}
  }
\end{equation}
\end{proposition}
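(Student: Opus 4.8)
The plan is to verify the two required properties of $y$ directly: first that $y \in \Reals_+^{\Powerset{V}}$ and has the claimed objective value, and second that $y$ satisfies the covering constraint $\sum_{S} y_S \incidvector{\delta(S)} \ge z$. Nonnegativity of $y$ is immediate from $\mu \ge 0$ (part of feasibility for \cref{eq:GW-polar-def-intro}) and the fact that probabilities are nonnegative. For the objective value, I would sum \cref{eq:6} over all $S \subseteq V$ and use $\sum_{S} \prob(\GWrv(Y) = S) = 1$ to get $\iprodt{\ones}{y} = \tfrac{1}{\GWalpha}\mu$; here I should note that $Y \in \Reals_+\elliptope{V}$ so $\GWrv(Y)$ is a well-defined random variable (if $\mu = 0$ then $Y = 0$ and both sides of everything are zero, a degenerate case worth a one-line remark, or one can simply take $\GWrv(0)$ to be any fixed shore).

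The main content is the covering inequality, which reduces to \cref{eq:sdp-rounding}. Fix $ij \in E$. Computing the left-hand side of the covering constraint at coordinate $ij$,
\begin{equation*}
  \paren[\Big]{\sum_{S \subseteq V} y_S \incidvector{\delta(S)}}_{ij}
  = \sum_{S \subseteq V} y_S \incidvector{\delta(S)}_{ij}
  = \frac{\mu}{\GWalpha} \sum_{S : ij \in \delta(S)} \prob(\GWrv(Y) = S)
  = \frac{\mu}{\GWalpha} \prob\paren[\big]{ij \in \delta(\GWrv(Y))}.
\end{equation*}
So it suffices to show this last quantity is at least $z_{ij}$. By \cref{eq:GW-Y-edge-marginal}, $\prob(ij \in \delta(\GWrv(Y))) = \arccos(Y_{ij})/\pi$ when $\diag(Y) = \ones$; for general $Y \in \mu\elliptope{V}$ with $\mu > 0$, the matrix $\tfrac{1}{\mu}Y$ lies in $\elliptope{V}$, has the same random hyperplane behavior (scaling $Y$ by a positive constant does not change $\GWrv(Y,h)$, since $(\tfrac1\mu Y)^{1/2} = \tfrac{1}{\sqrt\mu} Y^{1/2}$ and the sign pattern of $(\tfrac{1}{\sqrt\mu} Y^{1/2}h)$ agrees with that of $Y^{1/2}h$), and hence $\prob(ij \in \delta(\GWrv(Y))) = \arccos(Y_{ij}/\mu)/\pi$.

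It remains to relate $\arccos(Y_{ij}/\mu)/\pi$ to $z_{ij}$ using the SDP constraint $\tfrac14 \Laplacian_G^*(Y) \ge z$, which at coordinate $ij$ reads $\tfrac14(Y_{ii} + Y_{jj} - 2Y_{ij}) = \tfrac12(\mu - Y_{ij}) \ge z_{ij}$ by \cref{eq:Laplacian-adjoint-def} and $\diag(Y) = \mu\ones$. Writing $\zeta \coloneqq Y_{ij}/\mu \in [-1,1]$, this says $\tfrac{\mu}{2}(1 - \zeta) \ge z_{ij}$, so it is enough to prove
\begin{equation*}
  \frac{\mu}{\GWalpha}\frac{\arccos(\zeta)}{\pi} \ge \frac{\mu}{2}(1 - \zeta),
  \qquad\text{i.e.,}\qquad
  \frac{2}{\pi}\frac{\arccos(\zeta)}{1 - \zeta} \ge \GWalpha,
\end{equation*}
which holds for all $\zeta \in [-1, 1)$ by the very definition \cref{eq:GWalpha-def} of $\GWalpha$ (substitute $\vartheta = \arccos(\zeta) \in (0, \pi]$, noting $1 - \cos\vartheta = 1 - \zeta$; the case $\zeta = 1$ forces $z_{ij} = 0$ and is trivial). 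The expected obstacle is purely bookkeeping: handling the normalization between $\mu\elliptope{V}$ and $\elliptope{V}$ cleanly, and the boundary cases $\mu = 0$ and $\zeta = 1$ — none of which is deep, but each needs a careful sentence so the scaling argument for $\GWrv$ and the invocation of \cref{eq:GW-Y-edge-marginal} are rigorous rather than hand-waved.
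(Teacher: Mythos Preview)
Your proposal is correct and follows essentially the same approach as the paper: normalize to \(\bar{Y} \coloneqq \mu^{-1}Y \in \elliptope{V}\), use scale-invariance of \(\GWrv\) to invoke~\cref{eq:GW-Y-edge-marginal}, translate the SDP constraint \(\tfrac{1}{4}\Laplacian_G^*(Y) \ge z\) into \(\tfrac{\mu}{2}(1-\zeta) \ge z_{ij}\), and close with the definition of~\(\GWalpha\), handling the boundary cases \(\mu=0\) and \(\zeta=1\) separately. The only cosmetic difference is ordering: you compute the covering sum first and then bound the marginal probability, whereas the paper bounds \(z_{ij}\) first and then expands the probability into the sum.
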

\begin{proof}
We may assume that \(\mu > 0\).
Set \(\bar{Y} \coloneqq \mu^{-1}Y \in \elliptope{V}\).
By the definition in~\cref{eq:GWrv-def}, we have that \(\GWrv(Y,h) =
\GWrv(\bar{Y},h)\) for every $h\in\Reals^V$, which implies that
$\GWrv(Y) = \GWrv(\bar{Y})$.
Let \(ij \in E\).
Since  \(\bar{Y} \in \elliptope{V}\), we have that
\(\bar{Y}_{ij} \in [-1, 1]\).
If \(z_{ij} = 0\), then~\cref{eq:sdp-rounding} holds trivially.
Assume that \(z_{ij} > 0\).
Since \(\diag(\bar{Y}) = \ones\), we~get
from~\cref{eq:Laplacian-adjoint-def} that
\begin{equation*}
  0
  < z_{ij}
  \le \tfrac{1}{4} \paren[\big]{\Laplacian_G^*(Y)}_{ij}
  = \mu \tfrac{1}{4} \paren[\big]{\Laplacian_G^*(\bar{Y})}_{ij}
  = \mu \tfrac{1}{2}\paren{1 - \bar{Y}_{ij}}.
\end{equation*}
Thus \(\bar{Y}_{ij} < 1\), so \(\arccos(\bar{Y}_{ij}) > 0\), and
\(\prob\paren[\big]{ij \in \delta(\GWrv(\bar{Y}))} > 0\)
by~\cref{eq:GW-Y-edge-marginal}.
Hence
\begin{detailedproof}
  z_{ij}
  &\le \mu \tfrac{1}{2}\paren{1 - \bar{Y}_{ij}}
  = \mu
  \frac{\pi}{2}\frac{1 - \bar{Y}_{ij}}{\arccos(\bar{Y}_{ij})}
  \prob\paren[\big]{ij \in \delta(\GWrv(\bar{Y}))}
  &&\text{by~\cref{eq:GW-Y-edge-marginal}}\\*[5pt]
  &\le \mu\tfrac{1}{\GWalpha} \prob\paren[\big]{ij \in
    \delta(\GWrv(\bar{Y}))}
  &&\text{by~\cref{eq:GWalpha-def}}\\*[5pt]
  &= \sum_{{\substack{S \subseteq V\colon\\[1pt]ij \in \delta(S)}}} \dfrac{\mu}{\GWalpha}
  \prob({\GWrv(\bar{Y}) = S})
  = \sum_{\mathclap{\substack{S \subseteq V\colon\\[1pt]ij \in \delta(S)}}} y_S
  = \paren[\Big]{\sum_{S \subseteq V}{y_S \incidvector{\delta(S)}}}_{ij}.
\end{detailedproof}
As this holds for every \(ij \in E\) for which \(z_{ij} > 0\), we
conclude that \(y\) is a fractional cut cover for \((G, z)\) with
objective value \(\iprodt{\ones}{y} = \tfrac{1}{\GWalpha}\mu\),
and~\cref{eq:sdp-rounding} holds.
\end{proof}

\begin{corollary}
  \label{cor:1}
  Let \(G = (V, E)\) be a graph.
  Then
  \begin{equation*}
    \tag{\ref{eq:GW-polar-approx-pledge}}
    \GW^{\polar}(G, z)
    \le \fcc(G, z)
    \le \frac{1}{\GWalpha} \GW^{\polar}(G, z)
    \mathrlap{
      \qquad
      \text{for each \(z \in \Reals_+^{E}\)}.
    }
  \end{equation*}
\end{corollary}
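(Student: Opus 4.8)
The plan is to derive~\cref{eq:GW-polar-approx-pledge} by proving its two inequalities separately, each by exhibiting an explicit objective-respecting correspondence between the feasible sets of the two minimization problems~\cref{eq:cut-covering-problem} and~\cref{eq:GW-polar-def-intro}. Both feasible sets are nonempty and their objectives are bounded below by~\(0\), so it will be legitimate to conclude by passing to infima; I would record at the outset, for instance, that \((2\norm[\infty]{z}, \Diag(2\norm[\infty]{z}\,\ones))\) is always feasible for~\cref{eq:GW-polar-def-intro}, so that \(\GW^{\polar}(G,z) < \infty\) and the argument is not vacuous.

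For the left inequality \(\GW^{\polar}(G,z) \le \fcc(G,z)\), I would take an arbitrary fractional cut cover \(y \in \Reals_+^{\Powerset{V}}\) of \((G,z)\) and form the pair \((\mu, Y)\) with \(\mu \coloneqq \iprodt{\ones}{y}\) and \(Y \coloneqq \sum_{S \subseteq V} y_S \oprodsym{(\ones - 2\incidvector{S})}\), exactly the construction recorded just before~\cref{eq:GW-polar-def-intro}. The verification that \((\mu, Y)\) is feasible for~\cref{eq:GW-polar-def-intro} is routine: \(Y \succeq 0\) as a nonnegative combination of rank-one positive semidefinite matrices; \(\diag(Y) = \mu\ones\) because every coordinate of \(\ones - 2\incidvector{S}\) is \(\pm 1\); and \(\tfrac{1}{4}(\Laplacian_G^*(Y))_{ij} = \sum_{S \colon ij \in \delta(S)} y_S \ge z_{ij}\) for each \(ij \in E\), using that \((\ones - 2\incidvector{S})_i(\ones - 2\incidvector{S})_j\) equals \(-1\) precisely when \(ij \in \delta(S)\) and \(+1\) otherwise, so that the contribution of \(S\) to \(Y_{ii} + Y_{jj} - Y_{ij} - Y_{ji}\) is \(4 y_S\) on cuts separating \(i\) and \(j\) and \(0\) on the rest. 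Hence \(\GW^{\polar}(G,z) \le \mu = \iprodt{\ones}{y}\), and taking the infimum over all fractional cut covers \(y\) gives the inequality.

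For the right inequality \(\fcc(G,z) \le \tfrac{1}{\GWalpha}\GW^{\polar}(G,z)\), I would appeal directly to \Cref{prop:sdp-rounding}: given any \((\mu, Y)\) feasible for~\cref{eq:GW-polar-def-intro}, the vector \(y\) of~\cref{eq:6} is a fractional cut cover of \((G,z)\) with \(\iprodt{\ones}{y} = \tfrac{\mu}{\GWalpha}\), whence \(\fcc(G,z) \le \tfrac{\mu}{\GWalpha}\); taking the infimum over feasible \((\mu, Y)\) finishes. I do not anticipate a genuine obstacle here: the corollary is, in essence, \Cref{prop:sdp-rounding} packaged together with the elementary fractional-cover-to-SDP map described above, and the only point demanding a little care is the bookkeeping, flagged in the first paragraph, that passing to infima over the two feasible sets is valid.
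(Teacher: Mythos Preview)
Your proposal is correct and follows essentially the same route as the paper: the left inequality via the explicit map \(y \mapsto (\iprodt{\ones}{y},\,\sum_S y_S\oprodsym{(\ones-2\incidvector{S})})\) from fractional cut covers to feasible SDP points, and the right inequality directly from \Cref{prop:sdp-rounding}. Your extra care about nonemptiness and passing to infima is fine but unnecessary here, since both~\cref{eq:cut-covering-problem} and~\cref{eq:GW-polar-def-intro} are stated as minima that are attained.
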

\begin{proof}
  As mentioned after~\cref{eq:Laplacian-adjoint-def},
  the first
  inequality holds since
  \begin{inlinemath}
    (\mu,Y)
    \coloneqq
    \paren[\big]{
      \iprodt{\ones}{y},
      \sum_{S \subseteq V}
      y_S \oprod{(\ones - 2\incidvector{S})}{(\ones - 2\incidvector{S})}
    }
  \end{inlinemath}
  is feasible for the SDP~\cref{eq:GW-polar-def-intro}, for every
  fractional cut cover \(y\) for \((G, z)\).
  The second inequality in~\cref{eq:GW-polar-approx-pledge} follows
  directly from \cref{prop:sdp-rounding}.
\end{proof}

\begin{remark}
\label{re:need-to-randomize}
We will define in \cref{eq:Chernoff-sampling-def} below the random
variable capturing the repeated sampling behavior of \Cref{alg:1}.
One might question the purpose of employing a randomized approach
when \Cref{prop:sdp-rounding} defines a precise solution with a
guaranteed approximation factor to the optimal value.
We provide two compelling reasons: in \cref{prop:sdp-rounding},
\begin{itemize}
\item[(i)] it is a challenging task to compute the probabilities
  \(\prob\paren{\GWrv(Y) = S}\) in~\cref{eq:6}: while the marginal
  probability \(\prob\paren{ij \in \delta(\GWrv(Y))}\) is determined
  by~\cref{eq:GW-Y-edge-marginal}, computing
  \(\prob\paren{\GWrv(Y) = S}\) requires joint probabilities for all
  the vertices in the shore~\(S\);

\item[(ii)] the fractional cut cover \(y\) obtained may have
  exponential support size: indeed there are instances (e.g., the
  complete graph with uniform weights) for which there exist optimal
  solutions for~\cref{eq:GW-polar-def-intro} that result in a vector
  \(y\) given as in~\cref{eq:6} with exponential support size; see
  \cref{sec:exp-supp}.
\end{itemize}
The randomized procedure helps to address these two issues.
\end{remark}

As \cite{NetoBen-Ameur2019} is only concerned with approximating
the value \(\fcc(G)\), as opposed to computing a fractional
cut cover, the randomized algorithms in their Section~4 do not
address the issues in \cref{re:need-to-randomize}.
One may interpret their results as using \cref{prop:sdp-rounding}
to say that feasible solutions to \cref{eq:GW-polar-def-intro} are
an \emph{implicit} representation of a fractional cut cover.

\Cref{prop:sdp-rounding} and Carathéodory's theorem imply that, given
a matrix~\(Y\) and a real number~\(\mu\) feasible
for~\cref{eq:GW-polar-def-intro}, there exists a fractional cut cover
with polynomial support size and objective value bounded above by
\((1/\GWalpha)\mu\).
The randomized approach below will in fact produce a feasible solution
whose support size is \(O(\ln(n))\), a reduction of \emph{two} orders
of magnitude compared to the fractional cut cover from
\cref{prop:sdp-rounding}.

Let \(T \in \Naturals \setminus \set{0}\) and let \(\Chernoff \in (0,1)\).
Let \(G = (V, E)\) be a graph, and let \(Y \in \Sym{V}\) and \(\mu \in
\Reals_+\) be such that \(Y \in \mu \elliptope{V}\).
Let \(S_1, \dotsc, S_T \subseteq V\) be independent, identically
distributed random shores sampled by~\(\GWrv(Y)\).
Define
\begin{equation}
  \label{eq:Chernoff-sampling-def}
  \Acal_{T, \Chernoff}(G, Y) \coloneqq (\Fcal, y),
  \text{ where }
  \Fcal \coloneqq \set{S_1, \dotsc, S_T} \subseteq \Powerset{V}
  \text{ and }
  y \coloneqq
  \frac{\mu}{(1 - \Chernoff) \GWalpha} \frac{1}{T} \sum_{t = 1}^T
  e_{S_t} \in \Reals_+^{\Fcal}.
\end{equation}
Informally, \(\Acal_{T, \Chernoff}(G, Y)\) produces a (scaled version of
a) sparse surrogate for the probability distribution of~\(\GWrv(Y)\).
It is immediate that
\begin{equation}
  \label{eq:Chernoff-sampling-properties}
  \iprodt{\ones}{y} = \frac{1}{\GWalpha} \frac{1}{1 - \Chernoff}\mu
  \qquad
  \text{and}
  \qquad
  \card{\Fcal} \leq T.
\end{equation}
The parameter \(\Chernoff\) regulates the deviation from the objective
value obtained in  \Cref{prop:sdp-rounding}, while \(T\) needs to be
chosen large enough so that concentration results imply that the
desired level of accuracy is achieved with high probability.

Let \(G = (V, E)\) be a graph.
\v{S}ámal \cite[Theorem~5.2]{Samal2015} uses Chernoff's bound to show
that, by sampling sufficiently many cuts, one can obtain a fractional
cut cover for \((G, \ones)\).
\Cref{prop:probabilistic-rounding-cover-pre} improves on this work, by
providing an explicit bound on the actual number of cuts that suffices
in the general weighted setting.
We~will use that, for every \(x \in [-1, 1]\) and \(y \in [0, 2]\),
\begin{equation}
  \label{eq:arccos-bound}
  \text{if }
  x \le 1 - y
  \text{ then }
  \arccos(x) \ge \sqrt{2y}.
\end{equation}
This follows from monotonicity of \(\arccos\), and the inequality
\(\arccos(1 - y) \ge \sqrt{2y}\) for each \(y \in [0, 2]\).

\begin{proposition}
\label{prop:probabilistic-rounding-cover-pre}
Let \(\xi, \kappa, \Chernoff \in \Reals\) be such that
\(0 < \xi \le 1 \le \kappa\) and \(0 < \Chernoff < 1\).
Let \(G = (V, E)\) be a graph on \(n\) vertices, and let
\(z \in \Reals_+^E\) be nonzero.
Let \((\mu,Y)\) be feasible in~\cref{eq:GW-polar-def-intro}.
Let
\begin{inlinemath}
  T
  \geq
  \ceil[\big]{
    3\pi
    \paren{
      \frac{\kappa}{\xi}
    }^{\half}
    \frac{1}{\Chernoff^2} \ln(n)
  }
\end{inlinemath}
be an integer and set
\((\Fcal, y) \coloneqq \Acal_{T, \Chernoff}(G, Y)\).
If
\begin{subequations}
  \begin{gather}
    \label{eq:prop-round-pre-2}
    \mu \le \kappa \norm[\infty]{z}
    \\
    \shortintertext{and}
    \label{eq:prop-round-pre-3}
    z \ge \xi \norm[\infty]{z}\ones,
  \end{gather}
\end{subequations}
then
\begin{equation}
\label{eq:prop-round-y}
  \prob\paren[\bigg]{
    \sum_{S \in \Fcal} y_S \incidvector{\delta(S)} \geq z
  } \ge 1 - \frac{1}{n}.
\end{equation}
\end{proposition}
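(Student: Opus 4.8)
The plan is to reduce the event in \cref{eq:prop-round-y} to a union of per-edge lower-tail large-deviation events and control each one with a multiplicative Chernoff bound. First I would dispose of the degenerate case: since \(z\) is nonzero and nonnegative we have \(\norm[\infty]{z} > 0\), and \(Y \in \mu\elliptope{V}\) with \(\mu = 0\) would force \(Y = 0\) and hence \(\tfrac14\Laplacian_G^*(Y) = 0 \not\ge z\); thus \(\mu > 0\). Set \(\bar{Y} \coloneqq \mu^{-1}Y \in \elliptope{V}\), so that \(\GWrv(Y) = \GWrv(\bar{Y})\) exactly as in the proof of \cref{prop:sdp-rounding}.

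Next, fix \(ij \in E\) and put \(p_{ij} \coloneqq \prob\paren{ij \in \delta(\GWrv(Y))} = \arccos(\bar{Y}_{ij})/\pi\), the last equality being \cref{eq:GW-Y-edge-marginal} applied to \(\bar{Y}\). I would derive a uniform lower bound on \(p_{ij}\): feasibility of \((\mu, Y)\) and \cref{eq:Laplacian-adjoint-def} give \(z_{ij} \le \tfrac14\paren[\big]{\Laplacian_G^*(Y)}_{ij} = \tfrac{\mu}{2}\paren{1 - \bar{Y}_{ij}}\), while \cref{eq:prop-round-pre-3,eq:prop-round-pre-2} give \(z_{ij} \ge \xi\norm[\infty]{z} \ge \xi\mu/\kappa\); combining, \(\bar{Y}_{ij} \le 1 - 2\xi/\kappa\). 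Since \(2\xi/\kappa \in (0, 2]\) (as \(0 < \xi \le 1 \le \kappa\)), \cref{eq:arccos-bound} yields \(\arccos(\bar{Y}_{ij}) \ge \sqrt{4\xi/\kappa} = 2\sqrt{\xi/\kappa}\), hence \(p_{ij} \ge \tfrac{2}{\pi}\sqrt{\xi/\kappa} > 0\).

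Now let \(N_{ij}\) be the number of indices \(t \in [T]\) with \(ij \in \delta(S_t)\); since the \(S_t\) are independent and each contains \(ij\) in its cut with probability \(p_{ij}\), the quantity \(N_{ij}\) is a sum of \(T\) independent Bernoulli variables with mean \(Tp_{ij}\). Using \(\ceil{x} \ge x\) together with the \(p_{ij}\)-bound, this mean satisfies \(Tp_{ij} \ge \bigl(3\pi\sqrt{\kappa/\xi}\,\Chernoff^{-2}\ln(n)\bigr)\tfrac{2}{\pi}\sqrt{\xi/\kappa} = 6\Chernoff^{-2}\ln(n)\). From \cref{eq:Chernoff-sampling-def} one has \(\paren[\big]{\sum_{S \in \Fcal} y_S\incidvector{\delta(S)}}_{ij} = \tfrac{\mu}{(1 - \Chernoff)\GWalpha}\cdot\tfrac{N_{ij}}{T}\), so the edge-\(ij\) component of the inequality \(\sum_{S \in \Fcal} y_S\incidvector{\delta(S)} \ge z\) holds exactly when \(N_{ij} \ge \tfrac{(1 - \Chernoff)\GWalpha T z_{ij}}{\mu}\); and \cref{eq:sdp-rounding} of \cref{prop:sdp-rounding} gives \(\GWalpha z_{ij}/\mu \le p_{ij}\), so this threshold is at most \((1 - \Chernoff)Tp_{ij}\). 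Hence it suffices that \(N_{ij} \ge (1 - \Chernoff)Tp_{ij}\), and the multiplicative Chernoff lower-tail inequality gives \(\prob\paren{N_{ij} < (1 - \Chernoff)Tp_{ij}} \le \exp\paren{-\Chernoff^2 Tp_{ij}/2} \le \exp(-3\ln(n)) = n^{-3}\). A union bound over the at most \(\binom{n}{2} \le n^2\) edges — note \(E \ne \emptyset\) forces \(n \ge 2\), so \(\ln(n) > 0\) and \(T \ge 1\) — bounds the probability that \cref{eq:prop-round-y} fails by \(n^2 \cdot n^{-3} = 1/n\), which is exactly the claim.

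I expect the only genuinely delicate point to be the bookkeeping in the last paragraph: recognizing \(N_{ij}\) as a sum of independent Bernoulli variables, matching the scaling constant \(\tfrac{\mu}{(1 - \Chernoff)\GWalpha T}\) against the covering threshold forced by \cref{eq:sdp-rounding}, and invoking the precise multiplicative Chernoff lower-tail bound with exponent \(\Chernoff^2 Tp_{ij}/2\). Everything else is elementary: the hypothesis on \(T\) feeding into \(Tp_{ij} \ge 6\Chernoff^{-2}\ln(n)\), combined with the \(\arccos\) estimate \cref{eq:arccos-bound}.
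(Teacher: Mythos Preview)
Your proposal is correct and follows essentially the same route as the paper: bound each \(p_{ij}\) from below via \(\bar{Y}_{ij} \le 1 - 2\xi/\kappa\) and \cref{eq:arccos-bound}, reduce the covering event to \(N_{ij} \ge (1-\Chernoff)Tp_{ij}\) using \cref{eq:sdp-rounding}, apply the multiplicative Chernoff lower-tail bound, and union-bound over at most \(n^2\) edges. Your bookkeeping (explicitly computing \(Tp_{ij} \ge 6\Chernoff^{-2}\ln(n)\) before invoking Chernoff, and noting \(\mu > 0\) and \(n \ge 2\)) is slightly more explicit than the paper's, but the argument is the same.
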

\begin{proof}
Let \(S_1,\ldots, S_T\) be random shores defined as
in~\cref{eq:Chernoff-sampling-def}.
For every \(ij \in E\), set
\(X_{ij} \coloneqq \card{\setst{t \in [T]}{ij \in \delta(S_t)}}\) and
\(p_{ij} \coloneqq \prob\paren{ij \in \delta(\GWrv(Y))}\).
By construction, for each \(ij \in E\), the random variable~\(X_{ij}\)
has binomial distribution on \(T\) trials with success probability
\(p_{ij}\) and
\[
  \paren[\Big]{
    \sum_{S \in \Fcal} y_S \incidvector{\delta(S)}
  }_{ij}
  =\frac{\mu}{(1-\Chernoff)\GWalpha}\frac{1}{T} X_{ij}.
\]
By~\cref{eq:sdp-rounding}, we have that
\(p_{ij} \ge \frac{\GWalpha}{\mu} z_{ij}\) and so the expected value
is \(\Ebb(X_{ij}) \ge T \frac{\GWalpha}{\mu} z_{ij}\).
Thus, in order to prove \cref{eq:prop-round-y}, it suffices to show
that \(\prob\paren{\exists e \in E,\, X_{e} \le (1 - \Chernoff)
\Ebb(X_{e})} \leq 1/n\).

Set \(\bar{Y} \coloneqq \mu^{-1} Y\) and let \(ij \in E\).
Using \(z \neq 0\), \cref{eq:prop-round-pre-3},
\cref{eq:prop-round-pre-2}, feasibility of \((\mu,Y)\)
in~\cref{eq:GW-polar-def-intro}, and~\cref{eq:Laplacian-adjoint-def},
we obtain
\begin{equation*}
  \frac{2\xi}{\kappa}
  = \frac{
    2\xi \norm[\infty]{z}
  }{
    \kappa \norm[\infty]{z}
  }
  \le \frac{
    2 z_{ij}
  }{
    \kappa \norm[\infty]{z}
  }
  \le \frac{
    2 z_{ij}
  }{
    \mu
  }
  \le \frac{1}{2} \Laplacian_G^*(\bar{Y})_{ij}
  = 1 - \bar{Y}_{ij}.
\end{equation*}
Hence \(\bar{Y}_{ij} \le 1 -
\frac{2\xi}{\kappa}\).
Using~\cref{eq:GW-Y-edge-marginal} and~\cref{eq:arccos-bound} we see
that
\begin{equation*}
  p_{ij}
  = \frac{\arccos(\bar{Y}_{ij})}{\pi}
  \geq \frac{2}{\pi} \sqrt{\frac{\xi}{\kappa}}.
\end{equation*}
By Chernoff's bound,
\begin{equation*}
  \prob\paren[\big]{
    X_{ij} \le (1 - \Chernoff) \Ebb(X_{ij})
  }
  \le \exp\paren[\bigg]{
    -\frac{
      \Chernoff^2 \Ebb(X_{ij})
    }{2}
  }
  = \exp\paren[\bigg]{
    -\frac{\Chernoff^2 T p_{ij}}{2}
  }
  \le \exp\paren[\bigg]{
    -\frac{\Chernoff^2 T}{\pi}
    \sqrt{\frac{\xi}{\kappa}}\,
  }.
\end{equation*}
Hence, by union bound and the lower bound on~\(T\),
\begin{align*}
  \prob\paren[\big]{
    \exists e \in E,\,
    X_{e} \le (1 - \Chernoff) \Ebb(X_{e})
  }
  &\le n^2 \exp\paren[\bigg]{
    -\frac{\Chernoff^2 T}{\pi}
    \sqrt{\frac{\xi}{\kappa}}\,
  }
  \\
  &\le \exp\paren[\bigg]{
    2\ln(n)
    -
    \frac{3\pi \kappa^{\half} \ln(n)}
    {\xi^{\half}\Chernoff^2}
    \frac{\Chernoff^2}{\pi}
    \sqrt{\frac{\xi}{\kappa}}\,
  }
  = \frac{1}{n}.\qedhere
\end{align*}
\end{proof}

\begin{remark}
\label{re:small-edges}
When applying \Cref{prop:probabilistic-rounding-cover-pre}, it is
apparent that \cref{eq:prop-round-pre-3} is the most stringent
condition.
Indeed, the other requirements can be satisfied by a nearly optimal
solution to the SDP in~\cref{eq:GW-polar-def-intro}.
On the other hand, \cref{eq:prop-round-pre-3}~restricts the
applicability of our procedure to certain well-behaved values of \(z\).
This is not an artifact of our analysis, but an unavoidable
consequence of the repeated sampling approach.
\Cref{sec:small-edges} shows that, for every \(\eps \in (0, 2)\),
there exists an instance \((G, z)\) which has an optimal solution
\((\bar{\mu}, \bar{Y})\) for~\cref{eq:GW-polar-def-intro} such
that there is an edge \(ij\) with
\[
  \prob\paren[\big]{ij \in \delta(\GWrv(\bar{Y}))}
  = \frac{\arccos(1 - 2\eps + \eps^2/2)}{\pi},
\]
which can be made arbitrarily small.
This, in turn, increases the number of samples needed to produce a
cut~cover, i.e., a set of cuts whose union is the whole edge set of
the graph.
\Cref{thm:rounding-algorithm} solves this issue by perturbing the edge
weights \(z\) to a vector \(\hat{z}\) that satisfies
\cref{eq:prop-round-pre-3}.
\end{remark}

Let \(G = (V, E)\) be a graph.
We will use that
\begin{alignat}{3}
  \label{eq:GW-polar-monotone}
  \GW^{\polar}(G, z_0) & \le \GW^{\polar}(G, z_1),
  & \qquad &
  \text{for every \(z_0,z_1 \in \Lp{E}\) such that \(z_0 \leq z_1\)},
  \\*
  \label{eq:GW-polar-convex}
  \GW^{\polar}(G, z_0 + z_1)
  & \le \GW^{\polar}(G, z_0) + \GW^{\polar}(G, z_1)
  & \qquad &
  \text{for every \(z_0,z_1 \in \Lp{E}\), and}
  \\*
  \label{eq:GW-polar-norm-infty-lowerbound}
  \norm[\infty]{z} & \le \GW^{\polar}(G, z)
  & \qquad &
  \text{for every \(z \in \Lp{E}\)}.
\end{alignat}
These facts follow from SDP Strong Duality: if \(z \in \Reals_+^E\),
then
\begin{subequations}
  \label{eq:GW-polar-def}
  \begin{align}
    \GW^{\polar}(G, z)
    &=
    \label{eq:GW-polar-gaugef}
    \min\setst[\big]{
      \mu
    }{
      \mu \in \Reals_{+},\,
      Y \in \Psd{V},\,
      \diag(Y) = \mu \ones,\,
      \tfrac{1}{4} \Laplacian_G^*(Y) \ge z
    }
  \\*
  &=
  \label{eq:GW-polar-supf}
  \max\setst[\big]{
    \iprodt{z}{w}
  }{
    w \in \Reals_+^E,\,
    x \in \Reals^V,\,
    \tfrac{1}{4}\Laplacian_G(w) \preceq \Diag(x),\,
    \iprodt{\ones}{x} \le 1
  }.
  \end{align}
\end{subequations}
The optimization problems \cref{eq:GW-polar-gaugef}
and~\cref{eq:GW-polar-supf} form a primal-dual pair of SDPs.
Note that
\((\slater{\mu}, \slater{Y}) \coloneqq (2\norm[\infty]{z},
2\norm[\infty]{z}I)\) and
\((\slater{w}, \slater{x}) \coloneqq (0,\card{V}^{-1} \ones)\) are
relaxed Slater points of~\cref{eq:GW-polar-gaugef},
and~\cref{eq:GW-polar-supf}, respectively, so SDP Strong Duality
ensures both problems have optimal solutions attaining a common
optimal value; see, e.g., \cite[Theorem~7.1.2]{Nemirovski2012}.
The proofs of~\cref{eq:GW-polar-monotone,eq:GW-polar-convex} are
immediate from~\cref{eq:GW-polar-gaugef} and~\cref{eq:GW-polar-supf},
respectively.
To prove~\cref{eq:GW-polar-norm-infty-lowerbound}, let
\(z \in \Lp{E}\) and note that, for every \(ij \in E\),
\begin{equation*}
2 \Diag(e_i + e_j) - \Laplacian_G(e_{ij})
= 2 (\E{i}{i} + \E{j}{j}) - \oprod{(e_i - e_j)}{(e_i - e_j)}
= (e_i + e_j)(e_i + e_j)^\transp
\in \Psd{V}.
\end{equation*}
Hence, for every \(ij \in E\), the pair
\((\bar{w}, \bar{x}) \coloneqq (e_{ij}, \frac{1}{2} (e_i + e_j))\) is
feasible in~\cref{eq:GW-polar-supf} for \((G,z)\).
Thus~\cref{eq:GW-polar-norm-infty-lowerbound} holds.

\begin{theorem}
\label{thm:rounding-algorithm}
Let \(\beta \in (0, \GWalpha)\) and set
\begin{equation}
  \label{eq:5}
  \tau \coloneqq 1 - \dfrac{\beta}{\GWalpha} \in (0,1)
  \qquad
  \text{and}
  \qquad
  C
  \coloneqq
  {81\sqrt{2}\pi}/{\tau^{5/2}}.
\end{equation}
There exists a randomized polynomial-time algorithm which takes as
input a graph \(G = (V, E)\) on \(n\) vertices and a vector \(z \in
\Reals_+^E\) and outputs \((\Fcal, y)\), where \(\Fcal \subseteq
\Powerset{V}\) and \(y \in \Reals_+^{\Fcal}\) are such that
\[
  \prob\paren[\bigg]{\,
    \sum_{S \in \Fcal}
    y_S \incidvector{\delta(S)} \ge z
  } \ge 1 - \frac{1}{n},\qquad
  \iprodt{\ones}{y} \le \frac{1}{\beta} \fcc(G, z),
  \qquad
  \text{and}
  \qquad
  \card{\Fcal}
  \le
  \ceil{C \ln(n)}
  = O(\ln(n)).
\]
That~is, with high probability, \(y\) is a
\(\tfrac{1}{\beta}\)-approximately optimal solution
for~\cref{eq:cut-covering-problem} with logarithmic support size.
\end{theorem}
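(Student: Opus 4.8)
The plan is to realize \textsc{ApproxFcc}${}_{\beta}$ as the algorithm and verify each of its three guarantees, using \Cref{prop:probabilistic-rounding-cover-pre} as the workhorse for correctness of the cut-cover property. First I would dispose of the degenerate case $z = 0$, where the empty solution works. Assuming $z \neq 0$, I would run through the algorithm line by line: step~\ref{step:preconditioning} produces $\hat z$ with $\hat z \geq \tfrac{1}{2}\eps\norm[\infty]{z}\ones$ and $\norm[\infty]{\hat z} = \norm[\infty]{z}$ (since each coordinate is raised, if at all, only to $\tfrac12\eps\norm[\infty]{z} \le \norm[\infty]{z}$); step~\ref{step:sdp-approximate-solving} yields $(\mu,Y)$ feasible for $\GW^{\polar}(G,\hat z)$ with $\mu \le \GW^{\polar}(G,\hat z) + \sigma\norm[\infty]{z}$; and the sampling loop plus final scaling is exactly $\Acal_{T,\Chernoff}(G,Y)$ from \cref{eq:Chernoff-sampling-def} with $\Chernoff = \tau/3$ and $T = \ceil{C\ln n}$.

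For the cut-cover property I would invoke \Cref{prop:probabilistic-rounding-cover-pre} applied to the weight vector $\hat z$ with the parameters $\xi \coloneqq \tfrac12\eps$ and $\kappa$ chosen as an upper bound on $\mu/\norm[\infty]{\hat z}$. The hypothesis \cref{eq:prop-round-pre-3} holds for $\hat z$ by construction of $\hat z$. For \cref{eq:prop-round-pre-2} I need $\mu \le \kappa\norm[\infty]{\hat z}$; bounding $\mu \le \GW^{\polar}(G,\hat z) + \sigma\norm[\infty]{z}$ and using $\GW^{\polar}(G,\hat z) \le \GW^{\polar}(G, z) + \GW^{\polar}(G, \tfrac12\eps\norm[\infty]{z}\ones)$ via subadditivity \cref{eq:GW-polar-convex} together with monotonicity \cref{eq:GW-polar-monotone}, and then $\GW^{\polar}(G,z) \le \fcc(G,z) \le \ldots$ — actually the cleaner route is $\GW^{\polar}(G,\hat z) \le \GW^{\polar}(G, 2\norm[\infty]{z}\ones) \le 2\norm[\infty]{z}\cdot\GW^{\polar}(G,\ones)$; even more directly, the relaxed Slater point $(2\norm[\infty]{\hat z}, 2\norm[\infty]{\hat z}I)$ exhibited after \cref{eq:GW-polar-def} shows $\GW^{\polar}(G,\hat z) \le 2\norm[\infty]{z}$, hence $\mu \le (2 + \sigma)\norm[\infty]{z} \le 3\norm[\infty]{z}$, so $\kappa \coloneqq 3$ works. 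Then \Cref{prop:probabilistic-rounding-cover-pre} needs $T \ge \ceil{3\pi(\kappa/\xi)^{1/2}\Chernoff^{-2}\ln n} = \ceil{3\pi(6/\eps)^{1/2}(3/\tau)^2\ln n}$; substituting $\eps = \sigma = \Chernoff = \tau/3$ this becomes $\ceil{3\pi\sqrt{18/\tau}\cdot 9/\tau^2\ln n} = \ceil{81\sqrt2\,\pi\,\tau^{-5/2}\ln n} = \ceil{C\ln n} = T$, which is exactly why $C$ is defined as in \cref{eq:5}. This yields $\prob(\sum_{S\in\Fcal} y_S\incidvector{\delta(S)} \ge \hat z) \ge 1 - 1/n$, and since $\hat z \ge z$ the same holds with $z$.

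For the objective-value bound I would compute, using \cref{eq:Chernoff-sampling-properties}, that $\iprodt{\ones}{y} = \tfrac{1}{(1-\Chernoff)\GWalpha}\mu \le \tfrac{1}{(1-\Chernoff)\GWalpha}\bigl(\GW^{\polar}(G,\hat z) + \sigma\norm[\infty]{z}\bigr)$. Bounding $\GW^{\polar}(G,\hat z) \le \GW^{\polar}(G,z) + \GW^{\polar}(G,\tfrac12\eps\norm[\infty]{z}\ones)$ by \cref{eq:GW-polar-convex}, then using $\GW^{\polar}(G,z) \le \fcc(G,z)$ from \Cref{cor:1} and the crude bound $\GW^{\polar}(G,\tfrac12\eps\norm[\infty]{z}\ones) \le \eps\norm[\infty]{z} \le \eps\,\GW^{\polar}(G,z) \le \eps\,\fcc(G,z)$ (the middle step by \cref{eq:GW-polar-norm-infty-lowerbound}), plus $\sigma\norm[\infty]{z} \le \sigma\,\fcc(G,z)$ similarly, gives $\iprodt{\ones}{y} \le \tfrac{1+\eps+\sigma}{(1-\Chernoff)\GWalpha}\fcc(G,z)$. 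With $\eps = \sigma = \Chernoff = \tau/3$ the constant is $\tfrac{1 + 2\tau/3}{(1-\tau/3)\GWalpha}$, and an elementary check shows $\tfrac{1+2\tau/3}{1-\tau/3} \le \tfrac{1}{1-\tau}$ for $\tau \in (0,1)$ (cross-multiplying reduces to $2\tau^2/3 \ge 0$), so $\iprodt{\ones}{y} \le \tfrac{1}{(1-\tau)\GWalpha}\fcc(G,z) = \tfrac{1}{\beta}\fcc(G,z)$ since $\beta = (1-\tau)\GWalpha$. The support bound $\card{\Fcal} \le T = \ceil{C\ln n} = O(\ln n)$ is immediate from \cref{eq:Chernoff-sampling-properties}, and polynomial running time follows because $T$ is logarithmic, the SDP in step~\ref{step:sdp-approximate-solving} can be solved to the required additive accuracy in polynomial time (it has the relaxed Slater points exhibited after \cref{eq:GW-polar-def}), and each call to $\GWrv(Y)$ costs one Cholesky factorization plus one Gaussian sample.

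The main obstacle I expect is the bookkeeping that makes the chosen constants $\eps = \sigma = \Chernoff = \tau/3$ and $C = 81\sqrt2\,\pi/\tau^{5/2}$ line up exactly with the hypothesis $T \ge \ceil{3\pi(\kappa/\xi)^{1/2}\Chernoff^{-2}\ln n}$ of \Cref{prop:probabilistic-rounding-cover-pre} and simultaneously with the approximation-factor inequality $\tfrac{1+\eps+\sigma}{(1-\Chernoff)\GWalpha} \le \tfrac{1}{\beta}$; there is essentially no slack, so the delicate point is choosing $\kappa = 3$ as the right crude upper bound on $\mu/\norm[\infty]{z}$ and tracking that $\hat z$ (not $z$) is the vector fed to the proposition while the final guarantees are stated in terms of $z$. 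A secondary, more conceptual point to state carefully is why solving the SDP to additive precision $\sigma\norm[\infty]{z}$ (rather than to optimality) is both achievable in polynomial time and sufficient — this is where the robustness of the analysis to the real-number-machine and Cholesky/Gaussian-oracle assumptions is invoked.
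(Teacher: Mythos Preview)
Your proposal is correct and follows essentially the same argument as the paper: same constants $\eps=\sigma=\Chernoff=\tau/3$, same choice $\xi=\eps/2$ and $\kappa=3$ for invoking \Cref{prop:probabilistic-rounding-cover-pre}, the same $T$ arithmetic, and the same chain of inequalities (monotonicity, subadditivity, $\GW^{\polar}(G,\ones)\le 2$, and \cref{eq:GW-polar-norm-infty-lowerbound}) to obtain $\iprodt{\ones}{y}\le\tfrac{1+\eps+\sigma}{(1-\Chernoff)\GWalpha}\GW^{\polar}(G,z)\le\tfrac{1}{\beta}\fcc(G,z)$. The only cosmetic difference is that you bound $\GW^{\polar}(G,\hat z)\le 2\norm[\infty]{z}$ directly via the Slater point whereas the paper factors this through the intermediate statement $\GW^{\polar}(G,\ones)\le 2$; be sure, in the objective-value chain, to insert the monotonicity step $\hat z\le z+\tfrac12\eps\norm[\infty]{z}\ones$ before applying subadditivity.
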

\begin{proof}
We start by setting up the constants that will be used in the proof
(and in \Cref{alg:1}).
Set
\begin{equation}
  \label{eq:3}
  \sigma \coloneqq
  \eps \coloneqq
  \Chernoff \coloneqq
  \dfrac{\tau}{3}
  \in (0,\tfrac{1}{3})
  \quad\text{and}\quad
  T \coloneqq
  \ceil[\big]{C\ln(n)},
\end{equation}
as in the preamble to \cref{alg:1}.
The constants \(\sigma\), \(\eps\), and \(\Chernoff\) are chosen so
that
\begin{equation}
  \label{eq:1}
  \frac{1-\Chernoff}{1+\eps+\sigma} \GWalpha
  =
  \paren[\Big]{
    \frac{1-\tau/3}{1+2\tau/3}
  }
  \GWalpha
  =
  \paren[\Big]{
    1-\frac{\tau}{1+2\tau/3}
  }
  \GWalpha
  \geq
  \paren{
    1 - \tau
  }
  \GWalpha
  =
  \beta.
\end{equation}
It is simple to verify that \cref{alg:1} works if \(z = 0\), so we
may assume that \(z \neq 0\).
We define \(\hat{z}\) by rounding up entries that are smaller than
\(\frac{1}{2}\eps \norm[\infty]{z}\).
Set \(\hat{z} \in \Reals_+^E\) by \(\hat{z}_{ij} \coloneqq
\max\set{z_{ij}, \frac{1}{2}\eps \norm[\infty]{z}}\) for every \(ij
\in E\).
(Note that this is done in \cref{step:preconditioning} of \Cref{alg:1}.)

Let \((\mu,Y)\) be a feasible solution for
\(\GW^{\polar}(G, \hat{z})\) in \cref{eq:GW-polar-def-intro} with
objective value
\begin{equation}
  \label{eq:prob-round-2}
  \mu
  \leq
  \GW^{\polar}(G, \hat{z}) +\sigma \norm[\infty]{z}
  =
  \GW^{\polar}(G, \hat{z}) +\sigma \norm[\infty]{\hat{z}}.
\end{equation}
Note that this appears in \cref{step:sdp-approximate-solving} in
\Cref{alg:1}.
Such a nearly optimal solution \((\mu, Y)\) can be computed in
polynomial time due to the existence of strict Slater points for the
SDPs in~\cref{eq:GW-polar-def}; see~\cref{sec:solvers}.

We now move to the final and randomized part of \Cref{alg:1}.
Set \((\Fcal, y) \coloneqq \Acal_{T, \Chernoff}(G, Y)\).
Note that \(y\) is the solution produced by \Cref{alg:1}.
To finish the proof, we will show that \(y\) is a fractional cut cover
for \((G, z)\) with probability at least \(1 - \tfrac{1}{n}\), and it
has support size at most \( \ceil{C \ln(n)}\) and objective value at
most \((1/\beta) \fcc(G, z)\).

We will apply \Cref{prop:probabilistic-rounding-cover-pre} to show
that \(y\) is a fractional cut cover for \((G,\hat{z})\) with
probability at least \(1 - \tfrac{1}{n}\).
Since \(\hat{z} \geq z\), this implies that \(y\) is a fractional cut
cover for \((G,z)\) with probability at least \(1 - \tfrac{1}{n}\).
Set \(\xi \coloneqq \eps/2 = \tau/6\) and \(\kappa \coloneqq 3\), and
recall the definition of \(\Chernoff\) in~\cref{eq:3}.
By construction, \cref{eq:prop-round-pre-3} holds.
Note that
\begin{equation*}
  T
  = \ceil{C \ln(n)}
  = \ceil[\bigg]{
    \frac{81 \sqrt{2} \pi}{\tau^{5/2}} \ln(n)
  }
  = \ceil[\bigg]{
    3\pi
    \paren[\bigg]{\frac{18}{\tau}}^{1/2}
    \paren[\bigg]{\frac{3}{\tau}}^2
    \ln(n)
  }
  = \ceil[\bigg]{
    3\pi
    \paren[\bigg]{\frac{\kappa}{\xi}}^{1/2}
    \paren[\bigg]{\frac{1}{\gamma}}^2
    \ln(n)
  },
\end{equation*}
so the lower bound on~\(T\) from
\cref{prop:probabilistic-rounding-cover-pre} is met.
We will check that \(\mu \le \kappa \norm[\infty]{\hat{z}}\), that~is,
that \cref{eq:prop-round-pre-2} holds.
We~claim that
\begin{equation}
\label{eq:unweighted-GW-polar-upperbound}
\GW^{\polar}(G, \ones) \le 2.
\end{equation}
This follows from feasibility of
\((\bar{\mu}, \bar{Y}) \coloneqq (2, 2I)\)
in~\cref{eq:GW-polar-def-intro} for \((G,\ones)\), as
\(
\tfrac{1}{4} \Laplacian_G^*(I)
= \tfrac{1}{2} \ones
\)
by~\cref{eq:Laplacian-adjoint-def}.
Hence, \cref{eq:prob-round-2}, \cref{eq:GW-polar-monotone},
\cref{eq:unweighted-GW-polar-upperbound}, and \(\sigma < 1\) imply
\begin{equation*}
  \mu
  \le \GW^{\polar}(G, \hat{z}) + \sigma \norm[\infty]{\hat{z}}
  \le \GW^{\polar}(G, \norm[\infty]{\hat{z}} \ones)  + \sigma
  \norm[\infty]{\hat{z}}
  = \paren{\GW^{\polar}(G, \ones) + \sigma}\norm[\infty]{\hat{z}}
  \le 3 \norm[\infty]{\hat{z}},
\end{equation*}
so \cref{prop:probabilistic-rounding-cover-pre} applies.

The support size of~\(y\) is \(\card{\Fcal} \leq T = \ceil{C \ln(n)}\)
by~\cref{eq:Chernoff-sampling-properties}.
Finally, we bound \(\iprodt{\ones}{y}\):
\begin{detailedproof}
\iprodt{\ones}{y}
&= \frac{1}{\GWalpha}\frac{1}{1 - \Chernoff} \mu
&&\text{by~\cref{eq:Chernoff-sampling-properties}}\\
&\le \frac{1}{\GWalpha}\frac{1}{1 - \Chernoff}
(\GW^{\polar}(G, \hat{z}) + \sigma \norm[\infty]{\hat{z}})
&&\text{by~\cref{eq:prob-round-2}}\\
&\le \frac{1}{\GWalpha}\frac{1}{1 - \Chernoff}
\paren[\big]{
\GW^{\polar}(G, z + \tfrac{1}{2}\eps \norm[\infty]{\hat{z}} \ones)
+ \sigma \norm[\infty]{\hat{z}}
}
&&\text{by~\cref{eq:GW-polar-monotone},
as \(\hat{z} \le z + \tfrac{1}{2} \eps \norm[\infty]{\hat{z}}\ones\)}\\
&\le \frac{1}{\GWalpha}\frac{1}{1 - \Chernoff}
\paren[\big]{
\GW^{\polar}(G, z)
+ \tfrac{1}{2}\eps \norm[\infty]{\hat{z}} \GW^{\polar}(G, \ones)
+ \sigma \norm[\infty]{\hat{z}}
}
&&\text{by~\cref{eq:GW-polar-convex}}\\
&\le \frac{1}{\GWalpha}\frac{1}{1 - \Chernoff}
\paren[\big]{
\GW^{\polar}(G, z)
+ \eps\, \norm[\infty]{\hat{z}}
+ \sigma \norm[\infty]{\hat{z}}
}
&&\text{by~\cref{eq:unweighted-GW-polar-upperbound}}\\
&= \frac{1}{\GWalpha}\frac{1}{1 - \Chernoff}
\paren[\big]{
\GW^{\polar}(G, z)
+ (\eps + \sigma)\norm[\infty]{z}
}
&&\text{since \(\norm[\infty]{z} = \norm[\infty]{\hat{z}}\)}\\
&\le \frac{1}{\GWalpha}\frac{1 + \eps + \sigma}{1 - \Chernoff}
\GW^{\polar}(G, z)
&&\text{by~\cref{eq:GW-polar-norm-infty-lowerbound}}\\
&\leq\frac{1}{\beta}\GW^{\polar}(G, z)
&&\text{by~\cref{eq:1}}.\qedhere
\end{detailedproof}
\end{proof}

\begin{corollary}
\label{cor:probabilistic-method}

Let \(\beta \in (0, \GWalpha)\), and set \(\tau \in (0, 1)\) and \(C
\in \Reals_{++}\) as in~\cref{eq:5}.
For every graph \(G = (V, E)\) and \(z \in \Lp{E}\), there exists a
fractional cut cover \(y \in \Lp{\Powerset{V}}\) with
\(\card{\supp(y)} \le \ceil{C \ln n}\) and \(\iprodt{\ones}{y} \le
(1/\beta) \fcc(G, z)\).
\end{corollary}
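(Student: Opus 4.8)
The plan is to derive this as an immediate consequence of \Cref{thm:rounding-algorithm} via the probabilistic method, which is exactly what the name of the corollary advertises. First I would dispose of the degenerate case $E = \emptyset$: then $z$ is the zero vector, $\fcc(G,z) = 0$, and $y \coloneqq 0 \in \Lp{\Powerset{V}}$ is a fractional cut cover with $\supp(y) = \emptyset$ and objective value $0 = (1/\beta)\fcc(G,z)$, so the claim holds trivially. Hence we may assume $E \neq \emptyset$, and in particular $n \geq 2$.

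Next I would run the randomized algorithm furnished by \Cref{thm:rounding-algorithm} on input $(G, z)$, which outputs a pair $(\Fcal, y)$ with $\Fcal \subseteq \Powerset{V}$ and $y \in \Lp{\Fcal}$; extending $y$ by zero to an element of $\Lp{\Powerset{V}}$ we have $\supp(y) \subseteq \Fcal$. The theorem guarantees that, \emph{for every} outcome of the algorithm's internal random sampling, the two quantitative bounds $\card{\Fcal} \leq \ceil{C\ln n}$ and $\iprodt{\ones}{y} \leq (1/\beta)\fcc(G,z)$ hold, while the covering inequality $\sum_{S \in \Fcal} y_S \incidvector{\delta(S)} \geq z$ holds with probability at least $1 - 1/n$.

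Since $n \geq 2$, we have $1 - 1/n > 0$, so there exists at least one outcome at which $y$ simultaneously satisfies the covering inequality and the two deterministic bounds. Fixing such an outcome, the resulting $y \in \Lp{\Powerset{V}}$ is a fractional cut cover of $(G,z)$ with $\card{\supp(y)} \leq \card{\Fcal} \leq \ceil{C\ln n}$ and $\iprodt{\ones}{y} \leq (1/\beta)\fcc(G,z)$, as required. There is essentially no obstacle here: the only points worth stating carefully are that the support-size and objective-value bounds are deterministic features of the construction whereas feasibility is the sole probabilistic event, that a positive-probability event is in particular realized by some outcome, and that $\supp(y) \subseteq \Fcal$ transfers the bound on $\card{\Fcal}$ to a bound on $\card{\supp(y)}$.
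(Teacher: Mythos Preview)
Your proposal is correct and takes essentially the same approach as the paper, which simply states ``Immediate from \cref{thm:rounding-algorithm}.'' Your version spells out the probabilistic-method step and the degenerate case in more detail, but the argument is the same.
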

\begin{proof}
Immediate from \cref{thm:rounding-algorithm}.
\end{proof}

\begin{minipage}{1.0\linewidth}
\section{
A Primal-Dual Extension of the Goemans--Williamson
Algorithm\texorpdfstring{\\%
}{} With Certificates of Approximate Optimality}
\label{sec:certificates}
\end{minipage}

\begin{figure}
  \centering
  \begin{tabular}[c]{|c|c|c|c|c|}
    \hline
    \textbf{Problem}
    & \textbf{\begin{tabular}{c}SDP Solution\\Properties\end{tabular}}
    & \textbf{\begin{tabular}{c}Rounding\\Procedure\end{tabular}}
    & \textbf{\begin{tabular}{c}Rounding\\Analysis\end{tabular}}
    & \textbf{Algorithm}
    \bigstrut\\\cline{1-5}
    Fractional Cut-Covering
    & \cref{prop:sdp-rounding}
    & \cref{eq:Chernoff-sampling-def}
    & \cref{prop:probabilistic-rounding-cover-pre}
    & \cref{thm:rounding-algorithm}
    \bigstrut\\\cline{1-5}
    Simultaneous Certificates
    & \cref{prop:real-number-model-certificate}
    & \multirow{3}{*}[-.4em]{\cref{eq:GWOpt-approx-sampling}}
    & \multirow{3}{*}[-.4em]{\cref{prop:GWOpt-approx-sampling}}
    & \cref{prop:sampling-GWOpt}
    \bigstrut\\\cline{1-2}\cline{5-5}
    Maximum Cut Certificates
    & \cref{thm:certificates-from-w}
    &
    &
    & \cref{thm:certificates-from-w-approx}
    \bigstrut\\\cline{1-2}\cline{5-5}
    Fractional Cut-Covering Certificates
    & \cref{thm:certificates-from-z}
    &
    &
    & \cref{thm:certificates-from-z-approx}
    \bigstrut\\
    \hline
  \end{tabular}
  \caption{\Cref{sec:certificates} produces solutions accompanied by
    certificates of their approximate optimality.
    These developments parallel \cref{sec:rounding}: we exploit the
    properties of optimal solutions to SDP relaxations in a rounding
    procedure.
    As an auxiliary step, we study \emph{simultaneous} approximate
    solutions to both problems.}
  \label{fig:table-of-results}
\end{figure}

\Cref{sec:rounding} describes an approximation algorithm for the
fractional cut-covering problem.
The work of Goemans and Williamson is so ubiquitous in our reasoning
that one may claim \Cref{alg:1} to be ``dual'' to the algorithm
described in \cite{GoemansWilliamson1995}.
This language suggests a primal-dual approach, where cuts and
fractional cut covers simultaneously certify each other's
(approximate) optimality via a suitable notion of ``weak~duality''.
This section provides randomized polynomial-time algorithms exploiting
this idea.
Fix a desired approximation factor \(\beta \in (0, \GWalpha)\).
Given a fractional cut-covering instance, we produce a fractional cut
cover whose \((1/\beta)\)\nobreakdash-approximate optimality is certified by a
maximum cut instance with one of its \(\beta\)-approximately optimal
solutions.
Symmetrically, the input may be a maximum cut instance, and the
algorithm then produces a \(\beta\)-approximately optimal cut and
certifies it via a fractional cut-covering instance with one of its
\((1/\beta)\)-approximately optimal solutions.
The alignment of subtopics between this section and
\Cref{sec:rounding} are highlighted by
Figure~\hyperref[fig:table-of-results]{1}.

\subsection{Gauge Duality}
\label{sec:duality}
This subsection presents the gauge duality theory that permeates and
forms the foundational basis for our results.
In this manner, this subsection places our work within the literature
and equips readers with a theoretical framework that can lead to new
results.

\begin{proposition}
  \label{prop:1}
Let \(G = (V, E)\) be a graph. The functions \(\mc(G, \cdot)\) and
\(\fcc(G, \cdot)\) satisfy
\begin{subequations}
  \label{eq:mc-fcc-polar}
  \begin{alignat}{3}
  \label{eq:mc-as-polar}
  \mc(G, w)
  &= \max\setst{
    \iprodt{w}{z}
  }{
    z \in \Reals_+^E,\,
    \fcc(G, z) \le 1
  }
  & \qquad & \text{for every }w \in \Reals_+^E,
  \\
  \label{eq:fcc-as-polar}
  \fcc(G, z)
  &= \max\setst{
    \iprodt{z}{w}
  }{
    w \in \Reals_+^E,\,
    \mc(G, w) \le 1
  }
  & \qquad & \text{for every }z \in \Reals_+^E,
  \\
   \label{eq:fcc-mc-CS}
   \iprodt{w}{z}
   &\le \mc(G, w) \fcc(G, z)
   &\qquad
  &\text{for every }w, z \in \Reals_+^E.
  \end{alignat}
\end{subequations}
The functions \(\GW(G, \cdot)\) and
\(\GW^{\polar}(G, \cdot)\) satisfy
\begin{subequations}
  \label{eq:gw-gw-polar-polar}
  \begin{alignat}{3}
  \label{eq:GW-as-polar}
  \GW(G, w)
  &= \max\setst{
    \iprodt{w}{z}
  }{
    z \in \Reals_+^E,\,
    \GW^{\polar}(G, z) \le 1
  },
  & \qquad & \text{for every }w \in \Reals_+^E,
  \\
  \label{eq:gw-polar-as-polar}
  \GW^{\polar}(G, z)
  &= \max\setst{
    \iprodt{z}{w}
  }{
    w \in \Reals_+^E,\,
    \GW(G, w) \le 1
  },
  & \qquad & \text{for every }z \in \Reals_+^E,
  \\
  \label{eq:GW-CS}
  \iprodt{w}{z}
  &\le \GW(G, w) \GW^{\polar}(G, z)
  &\qquad
  &\text{for every }
  w,z \in \Lp{E}.
  \end{alignat}
\end{subequations}
\end{proposition}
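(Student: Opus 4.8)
The plan is to establish the six relations in two groups --- the combinatorial pair \(\mc,\fcc\) and the SDP pair \(\GW,\GW^{\polar}\) --- deriving in each group the two ``polar'' identities from linear, respectively semidefinite, programming duality, and then reading off the Cauchy--Schwarz-type inequality as an easy consequence. For the combinatorial pair, I would first obtain~\cref{eq:fcc-as-polar} directly from LP duality applied to~\cref{eq:cut-covering-problem}: the dual of \(\min\setst{\iprodt{\ones}{y}}{y\in\Reals_+^{\Powerset{V}},\, \sum_Sy_S\incidvector{\delta(S)}\ge z}\) is \(\max\setst{\iprodt{z}{w}}{w\in\Reals_+^E,\, \iprodt{w}{\incidvector{\delta(S)}}\le1\text{ for all }S\subseteq V}\), and requiring \(\iprodt{w}{\incidvector{\delta(S)}}\le1\) for every \(S\) is precisely \(\mc(G,w)\le1\) by~\cref{eq:maxcut-problem}; there is no duality gap since the primal is feasible (every edge \(ij\) lies in \(\delta(\set{i})\), so any \(z\in\Reals_+^E\) can be over-covered) and the dual is feasible (\(w=0\)). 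For~\cref{eq:mc-as-polar} I would prove the two inequalities directly: for ``\(\ge\)'', pick a shore \(S\) with \(\mc(G,w)=\iprodt{w}{\incidvector{\delta(S)}}\) and take \(z\coloneqq\incidvector{\delta(S)}\), which has \(\fcc(G,z)\le1\) and \(\iprodt{w}{z}=\mc(G,w)\); for ``\(\le\)'', given \(z\in\Reals_+^E\) with \(\fcc(G,z)\le1\) and a fractional cut cover \(y\) of \((G,z)\) with \(\iprodt{\ones}{y}\le1\), use \(w\ge0\) and \(\sum_Sy_S\incidvector{\delta(S)}\ge z\) to get \(\iprodt{w}{z}\le\sum_Sy_S\,\iprodt{w}{\incidvector{\delta(S)}}\le\iprodt{\ones}{y}\,\mc(G,w)\le\mc(G,w)\).

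For the SDP pair, I would get~\cref{eq:gw-polar-as-polar} almost for free from the primal--dual SDP pair~\cref{eq:GW-polar-def} already recorded in the excerpt: the dual form~\cref{eq:GW-polar-supf} maximizes \(\iprodt{z}{w}\) over the \(w\ge0\) for which some \(x\) satisfies \(\tfrac{1}{4}\Laplacian_G(w)\preceq\Diag(x)\) and \(\iprodt{\ones}{x}\le1\), and by the dual SDP~\cref{eq:GW-gaugef-rho} this set of \(w\)'s is exactly \(\setst{w\in\Reals_+^E}{\GW(G,w)\le1}\) --- the only point to observe being that \(w\ge0\) makes \(\Laplacian_G(w)\in\Psd{V}\), so \(\Diag(x)\succeq\tfrac{1}{4}\Laplacian_G(w)\) forces \(x\ge0\) and hence \(\iprodt{\ones}{x}\ge0\), which is what allows taking the scalar \(\rho\coloneqq\iprodt{\ones}{x}\le1\) in~\cref{eq:GW-gaugef-rho}. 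For~\cref{eq:GW-as-polar} I would first check that \(\GW^{\polar}(G,z)\le1\) if and only if there is \(Y\in\elliptope{V}\) with \(\tfrac{1}{4}\Laplacian_G^*(Y)\ge z\): a feasible \((\mu,Y)\) for~\cref{eq:GW-polar-def-intro} with \(0<\mu\le1\) yields \(\mu^{-1}Y\in\elliptope{V}\) with \(\tfrac{1}{4}\Laplacian_G^*(\mu^{-1}Y)=\mu^{-1}\tfrac{1}{4}\Laplacian_G^*(Y)\ge z\) because \(\mu^{-1}\ge1\) and \(z\ge0\) (and \(\mu=0\) forces \(z=0\), handled by \(Y=I\)). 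Since \(\paren[\big]{\tfrac{1}{4}\Laplacian_G^*(Y)}_{ij}=\tfrac{1}{2}(1-Y_{ij})\ge0\) for \(Y\in\elliptope{V}\) by~\cref{eq:Laplacian-adjoint-def}, the set \(\setst{z\in\Reals_+^E}{\GW^{\polar}(G,z)\le1}\) equals \(\setst{z\in\Reals_+^E}{z\le\tfrac{1}{4}\Laplacian_G^*(Y)\text{ for some }Y\in\elliptope{V}}\); hence for \(w\ge0\) the maximum of \(\iprodt{w}{z}\) over this set is attained at some \(z=\tfrac{1}{4}\Laplacian_G^*(Y)\), and that maximum equals \(\max\setst{\iprod{\tfrac{1}{4}\Laplacian_G(w)}{Y}}{Y\in\elliptope{V}}=\GW(G,w)\) by the adjoint identity and~\cref{eq:GW-intro-def}.

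The two Cauchy--Schwarz inequalities~\cref{eq:fcc-mc-CS,eq:GW-CS} then follow uniformly from the polar identities above together with two elementary facts: \(\mc(G,\cdot)\) and \(\GW(G,\cdot)\) are positively homogeneous (by~\cref{eq:maxcut-problem,eq:GW-intro-def} and linearity of \(\Laplacian_G\)), and \(\GW(G,w)\ge\mc(G,w)\ge\norm[\infty]{w}\) by~\cref{eq:GW-approximation} and \(\mc(G,w)\ge\iprodt{w}{\incidvector{\delta(\set{i})}}\ge w_{ij}\) for each \(ij\in E\). Thus if \(w\ne0\) I would normalize \(\bar w\coloneqq w/\mc(G,w)\) (respectively \(w/\GW(G,w)\)), which is nonnegative with \(\mc(G,\bar w)=1\) (respectively \(\GW(G,\bar w)=1\)), and substitute \(\bar w\) into~\cref{eq:fcc-as-polar} (respectively~\cref{eq:gw-polar-as-polar}) to obtain \(\iprodt{w}{z}\le\mc(G,w)\fcc(G,z)\) (respectively \(\iprodt{w}{z}\le\GW(G,w)\GW^{\polar}(G,z)\)); the case \(w=0\) is trivial.

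The main obstacle is the semidefinite-programming bookkeeping rather than any single hard idea: one must confirm there is no duality gap in either primal--dual pair (the Slater and relaxed-Slater points needed for strong duality are already supplied in the excerpt), check carefully that \(\Diag(x)\succeq\tfrac{1}{4}\Laplacian_G(w)\) with \(w\ge0\) forces \(x\ge0\) so that~\cref{eq:GW-polar-supf} and~\cref{eq:GW-gaugef-rho} describe the same set of vectors \(w\), verify \(\tfrac{1}{4}\Laplacian_G^*(Y)\ge0\) on the elliptope so those vectors are admissible in the maxima defining~\cref{eq:GW-as-polar}, and perform the harmless rescaling from ``\(\mu\le1\)'' to ``\(Y\in\elliptope{V}\)''; everything else is routine.
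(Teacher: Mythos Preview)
Your proposal is correct and follows essentially the same route as the paper: LP duality for~\cref{eq:fcc-as-polar}, SDP strong duality via~\cref{eq:GW-intro-pd} and~\cref{eq:GW-polar-supf} for~\cref{eq:gw-polar-as-polar}, and then the remaining identities and Cauchy--Schwarz inequalities by elementary manipulations. The only organizational difference is that the paper derives each Cauchy--Schwarz inequality immediately after the corresponding polar identity and then uses it to close the loop on the reverse polar (e.g.,~\cref{eq:fcc-mc-CS} is used for the ``\(\le\)'' direction of~\cref{eq:mc-as-polar}), whereas you prove both polars directly and derive the Cauchy--Schwarz inequalities last; the content is the same.
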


\begin{remark}
  The striking similarities between~\cref{eq:mc-fcc-polar}
  and~\cref{eq:gw-gw-polar-polar} underscore the existence of a
  theoretical framework that explains this phenomenon, rather than being
  a fortunate coincidence.
  It turns out that the functions \(\mc(G, \cdot)\), \(\fcc(G, \cdot)\),
  \(\GW(G, \cdot)\), and \(\GW^{\polar}(G, \cdot)\) are positive
  definite monotone gauges, which we shall define presently.
  Furthermore, \(\mc(G, \cdot)\) and \(\fcc(G, \cdot)\) form a dual
  pair, as do \(\GW(G, \cdot)\) and \(\GW^{\polar}(G, \cdot)\).
\end{remark}

\begin{proof}[Proof of \cref{prop:1}]
  \Cref{eq:fcc-as-polar} follows directly from Linear Programming
  Strong Duality, as
  \begin{align*}
    \fcc(G, z)
    &=
    \min\setst[\Big]{
      \iprodt {\ones} y
    }{
      y \in \Reals_+^{\Powerset{V}},\,
      \sum_{S \subseteq V} y_S \incidvector{\delta(S)} \ge z
    }
    \\*
    &= \max \setst{
      \iprodt{z}{w}
    }{
      w \in \Reals_+^E,\,
      \forall S \subseteq V,\,
      \iprodt{w}{\incidvector{\delta(S)}} \le 1
    }
    \\*
    &=
    \max\setst{
      \iprodt{z}{w}
    }{
      w \in \Reals_+^E,\,
      \mc(G, w) \le 1
    },
  \end{align*}
  and \cref{eq:fcc-mc-CS} is then straightforward.
  Next, we show that \cref{eq:mc-as-polar} holds.
  We have that
  \begin{detailedproof}
    \mc(G, w)
    &= \max\setst{
      \iprodt{w}{\incidvector{\delta(S)}}
    }{
      S \subseteq V
    }\\
    &\le \max\setst[\Big]{
      \iprodt{w}{z}
    }{
      z \in \Reals_+^E,\,
      y \in \Reals_+^{\Powerset{V}},\,
      \iprodt{\ones}{y} \leq 1,\,
      z \le \sum_{S \subseteq V} y_S\incidvector{\delta(S)}
    }
    &&\text{take \((z, y) \coloneqq (\incidvector{\delta(S)}, e_S)\)}\\
    &= \max\setst{
      \iprodt{w}{z}
    }{
      z \in \Reals_+^E,\,
      \fcc(G, z) \le 1
    }
    &&\text{by~\cref{eq:cut-covering-problem}}\\
    &\le \mc(G, w)
    &&\text{by~\cref{eq:fcc-mc-CS}.}
  \end{detailedproof}
  The proof of~\cref{eq:gw-gw-polar-polar} follows a similar
  structure.
  For every \(w \in \Reals_+^E\), equation~\eqref{eq:gw-polar-as-polar} follows
  from SDP Strong Duality via \cref{eq:GW-intro-pd}
  and~\cref{eq:GW-polar-supf}.
  The Cauchy-Schwarz inequality in~\cref{eq:GW-CS} then follows
  from~\cref{eq:gw-polar-as-polar}.
  Finally,~\cref{eq:GW-as-polar} follows, since
    \begin{detailedproof}
      \GW(G, w)
      &= \max\setst{
        \iprodt{w}{\paren[\big]{\tfrac{1}{4}\Laplacian_G^*(Y)}}
      }{
        Y \in \Psd{V},\,
        \diag(Y) = \ones
      }
      &&\text{by~\cref{eq:GW-intro-def}}\\
      &\le \max\setst{
        \iprodt{w}{z}
      }{
        z \in \Reals_+^E,\,
        z \le \tfrac{1}{4}\Laplacian_G^*(Y),\,
        Y \in \Psd{V},\,
        \diag(Y) = \ones
      }\\
      &= \max\setst{
        \iprodt{w}{z}
      }{
        z \in \Reals_+^E,\,
        \GW^{\polar}(G, z) \le 1
      }
      &&\text{by~\cref{eq:GW-polar-def-intro}}\\*
      &\le \GW(G, w)
      &&\text{by~\cref{eq:GW-CS}.}
      \qedhere
    \end{detailedproof}
\end{proof}

In the following discussion, we will elaborate on the above concepts
along with their associated implications.
Let \(E\) be a finite set, and let \(\phi \colon \Lp{E} \to \Reals_+\)
be a function such that \(\phi(0) = 0\).
We say that
\begin{minipage}{1.0\linewidth}
  \begin{itemize}
  \item \(\phi\) is \emph{positive definite} if \(\phi(x) > 0\) for
    every nonzero \(x \in \Lp{E}\);
  \item \(\phi\) is \emph{monotone} whenever \(x \le y\) implies
    \(\phi(x) \le \phi(y)\) for every \(x, y \in \Lp{E}\);
  \item \(\phi\) is \emph{positively homogeneous} if
    \(\phi(\alpha x) = \alpha \phi(x)\) for every \(x \in \Lp{E}\) and
    \(\alpha \in \Reals_{++}\);
  \item \(\phi\) is a \emph{gauge} if it is convex and positively
    homogeneous.
  \end{itemize}
  \smallskip
\end{minipage}
If \(\phi \colon \Lp{E} \to \Reals_+\) is a positive definite monotone
gauge, we define its \emph{dual}
\(\phi^{\polar} \colon \Lp{E} \to \Reals_+\) by
\begin{equation}
  \label{eq:gauge-polar-def}
  \phi^{\polar}(z)
  \coloneqq
  \max \setst{
    \iprodt{z}{w}
  }{
    w \in \Lp{E},\,
    \phi(w) \le 1
  }
  \qquad
  \text{ for every }
  z \in \Lp{E}.
\end{equation}
It is routine to check that \(\phi^{\polar}\) is a positive definite
monotone gauge.
One can also exploit a hyperplane separation theorem to show that
\(\phi^{\polar\polar} = \phi\).

\begin{corollary}
  Let \(G = (V, E)\) be a graph.
  Then the functions \(\mc(G, \cdot)\) and \(\GW(G, \cdot)\) are
  positive definite monotone gauges, and their duals are
  \(\fcc(G,\cdot)\) and~\(\GW^{\polar}(G,\cdot)\), respectively.
\end{corollary}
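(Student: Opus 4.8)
The plan is to check directly that each of $\mc(G, \cdot)$ and $\GW(G, \cdot)$ is a positive definite monotone gauge on $\Lp{E}$, and then to obtain the two duality identities simply by comparing the definition~\cref{eq:gauge-polar-def} of $\phi^{\polar}$ with the formulas already recorded in \cref{prop:1}.

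First I would deal with $\mc(G, \cdot)$. By~\cref{eq:maxcut-problem} it is the pointwise maximum of the finitely many linear functionals $w \mapsto \iprodt{w}{\incidvector{\delta(S)}}$, over $S \subseteq V$; hence it is finite-valued and convex, and it is positively homogeneous because each of these functionals is linear, so $\mc(G, \cdot)$ is a gauge with $\mc(G, 0) = 0$. It is monotone because $\incidvector{\delta(S)} \in \Lp{E}$ for every shore $S$, so $w \le w'$ in $\Lp{E}$ forces $\iprodt{w}{\incidvector{\delta(S)}} \le \iprodt{w'}{\incidvector{\delta(S)}}$ for every $S$. It is positive definite because, given a nonzero $w \in \Lp{E}$, one may pick $ij \in E$ with $w_{ij} > 0$ and take $S \coloneqq \set{i}$, so $\mc(G, w) \ge \iprodt{w}{\incidvector{\delta(S)}} \ge w_{ij} > 0$.

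Next I would handle $\GW(G, \cdot)$. By~\cref{eq:GW-intro-def} it is the supremum over the compact set $\elliptope{V}$ of the functionals $w \mapsto \iprod{\tfrac{1}{4}\Laplacian_G(w)}{Y}$, each of which is linear in $w$ since $\Laplacian_G$ is linear; thus $\GW(G, \cdot)$ is finite-valued, convex, and positively homogeneous, with $\GW(G, 0) = 0$. Nonnegativity and positive definiteness follow from the sandwich $0 \le \mc(G, w) \le \GW(G, w)$ in~\cref{eq:GW-approximation} together with the previous paragraph. The only step that needs a moment's care is monotonicity: if $w \le w'$ in $\Lp{E}$, then $w' - w \in \Lp{E}$, so $\Laplacian_G(w' - w) = \sum_{ij \in E}(w'_{ij} - w_{ij})(e_i - e_j)(e_i - e_j)^{\transp}$ is a nonnegative combination of rank-one positive semidefinite matrices and hence lies in $\Psd{V}$; since the trace inner product of two positive semidefinite matrices is nonnegative, $\iprod{\tfrac{1}{4}\Laplacian_G(w')}{Y} \ge \iprod{\tfrac{1}{4}\Laplacian_G(w)}{Y}$ for every $Y \in \elliptope{V} \subseteq \Psd{V}$, and taking maxima over $Y$ gives $\GW(G, w) \le \GW(G, w')$.

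Finally, the duals require nothing new. Having established that $\mc(G, \cdot)$ is a positive definite monotone gauge, its dual is well defined, and by~\cref{eq:gauge-polar-def} it sends $z \in \Lp{E}$ to $\max\setst{\iprodt{z}{w}}{w \in \Lp{E},\ \mc(G, w) \le 1}$, which is exactly $\fcc(G, z)$ by~\cref{eq:fcc-as-polar}. Likewise the dual of $\GW(G, \cdot)$ sends $z$ to $\max\setst{\iprodt{z}{w}}{w \in \Lp{E},\ \GW(G, w) \le 1}$, which equals $\GW^{\polar}(G, z)$ by~\cref{eq:gw-polar-as-polar}. (It then also follows that $\fcc(G, \cdot)$ and $\GW^{\polar}(G, \cdot)$ are positive definite monotone gauges, since the dual of such a function is again one.) I do not expect any genuine obstacle in this argument; the one place to stay honest is the positive-semidefiniteness and trace-inner-product step in the monotonicity of $\GW(G, \cdot)$.
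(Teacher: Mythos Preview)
Your proof is correct and follows the same route the paper intends: the paper's own proof is the single line ``This follows directly from \cref{prop:1},'' and you are simply spelling out the routine verifications (gauge, monotone, positive definite) that the paper leaves implicit, then reading off the duals from \cref{eq:fcc-as-polar} and \cref{eq:gw-polar-as-polar} exactly as the definition~\cref{eq:gauge-polar-def} demands. Your monotonicity argument for $\GW(G,\cdot)$ via $\Laplacian_G(w'-w) \succeq 0$ is precisely what the paper later records as~\cref{eq:Laplacian-monotone}.
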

\begin{proof}
  This follows directly from \cref{prop:1}.
\end{proof}

We have already exploited the properties of positive definite monotone
gauges throughout our work: recall~\cref{eq:GW-polar-monotone}
and~\cref{eq:GW-polar-convex}, for example, which state monotonicity
and convexity of \(\GW^{\polar}(G, \cdot)\), respectively.
More importantly, gauge duality immediately establishes a \emph{bound
  conversion}
procedure~\cite[Section~6]{BenedettoProencadeCarliSilvaEtAl2021} on
which this work is~based.
Recall~\cref{eq:GW-approximation} and \cref{eq:GW-polar-approx-pledge}:
\begin{alignat*}{3}
  \GWalpha \GW(G, w)
  & \le \mc(G, w)
  && \le \GW(G, w)
  &
  \mathrlap{
    \qquad
    \text{for each \(w \in \Reals_+^{E}\)},
  }
  \tag{\ref{eq:GW-approximation}}
  \\
  \GW^{\polar}(G, z)
  & \le \,\fcc(G, z)
  && \le \frac{1}{\GWalpha} \GW^{\polar}(G, z)
  &
  \mathrlap{
    \qquad
    \text{for each \(z \in \Reals_+^{E}\)}.
  }
  \tag{\ref{eq:GW-polar-approx-pledge}}
\end{alignat*}
The relationship between these inequalities are instances of the
following result.
\begin{proposition}
  \label{prop:bound-conversion}
  Let \(E\) be a finite set, and let
  \(\phi, \psi \colon \Lp{E} \to \Reals_+\) be positive definite
  monotone gauges.
  Let \(\alpha, \beta \in \Reals_{++}\).
  Then the following are equivalent:
  \begin{alignat*}{3}
    \alpha \psi(w)
    &\le
    \phi(w)
    &&\le \beta \psi(w)
    & \mathrlap{
      \qquad
      \text{ for every } w \in \Lp{E};
    }
    \\
    \tfrac{1}{\beta} \psi^{\polar}(z)
    &\le
    \phi^{\polar}(z)
    &&\le
    \tfrac{1}{\alpha} \psi^{\polar}(z)
    & \mathrlap{
      \qquad
      \text{ for every } z \in \Lp{E}.
    }
  \end{alignat*}
\end{proposition}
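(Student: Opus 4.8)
The plan is to prove one implication and then invoke symmetry using $\phi^{\polar\polar} = \phi$, $\psi^{\polar\polar} = \psi$, and the involutive behavior of the parameters under the bound-conversion map. Concretely, I would first show that the chain $\alpha\psi(w) \le \phi(w) \le \beta\psi(w)$ for all $w \in \Lp{E}$ implies $\tfrac{1}{\beta}\psi^{\polar}(z) \le \phi^{\polar}(z) \le \tfrac{1}{\alpha}\psi^{\polar}(z)$ for all $z \in \Lp{E}$; the reverse implication then follows by applying this same statement with the roles of $\phi,\psi$ replaced by $\phi^{\polar},\psi^{\polar}$ (which are again positive definite monotone gauges, as noted after~\eqref{eq:gauge-polar-def}) and with $(\alpha,\beta)$ replaced by $(\tfrac{1}{\beta}, \tfrac{1}{\alpha})$, and then using $\phi^{\polar\polar}=\phi$, $\psi^{\polar\polar}=\psi$ to translate back.

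For the forward implication, the key observation is a monotonicity property of the dual operation with respect to scaling: if $\phi(w) \le \beta\psi(w)$ for all $w$, then the sublevel set $\setst{w \in \Lp{E}}{\phi(w) \le 1}$ contains $\setst{w \in \Lp{E}}{\beta\psi(w) \le 1} = \tfrac{1}{\beta}\setst{w \in \Lp{E}}{\psi(w) \le 1}$. Taking the maximum of $\iprodt{z}{w}$ over the larger set in the definition~\eqref{eq:gauge-polar-def} of $\phi^{\polar}$ gives $\phi^{\polar}(z) \ge \tfrac{1}{\beta}\psi^{\polar}(z)$ for all $z \in \Lp{E}$ (here using positive homogeneity of $\psi^{\polar}$ to pull out the factor $\tfrac{1}{\beta}$, and $z \in \Lp{E}$ together with $w$ ranging over a set containing $0$ so that the suprema are well-behaved). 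Symmetrically, $\alpha\psi(w) \le \phi(w)$ for all $w$ yields the set inclusion $\setst{w}{\phi(w)\le 1} \subseteq \tfrac{1}{\alpha}\setst{w}{\psi(w)\le 1}$, hence $\phi^{\polar}(z) \le \tfrac{1}{\alpha}\psi^{\polar}(z)$ for all $z$. Combining the two bounds gives exactly the second chain of inequalities.

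I expect the only real subtlety — hardly an obstacle, but the point requiring care — to be the legitimacy of passing from the pointwise inequality $\phi \le \beta\psi$ to the sublevel-set inclusion and then to the dual inequality without any hidden assumption about finiteness or attainment of the suprema. Since $\phi$ and $\psi$ are positive definite monotone gauges on $\Lp{E}$ with $E$ finite, their sublevel sets at level $1$ are bounded (positive definiteness rules out unbounded directions) and closed, so the maxima in~\eqref{eq:gauge-polar-def} are attained and finite; this makes each step of the set-inclusion argument rigorous. One should also note the degenerate case $z = 0$, where both sides of every inequality vanish and the claim is trivial, so that one may freely restrict attention to nonzero $z$ when invoking positive homogeneity. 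With these points dispatched, the proposition follows, and the equivalences~\eqref{eq:GW-approximation} $\Leftrightarrow$ \eqref{eq:GW-polar-approx-pledge} and the analogous $\mc$–$\fcc$ statement are immediate special cases with $(\phi,\psi) = (\mc(G,\cdot), \GW(G,\cdot))$ or $(\phi,\psi)=(\GW(G,\cdot),\mc(G,\cdot))$ and $(\alpha,\beta) = (\GWalpha, 1)$.
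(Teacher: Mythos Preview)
Your proposal is correct and follows essentially the same route as the paper: reduce to one direction via the bipolar identity \(\phi^{\polar\polar}=\phi\), \(\psi^{\polar\polar}=\psi\), and then obtain the dual inequalities from the sublevel-set inclusions \(\setst{w}{\beta\psi(w)\le 1}\subseteq\setst{w}{\phi(w)\le 1}\) and \(\setst{w}{\phi(w)\le 1}\subseteq\setst{w}{\alpha\psi(w)\le 1}\) plugged into~\eqref{eq:gauge-polar-def}. The paper's only economy is observing that a single implication (\(\phi\le\beta\psi \Rightarrow \phi^{\polar}\ge\tfrac{1}{\beta}\psi^{\polar}\)) already yields both dual inequalities when applied with the roles of \(\phi,\psi\) swapped.
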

\begin{proof}
Since \(\phi^{\polar}, \psi^{\polar} \colon \Lp{E} \to \Reals_+\) are
positive definite monotone gauges and \(\phi^{\polar\polar} =
\phi\) and \(\psi^{\polar\polar} = \psi\), it~suffices to show that
\begin{equation}
\label{eq:24}
  \text{ if }
  \phi(w) \le \beta \psi(w)
  \text{ for every \(w \in \Lp{E}\), then }
  \frac{1}{\beta}\psi^{\polar}(z) \le \phi^{\polar}(z)
  \text{ for every \(z \in \Lp{E}\).}
\end{equation}
This follows from the fact that, for every \(z \in \Lp{E}\),
\begin{equation*}
  \phi^{\polar}(z)
  = \max\setst{\iprodt{z}{w}}{w \in \Lp{E},\, \phi(w) \le 1}
  \text{ and }
  \tfrac{1}{\beta} \psi^{\polar}(z)
  = \max\setst{\iprodt{z}{w}}{w \in \Lp{E},\, \beta \psi(w) \le 1}.
  \qedhere
\end{equation*}
\end{proof}

From a geometric viewpoint, positive definite monotone gauges are
deeply related to convex corners.
Let \(E\) be a finite
set. The \emph{lower-comprehensive hull of \(\Ccal \subseteq
  \Reals_+^E\)} is defined by \(\lc(\Ccal) \coloneqq \setst{z \in
  \Reals_+^E}{\exists x \in \Ccal, z \le x}\), and \(\Ccal\) is
\emph{lower comprehensive} if \(\lc(\Ccal) = \Ccal\). A \emph{convex
  corner} is a lower-comprehensive compact convex set \(\Ccal
\subseteq \Lp{E}\) with nonempty interior.

Every positive definite monotone gauge \(\phi \colon \Lp{E} \to
\Reals_+\) is associated to a convex corner
\begin{equation}
  \label{eq:assoc-convex-corner}
  \Ccal_{\phi}
  \coloneqq
  \setst{x \in \Lp{E}}{\phi^{\polar}(x) \le 1}
\end{equation}
which satisfies
\begin{equation*}
  \phi(w) = \max\setst{\iprodt{w}{x}}{x \in \Ccal_{\phi}}
  \qquad
  \text{for each \(w \in \Lp{E}\)}.
\end{equation*}
The \emph{antiblocker of \(\Ccal \subseteq \Lp{E}\)} is defined by
\(\abl(\Ccal) \coloneqq \setst{y \in \Lp{E}}{\forall x \in \Ccal,\,
  \iprodt{y}{x} \le 1}\).
From these definitions and the fact that
\(\phi^{\polar\polar} = \phi\), it~is~clear that
\(\Ccal_{\phi^{\polar}} = \abl(\Ccal_{\phi})\).
This correspondence allows one to recast gauge duality results in
terms of \emph{antiblocking duality}
\cite{Fulkerson1971,Fulkerson1972}.

Let \(G = (V,E)\) be a graph.
Define the \emph{cut polytope of~\(G\)} as
\begin{subequations}
  \begin{align}
    \label{eq:2}
    \CUT(G)
    & \coloneqq
    \conv\setst{\incidvector{\delta(S)}}{S \subseteq V}
    \subseteq \Reals^E,
    \\
    \intertext{where \(\conv(\cdot)\) denotes the convex hull, and
    define its semidefinite relaxation as}
    \GWcut(G)
    & \coloneqq
    \setst{\tfrac{1}{4}\Laplacian_G^*(Y)}{Y \in \elliptope{V}}
    \subseteq \Reals^E.
  \end{align}
\end{subequations}
The convex corners associated to
\(\mc(G, \cdot)\), \(\fcc(G,\cdot)\), \(\GW(G, \cdot)\), and
\(\GW^{\polar}(G, \cdot)\) as in \cref{eq:assoc-convex-corner}
are the following:
\begin{alignat}{3}
  \Ccal_{\mc(G, \cdot)} &= \lc(\CUT(G))
  &\quad\text{and}\quad&
  \Ccal_{\fcc(G, \cdot)} &&=\abl(\CUT(G)),
  \\
  \label{eq:GW-convex-corners}
  \Ccal_{\GW(G, \cdot)} &= \lc(\GWcut(G))
  &\quad\text{and}\quad&
  \Ccal_{\GW^{\polar}(G, \cdot)} &&= \abl(\GWcut(G)).
\end{alignat}
Basic convex analysis shows that studying \(\fcc(G,\cdot)\) for
\emph{all} weights \(z \in \Reals_+^{E}\) corresponds to studying the
\emph{whole} boundary structure of \(\Ccal_{\fcc(G,\cdot)}\), not just in the direction \(z = \ones\).
The set \(\lc(\CUT(G))\) above has appeared previously in the
literature as the bipartite subgraph polytope of~\(G\); see, e.g.,
\cite{GroetschelP81a}.

Moreover, the inequalities in~\cref{eq:GW-approximation} and
\cref{eq:GW-polar-approx-pledge} can be interpreted as the following
set inclusions, respectively:
\begin{subequations}
  \begin{alignat}{3}
    \GWalpha \lc(\GWcut(G))
    &\subseteq \,\,\lc(\CUT(G))
    &&\subseteq \lc(\GWcut(G)),
    \\
    \abl(\GWcut(G))
    &\subseteq
    \abl(\CUT(G))
    &&\subseteq
    \tfrac{1}{\GWalpha}\abl(\GWcut(G)).
  \end{alignat}
\end{subequations}

We refer the reader to
\cite[Sections~2--7]{BenedettoProencadeCarliSilvaEtAl2021} or
\cite[Sections~4.1--4.3]{BenedettoProenca2021} for an in-depth
discussion about gauge duality, with elementary proofs of the
aforementioned results.
One may regard gauge duality as a manifestation of convex duality.
For example, \nameandcite{Freund1987} formulates pairs of primal and
dual gauge optimization problems, and proves a strong duality result
using a hyperplane separation theorem.
In this form, gauge duality has received a lot of attention in the
optimization community recently; see
\cite{FriedlanderMacedoEtAl2014,AravkinBurkeEtAl2018}.
We remark that the work of
\citeauthor{GrotschelLovaszEtAl1981}~\cite[Corollary~3.5]{GrotschelLovaszEtAl1981},
together with the remark that~\cref{eq:GW-intro-def} defines a
positive definite monotone gauge, immediately implies that one can
approximate the optimal value of~\cref{eq:GW-polar-def-intro} to any
given precision in polynomial time.
The algorithm in \cref{sec:rounding} refines this by showing that,
beyond the polynomial-time computable lower bound to the value of the
fractional cut-covering number, one has a suitable approximation
algorithm leveraging the work of \nameandcite{GoemansWilliamson1995}
that actually constructs an approximately optimal fractional cut cover.

\subsection{\texorpdfstring{\(\beta\)}{β}-pairings}

Let \(G = (V, E)\) be a graph.
For every vector \(w \in \Lp{E}\), one has an instance~\((G, w)\) of
the maximum cut problem.
Similarly, for every vector \(z \in \Lp{E}\), one has an instance
\((G, z)\) of the fractional cut-covering problem.
From a computational complexity point of view, it is remarkable
how~\cref{eq:fcc-mc-CS} relates the optimal values \(\mc(G, w)\) and
\(\fcc(G, z)\).
It is natural then to try to find pairs \((w, z) \in \Lp{E} \times
\Lp{E}\) which mutually certify the optimality of each other.
That such pairs exist is a consequence of~\cref{eq:mc-fcc-polar}.
As we are interested in approximation algorithms, we then parameterize
this relationship between instances by a real number \(\beta \in
\halfclosed{0, 1}\), interpreted as an approximation factor.

\begin{definition}
  \label{def:beta-pairing}
  Let \(G=(V,E)\) be a graph and let \(\beta\in\halfclosed{0,1}\).
  A \emph{\(\beta\)-pairing} on \(G\) is a pair
  \((w, z) \in \Lp{E} \times \Lp{E}\) such that there exist
  \(\rho, \mu \in \Lp{}\), such that \(\rho = 0 = \mu\) if and only if
  \(w = 0 = z\), and
\begin{equation}
  \label{eq:beta-pairing-def}
  \iprodt{w}{z}
  \,\overset{\text{\hyperref[eq:beta-pairing-def]{(\ref{eq:beta-pairing-def}a)}}}{\mathclap{=}}
  \rho\mu
  \qquad
  \text{and}
  \qquad
  \beta\rho\mu
  \overset{\text{\hyperref[eq:beta-pairing-def]{(\ref{eq:beta-pairing-def}b)}}}{\mathclap{\leq}}
  \mc(G, w)\mu
  \overset{\text{\hyperref[eq:beta-pairing-def]{(\ref{eq:beta-pairing-def}c)}}}{\mathclap{\leq}}
  \rho\mu
  \overset{\text{\hyperref[eq:beta-pairing-def]{(\ref{eq:beta-pairing-def}d)}}}{\mathclap{\leq}}
  \rho\fcc(G, z)
  \overset{\text{\hyperref[eq:beta-pairing-def]{(\ref{eq:beta-pairing-def}e)}}}{\mathclap{\leq}}\,
  \frac{1}{\beta}\rho\mu.
\end{equation}
We define an \emph{exact pairing} on \(G\) to be a \(1\)-pairing on
\(G\).
\end{definition}
When \(\rho>0\) and \(\mu>0\), which we regard as the ``typical''
case, we may restate
\cref{eq:beta-pairing-def} as
\begin{gather*}
  \iprodt{w}{z}
  \overset{\text{\hyperref[eq:beta-pairing-def]{(\ref{eq:beta-pairing-def}a)}}}{\mathclap{=}}
  \rho\mu,\qquad
  \beta\rho
  \overset{\text{\hyperref[eq:beta-pairing-def]{(\ref{eq:beta-pairing-def}b)}}}{\mathclap{\leq}}
  \mc(G, w)
  \overset{\text{\hyperref[eq:beta-pairing-def]{(\ref{eq:beta-pairing-def}c)}}}{\mathclap{\leq}}
  \rho,
  \quad\text{and}\quad
  \mu
  \overset{\text{\hyperref[eq:beta-pairing-def]{(\ref{eq:beta-pairing-def}d)}}}{\mathclap{\leq}}
  \fcc(G, z)
  \overset{\text{\hyperref[eq:beta-pairing-def]{(\ref{eq:beta-pairing-def}e)}}}{\mathclap{\leq}}
  \tfrac{1}{\beta}\mu.
\end{gather*}
The definition is made to accommodate the case \(0 \in \set{\rho,
  \mu}\).
The nonzero conditions on~\(\rho\) and \(\mu\) are meant only to
avoid ``spurious'' \(\beta\)-pairings.
In~fact, one can easily check that, in \cref{def:beta-pairing},
\begin{equation*}
  \rho = 0 \text{ if and only if }w = 0
  \qquad
  \text{and}
  \qquad
  \mu = 0 \text{ if and only if }z = 0.
\end{equation*}

Let \(G = (V, E)\) be a graph, and let \(w,\,z \in \Lp{E}\).
If we take \(\rho \coloneqq \GW(G, w)\) and \(\mu \coloneqq
\GW^{\polar}(G, z)\), \Cref{prop:bound-conversion}
states that
\hyperref[eq:beta-pairing-def]{(\ref{eq:beta-pairing-def}b)} and
\hyperref[eq:beta-pairing-def]{(\ref{eq:beta-pairing-def}c)} are
the ``dual inequalities'' to
\hyperref[eq:beta-pairing-def]{(\ref{eq:beta-pairing-def}e)} and
\hyperref[eq:beta-pairing-def]{(\ref{eq:beta-pairing-def}d)},
respectively.
\Cref{def:beta-pairing} abstracts some of the concepts from positive
definite monotone gauges mentioned in~\cref{sec:duality}, while
preserving the relevant duality.
\Cref{def:beta-pairing} also shifts the focus to the relation
established between \(w\) and \(z\), which is crucial to
our certification approach.
Note that for an exact pairing \((w,z)\), the above solutions are
optimal with a precise relationship:
\begin{equation*}
  \iprodt{w}{z}
  = \mc(G, w)\fcc(G, z),\qquad
  \mc(G, w) = \rho,
  \quad\text{and}\quad
  \fcc(G, z) = \mu.
\end{equation*}

While the definition of a \(\beta\)-pairing captures the notion of
simultaneous \(\beta\)-approximations of the \emph{numbers}
\(\mc(G, w)\) and \(\fcc(G, z)\), it is important to consider which
objects might \emph{certify} that a pair
\((w, z) \in \Lp{E} \times \Lp{E}\) is indeed a \(\beta\)-pairing for
a fixed \(\beta \in \halfclosed{0, 1}\).
The lower
bound~\hyperref[eq:beta-pairing-def]{(\ref{eq:beta-pairing-def}b)} on
the maximum cut value and the upper
bound~\hyperref[eq:beta-pairing-def]{(\ref{eq:beta-pairing-def}e)} on
the fractional cut-covering value in~\cref{eq:beta-pairing-def} can be
naturally certified by (the shore of) a cut and a fractional cut
cover, respectively.
However, certifying the upper
bound~\hyperref[eq:beta-pairing-def]{(\ref{eq:beta-pairing-def}c)} on
\(\mc(G, w)\) and the lower
bound~\hyperref[eq:beta-pairing-def]{(\ref{eq:beta-pairing-def}d)} on
\(\fcc(G, z)\), i.e., \(\mc(G, w)\mu \le \rho\mu \le \rho\fcc(G, z)\),
poses a more complex question.
We achieve this certification by using semidefinite programming weak
duality.
Since \(\mc(G,w) \leq \GW(G, w)\), we have that
\begin{equation}
  \label{eq:maxcut-sdp-uppperbound}
  \mc(G, w) \le \rho
  \text{ for each feasible solution }
  (\rho, x)
  \text{ of~\cref{eq:GW-gaugef-rho}}.
\end{equation}
From the viewpoint of \(\GW(G, w)\) as a semidefinite relaxation for
\(\mc(G,w)\), it is very natural to regard \(x\) as (the key part of)
a feasible solution for its dual SDP~\cref{eq:GW-gaugef-rho}.
However, the vector \(x \in \Reals^V\) also has a combinatorial
interpretation.
Direct computation using~\cref{eq:Laplacian-adjoint-def} shows that
\begin{subequations}
  \begin{alignat}{3}
    \Laplacian_G^*(\oprod{\incidvector{S}}{\incidvector{S}})
    &=
    \incidvector{\delta(S)}
    & \qquad &
    \text{for each \(S \subseteq V\)},
    \\
    \Laplacian_G^*(\oprod{\ones}{h})
    &=
    \Laplacian_G^*(\oprod{h}{\ones})
    =
    0
    & \qquad &
    \text{for each \(h \in \Reals^V\)}.
  \end{alignat}
\end{subequations}
Thus, the inequality \(\tfrac{1}{4}\Laplacian_G(w) \preceq \Diag(x)\)
from~\cref{eq:GW-gaugef-rho} implies the middle inequality in:
\begin{equation}
  \label{eq:comb-weak-duality}
  \iprodt{w}{\incidvector{\delta(S)}}
  = \iprod{
    \tfrac{1}{4}\Laplacian_G(w)
  }{
    \oprod{
      \paren{\ones - 2\incidvector{S}}
    }{
      \paren{\ones - 2\incidvector{S}}
    }
  }
  \le \iprod{
    \Diag(x)
  }{
    \oprod{
      \paren{\ones - 2\incidvector{S}}
    }{
      \paren{\ones - 2\incidvector{S}}
    }
  }
  =
  \iprodt{\ones}{x}
  \leq
  \rho.
\end{equation}
This shows that, while the semidefinite inequality
\(\tfrac{1}{4}\Laplacian_G(w) \preceq \Diag(x)\) just used may look at
a first glance not quite combinatorial, the only property used in the
proof of \cref{eq:comb-weak-duality} is the more combinatorial-looking
\begin{equation}
  \label{eq:comb-weak-duality-core}
  \iprod{\tfrac{1}{4}\Laplacian_G(w)}{\oprod{h}{h}}
  \le
  \iprod{\Diag(x)}{\oprod{h}{h}}
  \qquad
  \text{for each \(h \in \set{\pm 1}^V\)}.
\end{equation}
Hence, any certificate for the inequality
\(\tfrac{1}{4}\Laplacian_G(w) \preceq \Diag(x)\) certifies the
inequality in \cref{eq:comb-weak-duality-core} for arbitrary
\(h \in \Reals^V\), and in particular for each \(h \in \set{\pm1}^V\),
i.e., for each \(h \in \Reals^{V}\) of the form
\(\ones - 2\incidvector{S}\) for some \(S \subseteq V\).
Certifying a richer family of inequalities can be seen as dual to
solving a relaxation of \(\mc(G,w)\).
Finally, the semidefinite inequality
\(\tfrac{1}{4}\Laplacian_G(w) \preceq \Diag(x)\) can be certified by
providing an \(LDL^{\transp}\) factorization (that~is,
a~square-root-free Cholesky decomposition) of the positive
semidefinite matrix \(\Diag(x) - \tfrac{1}{4}\Laplacian_G(w)\).
With~\cref{eq:comb-weak-duality} we have showed how to certify
\(\mc(G,w) \leq \rho\); it remains to discuss certification of
\(\fcc(G, z) \ge \mu\).
However, as we shall prove in the upcoming results, the latter
inequality can be certified by the \emph{same} objects that certify
the former inequality for appropriately paired edge weights~\(w\)
and~\(z\).
Such a \emph{simultaneous certification} is a key aspect of our work.
We have now gathered all the ingredients we need to define certificates
for \(\beta\)-pairings.

\begin{definition}
  \label{def:beta-cert}
  Let \(G=(V,E)\) be a graph and let \(\beta\in\halfclosed{0,1}\).
  Let \((w,z) \in \Lp{E} \times \Lp{E}\).
  A \emph{\(\beta\)-certificate for \((w, z)\)} is a tuple
  \((\rho, \mu, S, y, x)\) such that \(\rho = 0 = \mu\) if and only if
  \(w = 0 = z\), and
  \begin{subequations}
    \label{eq:beta-cert-items}
    \renewcommand{\theequation}{\theparentequation.\roman{equation}}
    \begin{flalign}
      \label{item:cert-1}
      &\rho,\mu \in \Reals_+
      \text{ are such that }
      \rho\mu = \iprodt{w}{z},&&
      \\
      \label{item:cert-2}
      &S \subseteq V
      \text{ is such that }
      \iprodt{w}{\incidvector{\delta(S)}} \ge \beta \rho,&&
      \\
      \label{item:cert-3}
      &y \in \Reals_+^{\Powerset{V}}
      \text{ is such that }
      {\textstyle\sum_{U \subseteq V}} y_U \incidvector{\delta(U)} \ge z
      \text{ and }
      \iprodt{\ones}{y} \le \tfrac{1}{\beta}\mu,
      \text{ and}&&
      \\
      \label{item:cert-4}
      &x \in \Reals^V
      \text{ is such that }
      \rho \geq \iprodt{\ones}{x}
      \text{ and }\Diag(x) \succeq \tfrac{1}{4}\Laplacian_G(w).&&
    \end{flalign}
  \end{subequations}
\end{definition}

\begin{remark}
\label{re:beta-certificate-alternatives}
Recalling the discussion preceding \Cref{def:beta-pairing}, note how
items \cref{item:cert-1}, \cref{item:cert-2}, and \cref{item:cert-3}
are the natural certificates for the inequalities
\hyperref[eq:beta-pairing-def]{(\ref{eq:beta-pairing-def}a)},
\hyperref[eq:beta-pairing-def]{(\ref{eq:beta-pairing-def}b)}, and
\hyperref[eq:beta-pairing-def]{(\ref{eq:beta-pairing-def}e)},
respectively.
Whereas we work with an SDP certificate in \cref{item:cert-4}, this
setup opens the possibility of using other techniques which upper
bound the maximum value of the weighted maximum cut problem.
Concretely, one could substitute \cref{item:cert-4} by appropriate
certificates arising from \cite{Trevisan2012,HopkinsSchrammEtAl2020},
for example.
\end{remark}

Next, we prove that a \(\beta\)-certificate for \((w,z)\) does indeed
certify that \((w,z)\) is a \(\beta\)-pairing.
\begin{proposition}
  \label{prop:2}
  Let \(G=(V,E)\) be a graph and let \(\beta\in\halfclosed{0,1}\).
  Let \((w,z) \in \Lp{E} \times \Lp{E}\).
  If there exists a \(\beta\)-certificate for \((w, z)\), then \((w,z)\)
  is a \(\beta\)-pairing.
\end{proposition}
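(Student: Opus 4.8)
The plan is to keep the pair $(\rho,\mu)$ supplied by the $\beta$-certificate $(\rho,\mu,S,y,x)$ for $(w,z)$ and to show that this very pair witnesses that $(w,z)$ is a $\beta$-pairing in the sense of \cref{def:beta-pairing}. Two of the required ingredients come for free: the biconditional ``$\rho = 0 = \mu$ if and only if $w = 0 = z$'' is part of the hypothesis via \cref{def:beta-cert}, and \cref{item:cert-1} already provides $\rho,\mu \in \Reals_+$ together with $\iprodt{w}{z} = \rho\mu$, which is \hyperref[eq:beta-pairing-def]{(\ref{eq:beta-pairing-def}a)}. So the work reduces to the inequality chain \hyperref[eq:beta-pairing-def]{(\ref{eq:beta-pairing-def}b)}--\hyperref[eq:beta-pairing-def]{(\ref{eq:beta-pairing-def}e)}, and I would obtain each link by first proving a scalar inequality and then multiplying through by $\mu \ge 0$ or $\rho \ge 0$, which conveniently sidesteps any case analysis on whether $\rho$ or $\mu$ vanishes.

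Concretely, I would first record $\beta\rho \le \mc(G,w) \le \rho$. The left inequality is \cref{item:cert-2} combined with the definition of $\mc(G,w)$ in \cref{eq:maxcut-problem}, since $\iprodt{w}{\incidvector{\delta(S)}} \ge \beta\rho$ and $\mc(G,w)$ is a maximum over shores. The right inequality is exactly \cref{eq:maxcut-sdp-uppperbound} applied to $(\rho,x)$, which is feasible for the dual SDP \cref{eq:GW-gaugef-rho} precisely by \cref{item:cert-1,item:cert-4}. Multiplying by $\mu \ge 0$ yields \hyperref[eq:beta-pairing-def]{(\ref{eq:beta-pairing-def}b)} and \hyperref[eq:beta-pairing-def]{(\ref{eq:beta-pairing-def}c)}. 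For \hyperref[eq:beta-pairing-def]{(\ref{eq:beta-pairing-def}e)}, \cref{item:cert-3} says $y$ is feasible for \cref{eq:cut-covering-problem}, hence $\fcc(G,z) \le \iprodt{\ones}{y} \le \tfrac{1}{\beta}\mu$; multiplying by $\rho \ge 0$ gives the bound.

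The only link not handed to us by a single certificate component is \hyperref[eq:beta-pairing-def]{(\ref{eq:beta-pairing-def}d)}, i.e.\ $\rho\mu \le \rho\fcc(G,z)$, and this is where the duality from \cref{prop:1} enters: I would chain $\rho\mu = \iprodt{w}{z} \le \mc(G,w)\fcc(G,z) \le \rho\fcc(G,z)$ using \hyperref[eq:beta-pairing-def]{(\ref{eq:beta-pairing-def}a)}, the Cauchy--Schwarz-type inequality \cref{eq:fcc-mc-CS}, and the already-established bound $\mc(G,w) \le \rho$ together with $\fcc(G,z) \ge 0$. I do not expect a genuine obstacle here; the argument is essentially bookkeeping against \cref{def:beta-cert}, and the one mildly non-obvious point is that \hyperref[eq:beta-pairing-def]{(\ref{eq:beta-pairing-def}d)} must be routed through \cref{eq:fcc-mc-CS} rather than being certified directly.
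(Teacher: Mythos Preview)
Your proposal is correct and follows essentially the same route as the paper's proof: \cref{item:cert-2} gives \hyperref[eq:beta-pairing-def]{(\ref{eq:beta-pairing-def}b)}, \cref{item:cert-3} gives \hyperref[eq:beta-pairing-def]{(\ref{eq:beta-pairing-def}e)}, \cref{item:cert-4} via \cref{eq:maxcut-sdp-uppperbound} gives \hyperref[eq:beta-pairing-def]{(\ref{eq:beta-pairing-def}c)}, and \hyperref[eq:beta-pairing-def]{(\ref{eq:beta-pairing-def}d)} is obtained by chaining \cref{item:cert-1}, \cref{eq:fcc-mc-CS}, and the bound $\mc(G,w)\le\rho$. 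The paper packages \hyperref[eq:beta-pairing-def]{(\ref{eq:beta-pairing-def}c)} and \hyperref[eq:beta-pairing-def]{(\ref{eq:beta-pairing-def}d)} into a single displayed chain, but the content is identical to yours.
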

\begin{proof}
  Let \((\rho, \mu, S, y, x)\) be a \(\beta\)-certificate for
  \((w, z)\).
  Item~\eqref{item:cert-1} proves
  \hyperref[eq:beta-pairing-def]{(\ref{eq:beta-pairing-def}a)}.
  One has
  \begin{equation*}
    \beta \rho \mu
    \le
    \mu \iprodt{w}{\incidvector{\delta(S)}}
    \le
    \mu \mc(G, w)
  \end{equation*}
  by \cref{item:cert-2}.
  This proves
  \hyperref[eq:beta-pairing-def]{(\ref{eq:beta-pairing-def}b)}.
  Similarly,
  \begin{equation*}
    \rho \fcc(G, z)
    \le
    \rho \iprodt{\ones}{y}
    \le
    \tfrac{1}{\beta} \rho \mu
  \end{equation*}
  by~\cref{item:cert-3}.
  This proves
  \hyperref[eq:beta-pairing-def]{(\ref{eq:beta-pairing-def}e)}.

  So far we have only used feasible solutions for \(\mc(G,w)\)
  and~\(\fcc(G,z)\) to obtain bounds for the optimal values.
  Finally, \cref{item:cert-4} shows
  that~\cref{eq:maxcut-sdp-uppperbound} applies, so
  \begin{equation*}
    \mc(G, w) \mu
    \overset{\text{\cref{eq:maxcut-sdp-uppperbound}}}{\mathclap{\le}}
    \rho \mu
    \overset{\text{\cref{item:cert-1}}}{\mathclap{=}}
    \iprodt{w}{z}
    \overset{\text{\cref{eq:fcc-mc-CS}}}{\mathclap{\le}}
    \mc(G, w) \fcc(G, z)
    \overset{\text{\cref{eq:maxcut-sdp-uppperbound}}}{\mathclap{\le}}
    \rho \fcc(G, z),
  \end{equation*}
  thus proving
  \hyperref[eq:beta-pairing-def]{(\ref{eq:beta-pairing-def}c)} and
  \hyperref[eq:beta-pairing-def]{(\ref{eq:beta-pairing-def}d)}.
\end{proof}

\subsection{Existence of \texorpdfstring{\(\GWalpha\)}{α}-Certificates}
\label{sec:existence-of-certificates}

Having motivated the definition of \(\beta\)-certificates as objects
simultaneously proving approximate optimality for the maximum cut and
fractional cut-covering problems, a next step would be to determine
conditions on a \(\beta\)-pairing \((w, z) \in \Lp{E} \times \Lp{E}\)
that guarantee the existence of a \(\beta\)-certificate for \((w, z)\).
Set
\begin{equation}
  \label{eq:GWOpt-as-tight-CS}
  \GWOpt(G)
  \coloneqq \setst{
    (w, z) \in \Lp{E} \times \Lp{E}
  }{
    \iprodt{w}{z} = \GW(G, w)\GW^{\polar}(G, z)
  }.
\end{equation}
We now expand~\cref{eq:GWOpt-as-tight-CS} into a more
convenient characterization.
Recalling~\cref{eq:GW-gaugef-rho} and \cref{eq:GW-polar-def-intro},
\begin{align}
  \GW(G, w)
  &=
  \min\setst[\big]{
    \rho
  }{
    \rho \in \Reals_{+},\,
    x \in \Reals^V,\,
    \rho \geq \iprodt{\ones}{x},\,
    \Diag(x) \succeq \tfrac{1}{4}\Laplacian_G(w)
  },
  \tag{\ref{eq:GW-gaugef-rho}}
  \\
  \GW^{\polar}(G, z)
  &=
  \min\setst[\big]{
    \mu
  }{
    \mu \in \Reals_+,\,
    Y \in \Psd{V},\,
    \diag(Y) = \mu \ones,\,
    \tfrac{1}{4}\Laplacian_G^*(Y) \ge z
  },
  \tag{\ref{eq:GW-polar-def-intro}}
\end{align}
we claim that
\begin{equation}
  \label{eq:GWOpt-def}
  \GWOpt(G)
  =
  \setst*{
    (w, z)
    \in \Lp{E} \times \Lp{E}
  }{
    \begin{array}{c}
      \exists (\rho,x)\text{ feasible
        for~\cref{eq:GW-gaugef-rho} for }(G,w),\\[2pt]
      \exists (\mu,Y) \text{ feasible
        for~\cref{eq:GW-polar-def-intro} for }(G,z),\\[2pt]
      \text{and } \iprodt{w}{z} \ge \rho \mu
    \end{array}
  }.
\end{equation}
For each \((w,z) \in \GWOpt(G)\), we shall refer to pairs \((\rho,x)\)
and \((\mu,Y)\) assumed to exist as in the RHS of~\cref{eq:GWOpt-def},
as~\emph{witnesses of the membership} \((w,z) \in \GWOpt(G)\).
To prove `\(\subseteq\)' in~\cref{eq:GWOpt-def}, it suffices to choose
as witnesses an optimal solution \((\rho,x)\)
for~\cref{eq:GW-gaugef-rho} and an optimal solution \((\mu,Y)\)
for~\cref{eq:GW-polar-def-intro}.
Next we prove `\(\supseteq\)'.
Note that,
\begin{equation}
  \label{eq:GWOpt-strong-duality}
  \text{%
    if \((\rho, x)\) and \((\mu,Y)\) witness the membership \((w, z)
    \in \GWOpt(G)\), then
  }
  \rho\mu
  =
  \iprodt{w}{z}
  =
  \iprod{\tfrac{1}{4}\Laplacian_G(w)}{Y},
\end{equation}
since
\(\rho\mu \le \iprodt{w}{z} \le \iprodt{w}{\paren[\big]{\tfrac{1}{4}
    \Laplacian_G^*(Y)}} = \iprod{\tfrac{1}{4}\Laplacian_G(w)}{Y} \le
\iprod{\Diag(x)}{Y} = \mu \iprodt{\ones}{x} \le \rho \mu\), so
equality holds throughout.
In particular, we further have that \(\GW(G, w) \leq \rho\) and
\(\GW^{\polar}(G, z) \leq \mu\), so equality holds in both cases
by~\cref{eq:GW-CS}.
Thus, `\(\supseteq\)' is proved in~\cref{eq:GWOpt-def}.
To see the connection between~\cref{eq:GWOpt-def}
and~\cref{def:beta-cert}, note that in the Goemans--Williamson
approximation algorithm for the maximum cut problem and in our
approximation algorithm for the fractional cut-covering problem, a
crucial step is obtaining a matrix \(Y \in \Psd{V}\) that is feasible
in the semidefinite relaxation, which is then used in the sampling of
shores.
In this way, the matrix~\(Y\) encodes both the shore in~\cref{item:cert-2}
and the fractional cut~cover in~\cref{item:cert-3} featured in
\(\beta\)-certificates.

In this subsection, we present two main results.
The first result (stated in \Cref{prop:real-number-model-certificate})
is that there is an \(\GWalpha\)-certificate for every nonzero
\((w, z) \in \GWOpt(G)\); in~particular, by \cref{prop:2}, every
nonzero \((w,z) \in \GWOpt(G)\) is an \(\GWalpha\)-pairing.
The second result (stated in \Cref{prop:optz-optw-cones}) shows that,
given an instance \((G,w)\) of the maximum cut problem, we have that
\((w,z) \in \GWOpt(G)\) for \(z \in \Reals_+^E\) if and only if \(z\)
is in the convex cone generated by the optimal solutions of a certain
formulation for \(\GW(G, w)\).
Symmetrically, given an instance \((G,z\)) of the fractional
cut-covering problem, we~have that \((w,z) \in \GWOpt(G)\) for
\(w \in \Reals_+^E\) if and only if \(w\) is in the convex cone
generated by the optimal solutions of a certain formulation for
\(\GW^{\polar}(G, z)\).

Let \(G = (V, E)\) be a graph and let \(z \in \Reals_+^E\).
Goemans and Williamson's analysis~\cite{GoemansWilliamson1995} implies
that
\begin{equation}
  \label{eq:gap:GW-cut}
  \text{if \((\mu,Y)\) is feasible for~\cref{eq:GW-polar-def-intro}
    and \(\mu > 0\), then }
  \Ebb\paren[\big]{
    \iprodt{w}{\incidvector{\delta(\GWrv(Y))}}
  } \ge \frac{\GWalpha}{\mu} \iprod{\tfrac{1}{4} \Laplacian_G(w)}{Y}.
\end{equation}

Now we show the existence of an \(\GWalpha\)-certificate for every
nonzero pair \((w, z) \in \GWOpt(G)\).
\begin{proposition}
\label{prop:real-number-model-certificate}
Let \(G = (V, E)\) be a graph.
For every \((w, z) \in \GWOpt(G)\) such that \(w \neq 0 \neq z\),
there exists an \(\GWalpha\)-certificate \((\rho, \mu, S, y, x)\) for
\((w,z)\) such that \(\rho \neq 0 \neq \mu\).
In~particular, \((w,z)\) is an \(\GWalpha\)-pairing.
\end{proposition}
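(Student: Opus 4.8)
The plan is to build the $\GWalpha$-certificate $(\rho,\mu,S,y,x)$ for a nonzero pair $(w,z)\in\GWOpt(G)$ directly from witnesses of the membership $(w,z)\in\GWOpt(G)$. Start by invoking the characterization in~\cref{eq:GWOpt-def}: since $(w,z)\in\GWOpt(G)$, there exist $(\rho,x)$ feasible for~\cref{eq:GW-gaugef-rho} for $(G,w)$ and $(\mu,Y)$ feasible for~\cref{eq:GW-polar-def-intro} for $(G,z)$ with $\iprodt{w}{z}\ge\rho\mu$. By~\cref{eq:GWOpt-strong-duality} we in fact have $\rho\mu=\iprodt{w}{z}=\iprod{\tfrac14\Laplacian_G(w)}{Y}$, and moreover $\rho=\GW(G,w)$ and $\mu=\GW^{\polar}(G,z)$; in particular, since $w\neq0\neq z$ and $\GW(G,\cdot)$, $\GW^{\polar}(G,\cdot)$ are positive definite (being positive definite monotone gauges), $\rho\neq0\neq\mu$. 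This immediately gives~\cref{item:cert-1}, and the pair $(\rho,x)$ gives~\cref{item:cert-4} verbatim, since feasibility of $(\rho,x)$ for~\cref{eq:GW-gaugef-rho} means exactly $\rho\ge\iprodt{\ones}{x}$ and $\Diag(x)\succeq\tfrac14\Laplacian_G(w)$.

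It remains to produce $S$ satisfying~\cref{item:cert-2} and $y$ satisfying~\cref{item:cert-3}. For $y$, apply \cref{prop:sdp-rounding} to the feasible pair $(\mu,Y)$ for~\cref{eq:GW-polar-def-intro}: it yields a fractional cut cover $y\in\Reals_+^{\Powerset{V}}$ for $(G,z)$ with $\iprodt{\ones}{y}=\tfrac{1}{\GWalpha}\mu$, which is exactly~\cref{item:cert-3} with $\beta=\GWalpha$. For $S$, use the random hyperplane sampling: by~\cref{eq:gap:GW-cut} (Goemans--Williamson's analysis), since $\mu>0$,
\[
  \Ebb\paren[\big]{\iprodt{w}{\incidvector{\delta(\GWrv(Y))}}}
  \ge \frac{\GWalpha}{\mu}\iprod{\tfrac14\Laplacian_G(w)}{Y}
  = \frac{\GWalpha}{\mu}\rho\mu
  = \GWalpha\,\rho,
\]
where the middle equality is~\cref{eq:GWOpt-strong-duality}. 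Since a random variable cannot lie strictly below its expectation with probability one, there exists an outcome $\omega\in\Omega$ with $\iprodt{w}{\incidvector{\delta(\GWrv(Y,h(\omega)))}}\ge\GWalpha\,\rho$; take $S\coloneqq\GWrv(Y,h(\omega))$. This gives~\cref{item:cert-2} with $\beta=\GWalpha$.

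Finally, since $\rho\neq0\neq\mu$, the condition ``$\rho=0=\mu$ if and only if $w=0=z$'' in \cref{def:beta-cert} is vacuously satisfied on the side we need (both sides false), so $(\rho,\mu,S,y,x)$ is a genuine $\GWalpha$-certificate for $(w,z)$. Applying \cref{prop:2} then shows $(w,z)$ is an $\GWalpha$-pairing, completing the proof. The only slightly delicate point is the existence of the good outcome $\omega$: one must be careful that $\GWrv(Y)$ is well-defined as a random variable (which needs $Y\in\Reals_{++}\elliptope{V}$, i.e., $\mu>0$, already secured) and then argue the existence of $\omega$ purely from the expectation bound—this is the nonconstructive heart of the argument, and it is exactly what \Cref{subsec:algorithimic_certicates} will later have to make algorithmic.
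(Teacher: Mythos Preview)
Your proof is correct and follows essentially the same approach as the paper's: take witnesses $(\rho,x)$ and $(\mu,Y)$ of the membership $(w,z)\in\GWOpt(G)$, use \cref{eq:GWOpt-strong-duality} to secure $\rho\mu=\iprodt{w}{z}$ and positivity of $\rho,\mu$, read off \cref{item:cert-1,item:cert-4} directly, obtain $y$ from \cref{prop:sdp-rounding}, and extract $S$ from the expectation bound~\cref{eq:gap:GW-cut}. Your use of the equality $\iprod{\tfrac14\Laplacian_G(w)}{Y}=\rho\mu$ from \cref{eq:GWOpt-strong-duality} to reach $\GWalpha\rho$ is in fact slightly more direct than the paper's route through $\iprod{\Diag(x)}{Y}$.
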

\begin{proof}
  Let \((w, z) \in \GWOpt(G)\) be such that \(w \neq 0 \neq z\).
  Let \((\rho, x)\) be an optimal solution for~\cref{eq:GW-gaugef-rho}
  and let \((\mu,Y)\) be an optimal solution
  for~\cref{eq:GW-polar-def-intro}, so that \(\rho = \GW(G, w) > 0\)
  and \(\mu = \GW^{\polar}(G, z) > 0\).
  Thus, \(\iprodt{w}{z} = \GW(G,w)\GW^{\polar}(G,z) = \rho\mu\), so
  \cref{item:cert-1} holds; we take \(\beta \coloneqq \GWalpha\)
  whenever referring to the items of~\cref{eq:beta-cert-items} in this
  proof.
  \Cref{item:cert-4} holds by the feasibility of \((\rho, x)\)
  for~\cref{eq:GW-gaugef-rho}.
  Define \(y \in \Reals^{\Powerset{V}}\) as in \cref{eq:6}.
  \Cref{item:cert-3} follows from \cref{prop:sdp-rounding}.
  Since by~\cref{eq:gap:GW-cut} one has
  \begin{equation*}
    \Ebb\paren[\big]{
      \iprodt{w}{\incidvector{\delta(\GWrv(Y))}} }
    \geq
    \frac{\GWalpha}{\mu} \iprod{\tfrac{1}{4}\Laplacian_G(w)}{Y}
    \geq
    \frac{\GWalpha}{\mu} \iprod{\Diag(x)}{Y}
    =
    \GWalpha \iprodt{\ones}{x}
    =
    \GWalpha \rho,
  \end{equation*}
  there exists \(S\subseteq V\) as in \cref{item:cert-2}.
  Thus, \((w,z)\) is an \(\GWalpha\)-pairing by \cref{prop:2}.
\end{proof}

The set \(\GWOpt(G)\) defines a relation between maximum cut and
fractional cut-covering instances.
Suppose the starting point is an instance of one of these problems and
one builds an instance for the other one so that the pair is in
\(\GWOpt(G)\).
This motivates the following definitions.
Define
\begin{alignat}{3}
  \label{eq:optz-def}
  \optz_G(w)
  &\coloneqq
  \setst{
    z \in \Reals_+^E
  }{
    (w, z) \in \GWOpt(G)
  }
  & \quad &
    \text{for every }
    w \in \Reals_+^E,
  \\*
  \label{eq:optw-def}
  \optw_G(z)
  &\coloneqq
  \setst{
    w \in \Reals_+^E
  }{
    (w, z) \in \GWOpt(G)
  }
  & \quad &
  \text{for every }
  z \in \Reals_+^E.
\end{alignat}
The upcoming \cref{prop:optz-optw-cones}, which describes the encoding
of the optimal solutions to our SDPs in \(\GWOpt(G)\), is~more
conveniently stated using a slight variant of~\cref{eq:GW-intro-pd}:
\begin{subequations}
  \label{eq:GW-explicit}
  \begin{align}
    \label{eq:GW-supf-explicit-z}
    \GW(G, w)
    &=
    \max\setst[\big]{
      \iprodt{w}{z}
    }{
      z \in \Reals_+^E,\,
      Y \in \Psd{V},\,
      \diag(Y) = \ones,\,
      z \le \tfrac{1}{4} \Laplacian_G^*(Y)
    }
    \\
    \label{eq:GW-gaugef-explicit-z}
    &=
    \min\setst[\big]{
      \rho
    }{
      \rho \in \Reals,\,
      x \in \Reals^V,\,
      \Diag(x) \succeq \tfrac{1}{4}\Laplacian_G(w),\,
      \rho \geq \iprodt{\ones}{x}
    }.
    \\
    \intertext{%
      Recall~\cref{eq:GW-polar-gaugef} and~\cref{eq:GW-polar-supf}:
    }
    \GW^{\polar}(G, z)
    &=
    \min\setst[\big]{
     \mu
    }{
      \mu \in \Reals_{+},\,
      Y \in \Psd{V},\,
      \diag(Y) = \mu \ones,\,
      \tfrac{1}{4} \Laplacian_G^*(Y) \ge z
    }
    \tag{\ref{eq:GW-polar-gaugef}}
    \\
    &\eqaligned
    \max\setst[\big]{
      \iprodt{z}{w}
    }{
      w \in \Reals_+^E,\,
      x \in \Reals^V,\,
      \tfrac{1}{4}\Laplacian_G(w) \preceq \Diag(x),\,
      \iprodt{\ones}{x} \le 1
    }.
    \tag{\ref{eq:GW-polar-supf}}
  \end{align}
\end{subequations}
Note the various symmetries relating these SDPs.
The constraint `\(\Diag(x) \succeq \tfrac{1}{4}\Laplacian_G(w)\)'
occurs in~\cref{eq:GW-gaugef-explicit-z,eq:GW-polar-supf} and
`\(\tfrac{1}{4}\Laplacian_G^*(Y) \geq z\)' occurs
in~\cref{eq:GW-supf-explicit-z,eq:GW-polar-gaugef}, although in each
constraint one of \(w\) or~\(z\) is a variable in one but not in the
other SDP.
Additionally, the constraint `\(\diag(Y) = \ones\)'
from~\cref{eq:GW-supf-explicit-z} appears homogenized in
\cref{eq:GW-polar-gaugef} with the variable~\(\mu\), whereas the
constraint `\(\iprodt{\ones}{x} \leq 1\)' from \cref{eq:GW-polar-supf}
appears homogenized in~\cref{eq:GW-gaugef-explicit-z} with the
variable~\(\rho\).

Let \(G = (V, E)\) be a graph.
We now relate \(\optz_G\) and \(\optw_G\) to optimal solutions
of~\cref{eq:GW-supf-explicit-z} and of~\cref{eq:GW-polar-supf}, resp.
For any finite set \(U\) and for any set \(S \subseteq \Reals^{U}\),
we~denote by \(\cone(S)\) the convex cone generated by~\(S\), i.e.,
the smallest convex cone containing~\(S\) and the origin.

\begin{proposition}
\label{prop:optz-optw-cones}

Let \(G = (V, E)\) be a graph.
For every \(w \in \Reals_+^E\),
\begin{align}
  \label{eq:optz-as-cone}
  \optz_G(w)
  &=
  \cone\paren[\big]{
    \setst{
      z \in \Reals_+^E
    }{
      \exists Y \in \Psd{V}
      \text{ s.t. \((z, Y)\) is optimal in~\cref{eq:GW-supf-explicit-z} for }
      \GW(G, w)
    }
  }.%
  \intertext{Similarly, for every \(z \in \Reals_+^E\),}
  \label{eq:optw-as-cone}
  \optw_G(z)
  &=
  \cone\paren[\big]{
    \setst{
      w \in \Reals_+^E
    }{
      \exists x \in \Reals^V
      \text{ s.t. }
      (w, x)
      \text{ is optimal in~\cref{eq:GW-polar-supf} for }
      \GW^{\polar}(G, z)
    }
  }.
\end{align}
\end{proposition}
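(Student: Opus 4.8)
The plan is to establish~\eqref{eq:optz-as-cone} in detail; \eqref{eq:optw-as-cone} will then follow by a symmetric argument in which the roles of the two semidefinite programs are swapped. Write $Z_w$ for the set that appears inside $\cone(\cdot)$ on the right-hand side of~\eqref{eq:optz-as-cone}, i.e., the set of $z \in \Reals_+^E$ for which there is some $Y \in \Psd{V}$ making $(z,Y)$ optimal in~\eqref{eq:GW-supf-explicit-z} for $\GW(G,w)$. The first step is to observe that, by definition of $\GWOpt(G)$ and $\optz_G$, the set $\optz_G(w)$ is itself a convex cone: it contains $0$ since $\GW^{\polar}(G,0)=0$; it is closed under multiplication by positive scalars by positive homogeneity of $\GW^{\polar}(G,\cdot)$ (immediate from~\eqref{eq:GW-polar-def-intro}); and, given $z_1,z_2 \in \optz_G(w)$, subadditivity~\eqref{eq:GW-polar-convex} yields $\iprodt{w}{z_1+z_2} = \GW(G,w)\bigl(\GW^{\polar}(G,z_1)+\GW^{\polar}(G,z_2)\bigr) \ge \GW(G,w)\GW^{\polar}(G,z_1+z_2)$, which together with~\eqref{eq:GW-CS} forces equality, so $z_1+z_2 \in \optz_G(w)$. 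Consequently it suffices to prove the two inclusions $Z_w \subseteq \optz_G(w)$ and $\optz_G(w) \subseteq \cone(Z_w)$.

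For $Z_w \subseteq \optz_G(w)$, I would take $z \in Z_w$ witnessed by some $Y$. Since $z \le \tfrac14\Laplacian_G^*(Y)$ and $\diag(Y)=\ones$, the pair $(1,Y)$ is feasible in~\eqref{eq:GW-polar-def-intro} for $(G,z)$, so $\GW^{\polar}(G,z)\le 1$; as $\iprodt{w}{z}=\GW(G,w)$ by optimality of $(z,Y)$, this gives $\GW(G,w)\GW^{\polar}(G,z) \le \GW(G,w) = \iprodt{w}{z}$, and the reverse inequality is~\eqref{eq:GW-CS}, so $\iprodt{w}{z} = \GW(G,w)\GW^{\polar}(G,z)$, i.e., $z \in \optz_G(w)$.

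For $\optz_G(w) \subseteq \cone(Z_w)$, I would take $z \in \optz_G(w)$; if $z=0$ it lies in the cone, so assume $z\neq 0$ and set $\mu := \GW^{\polar}(G,z)>0$. Choosing $(\mu,Y)$ to attain the minimum in~\eqref{eq:GW-polar-def-intro} (attained by SDP strong duality, since both SDPs admit Slater points, as recorded near~\eqref{eq:GW-polar-gaugef}), I would rescale: put $\bar z := z/\mu$ and $\bar Y := Y/\mu$. Then $\bar Y \in \elliptope{V}$ and $\tfrac14\Laplacian_G^*(\bar Y) \ge \bar z \ge 0$, so $(\bar z,\bar Y)$ is feasible in~\eqref{eq:GW-supf-explicit-z}, and its objective value is $\iprodt{w}{\bar z} = \mu^{-1}\iprodt{w}{z} = \mu^{-1}\GW(G,w)\GW^{\polar}(G,z) = \GW(G,w)$, hence $(\bar z,\bar Y)$ is optimal there. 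Thus $\bar z \in Z_w$ and $z = \mu\bar z \in \cone(Z_w)$. Combined with the previous paragraph and the fact that $\optz_G(w)$ is a convex cone, this yields $\optz_G(w) = \cone(Z_w)$, which is~\eqref{eq:optz-as-cone}.

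For~\eqref{eq:optw-as-cone}, the same three steps go through after exchanging $\GW \leftrightarrow \GW^{\polar}$ and $w \leftrightarrow z$: I would use~\eqref{eq:GW-gaugef-explicit-z} and~\eqref{eq:GW-polar-supf} in place of~\eqref{eq:GW-polar-def-intro} and~\eqref{eq:GW-supf-explicit-z}, normalize by $\rho := \GW(G,w)$ instead of by $\mu$, and invoke the gauge properties (subadditivity and positive homogeneity) of $\GW(G,\cdot)$ for the cone property. The only genuinely non-routine point I anticipate is the very first one — recognizing that $\optz_G(w)$ and $\optw_G(z)$ are convex cones — since this is what makes a single representative per ray sufficient, and it is precisely where the gauge structure, together with the Cauchy--Schwarz-type inequality~\eqref{eq:GW-CS}, is used in an essential way; the rest amounts to rescaling feasible SDP solutions and bookkeeping with $\diag$ and $\Laplacian_G^*$.
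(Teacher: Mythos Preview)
Your proposal is correct and follows essentially the same approach as the paper: both arguments hinge on showing that \(\optz_G(w)\) is a convex cone (via positive homogeneity, subadditivity~\eqref{eq:GW-polar-convex}, and the Cauchy--Schwarz inequality~\eqref{eq:GW-CS}), and then use rescaling by \(\mu = \GW^{\polar}(G,z)\) to pass between arbitrary \(z \in \optz_G(w)\) and optimal solutions of~\eqref{eq:GW-supf-explicit-z}. The paper organizes this by first proving the intermediate identity \(\optz_G(w) = \cone\{z : \GW^{\polar}(G,z)\le 1,\ \iprodt{w}{z}=\GW(G,w)\}\) and then unfolding the constraint \(\GW^{\polar}(G,z)\le 1\) via~\eqref{eq:GW-polar-gaugef}, whereas you work directly with the set \(Z_w\) and prove two inclusions, but the content is the same.
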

\begin{proof}

We first prove~\cref{eq:optz-as-cone}.
Let \(w \in \Lp{E}\).
Note that
\begin{equation}
  \label{eq:20}
  \setst{z \in \Lp{E}}{\iprodt{w}{z} = \GW(G, w)\GW^{\polar}(G, z)}
  = \cone\paren{\setst{z \in \Lp{E}}{
    \GW^{\polar}(G, z) \le 1,\,
    \iprodt{w}{z} = \GW(G, w)
  }}.
\end{equation}
Indeed, `\(\subseteq\)' holds in~\cref{eq:20} since \(\GW^{\polar}(G,\cdot)\)
is positively homogeneous.
For the proof of `\(\supseteq\)', first note that the LHS is clearly
closed under positive scalar multiplication since
\(\GW^{\polar}(G,\cdot)\) is positively homogeneous.
To see that the LHS is a convex cone, let \(z_1,z_2\) be elements of
the LHS.
Sublinearity of \(\GW^{\polar}(G,\cdot)\) implies that
\begin{inlinemath}
  \iprodt{w}{(z_1+z_2)}
  =
  \GW(G,w)
  \paren[\big]{
    \GW^{\polar}(G,z_1)
    +
    \GW^{\polar}(G,z_2)
  }
  \geq
  \GW(G,w)
  \GW^{\polar}(G,z_1+z_2),
\end{inlinemath}
whence equality holds by~\cref{eq:GW-CS}, which proves that
\(z_1+z_2\) lies in the LHS.
Since
\(
  \iprodt{w}{z}
  = \GW(G, w)
  \ge \GW(G, w) \GW^{\polar}(G, z)
  \ge \iprodt{w}{z}
\)
for every \(z \in \Lp{E}\) in the RHS of~\cref{eq:20}, by~\cref{eq:GW-CS},
this concludes the proof of~\cref{eq:20}.
We can now prove~\cref{eq:optz-as-cone}:
\begin{detailedproof}
  \optz_G(w)
  &= \setst{z \in \Lp{E}}{(w, z) \in \GWOpt(G)}
  &&\text{by~\cref{eq:optz-def}}\\
  &= \setst{z \in \Lp{E}}{\iprodt{w}{z} = \GW(G, w)\GW^{\polar}(G, z)}
  &&\text{by~\cref{eq:GWOpt-as-tight-CS}}\\
  &= \cone\paren{\setst{z \in \Lp{E}}{
    \GW^{\polar}(G, z) \le 1,\,
    \iprodt{w}{z} = \GW(G, w)
  }}
  &&\text{by~\cref{eq:20}}\\
  &= \cone\paren{\setst{z \in \Lp{E}}{
    \exists Y \in \Psd{V},\,
    \diag(Y) = \ones,\,
    \tfrac{1}{4}\Laplacian_G^*(Y) \ge z,\,
    \iprodt{w}{z} = \GW(G, w)
  }}
  &&\text{by~\cref{eq:GW-polar-gaugef}}\\
  &= \cone\paren{\setst{z \in \Lp{E}}{
    \exists Y \in \Psd{V}
    \text{ s.t. \((z, Y)\) is optimal in~\cref{eq:GW-supf-explicit-z} for }
    \GW(G, w)
  }}.
\end{detailedproof}

For~\cref{eq:optw-as-cone}, let \(z \in \Lp{E}\).
One can prove, analogously to~\cref{eq:20}, that
\begin{equation}
  \label{eq:22}
  \setst{w \in \Lp{E}}{\iprodt{w}{z} = \GW(G, w)\GW^{\polar}(G, z)}
  = \cone\paren{\setst{w \in \Lp{E}}{
    \GW(G, w) \le 1,\,
    \iprodt{w}{z} = \GW^{\polar}(G, z)
  }}.
\end{equation}
Then
\begin{detailedproof}
  \optw_G(z)
  &= \setst{w \in \Lp{E}}{(w, z) \in \GWOpt(G)}
  &&\text{by~\cref{eq:optw-def}}\\*
  &= \setst{w \in \Lp{E}}{\iprodt{w}{z} = \GW(G, w)\GW^{\polar}(G, z)}
  &&\text{by~\cref{eq:GWOpt-as-tight-CS}}\\*
  &= \cone\paren{\setst{w \in \Lp{E}}{
    \GW(G, w) \le 1,\,
    \iprodt{w}{z} = \GW^{\polar}(G, z)
  }}&&\text{by~\cref{eq:22}}\\*
  &= \cone\paren{\setst{w \in \Lp{E}}{
    \exists x \in \Reals^V,\,
    1 \ge \iprodt{\ones}{x},\,
    \Diag(x) \succeq \tfrac{1}{4}\Laplacian_G(w),\,
    \iprodt{w}{z} = \GW^{\polar}(G, z)
  }}
  &&\text{by~\cref{eq:GW-gaugef-rho}}\\*
  &= \cone\paren{\setst{w \in \Lp{E}}{
    \exists x \in \Reals^V
    \text{ s.t. \((w, x)\) is optimal in~\cref{eq:GW-polar-supf} for }
    \GW(G, w)
  }}.&&\qedhere
\end{detailedproof}
\end{proof}

\begin{remark}

Let \(G = (V, E)\) be a graph.
\Cref{prop:optz-optw-cones} establishes a stronger result than
nonemptiness of \(\GWOpt(G)\), by showing that the projections
\(\optz_G(w)\) and \(\optw_G(z)\) of \(\GWOpt(G)\) are nontrivial
convex cones for every \(w,\, z \in \Lp{E}\).
These cones are tightly connected to normal cones of the relevant
convex corners.
Recall the sets \(\Ccal_{\GW(G, \cdot)}\) and
\(\Ccal_{\GW^{\polar}(G, \cdot)}\) defined
in~\cref{eq:GW-convex-corners}.
Fix a nonzero \(w \in \Lp{E}\), and set \(\rho \coloneqq \GW(G, w) > 0\).
Then \(\GW(G, \rho^{-1} w) \le 1\), so \(\rho^{-1} w \in
\Ccal_{\GW^{\polar}(G, \cdot)} = \abl(\Ccal_{\GW(G, \cdot)})\).
Then one can show that
\begin{detailedproof}
 \optz_G(w)
  &= \setst{z \in \Lp{E}}{
    \iprodt{w}{z} = \GW(G, w) \GW^{\polar}(G, z)
  }\\
  &= \setst{z \in \Lp{E}}{
    \iprodt{(\rho^{-1} w)}{z} = \GW^{\polar}(G, z)
  }\\
  &= \setst{z \in \Lp{E}}{
    z \in \Normal(\Ccal_{\GW^{\polar}(G, \cdot)}, \rho^{-1} w)
  },
\end{detailedproof}
where
\(\Normal(S, \bar{x}) \coloneqq \setst{c \in \Reals^E}{\forall x
  \in S,\,\iprodt{c}{\bar{x}} \geq \iprodt{c}{x}}\) denotes the
normal cone of \(S \subseteq \Reals^E\) at \(\bar{x} \in S\).
Thus
\begin{alignat*}{2}
  \optz_G(w)
  &= \Lp{E}
    \cap \Normal\paren[\big]{
      \Ccal_{\GW^{\polar}(G, \cdot)}, \GW(G,w)^{-1} w
    }
  & \qquad &
    \text{ for every }
    w \in \Lp{E} \setminus \set{0}.\\*
  \shortintertext{Dually,}
  \optw_G(z)
  &= \Lp{E}
    \cap \Normal\paren[\big]{
      \Ccal_{\GW(G, \cdot)}, \GW^{\polar}(G,z)^{-1} z
    }
  & \qquad &
    \text{ for every }
    z \in \Lp{E} \setminus \set{0}.
\end{alignat*}
\end{remark}

By combining \cref{prop:real-number-model-certificate,%
  prop:optz-optw-cones}, we show how to obtain
\(\GWalpha\)-certificates when the starting point is either a maximum
cut instance or a fractional cut-covering instance.
In \Cref{subsec:algorithimic_certicates}, we present algorithmic
versions of these results: from nearly optimal SDPs solutions and a
randomized sampling procedure, we~produce a heavy cut and a light
fractional cut cover.

\begin{theorem}
\label{thm:certificates-from-w}
Let \(G = (V, E)\) be a graph and let \(w \in \Reals_+^E\) be
nonzero.
Then there exist a nonzero \(z \in \optz_G(w)\) and an
\(\GWalpha\)-certificate for \((w,z)\).
\end{theorem}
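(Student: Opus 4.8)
The plan is to first manufacture a concrete nonzero $z \in \optz_G(w)$ out of an optimal solution of the semidefinite relaxation~\cref{eq:GW-intro-def}, and then feed the pair $(w, z)$ into \cref{prop:real-number-model-certificate} to obtain the desired $\GWalpha$-certificate.

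For the first step, I would argue as follows. Since $w$ is nonzero and $\GW(G, \cdot)$ is a positive definite (monotone) gauge, we have $\GW(G, w) > 0$. Because the elliptope $\elliptope{V}$ is compact and the objective is continuous, the maximum in~\cref{eq:GW-intro-def} is attained by some $Y \in \elliptope{V}$, so $\iprod{\tfrac{1}{4}\Laplacian_G(w)}{Y} = \GW(G, w)$. Set $z \coloneqq \tfrac{1}{4}\Laplacian_G^*(Y) \in \Reals^E$. From $\diag(Y) = \ones$ and $Y_{ij} \in [-1, 1]$, equation~\cref{eq:Laplacian-adjoint-def} gives $z_{ij} = \tfrac{1}{2}(1 - Y_{ij}) \ge 0$, so $z \in \Reals_+^E$; moreover, by adjointness of $\Laplacian_G^*$, $\iprodt{w}{z} = \iprod{\tfrac{1}{4}\Laplacian_G(w)}{Y} = \GW(G, w)$, so $(z, Y)$ is feasible, hence optimal, for the formulation~\cref{eq:GW-supf-explicit-z} of $\GW(G, w)$. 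Therefore $z$ lies in the generating set on the right-hand side of~\cref{eq:optz-as-cone}, and so $z \in \optz_G(w)$ by \cref{prop:optz-optw-cones}; equivalently, $(w, z) \in \GWOpt(G)$. Finally $z \ne 0$: otherwise $\iprodt{w}{z} = 0 = \GW(G, w)$, contradicting $\GW(G, w) > 0$.

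With a nonzero $z$ in hand satisfying $(w, z) \in \GWOpt(G)$ and $w \ne 0 \ne z$, \cref{prop:real-number-model-certificate} immediately yields an $\GWalpha$-certificate $(\rho, \mu, S, y, x)$ for $(w, z)$, which completes the proof. I do not anticipate a genuine obstacle: the argument is essentially a repackaging of \cref{prop:optz-optw-cones} and \cref{prop:real-number-model-certificate}. The one point that needs care is the nonvanishing of the constructed $z$, which is precisely where positive definiteness of the gauge $\GW(G, \cdot)$ (the implication $w \ne 0 \Rightarrow \GW(G, w) > 0$) is used. An alternative, more abstract derivation would replace the explicit matrix $Y$ by the normal-cone description of $\optz_G(w)$ recorded after \cref{prop:optz-optw-cones}, but the hands-on route above is shorter and self-contained.
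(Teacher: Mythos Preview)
Your proposal is correct and follows essentially the same approach as the paper: take an optimal solution of~\cref{eq:GW-supf-explicit-z}, invoke \cref{prop:optz-optw-cones} to place $(w,z)$ in $\GWOpt(G)$, observe $z\neq 0$ from $\GW(G,w)>0$, and apply \cref{prop:real-number-model-certificate}. The paper's proof is simply terser, asserting the existence of an optimal $(\bar z,\bar Y)$ for~\cref{eq:GW-supf-explicit-z} directly rather than building it from an optimal~$Y$ for~\cref{eq:GW-intro-def} via $z \coloneqq \tfrac14\Laplacian_G^*(Y)$.
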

\begin{proof}
  There exists an optimal solution \((\bar{z},\bar{Y})\)
  for~\cref{eq:GW-supf-explicit-z}.
  Thus, \((w, \bar{z}) \in \GWOpt(G)\) by \Cref{prop:optz-optw-cones}.
  As \(w \neq 0\), we have that \(\bar{z} \neq 0\).
  By \cref{prop:real-number-model-certificate}, there exists an
  \(\GWalpha\)-certificate for \((w,\bar{z})\).
\end{proof}

\begin{theorem}
\label{thm:certificates-from-z}
Let \(G = (V, E)\) be a graph and let \(z \in \Reals_+^E\) be
nonzero.
Then there exist a nonzero \(w \in \optw_G(z)\) and an
\(\GWalpha\)-certificate for \((w,z)\).
\end{theorem}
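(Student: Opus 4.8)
The plan is to mirror the proof of \Cref{thm:certificates-from-w}, exploiting the symmetry between \(\GW\) and \(\GW^{\polar}\) already laid out in~\cref{eq:GW-explicit}. First I would invoke SDP Strong Duality for the primal-dual pair \cref{eq:GW-polar-gaugef}--\cref{eq:GW-polar-supf}: as noted after~\cref{eq:GW-polar-def}, both problems have relaxed Slater points, so they attain their common optimal value, and in particular there exists an optimal solution \((\bar{w}, \bar{x})\) for~\cref{eq:GW-polar-supf} computing \(\GW^{\polar}(G, z)\). Since \(z \neq 0\), positive definiteness of the gauge \(\GW^{\polar}(G, \cdot)\) gives \(\GW^{\polar}(G, z) > 0\); as the optimal value satisfies \(\iprodt{z}{\bar{w}} = \GW^{\polar}(G, z) > 0\), we must have \(\bar{w} \neq 0\).

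Next I would apply~\cref{eq:optw-as-cone} of \Cref{prop:optz-optw-cones}: since \((\bar{w}, \bar{x})\) is optimal in~\cref{eq:GW-polar-supf} for \(\GW^{\polar}(G, z)\), the vector \(\bar{w}\) lies in \(\cone(\cdots) = \optw_G(z)\), i.e., \((\bar{w}, z) \in \GWOpt(G)\). Finally, because \(\bar{w} \neq 0 \neq z\), \Cref{prop:real-number-model-certificate} produces an \(\GWalpha\)-certificate \((\rho, \mu, S, y, x)\) for \((\bar{w}, z)\) with \(\rho \neq 0 \neq \mu\); setting \(w \coloneqq \bar{w}\) completes the proof, and by \Cref{prop:2} the pair \((w,z)\) is moreover an \(\GWalpha\)-pairing.

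I do not anticipate a genuine obstacle: the substantive content has already been established in \Cref{prop:optz-optw-cones} and \Cref{prop:real-number-model-certificate}, and the argument is entirely parallel to that of \Cref{thm:certificates-from-w}, merely traversing the duality the other way (starting from a fractional cut-covering instance and building the maximum cut instance). The only step warranting a line of care is verifying \(\bar{w} \neq 0\), which, as above, follows from positive definiteness of \(\GW^{\polar}(G, \cdot)\) together with the fact that the maximization form~\cref{eq:GW-polar-supf} has optimal value \(\GW^{\polar}(G, z) > 0\) whenever \(z \neq 0\).
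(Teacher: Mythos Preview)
Your proposal is correct and follows essentially the same approach as the paper's proof: obtain an optimal solution \((\bar{w},\bar{x})\) for~\cref{eq:GW-polar-supf}, invoke \Cref{prop:optz-optw-cones} to conclude \((\bar{w},z)\in\GWOpt(G)\), argue \(\bar{w}\neq 0\), and finish with \Cref{prop:real-number-model-certificate}. Your justification of \(\bar{w}\neq 0\) via positive definiteness of \(\GW^{\polar}(G,\cdot)\) is slightly more explicit than the paper's, but the argument is otherwise identical.
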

\begin{proof}
  There exists an optimal solution \((\bar{w},\bar{x})\)
  for~\cref{eq:GW-polar-supf}.
  Thus, \((\bar{w}, z) \in \GWOpt(G)\) by \Cref{prop:optz-optw-cones}.
  As \(z \neq 0\), we have that \(\bar{w} \neq 0\).
  By \cref{prop:real-number-model-certificate}, there exists an
  \(\GWalpha\)-certificate for \((\bar{w},z)\).
\end{proof}

\subsection{Algorithmically Obtaining \texorpdfstring{\(\beta\)}{β}-Certificates}
\label{subsec:algorithimic_certicates}

\Cref{thm:certificates-from-w,thm:certificates-from-z} are pleasantly
symmetric, both in statement and in proof.
However, they are not directly suitable for algorithmic use.
The first issue is in the definition of \(\GWOpt(G)\) itself: as
\cref{prop:optz-optw-cones} states, its elements arise as optimal
solutions to~SDPs, whereas in algorithms one must work with nearly
optimal solutions.
The second issue is that, in
\Cref{prop:real-number-model-certificate}, the cut \(\delta(S)\) and
the fractional cut cover \(y\) obtained have significant caveats: the
shore~\(S\) is not explicitly constructed and \(y\) may have
exponential support size.
In particular, the same issues arising at~\cref{sec:rounding}
resurface here again, requiring one to ``thicken'' edges and to settle
for sparse surrogates of the probability distribution described in
\Cref{prop:sdp-rounding}.

Rather than parameterizing our rounding procedure as we have done in
\cref{sec:rounding}, we parameterize \emph{our optimization problems},
thus obtaining a family of geometric objects to study.
Let \(G = (V, E)\) be a graph and let \(\eps \in \halfopen{0,1}\).
For each \(w \in \Reals_+^E\), define
\begin{samepage}
  \begin{subequations}
    \label{eq:GW-eps-def-outer}
    \begin{align}
      \label{eq:GW-eps-def}
      \GW_{\eps}(G, w)
      &\coloneqq (1 - \eps)\GW(G, w) + \tfrac{\eps}{2}\norm[1]{w}
      \\
      \label{eq:GW-eps-def-dual}
      &\eqaligned
      \min\setst[\big]{
        \rho
      }{
        \rho \in \Reals_{+},\,
        x \in \Reals^V,\,
        \rho \geq (1-\eps)\iprodt{\ones}{x} + \tfrac{\eps}{2}\iprodt{\ones}{w},\,
        \Diag(x) \succeq \tfrac{1}{4}\Laplacian_G(w)
      }.
    \end{align}
  \end{subequations}
  Then \(\GW_{\eps}(G,\cdot) \colon \Lp{E} \to \Reals_+\) is a
  positive definite monotone gauge, and by~\cref{eq:gauge-polar-def}
  its gauge dual can be written as
  \begin{subequations}
    \label{eq:GW-eps-polar-def}
    \begin{align}
      \label{eq:GW-eps-polar-supf}
      \GW_{\eps}^{\polar}(G, z)
      &= \max\setst{
        \iprodt{z}{w}
        }{
        w \in \Reals_+^E,\,
        x \in \Reals^V,\,
        \tfrac{1}{4}\Laplacian_G(w) \preceq \Diag(x),\,
        (1 - \eps)\iprodt{\ones}{x} +  \iprodt{\tfrac{\eps}{2}\ones}{w} \le 1
        }
      \\
      \label{eq:GW-eps-polar-gaugef}
      &= \min\setst{
        \mu
        }{
        \mu \in \Reals_+,\,
        Y \in \Sym{V},\,
        Y \succeq \eps \mu I,\,
        \tfrac{1}{4}\Laplacian_G^*(Y) \ge z,\,
        \diag(Y) = \mu \ones
        }
    \end{align}
  \end{subequations}
  for every \(z \in \Reals_+^{E}\).
  The equality in~\cref{eq:GW-eps-polar-gaugef} follows from SDP
  Strong Duality, as the relaxed Slater points we exhibited
  in~\cref{eq:GW-polar-def} remain relaxed Slater points
  in~\cref{eq:GW-eps-polar-def}.
  Let \(\sigma \in (0, 1)\) and set
  \begin{equation}
    \label{eq:GWOpt-approx-def}
    \GWOpt_{\eps, \sigma}(G)
    \coloneqq
    \setst*{
      (w, z)
      \in \Reals_+^E \times \Reals_+^E
    }{
      \begin{array}{@{} l @{} l @{} l @{}}
        \exists (\rho,x)
        & \text{ feasible for~\cref{eq:GW-eps-def-dual} }
        & \text{for }(G,w),
        \\[2pt]
        \exists (\mu,Y)
        & \text{ feasible for~\cref{eq:GW-eps-polar-gaugef} }
        & \text{for }(G,z),
        \\[2pt]
        \multicolumn{3}{c}{%
        \text{and } \iprodt{w}{z} \ge (1 - \sigma) \rho \mu
        }
      \end{array}
    }.
  \end{equation}
\end{samepage}%
As for \(\GWOpt(G)\), for each \((w,z) \in \GWOpt_{\eps, \sigma}(G)\),
we refer to pairs \((\rho,x)\) and \((\mu,Y)\) assumed to exist as in
the RHS of~\cref{eq:GWOpt-approx-def}, as~\emph{witnesses of the membership}
\((w,z) \in \GWOpt_{\eps, \sigma}(G)\).
Note the following approximate version of~\cref{eq:GWOpt-strong-duality}:
\begin{equation}
  \label{eq:GWOpt-Cauchy-Schwarz}
  \text{%
    if \((\rho, x)\) and \((\mu,Y)\) witness the membership \((w, z)
    \in \GWOpt_{\eps, \sigma}(G)\),
    then
  }
  (1-\sigma)\rho\mu
  \le
  \iprodt{w}{z}
  \le
  \rho\mu.
\end{equation}
This holds since
\begin{detailedproof}
  \iprodt{w}{z}
  \le
  \iprodt{w}{\paren[\big]{\tfrac{1}{4} \Laplacian_G^*(Y)}}
  &=
  \iprod{\tfrac{1}{4}\Laplacian_G(w)}{Y - \eps\mu I}
  +
  \iprod{\tfrac{1}{4}\Laplacian_G(w)}{\eps\mu I}
  \\
  &\le
  \iprod{\Diag(x)}{Y - \eps\mu I}
  +
  \tfrac{\eps\mu}{2} \iprodt{\ones}{w}
  =
  \paren{
    (1 - \eps) \iprodt{\ones}{x} + \tfrac{\eps}{2} \iprodt{\ones}{w}
  }\mu
  \le
  \rho\mu.
\end{detailedproof}
The analogue of the expression we took as definition of \(H(G)\)
in~\cref{eq:GWOpt-as-tight-CS} is
\begin{equation*}
  \GWOpt_{\eps,\sigma}(G)
  = \setst[\bigg]{
    (w, z) \in \Lp{E} \times \Lp{E}
  }{
    \abs[\bigg]{
      \frac{
        \GW_{\eps}(G,w)\GW_{\eps}^{\polar}(G, z) - \iprodt{w}{z}
      }{
        \GW_{\eps}(G,w)\GW_{\eps}^{\polar}(G, z)
      }
    } \le \sigma
  },
\end{equation*}
where the expression inside the absolute value is taken to be zero
whenever \(0 \in \set{w, z}\).

Let \(G = (V, E)\) be a graph, let \(w,z \in \Lp{E}\), and let
\(\eps \in \halfopen{0,1}\).
We show in \cref{thm:approx-GW-bound-conversion} below that
\(\GW_{\eps}(G, w)\) and \(\GW_{\eps}^{\polar}(G, z)\) are
approximations for \(\GW(G, w)\) and \(\GW^{\polar}(G, z)\),
respectively.
Before that, we state two important monotonicity properties of
the Laplacian of a graph \(G\):
\begin{subequations}
  \begin{alignat}{3}
    \label{eq:Laplacian-monotone}
    v \le w
    & \text{ implies }
    \Laplacian_G(v) \preceq \Laplacian_G(w),
    & \quad &
    \text{ for every } v, w \in \Reals^E,
    \\*
    \label{eq:Laplacian-adjoint-monotone}
    A \preceq B
    & \text{ implies }
    \Laplacian_G^*(A) \le \Laplacian_G^*(B),
    & \quad &
    \text{ for every } A, B \in \Sym{V}.
  \end{alignat}
\end{subequations}
Both follow from the fact that \(\Laplacian_G(e_{ij}) \succeq 0\) for
every \(ij \in E\).
This is immediate for~\cref{eq:Laplacian-monotone} by using the
definition of \(\Laplacian_G\).
For~\cref{eq:Laplacian-adjoint-monotone}, one has
\begin{equation*}
  (\Laplacian_G^*(A))_{ij}
  = \iprod{\Laplacian_G^*(A)}{e_{ij}}
  = \iprod{A}{\Laplacian_G(e_{ij})}
  \le \iprod{B}{\Laplacian_G(e_{ij})}
  = \iprod{\Laplacian_G^*(B)}{e_{ij}}
  = (\Laplacian_G^*(B))_{ij}
\end{equation*}
for every \(A, B \in \Sym{V}\) such that \(A \preceq B\), and for
every \(ij \in E\).
These results imply one of the motivating properties of
\(\GW_{\eps}^{\polar}\):
\begin{equation}
  \label{eq:gap:GW-eps-cover}
  \text{if \((\mu,Y)\) is feasible for~\cref{eq:GW-eps-polar-gaugef}
    and \(\mu > 0\), then }
  \prob(ij \in \GWrv(Y)) \ge \frac{\sqrt{2\eps}}{\pi}
  \text{ for every }
  ij \in E.
\end{equation}
Let \((\mu, Y)\) be feasible in~\cref{eq:GW-eps-polar-gaugef}
such that \(\mu > 0\), and let \(ij \in E\).
Then
\(
  \mu - Y_{ij}
  = \tfrac{1}{2}\Laplacian_G^*(Y)_{ij}
  \ge \tfrac{1}{2}\Laplacian_G^*(\eps \mu I)_{ij}
  = \eps\mu
\)
by~\cref{eq:Laplacian-adjoint-monotone}.
Thus \(\mu(1 - \eps) \ge Y_{ij}\).
Hence~\cref{eq:GW-Y-edge-marginal} and~\cref{eq:arccos-bound} imply
that
\[
  \prob\paren{ij \in \GWrv(Y)}
  = \frac{\arccos(\mu^{-1} Y_{ij})}{\pi}
  \ge \frac{\sqrt{2\eps}}{\pi},
\]
so~\cref{eq:gap:GW-eps-cover} holds.

\begin{theorem}
  \label{thm:approx-GW-bound-conversion}
  Let \(G = (V, E)\) be a graph, and let \(\eps \in \halfopen{0,1}\).
  Then, for each \(w,z \in \Reals_+^{E}\),
  \begin{subequations}
    \begin{alignat}{3}
      \label{eq:GW-eps-bound}
      (1 - \eps) \GW(G, w)
      &\le \GW_{\eps}(G, w)
      &&\le \GW(G, w),
      &
      \mathrlap{
        \qquad
        \text{for each \(w \in \Reals_+^{E}\)},
      }
      \\
      \label{eq:GW-eps-polar-bound}
      \GW^{\polar}(G, z)
      &\le \GW_{\eps}^{\polar}(G, z)
      &&\le \frac{1}{1 - \eps} \GW^{\polar}(G, z),
      &
      \mathrlap{
        \qquad
        \text{for each \(z \in \Reals_+^{E}\)}.
      }
    \end{alignat}
  \end{subequations}
\end{theorem}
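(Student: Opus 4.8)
The plan is to establish the inequalities in~\cref{eq:GW-eps-bound} directly from the definition~\cref{eq:GW-eps-def}, and then to obtain~\cref{eq:GW-eps-polar-bound} from them by feeding them into the bound‑conversion principle of \cref{prop:bound-conversion}, applied to the dual pairs of positive definite monotone gauges \((\GW_{\eps}(G,\cdot),\GW_{\eps}^{\polar}(G,\cdot))\) and \((\GW(G,\cdot),\GW^{\polar}(G,\cdot))\).

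For~\cref{eq:GW-eps-bound}, the lower bound \((1-\eps)\GW(G,w)\le\GW_{\eps}(G,w)\) is immediate from~\cref{eq:GW-eps-def}, since \(w\in\Lp{E}\) forces \(\tfrac{\eps}{2}\norm[1]{w}\ge 0\). For the upper bound \(\GW_{\eps}(G,w)\le\GW(G,w)\), by~\cref{eq:GW-eps-def} it suffices to prove \(\tfrac12\norm[1]{w}\le\GW(G,w)\) (the case \(\eps=0\) being trivial). I would verify this by plugging the identity matrix into the primal SDP~\cref{eq:GW-intro-def}: one has \(I\in\elliptope{V}\), and from the definition of \(\Laplacian_G\) a direct computation gives \(\iprod{\tfrac14\Laplacian_G(w)}{I}=\tfrac14\sum_{ij\in E}w_{ij}\norm{e_i-e_j}^2=\tfrac12\norm[1]{w}\), so \(\GW(G,w)\ge\tfrac12\norm[1]{w}\).

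For~\cref{eq:GW-eps-polar-bound}, recall that \(\GW(G,\cdot)\) and \(\GW_{\eps}(G,\cdot)\) are positive definite monotone gauges on \(\Lp{E}\) with gauge duals \(\GW^{\polar}(G,\cdot)\) and \(\GW_{\eps}^{\polar}(G,\cdot)\), respectively (the former is established in \cref{sec:duality} via \cref{prop:1}, and the latter is recorded in the text introducing~\cref{eq:GW-eps-polar-def} via~\cref{eq:gauge-polar-def}). Since \(\eps\in\halfopen{0,1}\) gives \(1-\eps\in\Reals_{++}\), I would invoke \cref{prop:bound-conversion} with \(\phi\coloneqq\GW_{\eps}(G,\cdot)\), \(\psi\coloneqq\GW(G,\cdot)\), \(\alpha\coloneqq 1-\eps\), and \(\beta\coloneqq 1\): then~\cref{eq:GW-eps-bound} is exactly the hypothesis of \cref{prop:bound-conversion}, and its conclusion reads \(\GW^{\polar}(G,z)\le\GW_{\eps}^{\polar}(G,z)\le\tfrac{1}{1-\eps}\GW^{\polar}(G,z)\), which is~\cref{eq:GW-eps-polar-bound}.

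I do not expect a substantial obstacle; the only genuinely computational point is \(\tfrac12\norm[1]{w}\le\GW(G,w)\), handled by \(Y=I\) as above. If one prefers to bypass \cref{prop:bound-conversion}, \cref{eq:GW-eps-polar-bound} also admits a short direct proof from the SDPs~\cref{eq:GW-polar-gaugef} and~\cref{eq:GW-eps-polar-gaugef}: any \((\mu,Y)\) feasible for~\cref{eq:GW-eps-polar-gaugef} is already feasible for~\cref{eq:GW-polar-gaugef}, since \(\mu\ge 0\) and \(\eps\ge 0\) give \(Y\succeq\eps\mu I\succeq 0\), which yields the lower bound; and from an optimal \((\mu,Y)\) for~\cref{eq:GW-polar-gaugef} one checks that \(\bigl(\tfrac{\mu}{1-\eps},\,Y+\tfrac{\eps\mu}{1-\eps}I\bigr)\) is feasible for~\cref{eq:GW-eps-polar-gaugef} --- its diagonal equals \(\tfrac{\mu}{1-\eps}\ones\), it dominates \(\eps\bigl(\tfrac{\mu}{1-\eps}\bigr)I\), and \(\tfrac14\Laplacian_G^*\bigl(Y+\tfrac{\eps\mu}{1-\eps}I\bigr)\ge\tfrac14\Laplacian_G^*(Y)\ge z\) by~\cref{eq:Laplacian-adjoint-monotone} --- yielding the upper bound. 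I would present the gauge‑duality argument as the main proof, since it is shortest and reuses the framework already in place.
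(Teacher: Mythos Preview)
Your proposal is correct and matches the paper's proof essentially line for line: the paper also derives~\cref{eq:GW-eps-bound} from~\cref{eq:GW-eps-def} using \(I\in\elliptope{V}\) to get \(\tfrac{1}{2}\norm[1]{w}=\iprod{\tfrac{1}{4}\Laplacian_G(w)}{I}\le\GW(G,w)\), then obtains~\cref{eq:GW-eps-polar-bound} via \cref{prop:bound-conversion}, and even offers the same alternative direct proof through the feasible solution \(\bigl(\tfrac{\mu}{1-\eps},\,Y+\tfrac{\eps\mu}{1-\eps}I\bigr)\).
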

\begin{proof}

As the identity matrix is feasible in~\cref{eq:GW-intro-def}, the
rightmost inequality below holds:
\begin{equation*}
  (1 - \eps) \GW(G, w)
  \le (1 - \eps) \GW(G, w) + \tfrac{\eps}{2} \norm[1]{w}
  = (1 - \eps) \GW(G, w) + \eps\iprod{\tfrac{1}{4}\Laplacian_G(w)}{I}
  \le \GW(G, w).
\end{equation*}
Thus~\cref{eq:GW-eps-bound} follows from~\cref{eq:GW-eps-def}.
Now \cref{eq:GW-eps-polar-bound} follows from~\cref{eq:GW-eps-bound}
and \cref{prop:bound-conversion}.
Alternatively, one can prove~\cref{eq:GW-eps-polar-bound} directly.
As the feasible region of~\cref{eq:GW-polar-def-intro} contains the
feasible region of~\cref{eq:GW-eps-polar-gaugef}, the first inequality
in~\cref{eq:GW-eps-polar-bound} holds.
Let \((\mu,Y)\) be feasible for~\cref{eq:GW-polar-def-intro}.
Set \(\mu_{\eps} \coloneqq \mu/(1-\eps)\) and
\(Y_{\eps} \coloneqq Y + \eps \mu_{\eps} I \succeq \eps\mu_{\eps} I\).
Then
\(\diag(Y_{\eps}) = \mu \ones + \eps\mu_{\eps}\ones =
\mu_{\eps}\ones\).
Moreover, from~\cref{eq:Laplacian-adjoint-monotone} and
\(Y_{\eps} \succeq Y\) we obtain
\(\tfrac{1}{4}\Laplacian_G^*(Y_{\eps}) \ge
\tfrac{1}{4}\Laplacian_G^*(Y) \ge z\).
Thus \((\mu_{\eps},Y_{\eps})\) is feasible
for~\cref{eq:GW-eps-polar-gaugef} with objective value \(\mu_{\eps}\).
Hence~\cref{eq:GW-eps-polar-bound} holds.
\end{proof}

Let \(T \in \Naturals \setminus \set{0}\) and let \(\Chernoff \in (0,1)\).
We use the shore sampling procedure~\cref{eq:Chernoff-sampling-def}
defined in \Cref{sec:rounding}.
Let \(G = (V, E)\) be a graph, let \(\mu \in \Reals_+\), and let
\(Y \in \mu \Ecal_V\).
Let \(S_1, \dotsc, S_T \subseteq V\) be independent
identically-distributed random shores sampled by \(\GWrv(Y)\).
Recall the definition in \cref{eq:Chernoff-sampling-def}:
\begin{equation}
  \tag{\ref{eq:Chernoff-sampling-def}}
  \Acal_{T, \Chernoff}(G, Y) = (\Fcal, y),
  \text{ where }
  \Fcal = \set{S_1, \dotsc, S_T} \subseteq \Powerset{V}
  \text{ and }
  y =
  \frac{\mu}{(1 - \Chernoff) \GWalpha} \frac{1}{T} \sum_{t = 1}^T
  e_{S_t} \in \Reals_+^{\Fcal}.
\end{equation}
A shore of a sampled cut with largest weight is
\begin{inlinemath}
  \argmax \setst[\big]{
    \iprodt{w}{\incidvector{\delta(S)}}
  }{
    S \in \Fcal
  }.
\end{inlinemath}
These objects give rise to the sampling procedure in
\Cref{alg:sampling}, which we analyze next.
Similar to \cref{alg:1}, the pseudocode of \Cref{alg:sampling}
abstracts away important implementation choices, including the choice
of data structures.

\begin{algorithm}
  \caption{Certification procedure}
  \label{alg:sampling}
  \begin{algorithmic}[1]
    \algrenewcommand\algorithmicrequire{\textbf{Parameters:}}
    \Require a constant approximation factor \(\beta \in (0, \GWalpha)\)
    parameterizes the algorithm \textsc{Certify${}_{\beta}$}.
    As~in~\cref{eq:sampling-GWOpt-constants-1,%
      eq:sampling-GWOpt-constants-2}, define the following constants in
    terms of \(\beta\):
    \begin{equation*}
      \tau \coloneqq 1 - \dfrac{\beta}{\GWalpha} \in (0,1),
      \quad
      \sigma \coloneqq \frac{2}{3}\tau
      \in (0, 2/3),
      \quad
      \eps \coloneqq \frac{\tau}{3(3 - 2\tau)} \in (0,1/3),
      \quad
      \text{and}
      \quad
      \Chernoff \coloneqq \frac{2\tau}{9-7\tau} \in (0,1).
    \end{equation*}
    \noindent
    \algrenewcommand\algorithmicrequire{\textbf{Input:}} \Require a
    graph \(G = (V,E)\), a pair \((w,z) \in \Reals_+^{E}
    \times \Reals_+^{E}\) of nonzero edge weights, and witnesses
    \((\bar{\rho},x)\) and \((\bar{\mu},Y)\) of the membership of
    \((w,z)\) in \(\GWOpt_{\eps, \sigma}(G)\).
    \algrenewcommand\algorithmicensure{\textbf{Output:}}
    \Ensure\Call{Certify${}_{\beta}$}{$G, (w,z), (\bar{\rho},x),
      (\bar{\mu},Y)$} returns a \(\beta\)-certificate
    \((\rho, \mu, S, y, x)\) with high probability, where the support
    of \(y\) has size bounded above by
    \(T \coloneqq
    \ceil[\big]{\frac{2187\pi}{2\GWalpha^2\tau^3}\ln(\card{V})}\), as
    in~\cref{eq:23}.
    \Procedure{Certify${}_{\beta}$}{$G, (w,z), (\bar{\rho},x), (\bar{\mu},Y)$}
    \State \(\Fcal \gets \varnothing\)
    \State \(\bar{y} \gets 0 \in \Lp{\Powerset{V}}\)
    \CountUp{\(T\)}
    \State\label{step:GW-sampling-S}%
    \(S \gets \GWrv(Y)\)
    \Comment Sample a shore \(S \subseteq V\) via the random hyperplane technique
    \State \(\Fcal \gets \Fcal \cup \{S\}\)
    \State \(\bar{y}_S \gets \bar{y}_S + 1\)
    \EndCountUp
    \State \(S^{\max} \gets \arg\max \setst[\big]{\iprodt{w}{\incidvector{\delta(S)}}}{S \in \Fcal}\)
    \State
    \(y \gets \frac{\mu}{(1 - \Chernoff) \GWalpha} \frac{1}{T} \bar{y}\)
    \State \(\rho \gets (1-\eps)^{-1}\bar{\rho}\)
    \State \(\mu \gets \rho^{-1} \iprodt{w}{z}\)
    \State \textbf{return} \(\paren{\rho,\mu,S^{\max},y,x}\)
    \EndProcedure
  \end{algorithmic}
\end{algorithm}

\begin{proposition}
\label{prop:GWOpt-approx-sampling}
Let \(\eps, \sigma, \Chernoff \in (0, 1)\).
Let \(G = (V, E)\) be a graph on \(n\) vertices and let
\((w, z) \in \GWOpt_{\eps, \sigma}(G)\) be such that
\(w \neq 0 \neq z\).
Set \(\beta \coloneqq \GWalpha(1 - \Chernoff)(1 - \sigma) (1-\eps)\).
Let \((\bar{\rho},x)\) and \((\bar{\mu},Y)\) witness the membership
\((w, z) \in \GWOpt_{\eps, \sigma}(G)\).
For each integer
\begin{equation*}
  T \geq \ceil[\Big]{
    \frac{6\pi}{
      \paren{\GWalpha\Chernoff(1 - \sigma)(1 - \eps)}^2\eps
    }
    \ln(n)
  },
\end{equation*}
the randomized polynomial-time procedure \(\Acal_{T, \Chernoff}(G,
Y)\) satisfies the following: with probability at least \(1-2/n\),
we have that \((\rho, \mu, S^{\max}, y, x)\) is a
\(\beta\)-certificate for \((w, z)\), where
\begin{equation}
  \label{eq:GWOpt-approx-sampling}
\begin{gathered}
  \rho \coloneqq (1-\eps)^{-1}\bar{\rho},
  \quad
  \mu \coloneqq {\rho}^{-1} \iprodt{w}{z},
  \\
  (\Fcal, y) \coloneqq \Acal_{T, \Chernoff}(G, Y),
  \quad
  \text{ and }
  \quad
  S^{\max} \coloneqq \argmax \setst[\big]{
    \iprodt{w}{\incidvector{\delta(S)}}
  }{
    S \in \Fcal
  }.
\end{gathered}
\end{equation}
In~particular, \((w,z)\) is a \(\beta\)-pairing.
\end{proposition}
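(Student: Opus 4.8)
The plan is to verify that the output tuple $(\rho,\mu,S^{\max},y,x)$ satisfies the four requirements \cref{item:cert-1,item:cert-2,item:cert-3,item:cert-4} of \cref{def:beta-cert} with the given $\beta=\GWalpha(1-\Chernoff)(1-\sigma)(1-\eps)\in(0,\GWalpha)\subseteq\halfclosed{0,1}$; granting this, \cref{prop:2} yields at once that $(w,z)$ is a $\beta$-pairing. Items \cref{item:cert-1} and \cref{item:cert-4}, together with the objective-value bound $\iprodt{\ones}{y}\le\tfrac{1}{\beta}\mu$ in \cref{item:cert-3}, are deterministic and follow from the two given witnesses and \cref{eq:GWOpt-Cauchy-Schwarz}; the covering constraint in \cref{item:cert-3} and the cut lower bound in \cref{item:cert-2} are the random statements, and I would show that each fails with probability at most $1/n$, so that a union bound over these two events gives the advertised $1-2/n$.

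For the deterministic part I first record a few consequences of witness feasibility. Since $(\bar\mu,Y)$ is feasible in \cref{eq:GW-eps-polar-gaugef} we have $Y\succeq\eps\bar\mu I\succeq0$, so $(\bar\mu,Y)$ is also feasible in \cref{eq:GW-polar-def-intro}, and $z\ne0$ forces $\bar\mu>0$ (else $Y=0$ and $z\le0$). From feasibility of $(\bar\rho,x)$ in \cref{eq:GW-eps-def-dual} we get $\Diag(x)\succeq\tfrac{1}{4}\Laplacian_G(w)\succeq0$, hence $x\ge0$, and $\bar\rho\ge(1-\eps)\iprodt{\ones}{x}+\tfrac{\eps}{2}\iprodt{\ones}{w}>0$ because $w\ne0$; therefore $\iprodt{w}{z}\ge(1-\sigma)\bar\rho\bar\mu>0$ by \cref{eq:GWOpt-Cauchy-Schwarz}. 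Thus $\rho=(1-\eps)^{-1}\bar\rho>0$ and $\mu=\rho^{-1}\iprodt{w}{z}>0$ satisfy $\rho\mu=\iprodt{w}{z}$, which is \cref{item:cert-1}, while $\bar\rho\ge(1-\eps)\iprodt{\ones}{x}$ gives $\rho\ge\iprodt{\ones}{x}$, which together with $\Diag(x)\succeq\tfrac{1}{4}\Laplacian_G(w)$ is \cref{item:cert-4}. For the objective value in \cref{item:cert-3}, \cref{eq:Chernoff-sampling-properties} applied to the sampled matrix $Y$ (whose diagonal is $\bar\mu\ones$) gives $\iprodt{\ones}{y}=\bar\mu/((1-\Chernoff)\GWalpha)$, while $\mu=(1-\eps)\bar\rho^{-1}\iprodt{w}{z}\ge(1-\eps)(1-\sigma)\bar\mu$ again by \cref{eq:GWOpt-Cauchy-Schwarz}; substituting the value of $\beta$ yields $\iprodt{\ones}{y}\le\tfrac{1}{\beta}\mu$. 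Finally, since $w\ne0\ne z$ and $\rho,\mu>0$, the side condition ``$\rho=0=\mu$ iff $w=0=z$'' holds vacuously.

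For the covering constraint in \cref{item:cert-3} I would repeat the argument of \cref{prop:probabilistic-rounding-cover-pre}. Because $(\bar\mu,Y)$ is feasible in \cref{eq:GW-polar-def-intro}, \cref{prop:sdp-rounding} (through \cref{eq:sdp-rounding}) gives $z_{ij}\le\tfrac{\bar\mu}{\GWalpha}p_{ij}$ for every $ij\in E$, where $p_{ij}\coloneqq\prob(ij\in\delta(\GWrv(Y)))$. Writing $X_{ij}$ for the number of sampled shores $S_t$ with $ij\in\delta(S_t)$, one has $\paren[\big]{\sum_{S\in\Fcal}y_S\incidvector{\delta(S)}}_{ij}=\tfrac{\bar\mu}{(1-\Chernoff)\GWalpha T}X_{ij}$, so the covering inequality holds at $ij$ as soon as $X_{ij}\ge(1-\Chernoff)\Ebb(X_{ij})=(1-\Chernoff)Tp_{ij}$. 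Each $X_{ij}$ is binomial on $T$ trials with success probability $p_{ij}\ge\sqrt{2\eps}/\pi$ by \cref{eq:gap:GW-eps-cover}, so Chernoff's bound gives $\prob\paren[\big]{X_{ij}<(1-\Chernoff)\Ebb(X_{ij})}\le\exp\paren[\big]{-\Chernoff^2 T\sqrt{2\eps}/(2\pi)}$; a union bound over the at most $n^2$ edges, together with $(\GWalpha(1-\sigma)(1-\eps))^2\eps\le\eps\le\sqrt{2\eps}$, shows that for the stated lower bound on $T$ this failure probability is at most $1/n$.

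The main obstacle is the cut lower bound \cref{item:cert-2}, because $\iprodt{w}{\incidvector{\delta(S^{\max})}}=\max_{t\in[T]}\iprodt{w}{\incidvector{\delta(S_t)}}$ is a \emph{maximum} of i.i.d.\ samples, not a sum, so the Chernoff argument above does not transfer. Here I would first use \cref{eq:gap:GW-cut} together with $\iprod{\tfrac{1}{4}\Laplacian_G(w)}{Y}=\iprodt{w}{\tfrac{1}{4}\Laplacian_G^*(Y)}\ge\iprodt{w}{z}\ge(1-\sigma)\bar\rho\bar\mu$ to obtain $\mu_X\coloneqq\Ebb\paren[\big]{\iprodt{w}{\incidvector{\delta(\GWrv(Y))}}}\ge\GWalpha(1-\sigma)\bar\rho$, and note $\beta\rho=\GWalpha(1-\Chernoff)(1-\sigma)\bar\rho\le(1-\Chernoff)\mu_X$, so that it suffices that at least one $S_t$ has cut weight at least $(1-\Chernoff)\mu_X$. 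The cut weight of a single sample lies in $[0,\norm[1]{w}]$, and the inequality $\bar\rho\ge\tfrac{\eps}{2}\iprodt{\ones}{w}$ recorded above gives $\norm[1]{w}\le2\bar\rho/\eps$; a reverse Markov inequality then bounds the probability that a single sample falls below $(1-\Chernoff)\mu_X$ by $1-\Chernoff\mu_X/\norm[1]{w}$, so by independence all $T$ samples fall below it with probability at most $\paren[\big]{1-\Chernoff\mu_X/\norm[1]{w}}^T\le\exp\paren[\big]{-\Chernoff\mu_X T/\norm[1]{w}}\le\exp\paren[\big]{-\tfrac{1}{2}\GWalpha\Chernoff(1-\sigma)\eps T}$, which is at most $1/n$ for the stated $T$. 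A union bound over this event and the covering event then establishes that the tuple is a $\beta$-certificate with probability at least $1-2/n$, whence \cref{prop:2} gives the $\beta$-pairing claim; polynomial running time is clear since $\Acal_{T,\Chernoff}(G,Y)$ draws $T=O(\ln n)$ samples, each needing one Cholesky factorization of $Y$ and one standard normal sample, and the post-processing computing $S^{\max},\rho,\mu$ (as in \cref{alg:sampling}) is elementary.
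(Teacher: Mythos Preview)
Your proof is correct. The deterministic items \cref{item:cert-1}, \cref{item:cert-4}, and the objective-value bound in \cref{item:cert-3} are handled exactly as in the paper, and your treatment of the covering constraint in \cref{item:cert-3} is essentially an inline version of the paper's application of \cref{prop:probabilistic-rounding-cover-pre}.

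The one genuine difference is in \cref{item:cert-2}. The paper bounds the range of each sampled cut weight by $[0,\rho]$, using that $(\rho,x)$ is feasible in~\cref{eq:GW-gaugef-rho} so $\mc(G,w)\le\rho$; it then observes that if \emph{every} $X_t\le(1-\Chernoff)\Ebb(X_t)$ then the sum $\sum_t X_t$ is at most $(1-\Chernoff)$ times its mean, and applies Hoeffding's inequality to the sum to get failure probability $\exp\paren[\big]{-(\GWalpha\Chernoff(1-\sigma)(1-\eps))^2 T}$. You instead bound the range by $[0,\norm[1]{w}]$, exploit the perturbed-SDP constraint $\bar\rho\ge\tfrac{\eps}{2}\norm[1]{w}$ to get $\norm[1]{w}\le 2\bar\rho/\eps$, and apply a reverse Markov inequality plus independence. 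Your argument is more elementary (Markov rather than Hoeffding) and makes explicit use of the $\eps$-perturbation in~\cref{eq:GW-eps-def-dual}, at the cost of a looser single-sample bound; the paper's argument uses the sharper range bound $\rho$ and a cleaner concentration step. Both comfortably fit within the stated lower bound on~$T$, so either route is fine.
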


\begin{remark}
  Alternatively, rather than sampling from \(\GWrv(Y)\) in the call to
  \(\Acal_{T, \Chernoff}\) in \cref{prop:GWOpt-approx-sampling}, using
  the definition of \(\GW_{\eps}\) as a starting point, one may use a
  perturbed sampling \(\GWrv_{\eps}(Y)\) obtained by sampling from
  \(\GWrv(Y)\) with probability \((1-\eps)\), and by sampling
  uniformly among all shores with probability~\(\eps\).
\end{remark}

\begin{proof}[Proof of \cref{prop:GWOpt-approx-sampling}]
  Let \((w, z) \in \GWOpt_{\eps, \sigma}(G)\).
  Let \((\bar{\rho},x)\) and \((\bar{\mu}, Y)\) witness the membership
  \((w, z) \in \GWOpt_{\eps, \sigma}(G)\).
  Note that \(\bar{\rho}, \bar{\mu} > 0\) as \(w \neq 0 \neq z\).
  Set~\(\rho \coloneqq (1-\eps)^{-1} \bar{\rho}\) and
  \(\mu \coloneqq (1/\rho) \iprodt{w}{z}\).
  Item \cref{item:cert-1} in the \cref{def:beta-cert} of
  \(\beta\)-certificates holds trivially.
  We~also have \cref{item:cert-4}, since
  \(\Diag(x) \succeq \tfrac{1}{4}\Laplacian_G(w)\) and
\begin{equation*}
  \rho = \frac{\bar{\rho}}{1-\eps}
  \geq\frac{1}{1-\eps} \paren[\Big]{
    (1-\eps)\iprodt{\ones}{x} +
    \frac{\eps}{2}\norm[1]{w}}
  \geq \iprodt{\ones}{x}.
\end{equation*}
In particular, \((\rho, x)\) is feasible in~\cref{eq:GW-gaugef-rho} so
\begin{equation}
  \label{eq:upperbound_cut_rho}
  \mc(G, w)
  \le \GW(G, w)
  \le \rho.
\end{equation}
Next we prove \cref{item:cert-2} and \cref{item:cert-3}.

Let \(S_1,\ldots, S_T\) and \((\Fcal,y)\) be defined as
in~\cref{eq:Chernoff-sampling-def}, so that
\begin{inlinemath}
  S^{\max}
  =
  \argmax \setst[\big]{
    \iprodt{w}{\incidvector{\delta(S)}}
  }{
    S \in \Fcal
  }.
\end{inlinemath}
We will now prove~\cref{item:cert-2} for the shore \(S^{\max}\).
More precisely, we show that
\(\iprodt{w}{\incidvector{\delta(S^{\max})}} \ge \beta \rho\) with
probability at least \(1-1/n\).
As \((\bar{\mu}, Y)\) is feasible in~\cref{eq:GW-eps-polar-gaugef}, it
is feasible in~\cref{eq:GW-polar-def-intro}, so from~\cref{eq:gap:GW-cut}
we have that
\begin{equation*}
  \Ebb\paren[\big]{
    \iprodt{w}{\incidvector{\delta(\GWrv(Y))}}
  }
  \ge \frac{\GWalpha}{\bar{\mu}}\iprod{\tfrac{1}{4}\Laplacian_G(w)}{Y}
  \ge \GWalpha\frac{\iprodt{w}{z}}{\bar{\mu}}
  \geq \GWalpha(1-\sigma)\bar{\rho}
  = \GWalpha(1-\sigma) (1-\eps) \rho,
\end{equation*}
and by \cref{eq:upperbound_cut_rho}, we can bound the range of
\(\iprodt{w}{\incidvector{\delta(\GWrv(Y))}}\) as \(0 \le
\iprodt{w}{\incidvector{\delta(\GWrv(Y))}} \le \rho\).
Define \(X_t \coloneqq \iprodt{w}{\incidvector{\delta(S_t)}}\) for
every \(t\in[T]\).
Define \(S \coloneqq \sum_{t=1}^T X_t\).
The random variables \(X_1,\dotsc, X_T\) are independent and satisfy
\(0\leq X_t\leq \rho\) and \(\Ebb\paren{X_t}\geq
\GWalpha(1-\sigma)(1-\eps)\rho\) for each \(t\in[T]\).
Using Hoeffding's inequality,
\begin{multline*}
  \prob\paren[\Big]{
    X_t \leq (1-\Chernoff)\Ebb\paren{X_t}
    \text{ for all }t\in[T]}
  \leq
  \prob\paren[\Big]{S \leq (1-\Chernoff)\Ebb\paren{S}}
  \leq
  \exp\paren[\bigg]{\frac{-\Chernoff^2 \Ebb\paren{S}^2}
    {T \rho^2}}
  \\
  \leq
  \exp\paren[\bigg]{\frac{-\Chernoff^2
      (\GWalpha(1-\sigma)(1-\eps)\rho T)^2}
    {T\rho^2}}
  =
  \exp\paren[\bigg]{
    -\paren[\big]{\GWalpha\Chernoff(1-\sigma)(1-\eps)}^2 T
  }
  \leq 1/n,
\end{multline*}
since
\begin{equation*}
  T \ge \ceil[\bigg]{\frac{
    \ln(n)
  }{
    \paren{\GWalpha\Chernoff(1 - \sigma)(1 - \eps)}^2
  }}.
\end{equation*}
Thus \(\iprodt{w}{\incidvector{\delta(S^{\max})}} \ge \GWalpha(1 -
\Chernoff)(1 - \sigma)(1 - \eps)\rho = \beta\rho\) with probability
at least \(1 - 1/n\), so the proof of \cref{item:cert-2} is complete.

We now prove \cref{item:cert-3}.
By \cref{eq:Chernoff-sampling-properties},
\begin{equation}
  \label{eq:7}
  \iprodt{\ones}{y}
  =
  \frac{1}{(1 - \Chernoff) \GWalpha} \bar{\mu}
  =
  \frac{1}{\beta} (1-\eps)(1-\sigma)\bar{\mu}
  \leq
  \frac{1}{\beta} (1-\eps)
  \frac{\iprodt{w}{z}}{\bar{\rho}}
  =
  \frac{1}{\beta}\frac{\iprodt{w}{z}}{\rho}
  =
  \frac{1}{\beta}\mu.
\end{equation}
Next we will use \cref{prop:probabilistic-rounding-cover-pre} to show
that \(\sum_{S \in \Fcal} y_S \incidvector{\delta(S)} \ge z\) with
probability at least \(1-1/n\).
Set \(\hat{z} \coloneqq \tfrac{1}{4}\Laplacian_G^*(Y)\).
Set \(\kappa \coloneqq 2/\eps\) and \(\xi \coloneqq \eps/2\).
We claim that the hypotheses from
\cref{prop:probabilistic-rounding-cover-pre} for \(\hat{z}\) are met
by~\((\bar{\mu}, Y)\) as well as \(\xi\), \(\kappa\), and~\(\Chernoff\).
It is immediate that \((\bar{\mu}, Y)\) is feasible
in~\cref{eq:GW-polar-def-intro} for \((G,\hat{z})\).
We claim that
\begin{equation}
  \label{eq:GWOpt-approx-sampling-1}
  \frac{\eps}{2} \bar{\mu}
  \le \hat{z}_{ij}
  \le \bar{\mu}
  \mathrlap{
    \qquad
    \text{for every }
    ij \in E.
  }
\end{equation}
By using~\cref{eq:Laplacian-adjoint-monotone},
\cref{eq:GW-polar-norm-infty-lowerbound},
\cref{eq:GW-eps-polar-bound}, and that feasibility of
\((\bar{\mu}, Y)\) in~\cref{eq:GW-eps-polar-gaugef} for \((G,\hat{z})\)
implies that \(\GW_{\eps}^{\polar}(G, \hat{z}) \le \bar{\mu}\), we see
that
\begin{equation*}
  \frac{\eps}{2}\bar{\mu} \ones
  = \tfrac{1}{4} \Laplacian_G^*(\eps\bar{\mu} I)\\
  \le \tfrac{1}{4} \Laplacian_G^*(Y)
  = \hat{z}
  \le \norm[\infty]{\hat{z}} \ones
  \le \GW^{\polar}(G, \hat{z}) \ones
  \le \GW_{\eps}^{\polar}(G, \hat{z}) \ones
  \le \bar{\mu} \ones.
\end{equation*}
Thus~\cref{eq:GWOpt-approx-sampling-1} holds.
Condition \cref{eq:prop-round-pre-2} of
\cref{prop:probabilistic-rounding-cover-pre} holds by the first
inequality in~\cref{eq:GWOpt-approx-sampling-1};
condition \cref{eq:prop-round-pre-3} follows from noting that
\(\tfrac{\eps}{2}\norm[\infty]{\hat{z}} \le \frac{\eps}{2} \bar{\mu} \le
\hat{z}_{ij}\) for every \(ij \in E\).
Hence, with probability at least \(1 - 1/n\), \((\Fcal, y) \coloneqq
\Acal_{T,\Chernoff}(G, Y)\) covers \({z}\) --- by covering
\(\hat{z} \ge z\).
Thus~\cref{eq:7} proves~\cref{item:cert-3}, as
\begin{equation*}
  T
  \geq
  \ceil[\bigg]{
    \frac{6\pi}{\Chernoff^2\eps}
    \ln(n)
  }
  =
  \ceil[\bigg]{
    \frac{3\pi}{\Chernoff^2}
    \paren[\bigg]{
      \frac{\kappa}{\xi}
    }^{\half}
    \ln(n)
  }.
\end{equation*}
Hence, the probability that \((\rho, \mu, S^{\max}, y, x)\) is not a
\(\beta\)-certificate for \((w,z)\) is at most \(2/n\).
\end{proof}

\begin{proposition}
\label{prop:sampling-GWOpt}
Let \(\beta \in (0, \GWalpha)\) and set
\begin{equation}
  \label{eq:sampling-GWOpt-constants-1}
  \tau \coloneqq 1 - \dfrac{\beta}{\GWalpha} \in (0, 1),
  \qquad
  \sigma \coloneqq \frac{2}{3}\tau
  \in (0,2/3),
  \quad
  \text{and}
  \quad
  \eps \coloneqq
  \frac{\tau - \sigma}{3(1 - \sigma)}
  = \frac{\tau}{3(3 - 2\tau)}
  \in (0, 1/3).
\end{equation}
There exists a polynomial-time randomized algorithm that takes as
input a graph \(G = (V, E)\) on \(n\) vertices, a~nonzero pair
\((w, z) \in \GWOpt_{\eps, \sigma}(G)\), and objects
\((\bar{\rho}, x)\) and \((\bar{\mu}, Y)\) witnessing the membership
\((w, z) \in \GWOpt_{\eps,\sigma}(G)\), and, with probability at least
\(1-2/n\), outputs a \(\beta\)-certificate
\((\rho, \mu, S^{\max}, y, x)\) for~\((w,z)\), where
\begin{equation}
  \label{eq:23}
  \card{\supp(y)}
  \le T
  \coloneqq
  \ceil[\bigg]{
    \frac{2187 \pi}{2\GWalpha^2\tau^3} \ln(n)
  }.
\end{equation}
This algorithm may be implemented so that it makes:
\begin{enumerate}
\item a single call to an oracle producing a Cholesky factorization of
  an \(n \times n\) matrix;
\item \(T n\) calls to an oracle sampling from a standard Gaussian
  distribution; and
\item \(O(Tn^2)\) extra work.
\end{enumerate}
\end{proposition}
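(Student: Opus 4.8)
The plan is to read off \Cref{prop:sampling-GWOpt} as the packaged, parameter-calibrated form of \Cref{prop:GWOpt-approx-sampling}: the algorithm is \Cref{alg:sampling}, and the work is (i)~to choose the Chernoff slack $\Chernoff$ so the approximation factor is exactly~$\beta$, (ii)~to check that $T$ in \cref{eq:23} meets the sample-size threshold of \Cref{prop:GWOpt-approx-sampling}, and (iii)~to argue that the abstract sampler $\Acal_{T,\Chernoff}(G,Y)$ runs within the stated budget. For~(i), I would set $\Chernoff\coloneqq 2\tau/(9-7\tau)$, exactly as in the preamble of \Cref{alg:sampling}, and verify the single identity making \Cref{prop:GWOpt-approx-sampling} applicable with that choice, namely $\GWalpha(1-\Chernoff)(1-\sigma)(1-\eps)=\beta$. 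Since $\eps=(\tau-\sigma)/(3(1-\sigma))$ we have $(1-\sigma)\eps=\tfrac13(\tau-\sigma)$, so with $\sigma=\tfrac23\tau$ a short computation gives $(1-\sigma)(1-\eps)=1-\tfrac79\tau$, hence $\Chernoff(1-\sigma)(1-\eps)=\tfrac29\tau$ and $(1-\Chernoff)(1-\sigma)(1-\eps)=1-\tau$; multiplying by $\GWalpha$ and using $1-\tau=\beta/\GWalpha$ yields~$\beta$. Thus \Cref{prop:GWOpt-approx-sampling}, applied to the input $(w,z)\in\GWOpt_{\eps,\sigma}(G)$ with witnesses $(\bar\rho,x)$ and $(\bar\mu,Y)$, targets exactly our~$\beta$.

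\emph{Sample size and correctness.} Next I would check the bound in \cref{eq:23}. From the values above, $\GWalpha\Chernoff(1-\sigma)(1-\eps)=\tfrac29\GWalpha\tau$ and $\eps=\tau/(3(3-2\tau))$, so the threshold $\ceil{\frac{6\pi}{(\GWalpha\Chernoff(1-\sigma)(1-\eps))^2\eps}\ln n}$ required by \Cref{prop:GWOpt-approx-sampling} equals $\ceil{\frac{729\pi(3-2\tau)}{2\GWalpha^2\tau^3}\ln n}$; as $\tau\in(0,1)$ gives $3-2\tau<3$, and $n\ge 2$ (the pair $(w,z)$ is nonzero, so $E\neq\varnothing$), this is at most $\ceil{\frac{2187\pi}{2\GWalpha^2\tau^3}\ln n}=T$. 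Hence \Cref{prop:GWOpt-approx-sampling} gives: with probability at least $1-2/n$, the tuple $(\rho,\mu,S^{\max},y,x)$ of \cref{eq:GWOpt-approx-sampling} is a $\beta$-certificate for $(w,z)$, with $\card{\supp(y)}\le\card{\Fcal}\le T$ by \cref{eq:Chernoff-sampling-properties}. This tuple is exactly the output of \Cref{alg:sampling}, and $(w,z)$ is then a $\beta$-pairing by \Cref{prop:2}.

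\emph{Implementation.} It remains to realize $\Acal_{T,\Chernoff}(G,Y)$ within the claimed resources. As $(\bar\mu,Y)$ is feasible in \cref{eq:GW-eps-polar-gaugef}, $Y\succeq\eps\bar\mu I\succ 0$, so one Cholesky call returns a nonsingular $L$ with $Y=LL^{\transp}$. The crucial point is that the definition \cref{eq:GWrv-def} of $\GWrv(Y)$ --- in terms of a uniformly random unit vector and the exact square root $Y^{\half}$ --- may be simulated, without changing the law of the sampled shore, using a centered Gaussian $h\in\Reals^V$ and \emph{any} factorization $Y=LL^{\transp}$: for such $h$ both $Lh$ and $Y^{\half}h$ are centered Gaussian with covariance $Y$, so $\setst{i\in V}{(Lh)_i\ge 0}$ has the same distribution as $\GWrv(Y)$. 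Thus each of the $T$ iterations draws $n$ standard Gaussian samples to form $h$ ($Tn$ calls total), computes $Lh$ in $O(n^2)$ operations, records the shore $\setst{i}{(Lh)_i\ge 0}$, and updates $\Fcal$ and the multiplicity vector $\bar y$; afterwards one evaluates $\iprodt{w}{\incidvector{\delta(S)}}$ over $S\in\Fcal$ --- each in $O(\card{E})=O(n^2)$ work, $O(Tn^2)$ in total --- to get $S^{\max}$, and finally computes $\rho\coloneqq(1-\eps)^{-1}\bar\rho$, $\mu\coloneqq\rho^{-1}\iprodt{w}{z}$, and the appropriate positive rescaling $y$ of $\bar y$ (the $y$ from $\Acal_{T,\Chernoff}(G,Y)$) in $O(Tn)$ further work, returning $x$ unchanged. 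This produces the three resource bounds.

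\emph{Main obstacle.} All the concentration content --- that the sampled cut is heavy and that the sampled fractional cover is feasible --- sits inside \Cref{prop:GWOpt-approx-sampling}, so the only genuinely delicate step here is the implementation clause: one must confirm that the idealized $Y^{\half}$-and-uniform-unit-vector description of $\GWrv(Y)$ is faithfully reproduced by a single Cholesky factor together with $Tn$ Gaussian draws. The rest is the routine (if slightly fiddly) arithmetic pinning down $\Chernoff$ and the constant $2187\pi/(2\GWalpha^2\tau^3)$, together with the bookkeeping that the output of \Cref{alg:sampling} coincides with the tuple of \cref{eq:GWOpt-approx-sampling}.
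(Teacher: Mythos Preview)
Your proposal is correct and follows essentially the same approach as the paper: choose \(\Chernoff = 2\tau/(9-7\tau)\), verify \((1-\Chernoff)(1-\sigma)(1-\eps)=1-\tau\), show the sample threshold of \Cref{prop:GWOpt-approx-sampling} is at most~\(T\), and then account for the implementation via one Cholesky factorization, \(Tn\) Gaussian draws, and \(O(Tn^2)\) work. Your arithmetic route to \(1-\tau\) and to the \(T\) bound differs cosmetically (you substitute \(\sigma=\tfrac{2}{3}\tau\) earlier and arrive at \(\frac{729\pi(3-2\tau)}{2\GWalpha^2\tau^3}\) before bounding by \(3-2\tau<3\), whereas the paper routes through \(((1-\eps)\Chernoff)^2\eps=4\eps^3\) and \(\tau-\sigma=\tfrac{1}{3}\tau\)), but the argument is the same in substance.
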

\begin{proof}
Set
\begin{equation}
  \label{eq:sampling-GWOpt-constants-2}
  \Chernoff
  \coloneqq \frac{2\eps}{1 - \eps}
  = \frac{2\tau}{9-7\tau}
  \in (0,1).
\end{equation}
The constants \(\sigma\), \(\eps\), and \(\Chernoff\) are defined so
that
\begin{equation*}
  (1 - \Chernoff)(1 - \sigma) (1-\eps)
  = \paren[\bigg]{
    1 - \frac{2\eps}{1 - \eps}
  }
  (1 - \eps)
  (1 - \sigma)
  = (1 - 3\eps)(1 - \sigma)
    = \paren[\bigg]{
    1 - \frac{\tau - \sigma}{1 - \sigma}
  }(1 - \sigma)
  = 1 - \tau.
\end{equation*}
Thus \(\beta = \GWalpha(1 - \tau) = \GWalpha(1 - \Chernoff)(1 -
\sigma)(1 - \eps)\).
Moreover,
\begin{equation}
  \label{eq:sampling-GWOpt-1}
  \paren{(1 - \eps)\Chernoff}^2\eps
  = (2\eps)^2\eps
  = 4 \eps^3
  = \frac{4}{27}\paren[\bigg]{
    \frac{\tau - \sigma}{1 - \sigma}
  }^3.
\end{equation}
The proof is then an application of \cref{prop:GWOpt-approx-sampling}, since
\begin{detailedproof}
  \ceil[\bigg]{
    \frac{6\pi}{
      \paren{\GWalpha\Chernoff(1 - \sigma)(1 - \eps)}^2\eps
    }
    \ln(n)
  }
  &=
  \ceil[\bigg]{
    \frac{6\pi}{
      \paren{\GWalpha(1 - \sigma)}^2
    }
    \frac{27}{4}\paren[\bigg]{
      \frac{1 - \sigma}{\tau - \sigma}
    }^3
    \ln(n)
  }
  &&\text{by~\cref{eq:sampling-GWOpt-1}}\\
  &=
  \ceil[\bigg]{
    \frac{81\pi}{2 \GWalpha^2}
    \frac{1 - \sigma}{(\tau - \sigma)^3}
    \ln(n)
  }\\
  &\le
  \ceil[\bigg]{
    \frac{2187\pi}{2\GWalpha^2}
    \frac{1}{\tau^3}
    \ln(n)
  }
  &&\text{since \(\tau - \sigma = \tfrac{1}{3}\tau\) and \(\sigma > 0\)}\\
  &= T.
\end{detailedproof}

Let \(B \in \Reals^{[n] \times V}\) be a Cholesky factorization of
\(Y\), i.e., such that \(Y = B^\transp B\).
Let \(\Omega \in \Reals^{[T] \times [n]}\) have each entry independently
sampled from a standard Gaussian distribution.
\Cref{alg:sampling} may be easily implemented by computing the matrix
product \(X \coloneqq \Omega B\) and checking the signs of the entries
of \(X\).
This matrix product can be computed in \(O(Tn^2)\) time.
Each row of this matrix defines a shore of a cut, and one must keep
track of which shores have appeared.
Simply storing a list of the vertices in each shore allows one to
compute the fractional cut cover \(y\) in~\cref{alg:sampling} in
\(O(T^2 n)\) time.
Finally, computing  \(S^{\max}\) costs \(O(Tm)\).
Since \(T = O(\log n)\), the matrix multiplication cost \(O(Tn^2)\)
dominates.
\end{proof}

\begin{algorithm}
  \caption{Primal-dual randomized rounding approximation algorithm for
    $\mc$, building an instance for $\fcc$ and certificates}
  \label{alg:mc_fcc}
  \begin{algorithmic}[1]
    \algrenewcommand\algorithmicrequire{\textbf{Parameters:}}
    \Require a constant approximation factor \(\beta \in (0, \GWalpha)\)
    parameterizes the algorithm \textsc{ApproxMcFcc${}_{\beta}$}.
    Define the following constants in terms of~\(\beta\):
    \begin{equation*}
      \tau \coloneqq 1 - \dfrac{\beta}{\GWalpha},
      \quad
      \sigma \coloneqq \frac{2}{3}\tau,
      \quad
      \eps \coloneqq \frac{\tau}{3(3 - 2\tau)},
      \quad
      \Chernoff \coloneqq \frac{2\tau}{9-7\tau},
      \quad
      \text{and}
      \quad
      C \coloneqq \frac{2187\pi}{2\GWalpha^2\tau^3}.
    \end{equation*}
    \noindent
    \algrenewcommand\algorithmicrequire{\textbf{Input:}}
    \Require a graph $G = (V,E)$ and nonzero edge weights $w \in \Lp{E}$.
    \algrenewcommand\algorithmicensure{\textbf{Output:}}
    \Ensure \Call{ApproxMcFcc${}_{\beta}$}{$G, w$} returns a nonzero \(z \in
    \Lp{E}\) and a \(\beta\)-certificate \((\rho, \mu, S, y, x)\)
    for \((w, z)\) with probability at least \(1-2/\card{V}\), such
    that \(\card{\supp(y)} \le \ceil{C \ln(\card{V})}\).
    The algorithm runs in strongly polynomial time.
    \Procedure{ApproxMcFcc${}_{\beta}$}{$G, w$}
      \State Find a feasible pair $\paren[\big]{\tilde{Y}, (\tilde{\rho}, x)}$
      for~\cref{eq:GW-intro-pd} such that
      $\tilde{\rho} \le \iprod{\tfrac{1}{4}\Laplacian_G(w)}{\tilde{Y}}
      + \sigma\norm[\infty]{w}$.
      \State $\bar{\mu} \gets 1$
      \State $Y \gets (1 - \eps) \tilde{Y} + \eps I$
      \State $z \gets \tfrac{1}{4}\Laplacian_G^*(Y)$
      \State $\bar{\rho} \gets (1 - \eps)\iprodt{\ones}{x} + \tfrac{\eps}{2}\norm[1]{w}$
      \Comment Solve \(\GW_{\eps}(G, w)\) within \(\sigma\norm[\infty]{w}\)
      \State \textbf{return} $\big(z, $ \Call{Certify${}_{\beta}$}{$G, (w, z), (\bar{\rho}, x), (\bar{\mu}, Y)$}$\big)$
    \EndProcedure
  \end{algorithmic}
\end{algorithm}

\begin{algorithm}
  \caption{Primal-dual randomized rounding approximation algorithm for
    $\fcc$, building an instance for $\mc$ and certificates for both instances}
  \label{alg:fcc_mc}
  \begin{algorithmic}[1]
    \algrenewcommand\algorithmicrequire{\textbf{Parameters:}}
    \Require a constant approximation factor \(\beta \in (0, \GWalpha)\)
    parameterizes the algorithm \textsc{ApproxFccMc${}_{\beta}$}.
    Define the following constants in terms of~\(\beta\):
    \begin{equation*}
      \tau \coloneqq 1 - \dfrac{\beta}{\GWalpha},
      \quad
      \sigma \coloneqq \frac{2}{3}\tau,
      \quad
      \eps \coloneqq \frac{\tau}{3(3 - 2\tau)},
      \quad
      \Chernoff \coloneqq \frac{2\tau}{9-7\tau},
      \quad
      \text{and}
      \quad
      C \coloneqq \frac{2187\pi}{2\GWalpha^2\tau^3}.
    \end{equation*}
    \noindent
    \algrenewcommand\algorithmicrequire{\textbf{Input:}}
    \Require a graph $G = (V,E)$ and nonzero edge weights $z \in \Lp{E}$.
    \algrenewcommand\algorithmicensure{\textbf{Output:}}
    \Ensure \Call{ApproxFccMc${}_{\beta}$}{$G, z$} returns a nonzero \(w \in
    \Lp{E}\) and a \(\beta\)-certificate \((\rho, \mu, S, y, x)\)
    for \((w, z)\) with probability at least \(1-2/\card{V}\), such
    that \(\card{\supp(y)} \le \ceil{C \ln(\card{V})}\).
    The algorithm runs in strongly polynomial time.
    \Procedure{ApproxFccMc${}_{\beta}$}{$G, z$}
    \State Find a feasible pair $(w,x)$, $(\bar{\mu}, Y)$
    for~\cref{eq:GW-eps-polar-def} {s.t.} $\bar{\mu} \le \iprodt{z}{w} +
    \sigma\norm[\infty]{z}$.
    \Comment Solve $\GW^{\polar}_{\eps}(G, z)$ within \(\sigma\norm[\infty]{z}\)
    \State \textbf{return} $\big(w, $ \Call{Certify${}_{\beta}$}{$G, (w, z), (1, x), (\bar{\mu}, Y)$}$\big)$
  \EndProcedure
  \end{algorithmic}
\end{algorithm}

\begin{theorem}
\label{thm:certificates-from-w-approx}
Let \(\beta \in (0, \GWalpha)\).
Set \(\tau \coloneqq 1- \beta/\GWalpha\) and \(C \coloneqq
2187\pi/(2\GWalpha^2\tau^3)\).
There exists a randomized polynomial-time algorithm that, given a
graph \(G = (V, E)\) on \(n\) vertices and a nonzero \(w \in \Lp{E}\),
computes with probability at least \(1-2/n\) a nonzero vector \(z \in
\Lp{E}\) and a \(\beta\)-certificate \((\rho, \mu, S, y, x)\) for
\((w,z)\) such that the support of the fractional cut cover \(y\) has
size at most \(\ceil[\big]{C\ln(n)}\).
\end{theorem}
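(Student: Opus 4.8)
The plan is to run \Cref{alg:mc_fcc} (\textsc{ApproxMcFcc}$_{\beta}$) and reduce its analysis to \Cref{prop:sampling-GWOpt}. Fix $\beta \in (0, \GWalpha)$ together with the constants $\tau \coloneqq 1 - \beta/\GWalpha$, $\sigma \coloneqq \tfrac{2}{3}\tau$, $\eps \coloneqq \tfrac{\tau}{3(3-2\tau)}$, $\Chernoff \coloneqq \tfrac{2\tau}{9-7\tau}$, and $C \coloneqq 2187\pi/(2\GWalpha^2\tau^3)$, as in \cref{eq:sampling-GWOpt-constants-1,eq:sampling-GWOpt-constants-2}. Given $(G, w)$ with $w \neq 0$, the algorithm first computes a near-optimal primal--dual pair $\bigl(\tilde{Y}, (\tilde{\rho}, x)\bigr)$ for the Goemans--Williamson SDP~\cref{eq:GW-intro-pd} with $\tilde{\rho} \le \iprod{\tfrac14\Laplacian_G(w)}{\tilde{Y}} + \sigma\norm[\infty]{w}$; as in the Slater-point discussion around \cref{eq:GW-polar-def} (and as in \Cref{thm:rounding-algorithm}), such a pair is computable in polynomial time, and small rounding errors will be absorbed by the slacks below. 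It then sets $\bar{\mu} \coloneqq 1$, $Y \coloneqq (1-\eps)\tilde{Y} + \eps I$, $z \coloneqq \tfrac14\Laplacian_G^*(Y)$, and $\bar{\rho} \coloneqq (1-\eps)\iprodt{\ones}{x} + \tfrac{\eps}{2}\norm[1]{w}$. The heart of the proof is to show that $(w, z) \in \GWOpt_{\eps, \sigma}(G)$ with $(\bar{\rho}, x)$ and $(\bar{\mu}, Y)$ as witnesses; then the call $\textsc{Certify}_{\beta}\bigl(G, (w,z), (\bar{\rho},x), (\bar{\mu},Y)\bigr)$ made by \Cref{alg:mc_fcc} is exactly the object analyzed by \Cref{prop:sampling-GWOpt}.

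Next I would verify the three conditions in the definition~\cref{eq:GWOpt-approx-def}. The pair $(\bar{\rho}, x)$ is feasible for~\cref{eq:GW-eps-def-dual}: the semidefinite constraint $\Diag(x) \succeq \tfrac14\Laplacian_G(w)$ is inherited from feasibility of $(\tilde{\rho}, x)$ for~\cref{eq:GW-gaugef-rho}, and $\bar{\rho} = (1-\eps)\iprodt{\ones}{x} + \tfrac{\eps}{2}\iprodt{\ones}{w}$ holds by construction because $w \ge 0$. The pair $(\bar{\mu}, Y)$ is feasible for~\cref{eq:GW-eps-polar-gaugef}: from $\tilde{Y} \in \elliptope{V}$ we get $\diag(Y) = (1-\eps)\ones + \eps\ones = \bar{\mu}\ones$ and $Y = (1-\eps)\tilde{Y} + \eps I \succeq \eps I = \eps\bar{\mu}I$, while $\tfrac14\Laplacian_G^*(Y) = z$ by definition. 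For the gap condition $\iprodt{w}{z} \ge (1-\sigma)\bar{\rho}\bar{\mu}$, I would use adjointness of $\Laplacian_G$ and the identity $\iprod{\tfrac14\Laplacian_G(w)}{I} = \tfrac12\norm[1]{w}$ to write $\iprodt{w}{z} = (1-\eps)\iprod{\tfrac14\Laplacian_G(w)}{\tilde{Y}} + \tfrac{\eps}{2}\norm[1]{w} \ge (1-\eps)\bigl(\tilde{\rho} - \sigma\norm[\infty]{w}\bigr) + \tfrac{\eps}{2}\norm[1]{w}$, and then combine this with $\tilde{\rho} \ge \iprodt{\ones}{x}$ (feasibility in~\cref{eq:GW-gaugef-rho}) and the chain $\iprodt{\ones}{x} \ge \GW(G, w) \ge \mc(G, w) \ge \norm[\infty]{w}$ — the last step because the cut $\delta(\{i\})$ has weight at least $w_{ij}$ for a maximum-weight edge $ij$. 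A short computation then gives $\iprodt{w}{z} - (1-\sigma)\bar{\rho} \ge \sigma\bigl((1-\eps)(\iprodt{\ones}{x} - \norm[\infty]{w}) + \tfrac{\eps}{2}\norm[1]{w}\bigr) \ge 0$, and the same estimate shows $\iprodt{w}{z} \ge (1-\sigma)\bar{\rho} > 0$, hence $z \neq 0$.

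With membership in $\GWOpt_{\eps, \sigma}(G)$ established, I would finish by invoking \Cref{prop:sampling-GWOpt}. The constants are chosen precisely so that $\GWalpha(1-\Chernoff)(1-\sigma)(1-\eps) = \GWalpha(1-\tau) = \beta$, so that proposition applies to the call to $\textsc{Certify}_{\beta}$ and shows that, with probability at least $1 - 2/n$, its output $(\rho, \mu, S^{\max}, y, x)$ is a $\beta$-certificate for $(w, z)$ with $\card{\supp(y)} \le \ceil{C\ln n}$. Combining this with the polynomial-time SDP solve of the first step and the running-time accounting in \Cref{prop:sampling-GWOpt} (one Cholesky factorization, $\ceil{C\ln n}\cdot n$ Gaussian samples, and $O(n^2\ln n)$ additional work) yields the claimed randomized polynomial-time algorithm.

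I expect the main obstacle to be the gap inequality $\iprodt{w}{z} \ge (1-\sigma)\bar{\rho}\bar{\mu}$: this is exactly the point where the additive error $\sigma\norm[\infty]{w}$ incurred by solving the GW SDP only approximately must be absorbed into the multiplicative slack $1-\sigma$, which is possible only because the attainable dual objective $\iprodt{\ones}{x}$ dominates $\norm[\infty]{w}$; the value $\sigma = \tfrac23\tau$ is comfortably large enough for this, and it propagates correctly through the identity $(1-\Chernoff)(1-\sigma)(1-\eps) = 1-\tau$ to produce the final factor $\beta$. The only other point requiring care is that the near-optimal primal--dual SDP solution in the first step be genuinely computable in polynomial time and robust to rounding, which is handled by the Slater-point arguments referenced above, exactly as in the proof of \Cref{thm:rounding-algorithm}.
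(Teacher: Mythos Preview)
Your proposal is correct and follows essentially the same approach as the paper: compute a near-optimal primal–dual pair for~\cref{eq:GW-intro-pd}, perturb to obtain \((\bar{\rho},x)\) and \((1,Y)\), verify these witness \((w,z)\in\GWOpt_{\eps,\sigma}(G)\), and invoke \Cref{prop:sampling-GWOpt}. The only cosmetic difference is in how you establish the gap inequality \(\iprodt{w}{z}\ge(1-\sigma)\bar{\rho}\): the paper bounds \(\bar{\rho}\) from above by \(\iprodt{w}{z}+(1-\eps)\sigma\norm[\infty]{w}\) and then uses \((1-\eps)\norm[\infty]{w}\le\bar{\rho}\), whereas you compute the difference \(\iprodt{w}{z}-(1-\sigma)\bar{\rho}\) directly; both routes rely on the same two facts (\(\iprodt{\ones}{x}\ge\GW(G,w)\ge\norm[\infty]{w}\) and the \(\sigma\norm[\infty]{w}\) duality gap) and are equivalent.
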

\begin{proof}
  Let \(w \in \Lp{E}\) be nonzero.
  Define \(\sigma \coloneqq \frac{2}{3}\tau\) and \(\eps \coloneqq
  \tau/(9 - 6\tau)\) as in \Cref{prop:sampling-GWOpt}.
  We follow \cref{alg:mc_fcc}.
  By (nearly) solving the primal-dual SDPs in~\cref{eq:GW-intro-pd},
  one can compute in polynomial time a feasible solution \(\tilde{Y}\)
  for~\cref{eq:GW-intro-def} and a feasible solution
  \((\tilde{\rho}, x)\) for~\cref{eq:GW-gaugef-rho} such that
  \begin{equation*}
    \tilde{\rho}
    \le
    \iprod{\tfrac{1}{4}\Laplacian_G(w)}{\tilde{Y}} + \sigma \norm[\infty]{w}.
  \end{equation*}
  Set
  \begin{equation}
    \label{eq:10}
    Y
    \coloneqq
    (1 - \eps) \tilde{Y} + \eps I,
    \qquad
    z \coloneqq \tfrac{1}{4}\Laplacian_G^*(Y),
    \qquad
    \bar{\rho}
    \coloneqq
    (1 - \eps) \iprodt{\ones}{x} + \tfrac{\eps}{2} \norm[1]{w}.
  \end{equation}
  We claim that
  \begin{equation}
    \label{eq:8}
    \text{%
      \((\bar{\rho}, x)\) and \((1, Y)\)
      witness the membership \((w, z) \in \GWOpt_{\eps, \sigma}(G)\).
    }
  \end{equation}
  It is immediate that \((\bar{\rho}, x)\) is feasible
  in~\cref{eq:GW-eps-def-dual}, whereas \(Y \succeq \eps I\) implies
  \((1, Y)\) is feasible in~\cref{eq:GW-eps-polar-gaugef}.
  Since \(\norm[\infty]{w} \le \GW(G, w)\), we can use
  \cref{eq:GW-eps-bound} and~\cref{eq:GW-eps-def-dual} to see
\begin{equation}
  \label{eq:certificates-from-w-approx}
  (1 - \eps) \norm[\infty]{w}
  \le (1 - \eps) \GW(G, w)
  \le \GW_{\eps}(G, w)
  \le \bar{\rho}.
\end{equation}
Then
\begin{detailedproof}
  \bar{\rho}
  &= (1 - \eps) \iprodt{\ones}{x} + \tfrac{\eps}{2} \norm[1]{w}
  &&\text{by \cref{eq:10}}\\
  &\le (1 - \eps)\paren[\big]{
    \iprod{\tfrac{1}{4}\Laplacian_G(w)}{\tilde{Y}}
    + \sigma \norm[\infty]{w}
  }
  + \tfrac{\eps}{2} \norm[1]{w}
  &&\text{since \(
    \iprodt{\ones}{x}
    \le \tilde{\rho}
    \le \iprod{\tfrac{1}{4}\Laplacian_G(w)}{\tilde{Y}}
    + \sigma \norm[\infty]{w}
  \)}\\
  &= \iprod{\tfrac{1}{4}\Laplacian_G(w)}{(1 - \eps) \tilde{Y} + \eps I}
  + (1 - \eps) \sigma \norm[\infty]{w}
  &&\text{since \(\tfrac{\eps}{2}\norm[1]{w} =
    \iprod{\tfrac{1}{4}\Laplacian_G(w)}{\eps I}\)}\\
  &= \iprodt{w}{z}
  + (1 - \eps) \sigma \norm[\infty]{w}
  &&\text{by the definitions of \(Y\) and \(z\)}\\
  &\le \iprodt{w}{z} + \sigma \bar{\rho}
  &&\text{by~\cref{eq:certificates-from-w-approx}}.
\end{detailedproof}
This completes the proof of~\cref{eq:8}.
Moreover, as \(w \neq 0\), we get
from~\cref{eq:certificates-from-w-approx} that \(\bar{\rho} > 0\).
Hence \(\iprodt{w}{z} \ge (1 - \sigma)\bar{\rho} > 0\), as
\(\sigma < 1\).
Thus \(z \neq 0\).
\Cref{prop:sampling-GWOpt} then finishes the proof.
\end{proof}

\begin{theorem}
\label{thm:certificates-from-z-approx}
Let \(\beta \in (0, \GWalpha)\).
Set \(\tau \coloneqq 1- \beta/\GWalpha\) and \(C \coloneqq
2187\pi/(2\GWalpha^2\tau^3)\).
There exists a randomized polynomial-time algorithm that, given a
graph \(G = (V, E)\) on \(n\) vertices and a nonzero \(z \in \Lp{E}\),
computes with probability at least \(1-2/n\) a nonzero vector \(w \in
\Lp{E}\) and a \(\beta\)-certificate \((\rho, \mu, S, y, x)\) for
\((w,z)\) such that the support of the fractional cut cover \(y\) has
size at most \(\ceil[\big]{C\ln(n)}\).
\end{theorem}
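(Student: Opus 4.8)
The plan is to mirror the proof of \cref{thm:certificates-from-w-approx}, exchanging the roles of the primal and dual SDPs: instead of nearly solving \(\GW_{\eps}(G, w)\) and perturbing a primal elliptope matrix, we nearly solve \(\GW_{\eps}^{\polar}(G, z)\), extract a candidate weight vector \(w\) from the dual, and hand everything to the certification routine of \cref{prop:sampling-GWOpt}. Concretely, I would follow \cref{alg:fcc_mc}. Set \(\sigma \coloneqq \tfrac{2}{3}\tau\) and \(\eps \coloneqq \tau/(3(3 - 2\tau))\) as in \cref{prop:sampling-GWOpt}. Using the relaxed Slater points of~\cref{eq:GW-polar-def}, which remain relaxed Slater points in~\cref{eq:GW-eps-polar-def}, compute in polynomial time a feasible solution \((w, x)\) for~\cref{eq:GW-eps-polar-supf} and a feasible solution \((\bar{\mu}, Y)\) for~\cref{eq:GW-eps-polar-gaugef} with small duality gap, namely \(\bar{\mu} \le \iprodt{z}{w} + \sigma\norm[\infty]{z}\), and set \(\bar{\rho} \coloneqq 1\).

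The first task is to verify that \((\bar{\rho}, x) = (1, x)\) and \((\bar{\mu}, Y)\) witness the membership \((w, z) \in \GWOpt_{\eps, \sigma}(G)\) in the sense of~\cref{eq:GWOpt-approx-def}. Feasibility of \((1, x)\) in~\cref{eq:GW-eps-def-dual} is immediate, since the constraints \(\Diag(x) \succeq \tfrac{1}{4}\Laplacian_G(w)\) and \((1 - \eps)\iprodt{\ones}{x} + \tfrac{\eps}{2}\iprodt{\ones}{w} \le 1\) are exactly the feasibility constraints of \((w, x)\) in~\cref{eq:GW-eps-polar-supf} read with \(\rho \coloneqq 1\); feasibility of \((\bar{\mu}, Y)\) in~\cref{eq:GW-eps-polar-gaugef} holds by construction. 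The inner-product condition \(\iprodt{w}{z} \ge (1 - \sigma)\bar{\rho}\bar{\mu} = (1 - \sigma)\bar{\mu}\) is the crux. Since \((\bar{\mu}, Y)\) is feasible in~\cref{eq:GW-eps-polar-gaugef}, one has \(\GW_{\eps}^{\polar}(G, z) \le \bar{\mu}\), so by~\cref{eq:GW-polar-norm-infty-lowerbound} and~\cref{eq:GW-eps-polar-bound} we obtain \(\norm[\infty]{z} \le \GW^{\polar}(G, z) \le \GW_{\eps}^{\polar}(G, z) \le \bar{\mu}\); plugging this into \(\bar{\mu} \le \iprodt{z}{w} + \sigma\norm[\infty]{z}\) yields \((1 - \sigma)\bar{\mu} \le \iprodt{z}{w}\), as needed. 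The same chain shows \(\iprodt{w}{z} \ge (1 - \sigma)\bar{\mu} \ge (1 - \sigma)\norm[\infty]{z} > 0\), since \(z \neq 0\) and \(\sigma < 1\); hence \(w \neq 0\).

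With the membership witnessed and \(w \neq 0 \neq z\) in hand, I would invoke \cref{prop:sampling-GWOpt} on input \(\bigl(G, (w, z), (1, x), (\bar{\mu}, Y)\bigr)\), which---exactly as called in \cref{alg:fcc_mc}---runs \textsc{Certify}\({}_{\beta}\) to produce, with probability at least \(1 - 2/n\), a \(\beta\)-certificate \((\rho, \mu, S^{\max}, y, x)\) for \((w, z)\) with \(\card{\supp(y)} \le \ceil{C\ln(n)}\), where \(C = 2187\pi/(2\GWalpha^2\tau^3)\); the polynomial running time (indeed, the strong-polynomiality claim) is inherited from \cref{prop:sampling-GWOpt}. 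I do not expect a genuine obstacle here: the argument is the mirror image of \cref{thm:certificates-from-w-approx}, and the only point that needs a moment's care is the short inequality chain \(\norm[\infty]{z} \le \GW^{\polar}(G, z) \le \GW_{\eps}^{\polar}(G, z) \le \bar{\mu}\), which is what makes the witness property \(\iprodt{w}{z} \ge (1 - \sigma)\bar{\mu}\) and the nonzeroness of \(w\) go through.
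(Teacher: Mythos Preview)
Your proposal is correct and follows essentially the same approach as the paper's proof: set \(\sigma \coloneqq \tfrac{2}{3}\tau\) and \(\eps \coloneqq \tau/(9-6\tau)\), nearly solve the primal-dual pair~\cref{eq:GW-eps-polar-def} to obtain \((w,x)\) and \((\bar{\mu},Y)\) with \(\bar{\mu} \le \iprodt{z}{w} + \sigma\norm[\infty]{z}\), use the chain \(\norm[\infty]{z} \le \GW^{\polar}(G,z) \le \GW_{\eps}^{\polar}(G,z) \le \bar{\mu}\) to verify that \((1,x)\) and \((\bar{\mu},Y)\) witness \((w,z) \in \GWOpt_{\eps,\sigma}(G)\) and that \(w \neq 0\), and then invoke \cref{prop:sampling-GWOpt}. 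The paper's proof is the same argument, organized in the same order.
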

\begin{proof}
  Let \(z \in \Lp{E}\) be nonzero.
  Define \(\sigma \coloneqq \tfrac{2}{3}\tau\) and \(\eps \coloneqq \tau/(9 -
  6\tau)\), as in \Cref{prop:sampling-GWOpt}.
  We follow \cref{alg:fcc_mc}.
  By (nearly) solving the primal-dual SDPs
  in~\cref{eq:GW-eps-polar-def}, one can compute in polynomial time a
  feasible solution \((w,x)\) for~\cref{eq:GW-eps-polar-supf} and a
  feasible solution \((\bar{\mu}, Y)\) for~\cref{eq:GW-eps-polar-gaugef}
  such that
  \begin{equation*}
    \bar{\mu} \le \iprodt{z}{w} + \sigma \norm[\infty]{z}.
  \end{equation*}
  By combining~\cref{eq:GW-polar-norm-infty-lowerbound},
  \cref{eq:GW-eps-polar-bound}, and~\cref{eq:GW-eps-polar-gaugef}, we
  see that
  \begin{equation}
    \label{eq:norm-infty-le-GW-eps-polar}
    \norm[\infty]{z}
    \le
    \GW^{\polar}(G,z)
    \le
    \GW_{\eps}^{\polar}(G,z)
    \le
    \bar{\mu}.
  \end{equation}
  We claim that
  \begin{equation}
    \label{eq:9}
    \text{%
      \((1, x)\) and \((\bar{\mu}, Y)\)
      witness the membership \((w, z) \in \GWOpt_{\eps, \sigma}(G)\).
    }
  \end{equation}
  Feasibility of \((1,x)\) and \((\bar{\mu},Y)\) in the appropriate SDPs are
  easily verified.
  Moreover, by~\cref{eq:norm-infty-le-GW-eps-polar}, one has
  \begin{inlinemath}
    \bar{\mu}
    \le \iprodt{w}{z}
    + \sigma \norm[\infty]{z}
    \le \iprodt{w}{z}
    + \sigma \bar{\mu}.
  \end{inlinemath}
  This proves~\cref{eq:9}.
  Moreover, as \(z \neq 0\), we get
  from~\cref{eq:norm-infty-le-GW-eps-polar} that \(\bar{\mu} > 0\).
  Hence \(\iprodt{w}{z} \ge \bar{\mu}(1 - \sigma) > 0\) as \(\sigma < 1\).
  Thus \(w \neq 0\).
  \cref{prop:sampling-GWOpt} finishes the proof.
\end{proof}

\begin{remark}

\Cref{thm:certificates-from-w-approx} uses that the
SDP~\cref{eq:GW-intro-pd} is nearly solvable in polynomial time.
However, \Cref{thm:certificates-from-z-approx} relies on nearly
solving the SDP~\cref{eq:GW-eps-polar-def}, which is introduced in
this work.
\Cref{sec:solvers} proves this can be done in polynomial time.
\end{remark}

\section{Geometric Representation of Graphs}
\label{sec:geometric-graphs}

\begin{definition}
A \emph{hypersphere representation of} a graph \(G = (V, E)\) is a
map \(u \colon V \to \Reals^d\) for some \(d \in \Naturals\) such that
the map \(i \in V \mapsto \norm{u_i} \in \Reals\) is constant.
Such constant is the \emph{radius} of~\(u\), denoted by~\(\radius(u)\).
We~denote by~\(\Hcal(G)\) the set of all hypersphere representations
of~\(G\).
\end{definition}

Let \(G = (V, E)\) be a graph, and let \(\mu \in \Reals_+\).
Hypersphere representations of~\(G\) with squared radius~\(\mu\) are
directly related to \(\mu\elliptope{V}\) via their Gram matrices: if
\(u \in \Hcal(G)\) has squared radius~\(\mu\) and one defines a
matrix~\(U\) with columns \(\set{u_i}_{i \in V}\), then
\(U^{\transp} U \in \mu \elliptope{V}\); conversely, if
\(X \in \mu\elliptope{V}\), then the columns of~\(X^{1/2}\) form a
hypersphere representation of~\(G\) with squared radius~\(\mu\).

Let \(X \in \mu\elliptope{V}\) be the Gram matrix corresponding to a
hypersphere representation \(u \in \Hcal(G)\).
Then for every \(z \in \Lp{E}\),
\begin{equation}
  \label{eq:4}
  \Laplacian_G^*(X) \ge z
  \text{ if and only if }
  \norm{u_i - u_j}^2 \ge z_{ij}
  \text{ for every }
  ij \in E.
\end{equation}
In this way, the constraint defined by the adjoint of the Laplacian
has a natural geometric interpretation.

The study of geometric representations of graphs has been very
fruitful~\cite{Lovasz19a}.
To~our knowledge, geometric representations provided the first proof
that specializing the SDP in~\cref{eq:GW-polar-def-intro} to the case
\(z = \ones\) recovers the relationship with the vector chromatic
number.
A \emph{unit-distance representation of} a graph \(G = (V,E)\) is a hypersphere
representation \(u \colon V \to \Reals^d\) of~\(G\) such that
\(\norm{u_i - u_j}^2 = 1\) for every \(ij \in E\).
The~\emph{hypersphere number} of \(G\), denoted by \(t(G)\), is the
smallest squared radius of a unit-distance representation of \(G\).
Using semidefinite programming, one can write
\begin{align*}
  t(G)
  &= \inf\setst{
    \radius(u)^2
  }{
    u \in \Hcal(G)
    \text{ such that }
    \norm{u_i - u_j}^2 = 1
    \text{ for every }
    ij \in E
  }\\
  &= \inf\setst{
    \mu
  }{
    \mu \in \Reals_+,\,
    X \in \Psd{V},\,
    \diag(X) = \mu\ones,\,
    \Laplacian_G^*(X) = \ones
  }.
\end{align*}
The only differences between this optimization problem and the
problem~\cref{eq:GW-polar-def} specialized to \(z = \ones\) appear in
the constraint featuring the adjoint of the Laplacian: the \(\tfrac{1}{4}\)
factor is gone, and `\(\ge\)' was changed to `\(=\)'.
Lovász \cite[p.~23]{Lovasz2003} proved that
\begin{equation}
  \label{eq:Lovasz-theta-unit-distance-representations}
  2t(G) + \frac{1}{\theta(\overline{G})} = 1,
\end{equation}
where \(\theta\) denotes the \emph{Lovász theta function}
\cite{Lovasz1979}.
The similarities between the Lovász theta function \(\theta\) and the
vector chromatic number are already discussed in the work introducing
\(\chivec\) \cite{KargerMotwaniEtAl1998}.
In fact, \(\chivec(G) = \theta'(\overline{G})\), where
\(\overline{G}\) denotes the complement of a graph and \(\theta'\),
commonly referred to as Schrijver's \(\theta'\) function, denotes a
variant of \(\theta\) introduced independently
in~\cite{McElieceRodemichEtAl1978} and \cite{Schrijver1979}.
A natural variation
of~\cref{eq:Lovasz-theta-unit-distance-representations} that
involves~\(\theta'\) (see, e.g.,
\cite[Sec.~4]{deCarliSilvaTuncel2013}) is
\[
  2t'(G) + \frac{1}{\chivec(G)} = 1,
\]
where
\[
  t'(G)
  \coloneqq \inf\setst{
    \mu
  }{
    X \in \Psd{V},\,
    \diag(X) = \mu\ones,\,
    \Laplacian_G(X) \ge \ones
  }
  = \tfrac{1}{4}\GW^{\polar}(G).
\]
It is then immediate that
\begin{equation}
  \label{eq:GW-polar-theta-prime}
  \GW^{\polar}(G, \ones)
  = 2\paren[\bigg]{1 - \frac{1}{\chivec(G)}}.
\end{equation}
Our introduction presented~\cref{eq:vector-coloring-approximates-fcc}
as a motivating fact of our work.
Note that~\cref{eq:GW-polar-theta-prime} and \Cref{cor:1} provide an
alternative proof of~\cref{eq:vector-coloring-approximates-fcc}.
\Cref{eq:GW-polar-theta-prime} is another manifestation of the
well-known connection between the Lovász theta function (and its
variants) with the maximum cut problem --- see, e.g.,
\cite{LaurentPoljakEtAl1997,deCarliSilvaTuncel2015}.

The optimization problems we have considered optimize different
objective functions over certain hypersphere representations of a
graph \(G = (V, E)\).
For each \(z \in \Lp{E}\), an \emph{optimal fcc representation of \((G,
z)\)} is a hypersphere representation \(u \colon V \to \Reals^d\)
such that \(\tfrac{1}{4}\norm{u_i - u_j}^2 \ge z_{ij}\) for every edge
\(ij \in E\), and with minimal radius among such representations.
For each \(w \in \Lp{E}\), an \emph{optimal mc representation of \((G,
  w)\)} is a hypershere representation \(u \colon V \to \Reals^d\)
with radius~1 which maximizes \(\sum_{ij \in E} w_{ij} \norm{u_i -
  u_j}^2\).
Define, for every \(w, z \in \Lp{E}\),
\begin{align*}
  \FCC(G, z)
  &\coloneqq \argmin \setst[\big]{
    \radius(u)^2
  }{
    u \in \Hcal(G)
    \text{ such that }
    \tfrac{1}{4}\norm{u_i - u_j}^2 \ge z_{ij}
    \text{ for every }
    ij \in E
  }, \text{ and }\\
  \MC(G, w)
  &\coloneqq \argmax \setst[\bigg]{
    \frac{1}{4}\sum_{ij \in E} w_{ij}\norm{u_i - u_j}^2
  }{
    u \in \Hcal(G),\,
    \radius(u) = 1
  }.
\end{align*}
The connection between these two sets of optimal geometric
representations, which we illustrate with
\Cref{fig:hypersphere-representation}, is captured by the following
theorem.

\begin{theorem}
\label{thm:geometric-representation-equivalence}
Let \(G = (V, E)\) be a graph.
If \(w \in \Lp{E}\) is nonzero, then for every \(u \in \MC(G, w)\)
there exists \(z \in \optz_G(w)\) such that
\begin{equation}
  \label{eq:12}
  (\sqrt{\mu}u_i)_{i \in V} \in \FCC(G, z),
  \text{ where }
  \mu
  \coloneqq \min\setst{
    \rho \in \Lp{}
  }{
    \rho\tfrac{1}{4}\norm{u_i - u_j}^2 \ge z_{ij}
    \text{ for every } ij \in E
  }.
\end{equation}
Conversely, if \(z \in \Lp{E}\) is nonzero, then for every \(v \in
\FCC(G, z)\) there exists \(w \in \optw_G(z)\) such that
\begin{equation}
  \label{eq:28}
  \paren[\big]{\radius(v)^{-1} v_i}_{i \in V} \in \MC(G, w).
\end{equation}
\end{theorem}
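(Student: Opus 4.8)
The plan is to reduce both implications to statements about the semidefinite programs~\cref{eq:GW-intro-def} and~\cref{eq:GW-polar-def-intro} through Gram matrices. Recall the correspondence recorded around~\cref{eq:4}: a hypersphere representation $u\in\Hcal(G)$ of squared radius~$\mu$ has a Gram matrix $X\in\mu\elliptope{V}$ with $\Laplacian_G^*(X)_{ij}=\norm{u_i-u_j}^2$ for every $ij\in E$, and conversely the columns of~$X^{1/2}$ realize any $X\in\mu\elliptope{V}$ as such a representation. Hence $u\in\MC(G,w)$ exactly when $\radius(u)=1$ and $\iprod{\tfrac{1}{4}\Laplacian_G(w)}{Y}=\GW(G,w)$ for the Gram matrix $Y\in\elliptope{V}$ of~$u$; and $v\in\FCC(G,z)$ exactly when $\tfrac{1}{4}\Laplacian_G^*(X)\ge z$ for the Gram matrix $X$ of~$v$ and $\radius(v)^2=\GW^{\polar}(G,z)$, i.e., $(\radius(v)^2,X)$ is optimal in~\cref{eq:GW-polar-def-intro}. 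Under this dictionary the theorem becomes a matter of pairing optimal solutions of the relevant primal and dual SDPs, which is controlled by~\cref{prop:optz-optw-cones}, the Cauchy--Schwarz-type inequality~\cref{eq:GW-CS}, and positive definiteness of the gauges $\GW(G,\cdot)$ and $\GW^{\polar}(G,\cdot)$.

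For the first implication, I~would take a nonzero~$w$ and $u\in\MC(G,w)$, let $Y\in\elliptope{V}$ be the Gram matrix of~$u$, and set $z\coloneqq\tfrac{1}{4}\Laplacian_G^*(Y)\in\Lp{E}$. Then $(z,Y)$ is feasible for~\cref{eq:GW-supf-explicit-z} with objective value $\iprodt{w}{z}=\iprod{\tfrac{1}{4}\Laplacian_G(w)}{Y}=\GW(G,w)$, so it is optimal there and $z\in\optz_G(w)$ by~\cref{prop:optz-optw-cones}. Moreover $(1,Y)$ is feasible for~\cref{eq:GW-polar-def-intro} with data $(G,z)$ because $\tfrac{1}{4}\Laplacian_G^*(Y)=z$, whence $\GW^{\polar}(G,z)\le1$; and $\GW(G,w)=\iprodt{w}{z}\le\GW(G,w)\GW^{\polar}(G,z)$ by~\cref{eq:GW-CS}, which, since $\GW(G,w)>0$, gives $\GW^{\polar}(G,z)\ge1$. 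Therefore $\GW^{\polar}(G,z)=1$ and $(1,Y)$ is optimal in~\cref{eq:GW-polar-def-intro}, i.e., $u\in\FCC(G,z)$. Finally $z_{ij}=\tfrac{1}{4}\norm{u_i-u_j}^2$ for every $ij\in E$, and $\GW(G,w)=\tfrac{1}{4}\sum_{ij\in E}w_{ij}\norm{u_i-u_j}^2>0$ has at least one strictly positive term, so some edge $ij$ has $\norm{u_i-u_j}^2>0$; hence the minimum defining~$\mu$ in~\cref{eq:12} equals~$1$, and $(\sqrt{\mu}u_i)_{i\in V}=u\in\FCC(G,z)$.

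For the converse I~would take a nonzero~$z$ and $v\in\FCC(G,z)$, put $\mu_0\coloneqq\radius(v)^2=\GW^{\polar}(G,z)$ (which is positive as $z\ne0$), and let $X\in\mu_0\elliptope{V}$ be the Gram matrix of~$v$, so $\tfrac{1}{4}\Laplacian_G^*(X)\ge z$. Then I~would pick an optimal solution $(\bar w,\bar x)$ of~\cref{eq:GW-polar-supf}; this yields $\bar w\in\optw_G(z)$ by~\cref{prop:optz-optw-cones}, $\bar w\ge0$, $\iprodt{z}{\bar w}=\mu_0$, and feasibility of $(1,\bar x)$ in~\cref{eq:GW-gaugef-rho}, so $\GW(G,\bar w)\le1$. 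Set $u\coloneqq(\radius(v)^{-1}v_i)_{i\in V}$, a hypersphere representation of radius~$1$ whose Gram matrix is $Y\coloneqq\mu_0^{-1}X\in\elliptope{V}$. Using $Y=\mu_0^{-1}X$, adjointness of~$\Laplacian_G$, $\bar w\ge0$, and $\tfrac{1}{4}\Laplacian_G^*(X)\ge z$, one gets $\iprod{\tfrac{1}{4}\Laplacian_G(\bar w)}{Y}=\mu_0^{-1}\iprodt{\bar w}{\tfrac{1}{4}\Laplacian_G^*(X)}\ge\mu_0^{-1}\iprodt{\bar w}{z}=1$; on the other hand $\iprod{\tfrac{1}{4}\Laplacian_G(\bar w)}{Y}\le\GW(G,\bar w)\le1$ by feasibility of $Y$ in~\cref{eq:GW-intro-def} together with $\GW(G,\bar w)\le1$. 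Hence $\iprod{\tfrac{1}{4}\Laplacian_G(\bar w)}{Y}=\GW(G,\bar w)$, so $Y$ attains the maximum in~\cref{eq:GW-intro-def}, i.e., $u\in\MC(G,\bar w)$, and $w\coloneqq\bar w$ works. I~do not expect a genuine obstacle: the main (mild) difficulty is the bookkeeping of the Gram-matrix correspondence --- in particular, realizing optimal SDP solutions as hypersphere representations of the prescribed radius --- and the identification $\mu=1$ in~\cref{eq:12}, which is exactly the place where positive definiteness of $\GW(G,\cdot)$ is used; everything else is weak duality and~\cref{eq:GW-CS}.
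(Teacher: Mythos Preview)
Your proof is correct and follows essentially the same route as the paper's: both arguments pass to Gram matrices, identify membership in \(\MC(G,w)\) and \(\FCC(G,z)\) with optimality in the SDPs~\cref{eq:GW-intro-def} and~\cref{eq:GW-polar-def-intro}, and pair primal and dual optima via \cref{prop:optz-optw-cones}. The only stylistic differences are that you fix the specific choice \(z=\tfrac{1}{4}\Laplacian_G^*(Y)\) (which the paper mentions in the subsequent remark as the natural choice) and you deduce optimality of \((1,Y)\) and of \(Y\) by sandwiching with~\cref{eq:GW-CS}, whereas the paper re-derives these bounds directly from the dual variable~\(x\); both are the same weak-duality computation in different clothing.
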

\begin{remark}
Let \(G = (V, E)\) be a graph.
Note that~\cref{eq:12} describes an algorithm producing an element in
\(\FCC(G, z)\) from inputs \(G\), \(w \in \Lp{E}\), \(u \in \MC(G,
w)\), and \(z \in \optz_G(w)\).
Similarly,~\cref{eq:28} describes an algorithm producing an element in
\(\MC(G, w)\) from inputs \(G\), \(z \in \Lp{E}\), \(v \in \FCC(G,
z)\), and \(w \in \optw_G(z)\).
If one is given \(w \in \Lp{E}\) and an optimal mc representation \(u
\in \MC(G,w)\) and simply wants to obtain an associated optimal fcc
representation, there is an easily computable choice of \(z \in
\optz_G(w)\) --- namely, \(z_{ij} \coloneqq \tfrac{1}{4}\norm{u_i -
u_j}^2\) for every \(ij \in E\).
\end{remark}
\begin{proof}[Proof of \cref{thm:geometric-representation-equivalence}]

Let \(w \in \Lp{E}\) be nonzero, and let \(u \in \MC(G, w)\).
Let \(Y \in \elliptope{V}\) be the Gram matrix corresponding to~\(u\),
i.e., \(Y_{ij} = \iprodt{u_i}{u_j}\) for every \(i,j \in V\).
Since \(u \in \MC(G, w)\), there exists \(z \in \Lp{E}\) and \((\rho,
x) \in \Lp{} \times \Reals^V\) such that  \((z, Y)\) and \((\rho, x)\)
are optimal solutions
for the SDPs~\cref{eq:GW-supf-explicit-z,eq:GW-gaugef-explicit-z}, respectively.
We will prove that \(v \coloneqq (\sqrt{\mu}u_i)_{i \in V} \in \FCC(G, z)\).
From optimality of \((z,Y)\),
\begin{equation}
  \label{eq:27}
  \radius(v)^2
  = \min\setst{
      \rho \in \Lp{}
      }{
      \rho\tfrac{1}{4}\norm{u_i - u_j}^2 \ge z_{ij}
      \text{ for every } ij \in E
      }
  = \min \setst{\rho \in \Lp{}}{\tfrac{1}{4}\Laplacian_G^*(\rho Y) \ge z}
  = 1.
\end{equation}
Now let \(\tilde{v} \in \Hcal(G)\) be such that
\(\tfrac{1}{4}\norm{\tilde{v}_i - \tilde{v}_j}^2 \ge z_{ij}\) for
every \(ij \in E\), and let \(\tilde{Y}\) be the Gram matrix of
\(\tilde{v}\).
Since \((\rho, x)\) is feasible in~\cref{eq:GW-gaugef-explicit-z},
\begin{detailedproof}
  \rho \radius(\tilde{v})^2
  \ge \iprodt{\ones}{x} \radius(\tilde{v})^2
  = \iprod{\Diag(x)}{\tilde{Y}}
  \ge \iprod{\tfrac{1}{4}\Laplacian_G(w)}{\tilde{Y}}
  = \sum_{ij \in E} w_{ij} \tfrac{1}{4}\norm{\tilde{v}_i - \tilde{v}_j}^2
  \ge \iprodt{w}{z}
  = \rho.
\end{detailedproof}
As \(w \neq 0\), we have that \(\rho > 0\), so
\(\radius(\tilde{v})^2 \ge 1\).
As the latter inequality holds with equality at~\(v\) by~\cref{eq:27},
we~conclude that \(v \in \FCC(G, z)\).

Let \(z \in \Lp{E}\) be nonzero, let \(v \in \FCC(G, z)\), and let
\(Y\) be the Gram matrix of \(v\).
Since \(v \in \FCC(G, z)\), there exists \((w, x) \in \Lp{E} \times
\Reals^V\) such that \((\radius(v)^2, Y)\) and \((w, x)\) are optimal
solutions to~\cref{eq:GW-polar-gaugef,eq:GW-polar-supf}, respectively.
We will prove that
\(u \coloneqq \paren[\big]{\radius(v)^{-1} v_i}_{i \in V} \in \MC(G,w)\).
We claim that
\begin{equation}
  \label{eq:29}
  1
  = \tfrac{1}{4}\sum_{ij \in E} w_{ij} \norm{u_i - u_j}^2.
\end{equation}
Indeed, by optimality of \((\radius(v)^2, Y)\) and \((w, x)\), we have
that
\(
  \radius(v)^2
  \geq \radius(v)^2\iprodt{\ones}{x}
  = \iprod{Y}{\Diag(x)}
  \ge \iprod{Y}{\tfrac{1}{4}\Laplacian_G(w)}
  \ge \iprodt{z}{w}
  = \radius(v)^2
\).
Thus
\begin{detailedproof}
  \radius(v)^2
  = \iprod{Y}{\tfrac{1}{4}\Laplacian_G(w)}
  = \tfrac{1}{4} \sum_{ij \in E} w_{ij}\norm{v_i - v_j}^2
  = \radius(v)^2\tfrac{1}{4} \sum_{ij \in E} w_{ij}\norm{u_i - u_j}^2.
\end{detailedproof}
As \(z \neq 0\), we have that \(\radius(v)^2 > 0\), so~\cref{eq:29}
holds.
Now let \(\tilde{u} \in \Hcal(G)\) be such that \(\radius(\tilde{u}) =
1\).
Let \(\tilde{Y}\) be the Gram matrix corresponding to \(\tilde{u}\).
Then \(u \in \MC(G, w)\) follows from~\cref{eq:29}, as
\begin{equation*}
  \tfrac{1}{4} \sum_{ij \in E} w_{ij} \norm{\tilde{u}_i - \tilde{u}_j}^2
  = \iprod{\tilde{Y}}{\tfrac{1}{4} \Laplacian_G(w)}
  \le \iprod{\tilde{Y}}{\Diag(x)}
  = \radius(\tilde{u})^2\iprodt{\ones}{x}
  \le 1
  = \tfrac{1}{4}\sum_{ij \in E} w_{ij} \norm{u_i - u_j}^2.
  \qedhere
\end{equation*}
\end{proof}

\begin{figure}
  \centering
  \begin{subfigure}{0.4\textwidth}
    \centering
    \vspace{-1.1em}
    \begin{tikzpicture}[tdplot_main_coords]
  \pgfmathsetmacro{\radius}{1.3}
  
  \newcommand{\getCoordinate}[2]{
    ({\radius*sin(#1)*cos(#2)},{\radius*sin(#1)*sin(#2)},{\radius*cos(#1)})
  }
  
  \shade[ball color=gray!20, opacity=0.4] (0, 0, 0) circle [radius=\radius cm];
  \draw[gray,dashed] plot[variable=\t,domain=-180:180,smooth] (\getCoordinate{90}{\t});
  \draw[gray,dashed] plot[variable=\t,domain=-180:180,smooth] (\getCoordinate{\t}{0});
  \draw[-latex, gray, opacity=.5] (0,0,0) -- \getCoordinate{0}{0};
  \draw[-latex, gray, opacity=.5] (0,0,0) -- \getCoordinate{90}{0};
  \draw[-latex, gray, opacity=.5] (0,0,0) -- \getCoordinate{90}{90};
  \draw[gray,dashed,opacity=.5] plot[variable=\t,domain=-180:180,smooth] (\getCoordinate{90}{\t});
  \draw[gray,dashed,opacity=.5] plot[variable=\t,domain=-180:180,smooth] (\getCoordinate{\t}{0});
  
  \pgfmathsetmacro{\phiOne}{90} 
  \pgfmathsetmacro{\thetaOne}{120} 
  \pgfmathsetmacro{\phiTwo}{270} 
  \pgfmathsetmacro{\thetaTwo}{0} 
  \pgfmathsetmacro{\phiThree}{25} 
  \pgfmathsetmacro{\thetaThree}{0} 
  \pgfmathsetmacro{\phiFour}{90} 
  \pgfmathsetmacro{\thetaFour}{35} 

  \draw[-latex, thick] (0,0,0) -- \getCoordinate{\phiOne}{\thetaOne};
  \draw[-latex, thick] (0,0,0) -- \getCoordinate{\phiTwo}{\thetaTwo};
  \draw[-latex, thick] (0,0,0) -- \getCoordinate{\phiThree}{\thetaThree};
  \draw[-latex, thick] (0,0,0) -- \getCoordinate{\phiFour}{\thetaFour};
  
  \draw[blue, thick] \getCoordinate{\phiOne}{\thetaOne} -- \getCoordinate{\phiTwo}{\thetaTwo};
  \draw[blue, thick] \getCoordinate{\phiTwo}{\thetaTwo} -- \getCoordinate{\phiFour}{\thetaFour};
  \draw[blue, thick] \getCoordinate{\phiFour}{\thetaFour} --\getCoordinate{\phiThree}{\thetaThree};
  \draw[blue, thick] \getCoordinate{\phiThree}{\thetaThree} -- \getCoordinate{\phiOne}{\thetaOne};

  \node[right] at \getCoordinate{\phiOne}{\thetaOne} {$u_1$};
  \node[right] at \getCoordinate{\phiTwo}{\thetaTwo} {$u_2$};
  \node[above left] at \getCoordinate{\phiThree}{\thetaThree} {$u_3$};
  \node[below] at \getCoordinate{\phiFour}{\thetaFour} {$u_4$};
  
\end{tikzpicture}
    \caption{mc representation \(u \colon V \to \Reals^3\)}
  \end{subfigure}
  \hspace{.05\textwidth}
  \begin{subfigure}{0.4\textwidth}
    \centering
    \begin{tikzpicture}[tdplot_main_coords]
  \pgfmathsetmacro{\radius}{3}

  \newcommand{\getCoordinate}[2]{
    ({\radius*sin(#1)*cos(#2)},{\radius*sin(#1)*sin(#2)},{\radius*cos(#1)})
  }

  \shade[ball color=gray!20, opacity=0.4] (0, 0, 0) circle [radius=\radius cm];
  \draw[gray,dashed] plot[variable=\t,domain=-180:180,smooth] (\getCoordinate{90}{\t});
  \draw[gray,dashed] plot[variable=\t,domain=-180:180,smooth] (\getCoordinate{\t}{0});
  \draw[gray,dashed,opacity=.5] plot[variable=\t,domain=-180:180,smooth] (\getCoordinate{90}{\t});
  \draw[gray,dashed,opacity=.5] plot[variable=\t,domain=-180:180,smooth] (\getCoordinate{\t}{0});
  
  \pgfmathsetmacro{\phiOne}{90} 
  \pgfmathsetmacro{\thetaOne}{120} 
  \pgfmathsetmacro{\phiTwo}{270} 
  \pgfmathsetmacro{\thetaTwo}{0} 
  \pgfmathsetmacro{\phiThree}{25} 
  \pgfmathsetmacro{\thetaThree}{0} 
  \pgfmathsetmacro{\phiFour}{90} 
  \pgfmathsetmacro{\thetaFour}{35} 
  
  \draw[-latex, thick] (0,0,0) -- \getCoordinate{\phiOne}{\thetaOne};
  \draw[-latex, thick] (0,0,0) -- \getCoordinate{\phiTwo}{\thetaTwo};
  \draw[-latex, thick] (0,0,0) -- \getCoordinate{\phiThree}{\thetaThree};
  \draw[-latex, thick] (0,0,0) -- \getCoordinate{\phiFour}{\thetaFour};
  
  \draw[blue, thick] \getCoordinate{\phiOne}{\thetaOne} -- \getCoordinate{\phiTwo}{\thetaTwo};
  \draw[blue, thick] \getCoordinate{\phiTwo}{\thetaTwo} -- \getCoordinate{\phiFour}{\thetaFour};
  \draw[blue, thick] \getCoordinate{\phiFour}{\thetaFour} --\getCoordinate{\phiThree}{\thetaThree};
  \draw[blue, thick] \getCoordinate{\phiThree}{\thetaThree} -- \getCoordinate{\phiOne}{\thetaOne};

  \node[right] at \getCoordinate{\phiOne}{\thetaOne} {\(v_1\)};
  \node[right] at \getCoordinate{\phiTwo}{\thetaTwo} {\(v_2\)};
  \node[above left] at \getCoordinate{\phiThree}{\thetaThree} {\(v_3\)};
  \node[below] at \getCoordinate{\phiFour}{\thetaFour} {\(v_4\)};

  \pgfmathsetmacro{\radius}{1.3}
  \shade[ball color=gray!20, opacity=0.3] (0, 0, 0) circle [radius=\radius cm];
  \draw[gray,dashed] plot[variable=\t,domain=-180:180,smooth] (\getCoordinate{90}{\t});
  \draw[gray,dashed] plot[variable=\t,domain=-180:180,smooth] (\getCoordinate{\t}{0});
  
  \draw[-latex, thick] (0,0,0) -- \getCoordinate{\phiOne}{\thetaOne};
  \draw[-latex, thick] (0,0,0) -- \getCoordinate{\phiTwo}{\thetaTwo};
  \draw[-latex, thick] (0,0,0) -- \getCoordinate{\phiThree}{\thetaThree};
  \draw[-latex, thick] (0,0,0) -- \getCoordinate{\phiFour}{\thetaFour};
\end{tikzpicture}%
    \caption{and its corresponding fcc representation \(v \colon V \to
      \Reals^3\)}
  \end{subfigure}
  \caption{Geometric equivalence between hypersphere representations
    illustrating \cref{thm:geometric-representation-equivalence}.}
  \label{fig:hypersphere-representation}
\end{figure}

\section{Tightness of our Results}
\label{sec:possible-improvements}

This section discusses several aspects in which our algorithms and
analyses are best possible.
We collect instances of the maximum cut and fractional cut-covering
problems that justify several aspects of our algorithms, including
\begin{enumerate}[(i)]
\item\label{it:obstruction-1} the need to sparsify the cut cover defined in
  \Cref{prop:sdp-rounding},
\item\label{it:obstruction-2} the need to either ``thicken'' edges, as
  in~\cref{sec:rounding}, or to perturb the SDP, as
  in~\cref{subsec:algorithimic_certicates},
\item\label{it:obstruction-3} the asymptotic support size of the
  fractional cut cover we have obtained,
\item\label{it:obstruction-4} the approximation factor.
\end{enumerate}
These aspects are intertwined.
\Cref{it:obstruction-1} motivates the use of repeated sampling.
\Cref{it:obstruction-2} shows that naively sampling cuts until a
fractional cut cover is obtained takes too long for some choice of
weights, and
\cref{it:obstruction-3} discusses the amount of cuts needed to be
sampled.
The whole algorithmic set up grounds our analysis and guides our
discussion of~\cref{it:obstruction-4}.

\subsection{Sparsification of Rounded Solution}
Let \(G = (V, E)\) be a graph, and let \(z \in \Lp{E}\).
Let \((\mu, Y)\) be feasible in~\cref{eq:GW-polar-def-intro}, and set
\begin{equation}
  \label{eq:13}
  y
  \coloneqq \bar{\nu}p,
  \text{ where }
  p_S \coloneqq \prob\paren{\GWrv(Y) = S}
  \text{ for every }
  S \subseteq V,
  \text{ and }
  \bar{\nu} \coloneqq
  \min\setst[\Big]{\nu \in \Lp{}}{
    \nu \sum_{S \subseteq V} p_S \incidvector{\delta(S)} \ge z
  }.
\end{equation}
\Cref{prop:sdp-rounding} implies that \(\bar{\nu} \le \mu/\GWalpha\).
Via~\cref{eq:13} we have a deterministic fractional
cut cover \(y \in \Lp{\Powerset{V}}\) for every feasible solution
\((\mu, Y)\) of~\cref{eq:GW-polar-def-intro}.
It~is then natural to question the necessity of the repeated sampling
approach.
\Cref{re:need-to-randomize} mentions the difficulty of computing the
vector \(p \in \Lp{\Powerset{V}}\), and we now exhibit an instance
where \(y\) has exponential support size.
One may check that
\begin{inlinemath}
  (\bar{\mu},\bar{Y})
  \coloneqq
  \paren[\big]{
    2-\tfrac{2}{n}
    ,
    2I-\tfrac{2}{n}\oprodsym{\ones}
  }
\end{inlinemath}
is an optimal solution to \(\GW^{\polar}(K_n)\) for every nonzero
\(n \in \Naturals\), certified by the dual optimal solution
\begin{inlinemath}
  \paren{\bar{w},\bar{x}}
  \coloneqq
  \paren[\big]{
    \tfrac{4}{n^2}\ones
    ,
    \tfrac{1}{n}\ones
  }
\end{inlinemath}
for \cref{eq:GW-polar-supf}.
For every \(i \in [n]\), let \(g_i\) be independently sampled from the
standard normal distribution, and set \(h \coloneqq \norm{g}^{-1}g\).
One can prove --- see \cref{sec:exp-supp} --- that
\begin{equation}
  \label{eq:exp-supp-pledge}
  \supp(y)
  = \supp(p)
  = \setst{S \subseteq V}{\prob(\GWrv(\bar{Y},h) = S) > 0}
  = \setst{S \subseteq V}{S \neq \emptyset,\, S \neq V}.
\end{equation}
Hence, the vector \(y \in \Lp{\Powerset{V}}\) defined in~\cref{eq:13}
may have exponential support size.

\subsection{Edge Thickening or SDP Perturbation}
The repeated sampling approach naturally arises as a sparsification of
the probability distribution in~\cref{eq:13}.
Let \(G = (V, E)\) be a graph, let \(z \in \Lp{E}\), and let \((\mu,
Y)\) be feasible in~\cref{eq:GW-polar-def-intro}.
Set \(\hat{y}_0 \coloneqq 0\).
For every nonzero \(t \in \Naturals\), set
\begin{equation}
  \label{eq:hat-y-def}
  \hat{y}_t \coloneqq \hat{y}_{t - 1} + e_{S(t)}
  \text{ and }
  \mu_t \coloneqq
  \inf\setst[\Big]{\nu \in \Lp{}}{
    \nu \sum_{S \subseteq V} (\hat{y}_t)_S \incidvector{\delta(S)} \ge z
  },
\end{equation}
where each \(S(t)\) is independently sampled from \(\GWrv(Y)\).
These objects capture what we mean by ``repeated sampling''.
We claim that
\begin{equation}
  \label{eq:y-as-asymptotic-behavior}
  \prob\paren[\Big]{\,
    \lim_{t \to \infty} \mu_t \hat{y}_t
     = y
   }
   = 1,
\end{equation}
where \(y \in \Lp{\Powerset{V}}\) is defined as in~\cref{eq:13}.
In words, \cref{eq:y-as-asymptotic-behavior} states that almost
surely, the vector \(y\) describes the behavior of the
repeated sampling approach as more samples are taken.
We now prove~\cref{eq:y-as-asymptotic-behavior}.
Let \(B \in \Reals^{E \times \Powerset{V}}\) be the incidence matrix
of the cuts of \(G\), so \(Bx = \sum_{S \subseteq V} x_S
\incidvector{\delta(S)}\) for every \(x \in \Reals^{\Powerset{V}}\).
Set \(\Dcal \coloneqq \setst[\big]{x \in \Lp{\Powerset{V}}}{\supp(Bx)
\supseteq \supp(z)}\), and set \(f(x) \coloneqq \max
\setst{z_{ij}/(Bx)_{ij}}{ij \in \supp(z)}\) for every \(x \in \Dcal\).
Observe that \(f\) is continuous on \(\Dcal\),
that \(p \in \Dcal\) by \cref{prop:sdp-rounding}
and \(f(p) = \bar{\nu}\), and that \(f(\alpha x) = \tfrac{1}{\alpha}
x\) for every \(\alpha \in \Reals_{++}\) and \(x \in \Dcal\).
The Strong Law of Large Numbers implies that \(\lim_{t \to \infty}
\tfrac{1}{t}\hat{y}_t = p\) almost surely.
Assume that this event happens.
For every sufficiently large \(t \in \Naturals\), one has
\(\hat{y}_t \in \Dcal\),
as~\(\supp(B\hat{y}_t) = \supp(Bp) \supseteq \supp(z)\).
Thus~\cref{eq:y-as-asymptotic-behavior} holds, as
\[
  \lim_{t \to \infty} \mu_t \hat{y}_t
  = \lim_{t \to \infty} f(\hat{y}_t) \hat{y}_t
  = \lim_{t \to \infty} f(\hat{y}_t)t \tfrac{1}{t}\hat{y}_t
  = \lim_{t \to \infty} f\paren[\big]{\tfrac{1}{t} \hat{y}_t} \tfrac{1}{t}\hat{y}_t
  = f(p) p
  = \bar{\nu} p
  = y.
\]

\begin{proposition}
\label{prop:small-edges}

Let \(\eps \in (0, 2)\).
There exist a graph \(G = (V, E)\), vectors \(w \in \Lp{E}\) and \(z
\in \Lp{E}\), as well as \((1, x)\) and \((\mu, Y)\) witnesssing \((w,
z) \in \GWOpt(G)\) such that, for \(\mu_t\) and \(\hat{y}_t\) defined
as in~\cref{eq:hat-y-def},
\[
  \Ebb\paren{
    \min\setst{t \in \Naturals}{\mu_t < +\infty}
  }
  \ge
  \paren[\bigg]{
    \frac{2\sqrt{\eps}}{\pi} + O(\eps^{3/2})
  }^{-1}.
\]
\end{proposition}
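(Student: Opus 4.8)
The plan is to reuse the instance from~\cref{sec:small-edges}. For the given $\eps \in (0, 2)$, that section produces a graph $G = (V, E)$, a nonzero $z \in \Lp{E}$, and an edge $ij \in E$ with $z_{ij} > 0$, together with an optimal solution $(\bar{\mu}, \bar{Y})$ of~\cref{eq:GW-polar-def-intro} for $(G, z)$ such that
\[
  p_{ij} \coloneqq \prob\paren[\big]{ij \in \delta(\GWrv(\bar{Y}))}
  = \frac{\arccos(1 - 2\eps + \eps^2/2)}{\pi}.
\]
To supply the remaining data required by the statement, I~would take an optimal solution $(w, x)$ of~\cref{eq:GW-polar-supf} for $(G, z)$. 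By SDP Strong Duality (\cref{eq:GW-polar-def}) one has $\iprodt{w}{z} = \GW^{\polar}(G, z) = \bar{\mu}$, and $\GW^{\polar}(G, z) > 0$ by positive definiteness of $\GW^{\polar}(G, \cdot)$ (as $z \neq 0$), so in particular $w \neq 0$. The pair $(1, x)$ is feasible in~\cref{eq:GW-gaugef-rho} for $(G, w)$, since the constraints of~\cref{eq:GW-polar-supf} give $1 \ge \iprodt{\ones}{x}$ and $\Diag(x) \succeq \tfrac{1}{4}\Laplacian_G(w)$, while $(\bar{\mu}, \bar{Y})$ is (optimal, hence) feasible in~\cref{eq:GW-polar-def-intro} for $(G, z)$; as $\iprodt{w}{z} = \bar{\mu} = 1 \cdot \bar{\mu}$, equation~\cref{eq:GWOpt-def} shows that $(1, x)$ and $(\bar{\mu}, \bar{Y})$ witness $(w, z) \in \GWOpt(G)$.

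With these choices fixed, I~would analyse the repeated-sampling process~\cref{eq:hat-y-def} driven by $(\bar{\mu}, \bar{Y})$, so that $S(1), S(2), \dots$ are independent copies of $\GWrv(\bar{Y})$. The crucial observation is that finiteness of $\mu_t$ forces every coordinate $e$ of $\sum_{S \subseteq V}(\hat{y}_t)_S \incidvector{\delta(S)}$ with $z_e > 0$ to be strictly positive; in particular, as $z_{ij} > 0$, it forces $ij \in \delta(S(s))$ for some $s \le t$. Hence, writing $T \coloneqq \min\setst{t \in \Naturals}{\mu_t < +\infty}$ and $T_{ij} \coloneqq \min\setst{t \in \Naturals}{ij \in \delta(S(t))}$, we have $T \ge T_{ij}$ surely. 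Since $S(1), S(2), \dots$ are independent with $\prob(ij \in \delta(S(t))) = p_{ij} > 0$, the variable $T_{ij}$ is geometric with success probability $p_{ij}$, so $\Ebb(T) \ge \Ebb(T_{ij}) = 1/p_{ij}$.

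It remains to expand $1/p_{ij}$ as $\eps \to 0$. Put $y \coloneqq 2\eps - \eps^2/2$, so that $1 - 2\eps + \eps^2/2 = 1 - y$ and $y = O(\eps)$; using $\arccos(1 - y) = \sqrt{2y} + O(y^{3/2})$ together with $\sqrt{2y} = \sqrt{4\eps - \eps^2} = 2\sqrt{\eps}\,\sqrt{1 - \eps/4} = 2\sqrt{\eps} + O(\eps^{3/2})$ gives $\arccos(1 - y) = 2\sqrt{\eps} + O(\eps^{3/2})$, hence $p_{ij} = \tfrac{1}{\pi}\arccos(1 - y) = \tfrac{2\sqrt{\eps}}{\pi} + O(\eps^{3/2})$. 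Combining with the previous paragraph, $\Ebb(T) \ge 1/p_{ij}$, and the latter is precisely $\paren[\big]{\tfrac{2\sqrt{\eps}}{\pi} + O(\eps^{3/2})}^{-1}$, which is the asserted bound. The only point that needs genuine care is the first paragraph --- one must verify that the matrix $\bar{Y}$ imported from~\cref{sec:small-edges} can simultaneously serve as the second component of a $\GWOpt(G)$-witness and as the matrix driving the sampling in~\cref{eq:hat-y-def}; everything else reduces to that construction plus standard facts about the geometric distribution and the behaviour of $\arccos$ near~$1$.
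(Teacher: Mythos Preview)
Your proposal is correct and follows essentially the same approach as the paper: both use the \(K_3\) instance from \cref{sec:small-edges}, observe that \(\mu_t < +\infty\) forces the edge with weight \(\eps\) to be covered, and bound the hitting time below by the reciprocal of that edge's coverage probability via the geometric distribution. The only minor difference is that you derive the existence of \((w,x)\) abstractly from SDP strong duality, whereas the paper simply cites the explicit construction in \cref{prop:small-edges-example}; your added asymptotic computation of \(\arccos(1-2\eps+\eps^2/2)\) is also spelled out more fully than in the paper.
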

\begin{proof}

Let \(G \coloneqq K_3\) be the complete graph on three vertices, with
\(V(K_3) = \set{1, 2, 3}\).
Set \((z_{12}, z_{13}, z_{23}) \coloneqq (1, 1, \eps) \in \Lp{E}\).
\Cref{sec:small-edges} shows there exist \(w \in \Lp{E}\) and \(x \in
\Reals^V\) such that \((1, x)\) and \((\mu, Y)\) witness the
membership \((w, z) \in \GWOpt(G)\), where \(\mu \coloneqq
4/(4-\eps)\) and
\[
  Y \coloneqq
  \frac{4}{4 - \eps}
  \begin{bmatrix}
    1 & \eps/2 -1 & \eps/2 - 1\\
    \eps/2 - 1 & 1 & 1 - 2\eps + \eps^2/2\\
    \eps/2 -1 & 1 - 2\eps + \eps^2/2 & 1
  \end{bmatrix}.
\]
If \(t \in \Naturals\) is such that \(\mu_t < +\infty\), then the edge
\(23 \in E\) was covered, and hence
\[
  \Ebb\paren{
    \min\setst{t \in \Naturals}{\mu_t < +\infty}
  }
  \ge
  \frac{1}{\prob\paren{
      \set{2,3} \in \delta(\GWrv(Y))
    }}
  = \frac{\pi}{\arccos(1 - 2\eps + \eps^2/2)}
  = \paren[\bigg]{
    \frac{2\sqrt{\eps}}{\pi} + O(\eps^{3/2})
  }^{-1}.
  \qedhere
\]
\end{proof}

Although~\cref{eq:y-as-asymptotic-behavior} ensures repeated sampling
converges to the solution in~\cref{eq:13} almost surely,
algorithmically it~is necessary to bound the number of samples one has
to take.
\Cref{re:small-edges} mentions that edges with relatively small
weights can force exponentially many samples to be taken just to
enable feasibility.
\Cref{prop:small-edges} formalizes that remark: it defines a family of
instances where naively sampling from an optimal solution to the
SDP~\cref{eq:GW-polar-def-intro} may require, in expectation,
exponentially (on the size of the input \((G, z)\)) many cuts just to
output a feasible fractional cut cover.
\Cref{prop:small-edges} also motivates the perturbed SDPs introduced
in~\cref{eq:GW-eps-def-outer} and~\cref{eq:GW-eps-polar-def}, as it
shows, in~particular, that one cannot take \(\eps = 0\) in
\cref{prop:sampling-GWOpt}.

\subsection{Asymptotic Support Size}
Let \(G = (V, E)\) be a graph, let \(z \in \Lp{E}\), and set \(n
\coloneqq \card{V}\).
\Cref{prop:probabilistic-rounding-cover-pre} shows that, by assuming
the ratio between the largest and smallest entry of \(z\) is bounded,
we can produce in polynomial time a fractional cut cover with support
size \(O(\ln(n))\).
Our algorithms then perturb the input so this hypothesis is met.
The~logarithmic bound may not be asymptotically improved without
further assumptions on the input.
Assume that \(\supp(z) = E\).
We~claim that
\begin{equation}
  \label{eq:chi-G-le-supp-y}
  \ceil{\lg(\chi(G))}
  \le \card{\supp(y)}
  \text{ for every fractional cut cover~\(y\) of~\((G,z)\)},
\end{equation}
where \(\chi(G)\) denotes the chromatic number of~\(G\).
Since \(\supp(z) = E\), every edge must be in some cut defined by a
shore in \(\supp(y)\).
The minimum number of cuts necessary to cover the edges of a graph
is \(\ceil{\lg(\chi(G))}\) --- see, for example,
\cite[Section~6]{DeVosNesetrilEtAl2007}.
Thus~\cref{eq:chi-G-le-supp-y} holds.
In particular, the bound on \(\card{\supp(y)}\) given by
\cref{thm:rounding-algorithm} or \cref{prop:sampling-GWOpt} is
asymptotically best possible for graphs such that
\(\chi(G) = \Theta(\card{V(G)})\) --- in particular, for complete
graphs.

\subsection{Computational Complexity of Fractional Cut Covering}

\Cref{ssec:approximation-factor} below addresses how the
approximation factor from our algorithms is tight.
Prior to this discussion is the computational complexity status of the
problem we are attempting to solve.

\begin{proposition}

Let \(G = (V, E)\) be a graph, let \(z \in \Rationals_+^E\), and let
\(\mu \in \Rationals\).
Consider the problem
\begin{equation}
  \label{eq:fcc-decision-version}
  \text{%
    given \((G, z, \mu)\) as input,
    decide if \(\fcc(G,z) \leq \mu\).
  }
\end{equation}
This problem is in NP.
\end{proposition}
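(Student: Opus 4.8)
The plan is to show membership in NP by exhibiting, for every yes-instance $(G, z, \mu)$, a polynomial-size certificate consisting of a \emph{sparse} fractional cut cover of low bit complexity, and by checking it directly against~\cref{eq:cut-covering-problem}. Concretely, the certificate will be a pair $(\Fcal, (y_S)_{S \in \Fcal})$ with $\Fcal \subseteq \Powerset{V}$ of size at most $\card{E}$ (each $S \in \Fcal$ encoded by its indicator vector in $\set{0,1}^V$) and each $y_S$ a nonnegative rational, and the verifier will simply check that $\sum_{S \in \Fcal} y_S \incidvector{\delta(S)} \ge z$ and $\sum_{S \in \Fcal} y_S \le \mu$.

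First I would recall that $\fcc(G, z)$ is the optimal value of the linear program in~\cref{eq:cut-covering-problem}, whose variables are indexed by $\Powerset{V}$ but whose nontrivial inequality constraints $\sum_{S \subseteq V} y_S \incidvector{\delta(S)}_{ij} \ge z_{ij}$ number only $\card{E}$. This LP is feasible (e.g., a sufficiently large multiple of $\sum_{i \in V} e_{\{i\}}$ covers every edge) and its objective $\iprodt{\ones}{y}$ is bounded below by $0$ on the nonnegative orthant; since the feasible region $P \coloneqq \setst{y \in \Reals_+^{\Powerset{V}}}{\sum_{S} y_S \incidvector{\delta(S)} \ge z}$ is a pointed polyhedron, the minimum is attained at a vertex $y^{\star}$ of $P$, with $\iprodt{\ones}{y^{\star}} = \fcc(G,z)$.

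Next I would bound the support and the bit complexity of $y^{\star}$. At a vertex in $\Reals^{\Powerset{V}}$, exactly $\card{\Powerset{V}}$ linearly independent constraints are tight; since at most $\card{E}$ of the edge inequalities can be tight, at least $\card{\Powerset{V}} - \card{E}$ of the sign constraints $y_S \ge 0$ are tight, so $\card{\supp(y^{\star})} \le \card{E}$. Setting $\Fcal \coloneqq \supp(y^{\star})$, the vector $(y^{\star}_S)_{S \in \Fcal}$ is the unique solution of a square nonsingular linear system whose coefficient matrix has entries in $\set{0,1}$ (a submatrix of the cut--edge incidence matrix) and whose right-hand side is a subvector of $z \in \Rationals_+^E$. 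By Cramer's rule together with the Hadamard bound on determinants of $\set{0,1}$-matrices, each $y^{\star}_S$ is a rational whose numerator and denominator have bit-length polynomial in the encoding length of $(G, z)$. Hence the certificate $(\Fcal, (y^{\star}_S)_{S \in \Fcal})$ has length polynomial in the size of $(G, z, \mu)$.

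Finally, the verifier, given any candidate $(\Fcal, (y_S)_{S \in \Fcal})$, checks in polynomial time that every $S \in \Fcal$ is a subset of $V$, that every $y_S \in \Rationals_+$, that $\sum_{S \in \Fcal} y_S \incidvector{\delta(S)} \ge z$, and that $\sum_{S \in \Fcal} y_S \le \mu$; any candidate passing these checks extends by zeros to a point of $P$ with objective at most $\mu$, certifying $\fcc(G, z) \le \mu$, while the $y^{\star}$ constructed above is accepted whenever $(G,z,\mu)$ is a yes-instance. I expect the only nonroutine point to be the bit-complexity estimate: one must ensure that passing from the exponential-dimensional LP to a basic optimal solution does not blow up the size of its coefficients, which is exactly where the $\set{0,1}$-structure of the cut incidence matrix and the Hadamard inequality are needed; the support bound and the polynomial-time verification are otherwise standard.
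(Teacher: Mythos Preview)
Your proposal is correct and follows essentially the same approach as the paper: exhibit a sparse optimal fractional cut cover with polynomially bounded bit complexity as the NP certificate. The paper invokes Carath\'eodory's theorem to bound the support by \(\card{E}+1\) and then cites \cite[Corollary~10.2a]{Schrijver1986} for the bit bound on a basic optimal solution of the restricted LP, whereas you obtain the (slightly sharper) bound \(\card{E}\) directly from the vertex characterization and spell out the bit-complexity argument via Cramer's rule and the Hadamard inequality; these are cosmetic differences in an otherwise identical argument.
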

\begin{proof}

Since the set of optimal solutions to~\cref{eq:cut-covering-problem}
is non-empty, Carathéodory's Theorem, implies that there exists
\(\Fcal \subseteq \Powerset{V}\) with \(\card{\Fcal} \le \card{E} +
1\) and \(y \in \Lp{\Fcal}\) such that
\begin{inlinemath}
  \sum_{S \in \Fcal} y_S \incidvector{\delta(S)} \ge z
\end{inlinemath}
and
\begin{inlinemath}
  \iprodt{\ones}{y} = \fcc(G, z).
\end{inlinemath}
Further note that \(y\) can be taken as an optimal solution of a
rational LP of polynomial size (namely the RHS
of~\cref{eq:cut-covering-problem} restricted to the columns~\(\Fcal\)).
We conclude that \(y \in \Rationals^{\Fcal}\) can be represented using
polynomially many bits on the size of the input
\cite[Corollary~10.2a]{Schrijver1986}.
Thus \cref{eq:fcc-decision-version}~is in NP in the Turing
Machine model.
\end{proof}

Let \(G = (V, E)\) be a graph.
\nameandcite{GrotschelLovaszEtAl1981} show that the strong optimization
problem for the class of polytopes \(\CUT(G)\) is solvable if and only
if it is solvable for the class of polytopes \(\abl(\CUT(G))\).
This implies that the following problem is NP-hard in the Turing Machine model:
\begin{equation}
  \label{eq:fcc-optimization-version}
  \text{given an instance }
  (G, z)
  \text{ of \(\fcc\), compute }
  w \in \Rationals_+^{E(G)}
  \text{ such that }
  \mc(G, w) \le 1
  \text{ and }
  \fcc(G, z) = \iprodt{w}{z}.
\end{equation}
In particular, \cite[Section~7]{GrotschelLovaszEtAl1981} proves
intractability of computing the fractional chromatic number from
intractability of computing the weighted maximum clique problem --- a
completely analogous situation to~\cref{eq:fcc-optimization-version}.

Our approximation algorithms rely on the fact that we have a
\emph{tractable} positive definite monotone gauge \(\GW^{\polar}\)
which approximates \(\fcc\).
\cref{thm:ellipsoid-conversion} implies that gauges which approximate
the value of the fractional cut-covering number ``too well'' cannot be
tractable unless P = NP.

\begin{theorem}
\label{thm:ellipsoid-conversion}

Let \(\alpha \in (0, 1)\).
Assume that, for every graph \(G = (V, E)\), there exists a positive
definite monotone gauge \(\psi_G \colon \Lp{E} \to \Lp{}\) such that
\begin{equation}
  \label{item:ellipsoid-gauge-close}
  \psi_G(z)
  \le \fcc(G, z)
  \le \frac{1}{\alpha} \psi_G(z)
  \text{ for every } z \in \Lp{E}.
\end{equation}
Assume a polynomial-time algorithm for either of the following problems:
\begin{enumerate}
\item\label{item:gauge-pairing}
  Given \((G, z, \sigma)\) as input, output \(w \in \Rationals^E\)
  such that \(
    \min\setst{\norm[2]{w - w_0}}{w_0 \in \Lp{E},\,
      \psi_G^{\polar}(w_0) \le 1} \le \sigma
  \) and \(\psi_G(z) \le \iprodt{z}{w} + \sigma \norm[2]{z}\);
\item\label{item:gauge-deciding}
  Given \((G, z, \sigma)\) as input, either
  \begin{enumerate}
  \item conclude that
    \(
    \min\setst{\norm[2]{z - z_0}}{
      z_0 \in \Lp{E},\, \psi_G(z_0) \le 1
    } \le \sigma
    \) or
  \item output \(w \in \Rationals_+^E\) such that
    \(\psi_G^{\polar}(w) \le \iprodt{w}{z} + \sigma \norm[2]{w}\).
  \end{enumerate}
\end{enumerate}
In both cases, the input is a graph \(G = (V, E)\), a vector \(z \in
\Rationals_+^E\), and \(\sigma \in \Rationals_+\).
Then for every \(\beta \in (0, \alpha)\), there exists a polynomial-time
algorithm that, given a graph \(G = (V, E)\) and \(w \in
\Rationals_+^E\) as input, outputs \(q \in \Rationals\) such that
\(\beta q \le \mc(G, w) \le q\).
\end{theorem}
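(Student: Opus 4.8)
The plan is to combine gauge duality with the ellipsoid method: the hypothesis pins $\mc(G,\cdot)$ between $\psi_G^{\polar}$ and $\alpha\psi_G^{\polar}$, and the assumed algorithm for Problem~\cref{item:gauge-pairing} or Problem~\cref{item:gauge-deciding} lets one approximately evaluate $\psi_G^{\polar}(w)$ through the ellipsoid method. First I would apply bound conversion. By~\eqref{item:ellipsoid-gauge-close} we have $\psi_G(z)\le\fcc(G,z)\le\tfrac1{\alpha}\psi_G(z)$ for every $z\in\Lp{E}$, and both $\psi_G$ and $\fcc(G,\cdot)$ are positive definite monotone gauges (the latter since $\fcc(G,\cdot)=\mc(G,\cdot)^{\polar}$ by~\eqref{eq:fcc-as-polar}). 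Applying \cref{prop:bound-conversion} with $\phi\coloneqq\fcc(G,\cdot)$, $\psi\coloneqq\psi_G$, and bounds $1$ and $1/\alpha$, and using $\fcc(G,\cdot)^{\polar}=\mc(G,\cdot)$, yields
\[
  \alpha\,\psi_G^{\polar}(w)\le\mc(G,w)\le\psi_G^{\polar}(w)\qquad\text{for every }w\in\Lp{E}.
\]
Hence it suffices to compute in polynomial time a rational $q$ with $\psi_G^{\polar}(w)\le q\le\gamma\,\psi_G^{\polar}(w)$, where $\gamma\coloneqq\alpha/\beta$: then $\mc(G,w)\le\psi_G^{\polar}(w)\le q$ and $\beta q\le\beta\gamma\psi_G^{\polar}(w)=\alpha\psi_G^{\polar}(w)\le\mc(G,w)$. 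Since $\beta<\alpha$, the constant $\gamma>1$ leaves a fixed relative slack, so it is enough to approximate $\psi_G^{\polar}(w)$ from above within a fixed constant factor.

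Next I would set up the ellipsoid method for $\psi_G^{\polar}$. By~\eqref{eq:assoc-convex-corner} and the identity $\Ccal_{\phi^{\polar}}=\abl(\Ccal_{\phi})$, the convex corner associated with $\psi_G^{\polar}$ is
\[
  \Ccal_{\psi_G^{\polar}}=\setst{z\in\Lp{E}}{\psi_G(z)\le1}=\abl(\Ccal_{\psi_G}),
  \qquad
  \psi_G^{\polar}(w)=\max\setst{\iprodt{w}{z}}{z\in\Ccal_{\psi_G^{\polar}}}.
\]
This corner is well-bounded: \cref{cor:1}, \eqref{eq:GW-polar-norm-infty-lowerbound}, \eqref{eq:GW-polar-monotone}, and~\eqref{eq:unweighted-GW-polar-upperbound} give $\norm[\infty]{z}\le\fcc(G,z)\le\tfrac2{\GWalpha}\norm[\infty]{z}$, hence $\alpha\norm[\infty]{z}\le\psi_G(z)\le\tfrac2{\GWalpha}\norm[\infty]{z}$ for all $z\in\Lp{E}$; therefore $\Ccal_{\psi_G^{\polar}}$ contains the box $[0,\tfrac{\GWalpha}{2}]^{E}$ (so a Euclidean ball of constant radius around a known centre) and is contained in the ball of radius $\sqrt{\card{E}}/\alpha$, so $\log(R/r)$ is polynomially bounded. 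The assumed polynomial-time algorithm then yields, in polynomial time, a weak separation oracle for $\Ccal_{\psi_G^{\polar}}$: for Problem~\cref{item:gauge-deciding} this is essentially its definition (case (a) is approximate membership, case (b) returns an approximate separating hyperplane); for Problem~\cref{item:gauge-pairing}, a call on $(G,z,\sigma)$ returns $w$ that is $\sigma$-close to $\Ccal_{\psi_G}$ with $\iprodt{z}{w}$ within $\sigma\norm[2]{z}$ of $\psi_G(z)$, so one declares approximate membership when $\iprodt{z}{w}\le1$ and otherwise uses $w$ as an approximate separating hyperplane, since $\iprodt{z'}{w}\le\psi_G(z')+\sigma\norm[2]{z'}\le1+\sigma R$ for every $z'\in\Ccal_{\psi_G^{\polar}}$. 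Feeding this weak separation oracle to the ellipsoid method in the standard Grötschel--Lovász--Schrijver framework~\cite{GrotschelLovaszEtAl1981} produces a polynomial-time weak optimization routine for $\Ccal_{\psi_G^{\polar}}$, i.e.\ one that on input $w$ returns a rational within any prescribed polynomially small accuracy of $\psi_G^{\polar}(w)=\max\setst{\iprodt{w}{z}}{z\in\Ccal_{\psi_G^{\polar}}}$.

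Finally I would assemble the pieces: run the oracle with tolerance $\sigma$ and the ellipsoid with accuracy $\delta$ both chosen polynomially small, obtain an estimate $q_0$ of $\psi_G^{\polar}(w)$, and output $q\coloneqq q_0$ inflated by a controlled amount so that $\psi_G^{\polar}(w)\le q\le\gamma\,\psi_G^{\polar}(w)$; here one converts the additive guarantee into a relative one using the trivial lower bound $\psi_G^{\polar}(w)\ge\mc(G,w)\ge\tfrac12\norm[1]{w}$ (the expected weight of a uniformly random cut), handling $w=0$ separately. By the first paragraph, $\beta q\le\mc(G,w)\le q$, which is the desired output. The hard part will be the oracle bookkeeping of the second paragraph: faithfully matching Problems~\cref{item:gauge-pairing} and~\cref{item:gauge-deciding} to the weak-separation/membership primitives, tracking how the tolerances $\sigma$ and ellipsoid error $\delta$ propagate into a relative-error guarantee for $\psi_G^{\polar}(w)$, and verifying the well-boundedness estimates with correct constants; the bound-conversion step and the closing arithmetic are short by comparison.
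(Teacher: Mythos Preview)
Your proposal is correct and follows essentially the same route as the paper: bound conversion to obtain $\alpha\psi_G^{\polar}(w)\le\mc(G,w)\le\psi_G^{\polar}(w)$, identification of Problems~(1) and~(2) with Gr\"otschel--Lov\'asz--Schrijver oracle problems for $K=\setst{z\in\Lp{E}}{\psi_G(z)\le1}$, the ellipsoid method to get weak optimization over~$K$, and the lower bound $\mc(G,w)\ge\tfrac12\norm[1]{w}$ to turn additive into relative error. The only minor difference is that for Problem~(1) the paper recognizes it directly as weak optimization over $\abl(K)$ and invokes \cite[Corollary~3.5]{GrotschelLovaszEtAl1981}, whereas you rebuild a weak separation oracle for~$K$ by hand; your justification of the separating inequality should read $\iprodt{z'}{w}\le\iprodt{z'}{w_0}+\sigma\norm[2]{z'}\le\psi_G(z')\psi_G^{\polar}(w_0)+\sigma\norm[2]{z'}\le1+\sigma R$ using the $w_0$ witnessing $w$'s closeness to $\Ccal_{\psi_G}$, rather than appealing to the Problem~(1) guarantee for~$z'$.
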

\begin{proof}

Let \(G = (V, E)\) be a graph.
Define \(K \coloneqq \setst{z \in \Lp{E}}{\psi_G(z) \le 1}\), so
\(\max\setst{\iprodt{w}{z}}{z \in K} = \psi_G^{\polar}(w)\) for every
\(w \in \Lp{E}\).
Moreover, as \(\abl(K) = \setst{w \in \Lp{E}}{\psi_G^{\polar}(w) \le
1}\), we have that \(\max\setst{\iprodt{z}{w}}{w \in \abl(K)} =
\psi_G(z)\) for every \(z \in \Lp{E}\).
The weak optimization problem for \(\abl(K)\) is
\begin{equation}
  \label{eq:weak-optimization-problem-abl-K}
  \begin{aligned}
    \text{Given }
    z \in \Rationals_+^E
    \text{ and }
    \sigma > 0
    &\text{ as input, compute }
      \bar{w} \in \Rationals^E
      \text{ such that }\\
    & \min\setst{\norm[2]{\bar{w} - w}}{w \in \abl(K)} \le \sigma, \text{ and}\\
    & \max\setst{\iprodt{z}{w}}{w \in \abl(K)} \le \iprodt{\bar{w}}{z} + \sigma \norm[2]{z}.
  \end{aligned}
\end{equation}
Whereas \cite{GrotschelLovaszEtAl1981} does not multiply \(\sigma\) by
\(\norm[2]{z}\) in the second guarantee, we can easily accomplish so
by normalizing \(z\) before using the oracle.
The weak separation problem for \(K\) is
\begin{equation}
  \label{eq:weak-separation-problem-1}
  \begin{aligned}
    \text{Given }
    \bar{z} \in \Rationals_+^E
    \text{ and }
    \sigma > 0
    &\text{ as input, either }\\
    & \text{conclude that } \min\setst{\norm[2]{\bar{z} - z}}{z \in K} \le \sigma, \text{ or}\\
    & \text{compute } w \in \Rationals_+^E \text{ such that }
      \max\setst{\iprodt{w}{z}}{z \in K} \le \iprodt{w}{\bar{z}} + \sigma \norm[2]{w}.
  \end{aligned}
\end{equation}
Here we have used that \(K\) is lower-comprehensive to assume that \(w
\ge 0\); more precisely, that given any \(w \in \Rationals^E\)
produced by the oracle,  we can pick its non-negative part \(w_+ \in
\Rationals_+^{E}\).
\Cref{item:gauge-pairing,item:gauge-deciding} correspond,
respectively, to solving the
problems~\cref{eq:weak-optimization-problem-abl-K,%
  eq:weak-separation-problem-1} in polynomial time.
\cite[Theorem~3.1]{GrotschelLovaszEtAl1981} shows that, if we can
solve~\cref{eq:weak-separation-problem-1} in polynomial time, then we
can solve the weak optimization problem over \(K\) in polynomial time:
\begin{equation}
  \label{eq:weak-optimization-problem}
  \begin{aligned}
    \text{Given }
    w \in \Rationals_{+}^E
    \text{ and }
    \sigma > 0
    & \text{ as input},
    \text{ compute } \bar{z} \in \Rationals^E \text{ such that }\\
    & \min\setst{\norm[\infty]{\bar{z} - z}}{z \in K} \le \sigma, \text{ and} \\
    & \max\setst{\iprodt{w}{z}}{z \in K} \le \iprodt{w}{\bar{z}} + \sigma\norm[1]{w}.
  \end{aligned}
\end{equation}
For convenience, we have changed from the Euclidean norm to
\(\norm[\infty]{\cdot}\) and \(\norm[1]{\cdot}\), which we can do
since \(\norm[\infty]{\cdot} \le \norm[2]{\cdot} \le
\sqrt{\card{E}}\,\norm[1]{\cdot}\) and the oracle has a running time
bounded by a polynomial on \(\log(1/\sigma)\).
\cite[Corollary~3.5]{GrotschelLovaszEtAl1981} shows that if we can
solve the weak optimization problem over \(\abl(K)\) in polynomial
time, we can solve the weak optimization problem over \(K\).
Hence if we can solve~\cref{eq:weak-optimization-problem-abl-K}, we
can also solve~\cref{eq:weak-optimization-problem}.
Thus we assume we can solve~\cref{eq:weak-optimization-problem} in
polynomial time for every graph \(G\).

From \cref{item:ellipsoid-gauge-close} and
\Cref{prop:1,prop:bound-conversion} we have that
\begin{equation}
  \label{eq:42}
  \alpha \psi_G^{\polar}(w)
  \le \mc(G, w)
  \le \psi_G^{\polar}(w)
  \text{ for every }
  w \in \Lp{E}.
\end{equation}
Let \(\beta \in (0, \alpha)\), and set \(\tau \coloneqq 1 - \beta/\alpha\).
Set
\[
  \sigma \coloneqq \frac{1}{4\alpha}\frac{\tau}{1 - \tau}.
\]
Let \(w \in \Rationals_+^{E}\), and let \(\bar{z} \in \Rationals^{E}\) be the
output of the oracle in~\cref{eq:weak-optimization-problem} for input
\(w\) and \(\sigma\).
Note that we are not assuming that \(\bar{z} \ge 0\).
Let \(z_0 \in \Lp{E}\) be such that \(\norm[\infty]{\bar{z} - z_0} \le
\sigma\) and \(\psi_G(z_0) \le 1\).
Then
\begin{detailedproof}
  \mc(G, w)
  &\le \psi_G^{\polar}(w)
  &&\text{by~\cref{eq:42}}\\
  &\le \iprodt{w}{\bar{z}} + \sigma \norm[1]{w}
  &&\text{by~\cref{eq:weak-optimization-problem},
  as \(\psi_G^{\polar}(w) = \max\setst{\iprodt{w}{z}}{z \in K}\)}\\
  &\le \iprodt{w}{z_0} + \iprodt{w}{(\bar{z} - z_0)_{+}} + \sigma \norm[1]{w}
  &&\text{since \(w \ge 0\) and \(\bar{z} \le z_0 + (\bar{z} - z_0)_+\)}\\
  &\le \iprodt{w}{z_0} + \norm[1]{w} \norm[\infty]{(\bar{z} - z_0)_{+}} + \sigma \norm[1]{w}\\
  &\le \iprodt{w}{z_0} + \norm[1]{w} \norm[\infty]{\bar{z} - z_0} + \sigma \norm[1]{w}\\
  &\le \iprodt{w}{z_0} + 2\sigma\norm[1]{w}
  &&\text{since \(\norm[\infty]{\bar{z} - z_0} \le \sigma\)}\\
  &\le \psi_G^{\polar}(w)\psi_G(z_0) + 2\sigma\norm[1]{w}
  &&\text{since \(z_0 \ge 0\) as \(z_0 \in K\)}\\
  &\le \psi_G^{\polar}(w) + 2\sigma\norm[1]{w}
  &&\text{since \(\psi_G(z_0) \le 1\) as \(z_0 \in K\)}\\
  &\le \frac{1}{\alpha}\mc(G, w) + 2\sigma\norm[1]{w}
  &&\text{by~\cref{eq:42}}\\
  &\le \paren[\Big]{
    \frac{1}{\alpha} + 4\sigma
  } \mc(G, w)
  &&\text{since \(\tfrac{1}{2}\norm[1]{w} \le \mc(G, w)\).}
\end{detailedproof}
Since
\[
  \paren[\bigg]{
    \frac{1}{\alpha} + 4\sigma
  }\mc(G, w)
  = \frac{1}{\alpha}\paren[\bigg]{
    1 + \frac{\tau}{1 - \tau}
  }\mc(G, w)
  = \frac{1}{\alpha(1 - \tau)}\mc(G, w)
  = \frac{1}{\beta}\mc(G, w),
\]
we have
\[
  \mc(G, w)
  \le \iprodt{w}{\bar{z}} + \sigma\norm[1]{w}
  \le \frac{1}{\beta}\mc(G, w).
\]
Since \(q \coloneqq \iprodt{w}{\bar{z}} + \sigma\norm[1]{w}\) is
computable from the output of~\cref{eq:weak-optimization-problem},
the proof is done.
\end{proof}

Let \(\alpha \in (0, 1)\).
Assume that for every graph \(G = (V, E)\), there exists a positive
definite monotone gauge \(\psi_G \colon \Lp{E} \to \Lp{}\) such that
\(\psi_G(z) \le \fcc(G, z) \le \frac{1}{\alpha} \psi_G(z)\)
for every \(z \in \Lp{E}\).
\Cref{thm:ellipsoid-conversion} provides precise statements, taking
into account the finite arithmetic of Turing machines.
Problem (1) in \cref{thm:ellipsoid-conversion} can be interpreted as
the problem of computing a \(w \in \Lp{E}\) paired to the input \(z
\in \Lp{E}\) via the gauge \(\psi_G\).
Recalling~\cref{eq:optw-def}, this is analogous to computing an
element in \(w \in \optw_G(z)\) for a given input \(z \in \Lp{E}\).
In particular, solving problem (1) implies that given \(z \in
\Lp{E}\), one can compute \(w \in \Lp{E}\) such that \((w, z)\) is an
\(\alpha\)-pairing.
Problem (2) in \cref{thm:ellipsoid-conversion} can be interpreted as
deciding if \(\psi_G(z) \le 1\), and when that is not the case,
computing \(w \in \Lp{E}\) certifying \(\psi_G(z) > 1\).

\begin{corollary}
\label{cor:pairing-NP-hard}
Let \(\alpha \in (\frac{16}{17}, 1)\).
Let \(\psi_G \colon \Lp{E} \to \Lp{}\) be a family of positive
definite monotone gauges for every graph \(G\).
Further assume that \(\psi_G(z) \le \fcc(G, z) \le \frac{1}{\alpha}
\psi_G(z)\) for every graph \(G = (V, E)\) and \(z \in \Lp{E}\).
Both of the following problems are NP-hard:
\begin{enumerate}
\item
  Given \((G, z, \sigma)\) as input, output \(w \in \Rationals^E\)
  such that \(
    \min\setst{\norm[2]{w - w_0}}{w_0 \in \Lp{E},\,
      \psi_G^{\polar}(w_0) \le 1} \le \sigma
  \) and \(\psi_G(z) \le \iprodt{z}{w} + \sigma \norm[2]{z}\);
\item
  Given \((G, z, \sigma)\) as input, either
  \begin{enumerate}
  \item conclude that
    \(
    \min\setst{\norm[2]{z - z_0}}{
      z_0 \in \Lp{E},\, \psi_G(z_0) \le 1
    } \le \sigma
    \) or
  \item output \(w \in \Rationals_+^E\) such that
    \(\psi_G^{\polar}(w) \le \iprodt{w}{z} + \sigma \norm[2]{w}\).
  \end{enumerate}
\end{enumerate}
In both cases, the input is a graph \(G = (V, E)\), a vector \(z \in
\Rationals_+^E\), and \(\sigma \in \Rationals_+\).
\end{corollary}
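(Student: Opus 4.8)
The plan is to bootstrap \cref{thm:ellipsoid-conversion} with the known (unconditional) NP-hardness of approximating the maximum cut problem. Fix $\alpha \in (\tfrac{16}{17}, 1)$ and a family $\psi_G$ of positive definite monotone gauges satisfying $\psi_G(z) \le \fcc(G, z) \le \tfrac{1}{\alpha}\psi_G(z)$ for every graph $G = (V, E)$ and every $z \in \Lp{E}$; this is precisely \cref{item:ellipsoid-gauge-close}. The problems (1) and (2) in the corollary are word-for-word the problems \cref{item:gauge-pairing} and \cref{item:gauge-deciding} in \cref{thm:ellipsoid-conversion}. So, assuming for contradiction that one of problems (1), (2) admits a polynomial-time algorithm, \cref{thm:ellipsoid-conversion} hands us, for \emph{every} $\beta \in (0, \alpha)$, a polynomial-time algorithm that on input a graph $G$ and $w \in \Rationals_+^E$ outputs $q \in \Rationals$ with $\beta q \le \mc(G, w) \le q$.

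Next I would specialize $\beta$ and reduce to unweighted instances. Since $\alpha > \tfrac{16}{17}$, the interval $(\tfrac{16}{17}, \alpha)$ is nonempty, so I pick a rational $\beta$ in it and write $\beta = \tfrac{16}{17} + \eta$ for some $\eta > 0$. Running the algorithm above on $w = \ones \in \Rationals_+^E$ gives a polynomial-time procedure producing $q \in \Rationals$ with $\beta q \le \mc(G, \ones) \le q$ for every graph $G$. Comparing $q$ with a rational threshold $c$ then separates the two possibilities in the usual gap formulation: if $\mc(G, \ones) \ge c$ then $q \ge \mc(G,\ones) \ge c$, whereas if $\mc(G, \ones) \le \tfrac{16}{17}c$ then $q \le \tfrac{1}{\beta}\mc(G,\ones) \le \tfrac{16}{17}c/\beta < c$. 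Hence this procedure solves, in polynomial time, the gap version of the (unweighted) maximum cut problem with completeness $c$ and soundness $\tfrac{16}{17}c$.

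Finally I would invoke Håstad's inapproximability theorem~\cite{Hastad2001}, the explicit $\tfrac{16}{17}$ factor being due to Trevisan, Sorkin, Sudan and Williamson~\cite{TrevisanSorkinEtAl2000}: for every $\eta > 0$ it is NP-hard to approximate the maximum cut of an unweighted graph within a factor of $\tfrac{16}{17} + \eta$, equivalently, the gap problem of the previous paragraph is NP-hard. The composition of the reduction from that gap problem to our threshold comparison with the reduction supplied by \cref{thm:ellipsoid-conversion} is a polynomial-time (Turing) reduction from an NP-hard problem to problem (1), and likewise to problem (2). Therefore a polynomial-time algorithm for either problem would place an NP-hard problem in P; i.e., each of (1) and (2) is NP-hard.

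I do not expect a real obstacle: all the substance is already in \cref{thm:ellipsoid-conversion}, and the corollary is essentially bookkeeping. The two points that need a little care are (a) checking that $(\tfrac{16}{17}, \alpha)$ is nonempty so that a suitable $\beta$ exists — this is exactly the hypothesis $\alpha \in (\tfrac{16}{17},1)$ — and (b) matching the approximation guarantee ``$\beta q \le \mc \le q$'' delivered by \cref{thm:ellipsoid-conversion} to the completeness/soundness form of Håstad's result, as done in the second paragraph. If one prefers to avoid citing a specific numerical inapproximability constant, the same argument works verbatim starting from any constant-factor NP-hardness threshold $\gamma_0$ for MAX CUT, at the cost of restricting the corollary to $\alpha \in (\gamma_0, 1)$.
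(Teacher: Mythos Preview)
Your proposal is correct and follows essentially the same approach as the paper: invoke \cref{thm:ellipsoid-conversion} and then contrapose against the NP-hardness of approximating MAX CUT within any factor in \((\tfrac{16}{17},1]\) from \cite{Hastad2001,TrevisanSorkinSudanWilliamson2000}. The paper's proof is just the two-sentence version of what you wrote; your extra paragraph unpacking the match between the guarantee \(\beta q \le \mc(G,w) \le q\) and the gap formulation (and specializing to \(w=\ones\)) is accurate but not strictly needed, since the cited references already phrase hardness in the multiplicative-approximation form.
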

\begin{proof}
By \cite{Hastad2001,TrevisanSorkinSudanWilliamson2000}, it is NP-hard
to compute \(q \in \Rationals\) such that \(\beta q \le \mc(G, w) \le
q\) for every \(\beta \in (\frac{16}{17}, 1]\).
The result follows from \cref{thm:ellipsoid-conversion}.
\end{proof}

\begin{corollary}
\label{cor:UGC}
Let \(\alpha \in (\GWalpha, 1)\).
Let \(\psi_G \colon \Lp{E} \to \Lp{}\) be a family of positive
definite monotone gauges for every graph \(G\).
Further assume that \(\psi_G(z) \le \fcc(G, z) \le \frac{1}{\alpha}
\psi_G(z)\) for every graph \(G = (V, E)\) and \(z \in \Lp{E}\).
Assuming the Unique Games Conjecture, both of the following problems
are NP-hard:
\begin{enumerate}
\item
  Given \((G, z, \sigma)\) as input, output \(w \in \Rationals^E\)
  such that \(
    \min\setst{\norm[2]{w - w_0}}{w_0 \in \Lp{E},\,
      \psi_G^{\polar}(w_0) \le 1} \le \sigma
  \) and \(\psi_G(z) \le \iprodt{z}{w} + \sigma \norm[2]{z}\);
\item
  Given \((G, z, \sigma)\) as input, either
  \begin{enumerate}
  \item conclude that
    \(
    \min\setst{\norm[2]{z - z_0}}{
      z_0 \in \Lp{E},\, \psi_G(z_0) \le 1
    } \le \sigma
    \) or
  \item output \(w \in \Rationals_+^E\) such that
    \(\psi_G^{\polar}(w) \le \iprodt{w}{z} + \sigma \norm[2]{w}\).
  \end{enumerate}
\end{enumerate}
In both cases, the input is a graph \(G = (V, E)\), a vector \(z \in
\Rationals_+^E\), and \(\sigma \in \Rationals_+\).
\end{corollary}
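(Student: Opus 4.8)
The plan is to argue exactly as in \cref{cor:pairing-NP-hard}, but feeding \cref{thm:ellipsoid-conversion} the optimal Unique-Games-based inapproximability of the maximum cut problem in place of the unconditional \(16/17\) inapproximability. The mechanism is \cref{thm:ellipsoid-conversion} itself: given the family \((\psi_G)_G\) of positive definite monotone gauges satisfying \(\psi_G(z) \le \fcc(G, z) \le \tfrac{1}{\alpha}\psi_G(z)\), a polynomial-time algorithm for the first listed problem, or for the second (these are verbatim \cref{item:gauge-pairing} and \cref{item:gauge-deciding} of \cref{thm:ellipsoid-conversion}), yields, for \emph{every} \(\beta \in (0, \alpha)\), a polynomial-time algorithm that on input \((G, w)\) with \(w \in \Rationals_+^{E}\) returns \(q \in \Rationals\) with \(\beta q \le \mc(G, w) \le q\). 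So it suffices to pin down a single \(\beta \in (0, \alpha)\) for which producing such a \(q\) is NP-hard under the Unique Games Conjecture.

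First I would choose \(\beta\). By hypothesis \(\alpha > \GWalpha\), so the interval \((\GWalpha, \alpha)\) is nonempty; fix any rational \(\beta\) in it. The external ingredient is the theorem of Khot, Kindler, Mossel, and O'Donnell (via the Majority-is-Stablest theorem of Mossel, O'Donnell, and Oleszkiewicz): assuming the Unique Games Conjecture, for every constant \(c > \GWalpha\) --- with \(\GWalpha\) the same constant as in~\cref{eq:GWalpha-def} --- it is NP-hard to compute, given a graph \(G = (V,E)\) and \(w \in \Rationals_+^{E}\), a number \(q\) with \(c\,q \le \mc(G, w) \le q\). Applying this with \(c \coloneqq \beta\) and composing the corresponding reduction with the reduction supplied by \cref{thm:ellipsoid-conversion}, a polynomial-time algorithm for either listed problem would place this maximum-cut approximation task in \(\mathrm{P}\), which under the Unique Games Conjecture forces \(\mathrm{P} = \mathrm{NP}\). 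Hence each of the two problems is NP-hard, assuming the Unique Games Conjecture, as claimed.

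The only delicate point --- and the main obstacle --- is the bookkeeping around the threshold constant: one must invoke the inapproximability statement precisely in the regime \(c > \GWalpha\) (so that the chosen \(\beta < \alpha\) still exceeds the hardness threshold), and one must check that the hard maximum-cut instances are admissible inputs to the reduction of \cref{thm:ellipsoid-conversion} --- i.e., that they can be presented with nonnegative rational weights and with the accuracy parameter \(\sigma\) taken inverse-polynomially small, exactly as in the proof of \cref{cor:pairing-NP-hard}. Once this is matched up, no further computation is required; the corollary follows immediately from \cref{thm:ellipsoid-conversion}.
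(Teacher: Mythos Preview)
Your proof is correct and follows essentially the same approach as the paper: invoke the UGC-based inapproximability of maximum cut due to Khot, Kindler, Mossel, and O'Donnell at some \(\beta \in (\GWalpha,\alpha)\), then apply \cref{thm:ellipsoid-conversion}. The paper's proof is just the two-sentence version of what you wrote.
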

\begin{proof}
By \cite[Theorem~1]{KhotKindlerEtAl2007}, assuming the Unique Games
Conjecture, it is NP-hard to compute \(q \in \Rationals\) such that
\(\beta q \le \mc(G, w) \le q\) with \(\beta \in (\GWalpha, 1]\).
The result follows from \cref{thm:ellipsoid-conversion}.
\end{proof}

\subsection{Approximation Factor}
\label{ssec:approximation-factor}
The algorithms we presented work in two steps: first they solve an SDP
relaxation, and then they employ a rounding procedure to convert
nearly optimal SDP solutions into actual combinatorial solutions ---
namely, (the shore of) a cut or a fractional cut cover.
Each of these two stages impacts the approximation factor.
Let \(G = (V, E)\) be a graph.
The \emph{integrality ratio of a maximum cut instance \((G, w)\) (with
  respect to \(\GW\))} is \(\mc(G, w)/\GW(G, w)\).
The \emph{integrality ratio of a fractional cut-covering instance
\((G, z)\) (with respect to \(\GW^{\polar}\))} is \(\GW^{\polar}(G,
z)/\fcc(G, z)\).
In either case, the integrality ratio is a number between \(0\) and
\(1\) capturing how well the semidefinite programming relaxation
approximates the actual optimal value of the problem at hand.

Our theory ties the integrality ratios of both problems.
Let \(G = (V, E)\) be a graph, and let \(w \in \Lp{E}\).
Then
\begin{equation}
  \label{eq:integrality-ratio-conversion}
  \frac{\GW^{\polar}(G, z)}{\fcc(G, z)}
  \le \frac{\mc(G, w)}{\GW(G, w)}
  \text{ for every }
  z \in \optz_G(w).
\end{equation}
Indeed, this follows from
\(\GW(G, w) \GW^{\polar}(G, z) = \iprodt{w}{z} \le \mc(G, w) \fcc(G,
z)\).
In this way, the set \(\optz_G(w)\) describes instances \((G, z)\) of
the fractional cut covering problem with the same or worse integrality
ratio.
This construction can be made algorithmic via the tools developed
in~\cref{sec:certificates}.
If the graph \(G\) is edge-transitive, then one can prove that
\((\ones, \ones) \in \GWOpt(G)\) and
\(
  \mc(G)\fcc(G)
  = \card{E}
  = \GW(G)\GW^{\polar}(G)
\).
In particular, equality holds
in~\cref{eq:integrality-ratio-conversion}, and the cycle \(C_5\) on five
vertices is a concrete example with bad integrality ratio for
both problems, as
\[
  0.878
  \approx \GWalpha
  \le \frac{\GW^{\polar}(C_5)}{\fcc(C_5)}
  = \frac{\mc(C_5)}{\GW(C_5)}
  \approx 0.884,
\]
where unweighted graph parameters are evaluated from their weighted
versions with~\(\ones\) as edge weights.
The integrality ratio can be arbitrarily close to \(\GWalpha\).

\begin{proposition}
\label{prop:FeigeSchechtmanExample}
For every \(\eps > 0\), there exists a graph \(G\) such that for
every \(z \in \optz_G(\ones)\),
\[
  \GWalpha
  \le \frac{\GW^{\polar}(G, z)}{\fcc(G, z)}
  \le \frac{\mc(G)}{\GW(G)} \le \GWalpha + \eps.
\]
\end{proposition}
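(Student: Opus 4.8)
The plan is to piggyback on the known tightness of the Goemans--Williamson semidefinite relaxation for the maximum cut problem and then transport it to the fractional cut-covering side through the single inequality~\cref{eq:integrality-ratio-conversion}. Concretely, the first step is to invoke the Feige--Schechtman integrality-gap construction~\cite{FeigeSchechtman2002}: for every $\eps > 0$ there exists a finite simple graph $G = (V,E)$ with $E \neq \emptyset$ such that $\mc(G)/\GW(G) \le \GWalpha + \eps$, where as elsewhere $\mc(G) = \mc(G,\ones)$ and $\GW(G) = \GW(G,\ones)$. (One could instead reprove this from scratch by taking $V$ to be a fine net on a high-dimensional sphere and joining points whose inner product is near the threshold $\cos\vartheta^{*}$, with $\vartheta^{*}$ attaining the minimum in~\cref{eq:GWalpha-def}; the identity embedding lower-bounds $\GW(G)$, while a measure-concentration/isoperimetric argument upper-bounds $\mc(G)$ by essentially the hyperplane-cut value, and the ratio tends to $\GWalpha$. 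That analysis is exactly the substantive content, and it is precisely what the cited result supplies.) Since $G$ has an edge we have $\ones \neq 0$, hence $\GW(G) > 0$ (indeed $\GWalpha\GW(G) \le \mc(G)$ and $\mc(G) \ge 1$ by~\cref{eq:GW-approximation}), so $\mc(G)/\GW(G)$ is well defined and, by the lower bound in~\cref{eq:GW-approximation}, at least $\GWalpha$.

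Next I would fix $z \in \optz_G(\ones)$. By \cref{thm:certificates-from-w} the set $\optz_G(\ones)$ contains a nonzero vector, so the claim is not vacuous; for $z = 0$ the displayed ratios degenerate and there is nothing to prove, so assume $z \neq 0$. Then $\fcc(G,z) > 0$ since $\fcc(G,\cdot)$ is a positive definite monotone gauge. By definition of $\optz_G$ (via~\cref{eq:optz-def} and~\cref{eq:GWOpt-as-tight-CS}) we have $\iprodt{\ones}{z} = \GW(G)\,\GW^{\polar}(G,z)$, so combining with the Cauchy--Schwarz-type bound $\iprodt{\ones}{z} \le \mc(G)\,\fcc(G,z)$ from~\cref{eq:fcc-mc-CS} and dividing through by $\GW(G)\,\fcc(G,z) > 0$ yields exactly $\GW^{\polar}(G,z)/\fcc(G,z) \le \mc(G)/\GW(G)$ --- this is~\cref{eq:integrality-ratio-conversion}. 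Chaining this with the first step gives the two rightmost inequalities of the proposition: $\GW^{\polar}(G,z)/\fcc(G,z) \le \mc(G)/\GW(G) \le \GWalpha + \eps$. The leftmost inequality $\GWalpha \le \GW^{\polar}(G,z)/\fcc(G,z)$ is immediate from \cref{cor:1} (equivalently~\cref{eq:GW-polar-approx-pledge}): $\fcc(G,z) \le \tfrac{1}{\GWalpha}\GW^{\polar}(G,z)$ rearranges, after multiplying by $\GWalpha/\fcc(G,z)$, to the desired bound. This closes the chain for every $z \in \optz_G(\ones)$.

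The only nontrivial ingredient is the existence of the integrality-gap graph $G$, so the ``hard part'' is entirely the Feige--Schechtman theorem (or a from-scratch reproof of it); everything else is a two-line manipulation of inequalities already established in the paper. The point of this proposition is the observation that \emph{the same} graph that witnesses near-tightness of the $\GW$ relaxation for $\mc$ also witnesses near-tightness of the $\GW^{\polar}$ relaxation for $\fcc$, simultaneously for \emph{every} weight vector $z$ in the cone $\optz_G(\ones)$, which is the fractional-cut-covering counterpart of the edge-transitive examples ($C_5$ in particular) discussed just above.
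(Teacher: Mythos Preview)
Your proof is correct and follows essentially the same approach as the paper: invoke the Feige--Schechtman integrality-gap graph for the rightmost inequality, then apply \cref{eq:integrality-ratio-conversion} for the middle inequality and \cref{cor:1} for the leftmost one. The paper's proof is just a two-line compression of exactly what you wrote out.
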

\begin{proof}

Let \(\eps > 0\).
\nameandcite{FeigeSchechtman2002} prove that there exists a graph
\(G\) such that
\[
  \frac{\mc(G)}{\GW(G)} \le \GWalpha + \eps.
\]
The result follows from~\cref{cor:1,eq:integrality-ratio-conversion}.
\end{proof}

\cref{prop:FeigeSchechtmanExample}, despite showing that the
approximation factor \(\GWalpha\) is tight in our analysis, leaves
open the possibility that strengthening our semidefinite programming
relaxations could lead to better results, even if we keep the same
rounding procedure.
This is not the case, as there exist graphs in which the relaxation is
tight, but the rounding procedure still produces solutions with
approximation factor as bad as \(\GWalpha\).
We now present such an example.
Similar to the proof of \cref{prop:FeigeSchechtmanExample}, our result
builds on what is known in the literature, and it exploits certain
simple eigenvalue bounds.

Let \(G = (V, E)\) be a graph, and let \(L_G \coloneqq
\Laplacian_G(\ones)\) denote the unweighted Laplacian of \(G\).
It holds that
\begin{equation}
  \label{eq:eigenvalues-bounds}
  \GW(G) \le \frac{\card{V}}{4}\lambdamax(L_G)
  \text{ and }
  \GW^{\polar}(G)
  \ge
  \frac{4 \card{E}}{\card{V}}
  \frac{1}{\lambdamax(L_G)}.
\end{equation}
One may easily produce feasible solutions for~\cref{eq:GW-gaugef-rho}
and~\cref{eq:GW-polar-supf} that prove~\cref{eq:eigenvalues-bounds}.
The first inequality is a well-known bound on the maximum cut value,
easily obtained from the characterization of eigenvalues in terms of
Rayleigh quotients; see, for example,
\cite[Lemma~13.7.4]{GodsilRoyle2001}.
More generally, it holds that
\[
  \GW(G, w)
  =
  \min \setst[\Big]{
    \tfrac{\card{V}}{4}
    \lambdamax(\Laplacian_G(w) + \Diag(u))
  }{
    u \in \Reals^V,\,
    \iprodt{u}{\ones} = 0
  };
\]
this formulation was introduced in \cite{DelormePoljak1993}.
The second inequality in~\cref{eq:eigenvalues-bounds} is used ---
implicitly --- in \cite{Samal2015}.
Let \(a, b \in \Naturals\) be such that \(b \le a\).
Let \(\Ham(a, b)\) denote the \emph{Hamming distance graph}, which has
as vertex set \(\set{0, 1}^a\), with two vertices adjacent if they differ
in at least \(b\) entries.
We denote by \(\Ham_{=}(a, b)\) the \emph{exact Hamming distance
graph}, which is the spanning subgraph of \(\Ham(a, b)\) where two
vertices are adjacent when they differ in exactly \(b\) entries.
It is known \cite{AlonSudakov2000,Samal2015} that
\begin{equation}
  \label{eq:eigenvalue-exact-Hamming}
  \text{if }
  b \le a < 2b
  \text{ and \(b\) is even, then }
  \lambdamin\paren[\big]{
    A(\Ham_{=}(a, b))
  }
  = \binom{a}{b}\paren[\bigg]{1 - \frac{2b}{a}},
\end{equation}
where \(A_G \in \Sym{V}\) denotes the adjacency matrix of a graph
\(G = (V, E)\).
The interest on the smallest eigenvalue of the adjacency matrix in
both works \cite{AlonSudakov2000,Samal2015} stems directly
from~\cref{eq:eigenvalues-bounds}: since \(G \coloneqq \Ham_{=}(a, b)\) is
\(\binom{a}{b}\)-regular, it follows that \(\lambdamax(L_G) =
\binom{a}{b} - \lambdamin(A_G)\).

\begin{proposition}
\label{prop:AlonSudakovExample}

For every real number \(\beta > \GWalpha\) there exists a graph \(G =
(V, E)\), as well as witnesses \((\rho, x)\) and \((\mu, Y)\) of the
membership \((\ones, \ones) \in \GWOpt(G)\), satisfying the following
conditions.
One has that \(\GW(G) = \mc(G)\) and \(\GW^{\polar}(G) = \fcc(G)\).
Furthermore,
\begin{equation}
  \label{eq:AlonSudakovGap}
  \Ebb\paren[\big]{\card{\delta(\GWrv(Y))}}
    < \beta \mc(G)
    \text{ and }
  \tfrac{1}{\beta} \fcc(G) < \iprodt{\ones}{y},
\end{equation}
where \(y \in \Lp{\Powerset{V}}\) is defined as in~\cref{eq:13}.
\end{proposition}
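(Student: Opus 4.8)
The plan is to realize the claimed obstruction inside the family of exact Hamming distance graphs $\Ham_{=}(a,b)$, exploiting that the natural symmetric optimal SDP solution is \emph{constant on edges}, which forces \emph{both} inequalities in~\cref{eq:AlonSudakovGap} to become equivalent to a single scalar inequality $g(\vartheta)<\beta$, where $g(\vartheta)\coloneqq\frac2\pi\frac\vartheta{1-\cos\vartheta}$ and $\vartheta\coloneqq\arccos(1-2b/a)$. Since $\GWalpha=\min_{0<\vartheta\le\pi}g(\vartheta)$ by~\cref{eq:GWalpha-def} is attained at a point $\vartheta^{*}\in(\pi/2,\pi)$ — so that $\mu^{*}\coloneqq 2/(1-\cos\vartheta^{*})\in(1,2)$ — and $g$ and $\mu\mapsto\arccos(1-2/\mu)$ are continuous, there is an open interval $J\subseteq(1,2)$ around $\mu^{*}$ with $g(\arccos(1-2/\mu))<\beta$ for all $\mu\in J$. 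I would then pick a rational $p/q\in J$ in lowest terms (so $q<p<2q$) and set $a\coloneqq 2p$, $b\coloneqq 2q$; then $b$ is even and $b\le a<2b$ — exactly the hypotheses of~\cref{eq:eigenvalue-exact-Hamming} — and $\mu\coloneqq a/b=p/q\in J$, whence $g(\vartheta)<\beta$.

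Next I would pin down $\GW(G)$, $\mc(G)$, $\GW^{\polar}(G)$, $\fcc(G)$ for $G\coloneqq\Ham_{=}(a,b)$, with $n\coloneqq\card V=2^{a}$, $G$ being $\binom ab$-regular and $\card E=2^{a-1}\binom ab$. By~\cref{eq:eigenvalue-exact-Hamming} and regularity, $\lambdamax(L_{G})=\binom ab-\lambdamin(A_{G})=\binom ab\tfrac{2b}a$. Let $\bar Y\in\elliptope V$ be the Gram matrix of the unit vectors $u_{x}\coloneqq a^{-1/2}((-1)^{x_{1}},\dots,(-1)^{x_{a}})$, so $\bar Y_{xy}=1-\tfrac2a d_{H}(x,y)$, which equals $\cos\vartheta=1-2b/a$ on edges; then~\cref{eq:Laplacian-adjoint-def} gives $\tfrac14\Laplacian_{G}^{*}(\mu\bar Y)=\ones$, so $(\mu,\mu\bar Y)$ is feasible for~\cref{eq:GW-polar-def-intro} for $(G,\ones)$ and $\GW^{\polar}(G)\le\mu$, while~\cref{eq:eigenvalues-bounds} gives $\GW^{\polar}(G)\ge 4\card E/(n\lambdamax(L_{G}))=a/b$; hence $\GW^{\polar}(G)=\mu$ and $(\mu,Y)$ with $Y\coloneqq\mu\bar Y$ is optimal. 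For $\fcc$, the coordinate cuts $S_{i}\coloneqq\setst{x}{x_{i}=1}$ satisfy $\sum_{i=1}^{a}\incidvector{\delta(S_{i})}=b\ones$ on $E$, so $\tfrac1b\sum_{i}e_{S_{i}}$ is a fractional cut cover of value $a/b$, giving $\fcc(G)\le\mu$; with $\fcc(G)\ge\GW^{\polar}(G)=\mu$ from~\cref{eq:GW-polar-approx-pledge} we get $\fcc(G)=\mu=\GW^{\polar}(G)$. Finally $\GW(G)\le\tfrac n4\lambdamax(L_{G})=\card E/\mu$ by~\cref{eq:eigenvalues-bounds}, while $\card E=\iprodt{\ones}{\ones}\le\mc(G)\fcc(G)=\mu\,\mc(G)$ by~\cref{eq:fcc-mc-CS}, so combining with $\mc(G)\le\GW(G)$ from~\cref{eq:GW-approximation} yields $\mc(G)=\GW(G)=\card E/\mu$. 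The required witnesses of $(\ones,\ones)\in\GWOpt(G)$ are $(\rho,x)\coloneqq(\card E/\mu,\tfrac14\lambdamax(L_{G})\ones)$ — feasible for~\cref{eq:GW-gaugef-rho} since $\lambdamax(L_{G})I\succeq L_{G}$ and $\iprodt{\ones}{x}=\tfrac n4\lambdamax(L_{G})=\rho$ — together with $(\mu,Y)$, since $\rho\mu=\card E=\iprodt{\ones}{\ones}$.

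It then remains to evaluate the rounding of $Y$. Because $\mu>0$, $\GWrv(Y)=\GWrv(\bar Y)$, and~\cref{eq:GW-Y-edge-marginal} gives $\prob(xy\in\delta(\GWrv(Y)))=\arccos(\bar Y_{xy})/\pi=\vartheta/\pi$ for \emph{every} edge $xy$. Summing, $\Ebb(\card{\delta(\GWrv(Y))})=\card E\,\vartheta/\pi=(\card E/\mu)(\mu\vartheta/\pi)=\mc(G)\,g(\vartheta)<\beta\,\mc(G)$, using $\mu\vartheta/\pi=g(\vartheta)$ (as $\mu=2/(1-\cos\vartheta)$). For $y$ as in~\cref{eq:13}, the vector $\sum_{S}p_{S}\incidvector{\delta(S)}$ has every edge-entry equal to $\vartheta/\pi$, so $\bar\nu=\pi/\vartheta$ and $\iprodt{\ones}{y}=\bar\nu\iprodt{\ones}{p}=\pi/\vartheta$; hence $\tfrac1\beta\fcc(G)=\mu/\beta=\iprodt{\ones}{y}\cdot g(\vartheta)/\beta<\iprodt{\ones}{y}$. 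The one genuinely delicate point is the arithmetic of the parameter choice: one must approach the Goemans--Williamson angle (equivalently, make $\mu=a/b$ close to $\mu^{*}\in(1,2)$) while respecting ``$b$ even'' and ``$b\le a<2b$'' from~\cref{eq:eigenvalue-exact-Hamming}, which the doubling $a=2p,\ b=2q$ handles; and one needs the elementary fact that the minimizer of $g$ lies in $(\pi/2,\pi)$, equivalently $\mu^{*}\in(1,2)$, which follows from $g\ge 1$ on $(0,\pi/2]$ and $g(\pi)=1>\GWalpha$.
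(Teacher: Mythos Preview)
Your proposal is correct and follows essentially the same route as the paper: both construct the example inside the exact Hamming distance graphs \(\Ham_{=}(a,b)\) with \(b\) even and \(b\le a<2b\), use the natural symmetric solution \(\bar Y\) (Gram matrix of the \(\pm1\) coordinate vectors) which is constant on edges, verify optimality of \((\mu,Y)\) via~\cref{eq:eigenvalue-exact-Hamming} and of the dual witness via the eigenvalue bound, exhibit the coordinate cuts to pin down \(\fcc(G)\), and then reduce both inequalities in~\cref{eq:AlonSudakovGap} to the single scalar inequality \(g(\vartheta)<\beta\). The only cosmetic differences are your parameterization via \(\mu=a/b\in J\) rather than the paper's \(\zeta=1-2b/a\), and your use of~\cref{eq:fcc-mc-CS} to deduce \(\mc(G)=|E|/\mu\) where the paper instead computes \(\card{\delta(S_i)}\) directly; both arguments are equivalent.
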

\begin{proof}

By~\cref{eq:GWalpha-def},
there exists a rational number \(\zeta \in (-1, 0)\) such that
\begin{equation}
\label{eq:26}
  \GWalpha
  \le \frac{2}{1 - \zeta} \frac{\arccos (\zeta)}{\pi}
  < \beta.
\end{equation}
Let \(a \in \Naturals\) be such that \(b \coloneqq (1 - \zeta)
\frac{a}{2}\) is an even natural number, so that
\begin{equation*}
  \zeta = 1 - 2b/a.
\end{equation*}
We now prove the statement holds for \(G \coloneqq \Ham_{=}(a, b)\).

Let \(U \in \Reals^{[a] \times V(G)}\) be defined by \(Ue_s \coloneqq 2s
- \ones \in \Reals^{[a]}\) for every \(s \in V(G) = \set{0, 1}^{[a]}\).
We claim that
\begin{equation}
  \label{eq:optimal-solutions-exact-Hamming}
  (\rho, x) \coloneqq \paren{
    \tfrac{b}{a} \card{E},
    \tfrac{1}{2} \tbinom{a-1}{b-1}\ones
  }
  \text{ and }
  (\mu, Y) \coloneqq \paren{
    \tfrac{a}{b}, \tfrac{1}{b} U^\transp U
  }
  \text{ witness the membership }
  (\ones, \ones) \in \GWOpt(G).
\end{equation}
Feasibility of \((\tfrac{a}{b}, \tfrac{1}{b}U^\transp U)\)
in~\cref{eq:GW-polar-def-intro} with \(z \coloneqq \ones\) follows
directly from the definition of \(U\) and~\(G\), since
\begin{equation}
  \label{eq:25}
  Y_{ij} = \tfrac{a}{b}\zeta
  \qquad
  \mathrlap{
    \text{for every }ij \in E,
  }
\end{equation}
whereas feasibility of \((\rho,
x)\) in~\cref{eq:GW-gaugef-rho} with \(w \coloneqq \ones\) follows
from~\cref{eq:eigenvalue-exact-Hamming}, since \(L_G = \binom{a}{b}I
- A_G\) as \(G\) is \(\tbinom{a}{b}\)-regular.
It is immediate that \(\rho\mu = \card{E}\).
Thus, \cref{eq:optimal-solutions-exact-Hamming} is proved.
We claim that
\begin{equation}
  \label{eq:AlonSudakov-exact-pairing}
  (\ones, \ones)
  \text{ is an exact pairing}.
\end{equation}
Note that, in particular, \cref{eq:AlonSudakov-exact-pairing} ensures
that \(\GW(G) = \mc(G)\) and \(\GW^{\polar}(G) = \fcc(G)\).
For every \(i \in [a]\), set \(S_i \coloneqq \setst{s \in \set{0,
    1}^a}{s_i = 1}\).
One may easily check that \(\card{\delta(S_i)} = \binom{a - 1}{b -
  1}2^{a - 1}\) for every \(i \in [a]\).
By~\cref{eq:optimal-solutions-exact-Hamming}
and~\cref{eq:GW-approximation}, all such cuts are maximum, since
\begin{equation*}
  \GW(G)
  = \frac{b}{a}\card{E}
  = \frac{b}{a}\binom{a}{b} 2^{a - 1}
  = \binom{a - 1}{b - 1}2^{a - 1}
  = \card{\delta(S_i)}
  \le \mc(G)
  \le \GW(G).
\end{equation*}
Now consider the fractional cut cover \(\bar{y} \coloneqq
\tfrac{1}{b}\sum_{i \in [a]} e_{S_i}\).
By definition of \(\Ham_{=}(a, b)\), each edge belongs to \(b\) of the
cuts in \(\setst{\delta(S_i)}{i \in [a]}\), so \(\sum_{S \subseteq V}
\bar{y}_S \incidvector{\delta(S)} = \ones\).
Hence~\cref{eq:optimal-solutions-exact-Hamming}
and~\cref{eq:GW-polar-approx-pledge} imply that \(\bar{y}\) is an
optimal fractional cut cover, as
\begin{equation*}
  \frac{a}{b}
  = \GW^{\polar}(G)
  \le \fcc(G)
  \le \iprodt{\ones}{\bar{y}}
  = \frac{a}{b}.
\end{equation*}
This concludes the proof of~\cref{eq:AlonSudakov-exact-pairing}.
By~\cref{eq:GW-Y-edge-marginal}, \cref{eq:25}, and linearity of
expectation,
\[
  \Ebb\paren[\big]{
    \card{\delta(\GWrv(Y))}
  }
  = \sum_{ij \in E} \prob\paren{ij \in \delta(\GWrv(Y))}
  = \frac{\arccos(\zeta)}{\pi} \card{E}.
\]
Let \(y,\, p \in \Lp{\Powerset{V}}\) and \(\bar{\nu} \in \Lp{}\) be
defined from \((\mu, Y)\) as in~\cref{eq:13}.
Then for every \(ij \in E\),
\[
  e_{ij}^\transp\paren[\bigg]{\,
    \sum_{S \subseteq V} p_S \incidvector{\delta(S)}
  }
  = \prob\paren{ij \in \delta(\GWrv(Y))}
  = \frac{\arccos(\zeta)}{\pi}
\]
by~\cref{eq:GW-Y-edge-marginal}.
Hence \(\bar{\nu} = \pi/\arccos(\zeta)\).
Since \(a/b = 2/(1 - \zeta)\), we have that
\begin{equation*}
  \frac{
    \Ebb\paren[\big]{\card{\delta(\GWrv(Y))}}
  }{\mc(G)}
  = \frac{2}{1 - \zeta}\frac{\arccos(\zeta)}{\pi}
  = \frac{\fcc(G)}{\iprodt{\ones}{y}},
\end{equation*}
and this number is \(< \beta\) by~\cref{eq:26}.
\end{proof}

\nameandcite{Karloff1999} studied a family of graphs closely related
to the one featured in the proof of \Cref{prop:AlonSudakovExample},
using it to bound the quality of the approximation factor obtained
by \nameandcite{GoemansWilliamson1995}.
The~specific construction in our proof is a simplification of this
work due to \nameandcite{AlonSudakov2000}.
The relevance of \cref{prop:AlonSudakovExample} to our algorithms
inherits several aspects of the relevance of these examples to the
maximum cut setting.
Let \(\beta\) be a real number such that \(\beta > \GWalpha\).
\Cref{prop:AlonSudakovExample} presents an obstruction for the use of
the randomized hyperplane technique to produce \(\beta\)-certificates.
Let \(G = (V, E)\) be a graph whose existence is ensured by
the proposition.
Then \(G\) defines instances where the SDP
relaxations~\cref{eq:GW-intro-def} and~\cref{eq:GW-polar-def-intro}
are tight --- i.e., the integrality ratio is one ---, but where the
rounding itself can be responsible for the approximation factor
\(\GWalpha\) of the final algorithm.
The first inequality in~\cref{eq:AlonSudakovGap} states that the
expected value of a cut produced by the random hyperplane technique
will be too small for the desired approximation factor~\(\beta\).
The second inequality in~\cref{eq:AlonSudakovGap} does not directly
translate to the setting of
our algorithm, which takes finitely many samples and outputs a
surrogate for the \(y \in \Lp{\Powerset{V}}\) defined in~\cref{eq:13}.
\Cref{prop:AlonSudakovExample} and~\cref{eq:y-as-asymptotic-behavior}
do imply that, almost surely, the objective value \(\mu_t/t\) obtained
from~\cref{eq:hat-y-def} deteriorates above \(\tfrac{1}{\beta}
\fcc(G)\) for sufficiently large values of \(t \in \Naturals\).
Note, however, that similar to how~\cref{eq:AlonSudakovGap} allows for
a finite number of samples to define a fractional cut cover with
objective value better than \(\tfrac{1}{\beta} \fcc(G)\), it also allows
for a single cut to have objective value better than \(\beta \mc(G)\).

Let \(G = (V, E)\) be a graph and let \(w \in \Lp{E}\).
\Cref{prop:FeigeSchechtmanExample,prop:AlonSudakovExample} describe
limitations of the approach we chose for producing approximation
algorithms for the maximum cut and fractional cut-covering problems.
The \emph{Unique Games Conjecture} \cite{KhotKindlerEtAl2007}
implies that it is NP-hard to compute, given an instance \((G, w)\) of
the maximum cut problem as input, an
upper bound to \(\mc(G, w)\) with a better approximation guarantee
than~\cref{eq:GW-approximation}.
Recall from~\cref{cor:UGC} that this conjectured optimality extends to
the SDP in~\cref{eq:GW-polar-def-intro} and the fractional
cut-covering problem: obtaining any better approximation factor on the
objective value for~\cref{alg:1} would disprove the conjecture.
It is the case, however, that even under UGC, our developments leave
open the possibility of an approximation algorithm for the fractional
cut-covering problem which is \emph{not} based on a positive definite
monotone gauge.
Although the reader could see this as an invitation to develop
non-convex approximation algorithms for the weighted fractional
cut-covering problem, this could also simply hint at the existence of
a (not yet developed) direct reduction from the Unique Label Cover
problem to the fractional cut-covering problem.

\section{Concluding Remarks}
\label{sec:conclusion}

We summarize the main contributions of this paper and discuss future
directions.
Key to our contributions is precisely describing the relationship
between the (weighted) maximum cut and the weighted fractional
cut-covering problems through the lens of gauge duality: the functions
\(\mc(G, \cdot)\) and \(\fcc(G, \cdot)\) form a gauge dual pair.
Crucially, the SDP relaxation for the maximum cut problem, utilized
by~\nameandcite{GoemansWilliamson1995}, and the SDP relaxation we
provided for the fractional cut-covering problem share the same
property: \(\GW(G, \cdot)\) and \(\GW^\polar(G, \cdot)\) form a gauge
dual pair (see \Cref{prop:1}).
This explicit connection establishes the background and foundation for
the development of our algorithms.
Gauge duality promptly yields a bound conversion procedure, enabling
us to extend the \(\GWalpha\) approximation ratio between
\(\mc(G, \cdot)\) and \(\GW(G, \cdot)\) to a \(1/\GWalpha\)
approximation ratio between \(\fcc(G, \cdot)\) and
\(\GW^\polar(G, \cdot)\) (see \Cref{prop:bound-conversion}).

The understanding of this connection organizes our efforts in
algorithmic design:
\begin{enumerate}[(i)]
\item Optimal solutions \(Y \in \Psd{V}\) of the SDP relaxation
  \cref{eq:GW-intro-def} for maximum cut are employed
  in~\cite{GoemansWilliamson1995} to sample (the shore of) a cut by
  the randomized hyperplane technique, with \(Y\) as the generating
  parameter.
  We extend this technique to the fractional cut-covering problem in
  two stages: initially by showing that the marginal probabilities for
  the shores provide a fractional cut cover with the same
  approximation quality, and
  subsequently by sparsifying our cover through a polynomial-time
  randomized procedure (see
  \cref{prop:sdp-rounding,prop:probabilistic-rounding-cover-pre,%
    thm:rounding-algorithm}).
\item We address the problem of simultaneously obtaining
  approximate solutions for the maximum cut and the weighted
  fractional cut-covering problems with certificates of approximate
  optimality.
  This inspires our definition of the set \(\GWOpt(G)\), which
  precisely links tightness of
  \(\iprodt{w}{z} \leq \GW(G,w)\GW^{\polar}(G,z)\) in gauge duality with the
  optimality of solutions for the SDP relaxations.
  This directly motivates the definition of \(\beta\)-pairings and
  \(\beta\)-certificates, which are essential to our simultaneous
  certification process (see
  \cref{prop:real-number-model-certificate}, and
  \cref{thm:certificates-from-w,thm:certificates-from-z}).
\item Given the strong algorithmic focus of our work, it is
  crucial to deal with the fact that exact optimal solutions for SDPs
  may not be computable in polynomial time.
  We combine the strategies from the previous items: in
  \cref{subsec:algorithimic_certicates}, we introduce the set
  \(\GWOpt_{\eps,\sigma}(G)\), which is a perturbed version of
  \(\GWOpt(G)\), and then develop a randomized approximation algorithm
  which requires only nearly optimal solutions for the SDPs.
\item When run on a connected graph with \(n\) vertices and \(m\)
  edges, our algorithms rely on obtaining a nearly optimal solution
  \(Y\) to an instance of the relevant SDP problems, which can be done
  in \(O(m^4)\) time, followed by the rounding procedure on~\(Y\),
  which involves a single \(n \times n\) Cholesky factorization,
  \(O(n\ln(n))\) samples from a standard Gaussian distributions, and
  \(O(n^2 \ln(n))\) extra work.
\item \Cref{sec:possible-improvements} addresses many aspects of our
  approach that are optimal.
  Many of them are mirror images under gauge duality of corresponding
  properties for the Goemans and Williamson's algorithm for maximum cut
  (see \cref{prop:AlonSudakovExample}).
\end{enumerate}

Throughout our work we have assumed the real-number machine model
\cite{BlumCuckerEtAl1998} with two extra oracles: one sampling from a
standard normal distribution, and one computing Cholesky
factorizations.
This second assumption is equivalent to assuming exact computation of
square roots of real positive numbers.
Despite~these assumptions, our results build towards an implementation
in the Turing machine model.
Explicitly, the Slater points for~\cref{eq:GW-eps-polar-def} used
in~\cref{sec:solvers} may allow one to utilize the work of
\nameandcite{deKlerkVallentin2016} to conclude that both
\(\GW_{\eps}\) and \(\GW_{\eps}^{\polar}\) may be computed in
polynomial time on a Turing machine.
\Cref{prop:sampling-GWOpt} identifies the main computational steps
required to round these optimal solutions, and it may be extended to
the probabilistic Turing machine model with an appropriate analysis of
approximate square root computations.

The interplay of duality with probabilistic aspects that permeates
this work suggests
interesting avenues for future research.
Note that, essential to our approach, Goemans and Williamson's
randomized hyperplane technique casts an optimal solution~\(Y\)
for~\cref{eq:GW-intro-def} as a \emph{distribution} of cuts.
This enables edges that do not occur in any heavy cut to be probably
covered after appropriate thickening.
The SDP perturbation \cref{eq:GW-eps-polar-gaugef} makes this even
more robust, since for any feasible solution~\((\mu,Y)\), sampling
from \(\GWrv(Y)\) covers every edge with probability at least
\(\sqrt{2\eps}/\pi\) (by~\cref{eq:gap:GW-eps-cover}) and \(Y\) has
full rank.
The analysis in~\cite{GoemansWilliamson1995} for maximum cut relies on
expected values, whereas for fractional cut covers we rely on
concentration inequalities (see
\cref{prop:probabilistic-rounding-cover-pre,%
  prop:GWOpt-approx-sampling}).
In this respect, it would be interesting to find a gauge dual analogue
of the analysis in \cite[Sec.~3.1]{GoemansWilliamson1995}, which
improves the approximation factor~\(\GWalpha\) when the maximum cut is
large.
A similar question can be made about a gauge dual analogue of low-rank
SDP solutions with improved approximation factors, as
in~\cite{AvidorZ05a}.
One~may also ask whether the techniques from~\cite{MahajanR99a} can be
applied to derandomize our algorithms.

Further natural research directions include the use of duality in
close relatives of the maximum cut problem, such as maximum bisection
problem (see, e.g., \cite{PoljakR95a,austrin2016better}) and Nesterov's
generalization to quadratic optimization problems~\cite{Nesterov98a}.
While this paper was under review, there has been a very significant
advance in this research direction: \cite{BPdCSST2024} presents a
generalized framework containing, among other problems, both
\cite{Nesterov98a} and all Boolean constraint satisfaction problems
whose constraints contain at most two variables (Boolean 2-CSPs).
Beyond the maximum cut and fractional cut-covering problems, it is
natural to search for and study other pairs of combinatorial
optimization problems that are linked by gauge duality.
For example, the literature has both time-honored
\cite{GrotschelLovaszSchrijver1986} and more recent
\cite{BenedettoProencadeCarliSilvaEtAl2021} publications leveraging
the fact that
the stability number and the fractional chromatic number of a
graph define gauges dual to each other.
Specially in cases where an approximation algorithm rounds a solution
to a convex relaxation of a combinatorial problem, the (gauge) dual
combinatorial problem may be approximated utilizing the ideas within
this work.
On a broader note, it seems interesting to look for further pair of
problems which can be ``simultaneously approximated''.
\Cref{def:beta-cert} provides a convenient formalization of
simultaneous approximation for this work, but even within the
maximum cut and fractional cut-covering context there are potentially
sensible alternative definitions, as mentioned in
\Cref{re:beta-certificate-alternatives}.

\printbibliography

\clearpage
\appendix
\section{Edges with Arbitrarily Small Probability of being Covered}
\label{sec:small-edges}

\begin{proposition}
\label{prop:small-edges-example}
Let \(\eps \in (0,2)\).
Set \(G \coloneqq K_3\), set \(\beta \coloneqq {4}/{(4 - \eps)^2}\),
and set
\begin{subequations}
  \begin{align}
    \label{eq:tiny-example-w-def}
    w
    &\coloneqq (2 - \eps)\beta(e_{12} + e_{13}) + \beta e_{23} \in \Lp{E(G)},\\
    z
    &\coloneqq e_{12} + e_{13} + \eps e_{23} \in \Lp{E(G)}.
  \end{align}
\end{subequations}
Then
\begin{equation}
  \label{eq:31}
  (1, \bar{x})
  \text{ and }
  (\bar{\mu}, \bar{Y})
  \text{ witness the membership }
  (w, z) \in \GWOpt(G),
\end{equation}
where
\begin{equation*}
  \bar{x}
  \coloneqq
  \frac{1}{4 - \eps}\paren{(2 - \eps)e_1 + e_2 + e_3},
  \quad
  \bar{\mu}
  \coloneqq \frac{4}{4 - \eps},
  \quad
  \text{and}
  \quad
  \bar{Y}
  \coloneqq
  \bar{\mu}
  \begin{bmatrix*}
    1 & \eps/2 -1 & \eps/2 - 1\\
    \eps/2 - 1 & 1 & 1 - 2\eps + \eps^2/2\\
    \eps/2 -1 & 1 - 2\eps + \eps^2/2 & 1
  \end{bmatrix*}.
\end{equation*}
In particular, \((\bar{\mu},\bar{Y})\) is an optimal solution for
the SDP~\cref{eq:GW-polar-def-intro}.
\end{proposition}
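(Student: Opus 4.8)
The plan is to unwind the definition of $\GWOpt(G)$ from~\cref{eq:GWOpt-def} and verify directly that $(1,\bar{x})$ and $(\bar{\mu},\bar{Y})$ witness the membership $(w,z)\in\GWOpt(G)$; once this is done, the concluding sentence is immediate, since~\cref{eq:GWOpt-strong-duality} together with~\cref{eq:GW-CS} forces $\GW^{\polar}(G,z)=\bar{\mu}$, so that the feasible pair $(\bar{\mu},\bar{Y})$, whose objective value is $\bar{\mu}$, is optimal in~\cref{eq:GW-polar-def-intro}. Thus it suffices to check three things for $G=K_3$: that $(1,\bar{x})$ is feasible in~\cref{eq:GW-gaugef-rho} for $(G,w)$, that $(\bar{\mu},\bar{Y})$ is feasible in~\cref{eq:GW-polar-def-intro} for $(G,z)$, and that $\iprodt{w}{z}\ge 1\cdot\bar{\mu}$.

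For the primal witness, one computes $\iprodt{\ones}{\bar{x}}=\tfrac{(2-\eps)+1+1}{4-\eps}=1$, so the objective inequality $\rho\ge\iprodt{\ones}{x}$ holds (with equality) for $\rho=1$. For the semidefinite inequality I would write $\Laplacian_G(w)$ entrywise --- diagonal entries are the weighted degrees, the $(i,j)$-entry is $-w_{ij}$ --- and verify, using $\beta=4/(4-\eps)^2$, the identity
\[
  \Diag(\bar{x})-\tfrac{1}{4}\Laplacian_G(w)
  \;=\;
  \frac{1}{(4-\eps)^2}
  \begin{bmatrix}
    (2-\eps)^2 & 2-\eps & 2-\eps\\
    2-\eps & 1 & 1\\
    2-\eps & 1 & 1
  \end{bmatrix}
  \;=\;
  \bar{x}\bar{x}^{\transp}
  \;\succeq\;0 ,
\]
which gives $\Diag(\bar{x})\succeq\tfrac{1}{4}\Laplacian_G(w)$. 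Finally, $\iprodt{w}{z}=2(2-\eps)\beta+\eps\beta=(4-\eps)\beta=\tfrac{4}{4-\eps}=\bar{\mu}$, so $\iprodt{w}{z}\ge\rho\mu$ holds (again with equality) for $(\rho,\mu)=(1,\bar{\mu})$.

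For the dual witness, $\bar{\mu}>0$ and $\diag(\bar{Y})=\bar{\mu}\ones$ are read off directly. The constraint $\tfrac{1}{4}\Laplacian_G^{*}(\bar{Y})\ge z$ is checked edge by edge via~\cref{eq:Laplacian-adjoint-def}: since all diagonal entries of $\bar{Y}$ equal $\bar{\mu}$, one has $\Laplacian_G^{*}(\bar{Y})_{ij}=2\bar{\mu}-2\bar{Y}_{ij}$, and substituting the three off-diagonal entries of $\bar{Y}$ yields $\tfrac{1}{4}\Laplacian_G^{*}(\bar{Y})=z$ exactly. The only step requiring a moment's thought is $\bar{Y}\succeq 0$: I would divide by $\bar{\mu}>0$ and observe that, with $c\coloneqq\eps/2-1\in(-1,0)$, the matrix $\bar{\mu}^{-1}\bar{Y}$ has all diagonal entries $1$, entries $c$ in the first row and column, and entry $2c^{2}-1$ in position $(2,3)$; the double-angle identity $2c^{2}-1=\cos(2\theta)$ with $\theta\coloneqq\arccos c$ then exhibits $\bar{\mu}^{-1}\bar{Y}=B^{\transp}B$, where $B\in\Reals^{2\times 3}$ has columns $(1,0)$, $(\cos\theta,\sin\theta)$, $(\cos\theta,-\sin\theta)$ (with $\sin\theta=\sqrt{1-c^{2}}=\tfrac12\sqrt{\eps(4-\eps)}$ real since $\eps\in(0,2)$), whence $\bar{Y}\succeq 0$. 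The two mildly nonobvious points are precisely this Gram-matrix realization (equivalently, nonnegativity of the leading principal minors of $\bar{Y}$) and the rank-one identity $\Diag(\bar{x})-\tfrac14\Laplacian_G(w)=\bar{x}\bar{x}^{\transp}$; everything else is routine bookkeeping. Having verified the three conditions, $(w,z)\in\GWOpt(G)$ with the stated witnesses, and~\cref{eq:GWOpt-strong-duality} yields $\GW^{\polar}(G,z)=\bar{\mu}$, so $(\bar{\mu},\bar{Y})$ is optimal for~\cref{eq:GW-polar-def-intro}.
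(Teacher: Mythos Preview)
Your proposal is correct and follows essentially the same approach as the paper: verify feasibility of $(1,\bar{x})$ in~\cref{eq:GW-gaugef-rho}, feasibility of $(\bar{\mu},\bar{Y})$ in~\cref{eq:GW-polar-def-intro}, and the equality $\iprodt{w}{z}=\bar{\mu}$, then conclude optimality. The only cosmetic differences are in the two PSD checks: the paper exhibits $\tfrac{4-\eps}{4}\bar{Y}$ as the sum of the two rank-one matrices $\oprodsym{(1,\eps/2-1,\eps/2-1)}$ and $(\eps-\eps^2/4)\oprodsym{(0,1,-1)}$ (equivalent to your Gram-matrix realization on the circle), and it writes $\tfrac{1}{4}\Laplacian_G(w)=\tfrac{\beta}{4}\paren[\big]{(4-\eps)\Diag(u)-\oprodsym{u}}$ with $u=(2-\eps)e_1+e_2+e_3$, which after the rescaling $\bar{x}=\tfrac{1}{4-\eps}u$ is exactly your rank-one identity $\Diag(\bar{x})-\tfrac{1}{4}\Laplacian_G(w)=\oprodsym{\bar{x}}$.
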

\begin{proof}
It is immediate that \(\diag(\bar{Y}) = \bar{\mu}\ones\).
The direct computation
\begin{equation*}
  \frac{(4 - \eps)}{4}
  \bar{Y}
  =
  \begin{bmatrix}
    1 \\ \eps/2 - 1 \\ \eps/2 -1
  \end{bmatrix}
  \begin{bmatrix}
    1 & \eps/2 - 1 & \eps/2 - 1
  \end{bmatrix}
  + \paren{\eps - \eps^2/4}
  \begin{bmatrix}
    0 \\ 1 \\ -1
  \end{bmatrix}
  \begin{bmatrix}
    0 & 1 & -1
  \end{bmatrix}
\end{equation*}
shows that \(\bar{Y} \in \Psd{V}\).
Moreover, from~\cref{eq:Laplacian-adjoint-def} we get that
\[
  \paren[\big]{\tfrac{1}{4}\Laplacian_G^*(\bar{Y})}_{13}
  = \paren[\big]{\tfrac{1}{4}\Laplacian_G^*(\bar{Y})}_{12}
  = \frac{2}{4 - \eps} - \frac{\eps - 2}{4 - \eps}
  = 1
  = z_{12}
  = z_{13}.
\]
Furthermore,
\begin{equation*}
  \paren[\big]{\tfrac{1}{4}\Laplacian_G^*(\bar{Y})}_{23}
  = \frac{2}{4 - \eps} - \frac{2(1 - 2\eps + \eps^2/2)}{4 - \eps}
  = \frac{4\eps - \eps^2}{4 - \eps}
  = \eps
  = z_{23}.
\end{equation*}
Thus, \((\bar{\mu},\bar{Y})\) is feasible
in~\cref{eq:GW-polar-def-intro} for \((G, z)\).
Set \(u \coloneqq (2 - \eps)e_1 + e_2 + e_3\).
Moreover,
\begin{detailedproof}
  \tfrac{1}{4}\Laplacian_G(w)
  &= \tfrac{1}{4}(2 - \eps)\beta \Laplacian_G(e_{12} + e_{13})
  + \tfrac{1}{4}\beta \Laplacian_G(e_{23})
  &&\text{by~\cref{eq:tiny-example-w-def}}\\
  &=
  \tfrac{1}{4}\beta\paren[\Bigg]{
   (2 - \eps)
  \begin{bmatrix}
    \phantom{-}2 & -1 & -1\\
    -1 & \phantom{-}1 & \phantom{-}0\\
    -1 & \phantom{-}0 & \phantom{-}1
  \end{bmatrix}
  +
  \begin{bmatrix}
    0 & \phantom{-}0 & \phantom{-}0 \\
    0 & \phantom{-}1 & -1           \\
    0 & -1           & \phantom{-}1
  \end{bmatrix}
  }\\
  &= \tfrac{1}{4}\beta
  \begin{bmatrix}
    4 - 2\eps & \eps -2 & \eps -2\\
    \eps - 2 & 3 - \eps & -1\\
    \eps - 2 & -1 & 3 - \eps
  \end{bmatrix}\\
  &= \tfrac{1}{4}\beta
  \paren[\Bigg]{
    (4 - \eps)
    \begin{bmatrix}
      2 - \eps & 0 & 0\\
      0 & 1 & 0\\
      0 & 0 & 1
    \end{bmatrix}
    -
    \begin{bmatrix}
      (2 - \eps)^2 & 2 - \eps & 2 - \eps\\
      2 - \eps & 1 & 1\\
      2 - \eps & 1 & 1
    \end{bmatrix}
  }\\
  &= \tfrac{1}{4}\beta\paren[\big]{(4 - \eps)\Diag(u) - \oprod{u}{u}}
  &&\text{by the definition of \(u\)}\\
  &\preceq
  \tfrac{1}{4}\beta(4 - \eps)\Diag(u)\\
  &= \Diag(\bar{x})
  &&\text{since \(4\bar{x} = \beta(4 - \eps) u\)}.\\
\end{detailedproof}
Thus \((1, \bar{x})\) is feasible in~\cref{eq:GW-gaugef-rho}
for \((G, w)\) as \(\iprodt{\ones}{\bar{x}} = 1\).
Since
\[
  \iprodt{w}{z}
  = 2(2 - \eps)\beta + \eps \beta
  = \beta(4 - \eps)
  = \frac{4}{4 - \eps}
  = \bar{\mu},
\]
we obtain~\cref{eq:31} from~\cref{eq:GWOpt-def}.
Note that since \((1, \bar{x})\) is feasible
in~\cref{eq:GW-gaugef-rho}, then \((w, \bar{x})\) is feasible
in~\cref{eq:GW-polar-supf}.
Since~\cref{eq:GW-polar-def-intro} and~\cref{eq:GW-polar-supf} form a
primal-dual pair of SDPs, \(\bar{\mu} = \iprodt{w}{z}\) implies
optimality of \((\bar{\mu}, \bar{Y})\) in~\cref{eq:GW-polar-def-intro}.
\end{proof}

\section{SDP Solutions Defining Fractional Cut Covers with Exponential Support}
\label{sec:exp-supp}

\Cref{thm:exp-supp} below presents a rigorous statement and proof
regarding~\cref{eq:exp-supp-pledge}, i.e., about the exponential
support of a fractional cut cover derived from
\cref{prop:sdp-rounding}.

\begin{theorem}
  \label{thm:exp-supp}
  Let \(n \geq 3\) be an integer.
  Set \((V,E) \coloneqq K_n\) and
  \(\bar{Y} \coloneqq 2I-\tfrac{2}{n}\oprodsym{\ones} \in \Sym{V}\).
  For every \(i \in [n]\), let \(g_i\) be independently sampled from
  the standard normal distribution, and set
  \(h \coloneqq \norm{g}^{-1}g\).
  Then, for each nonempty \(S \subsetneq V\),
  \[
    \prob\paren[\big]{\GWrv(\bar{Y}, h) = S} > 0.
  \]
\end{theorem}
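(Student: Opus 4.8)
The plan is to make the random shore $\GWrv(\bar{Y}, h)$ completely explicit by diagonalizing $\bar{Y}$, and then, for each nonempty $S \subsetneq V$, to exhibit an explicit open subset of $\Reals^n$ of positive Gaussian measure on which the random shore equals $S$.

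First I would compute $\bar{Y}^{\half}$. The all-ones vector $\ones$ is an eigenvector of $\bar{Y} = 2I - \tfrac{2}{n}\ones\ones^\transp$ with eigenvalue $0$, whereas $\bar{Y}$ acts as $2I$ on $\ones^{\perp}$; hence $\bar{Y} \in \Psd{V}$ and $\bar{Y}^{\half} = \sqrt{2}\,\paren[\big]{I - \tfrac{1}{n}\ones\ones^\transp}$, that is, $\sqrt{2}$ times the orthogonal projection onto $\ones^{\perp}$. (This is the one place where a little care is needed: $\bar{Y}$ is singular, so the square root is \emph{not} $\sqrt{2}\,I$, and the constant vectors lie in its kernel.) Writing $\bar{g} \coloneqq \tfrac{1}{n}\sum_{i \in [n]} g_i$ and using $h = \norm{g}^{-1} g$, one gets $\iprodt{e_i}{\bar{Y}^{\half}h} = \tfrac{\sqrt{2}}{\norm{g}}\paren{g_i - \bar{g}}$ for every $i \in [n]$. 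Since $\norm{g} > 0$ almost surely and rescaling by a positive scalar does not change signs, the definition in~\cref{eq:GWrv-def} yields
\[
  \GWrv(\bar{Y}, h) = \setst{i \in [n]}{g_i \ge \bar{g}}
  \qquad\text{almost surely.}
\]

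It then remains to prove that, for each nonempty $S \subsetneq V$, the event $\setst{i \in [n]}{g_i \ge \bar{g}} = S$ has positive probability. I would work with the set $R \coloneqq \setst{g \in \Reals^n}{g_i > \bar{g} \text{ for all } i \in S \text{ and } g_i < \bar{g} \text{ for all } i \in [n] \setminus S}$. Each defining constraint is a strict linear inequality in $g$ (as $\bar{g}$ depends linearly on $g$), so $R$ is open; and $R \neq \emptyset$, since taking any $a > b$ and setting $g_i \coloneqq a$ for $i \in S$, $g_i \coloneqq b$ for $i \notin S$, a short computation shows $a > \bar{g}$ (using $\card{[n] \setminus S} \ge 1$) and $b < \bar{g}$ (using $\card{S} \ge 1$). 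Because the density of a standard Gaussian vector is strictly positive on all of $\Reals^n$, every nonempty open set has positive probability, so $\prob(g \in R) > 0$; and on $R$ one has $\setst{i \in [n]}{g_i \ge \bar{g}} = S$. Combining this with the displayed identity gives $\prob\paren[\big]{\GWrv(\bar{Y}, h) = S} \ge \prob(g \in R) > 0$.

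I do not anticipate a genuine obstacle here: once $\bar{Y}^{\half}$ is identified, the statement reduces to an essentially one-line observation about the signs of $g_i - \bar{g}$. The only points that require attention are the singularity of $\bar{Y}$ (so that the relevant quantity is each coordinate minus the mean, rather than the coordinate itself) and the bookkeeping to verify that $R$ is nonempty, both of which are routine.
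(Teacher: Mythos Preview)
Your proof is correct and follows essentially the same approach as the paper: both identify that \(\GWrv(\bar{Y},h) = \setst{i}{g_i \ge \bar{g}}\) via the projection \(I - \tfrac{1}{n}\ones\ones^\transp\), and then exhibit a region of positive Gaussian measure on which this set equals~\(S\). The only cosmetic difference is that the paper constructs an explicit product of intervals (via a parameter~\(\beta\)) containing the desired region, whereas you argue more cleanly that the region is open and nonempty and hence has positive measure.
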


\begin{proof}
  Let \(S \subsetneq V\) be nonempty.
  Set
  \[
    \beta \coloneqq \min\set[\bigg]{\frac{2n - \card{S} - 1}{\card{S}-1},
      \frac{n+\card{S}-1}{n-\card{S}-1}} \in \Reals,
  \]
  where we consider \(1/0\) to be \(+\infty\).
  It is straightforward to check that \(\beta>1\).
  For each \(v \in V\), let \(A_v\) be the event that
  \(1< g_v <\beta\) and let \(B_v\) be the event that
  \(-\beta< g_v <-1\).
  Since \(g_v\) is sampled from the standard normal
  distribution, we have that \(\prob(A_v) = \prob(B_v) \eqqcolon p\)
  is a positive constant depending only on \(\beta\).

  Set
  \[
    D \coloneqq
    \paren[\bigg]{
      \bigcap_{s\in S} A_s
    }
    \cap
    \paren[\bigg]{
      \bigcap_{t\in V\setminus S} B_t
    }.
  \]
  Note that, by the independence of \((g_v)_{v \in V}\), we have that
  \(\prob(D) = p^n > 0\).
  We complete the proof by showing that \(D\) implies that
  \begin{equation}
    \label{eq:explicit-outcome}
    \GWrv(\bar{Y}, h) = S.
  \end{equation}
  We will use the fact that the sampled shore is
  \begin{equation}
    \label{eq:15}
    \begin{split}
      \GWrv(\bar{Y}, h)
      & = \setst{i \in V}{\iprodt{e_i}{\bar{Y}^{\half}h} \ge 0}
        = \setst{i \in V}{\iprodt{e_i}{(I-\tfrac{1}{n}\oprodsym{\ones})h} \ge 0}
      \\
      & = \setst[\Big]{i \in V}{h_i \ge \frac{\iprodt{\ones}{h}}{n}}
        = \setst[\Big]{i \in V}{g_i \ge \frac{\iprodt{\ones}{g}}{n}}
      \\
      & = \setst[\Big]{i \in V}{(n-1) g_i \ge \sum_{v\in V\setminus\set{i}} g_v}.
    \end{split}
  \end{equation}
  Assume that \(D\) holds.
  We first prove `\(\supseteq\)' in~\cref{eq:explicit-outcome}.
  Let \(s \in S\).
  Then
  \begin{equation*}
    \frac{1}{n-1}\sum_{v\in V\setminus\set{s}} g_v
    =
    \frac{1}{n-1}
    \paren[\bigg]{
      \sum_{s'\in S \setminus \set{s}} g_{s'}
      +
      \sum_{t\in V\setminus S} g_t
    }
    <
    \frac{1}{n-1}\left(\beta (\card{S}-1) - (n-\card{S})\right)
    \leq 1 < g_s,
  \end{equation*}
  since our choice of \(\beta\) ensures that
  \(\beta (\card{S}-1) \leq 2n-\card{S}-1\).
  Hence, \(s \in \GWrv(\bar{Y}, h)\) by~\cref{eq:15}.

  We now prove `\(\subseteq\)' in~\cref{eq:explicit-outcome}.
  Let \(t \in V\setminus S\).
  Then
  \begin{equation*}
    \frac{1}{n-1}\sum_{v\in V\setminus\set{t}} g_v
    =
    \frac{1}{n-1}
    \paren[\bigg]{
      \sum_{t'\in V\setminus (S \cup \set{t})} g_{t'}
      +
      \sum_{s\in S} g_s
    }
    >
    \frac{1}{n-1}\left(-\beta(n-\card{S}-1) + \card{S}\right)
    \geq -1 > g_t,
  \end{equation*}
  since our choice of \(\beta\) ensures that
  \(\beta(n-\card{S}-1) \leq n+\card{S}-1\).
  Thus, \(t \notin \GWrv(\bar{Y}, h)\) by~\cref{eq:15}.
\end{proof}

\section{SDP Solvers}
\label{sec:solvers}

In this section, we analyze the running time of an interior-point
method (IPM) to solve the SDP~\cref{eq:GW-eps-polar-gaugef} to near
optimality:
\begin{equation*}
  \tag{\ref{eq:GW-eps-polar-gaugef}}
  \GW_{\eps}^{\polar}(G, z)
  = \min\setst{
    \mu
  }{
    \mu \in \Reals_+,\,
    Y \in \Sym{V},\,
    Y \succeq \mu \eps I,\,
    \tfrac{1}{4}\Laplacian_G^*(Y) \ge z,\,
    \diag(Y) = \mu \ones
  }.
\end{equation*}
\Cref{alg:1,alg:fcc_mc} in Theorems~\ref{thm:rounding-algorithm} and
\ref{thm:certificates-from-z-approx}, resp., rely on obtaining such
nearly optimal solutions for \cref{eq:GW-eps-polar-gaugef}, where the
parameter \(\eps\) lies in \(\halfopen{0,1}\).

For the purposes of stating the running time of IPMs, we will consider
an arbitrary primal SDP in the format
\begin{subequations}
  \label{eq:16}
  \begin{alignat}{3}
    \text{Minimize}   &    & & \iprod{c}{x}                 \\*
    \label{eq:17}
    \text{subject to} &\ \ & & \Acal(x) \succeq_{\Lbb^*} b, \\*
    \label{eq:18}
                      &    & & x \in \Kbb,
  \end{alignat}
\end{subequations}
and its (syntactically symmetric) dual,
\begin{subequations}
  \label{eq:19}
  \begin{alignat}{3}
    \text{Maximize}   &    & & \iprod{b}{y}                \\*
    \text{subject to} &\ \ & & y \in \Lbb,                 \\*
                      &    & & \Acal^*(y) \preceq_{\Kbb^*} c,
  \end{alignat}
\end{subequations}
where
\begin{subequations}
  \begin{alignat}{4}
    \label{eq:21}
    \Kbb
    & \coloneqq \Psd{n_1} \oplus \Lp{n_2} \oplus \Reals^{n_3}
    && \subseteq \Sym{n_1} \oplus \Reals^{n_2} \oplus \Reals^{n_3}
    & \eqqcolon \Xbb
    \\
    \shortintertext{and}
    \label{eq:11}
    \Lbb
    & \coloneqq \Psd{m_1} \oplus \Lp{m_2} \oplus \Reals^{m_3}
    && \subseteq \Sym{m_1} \oplus \Reals^{m_2} \oplus \Reals^{m_3}
    & \eqqcolon \Ybb
  \end{alignat}
\end{subequations}
for some integers \(n_1,n_2,n_3,m_1,m_2,m_3 \in \Naturals\), the map
\(\Acal \colon \Xbb \to \Ybb\) is linear, \(b \in \Ybb\), and
\(c \in \Xbb\).
Here, the \emph{dual cone} of a cone~\(C\) in Euclidean space~\(\Ebb\)
is
\(C^* \coloneqq \setst{y \in \Ebb}{\forall x \in C,\,\iprod{y}{x} \geq
  0}\), and each of the notations \(a \succeq_C b\) and
\(b \preceq_C a\), with \(a,b \in \Ebb\), means that \(a-b \in C\).

Note that~\cref{eq:18,eq:21} allow for variables composed of positive
semidefinite matrices, nonnegative vectors, and free scalar variables.
Similarly, \cref{eq:17,eq:11} enables one to require affine functions
of the variables to be positive semidefinite, nonnegative, or
\emph{equal} to zero.
As an example, the SDP~\cref{eq:GW-eps-polar-gaugef} can be cast in
the format~\cref{eq:16} by setting
\begin{gather*}
  \Kbb
  \coloneqq
  \Psd{0} \oplus \Lp{1} \oplus \Sym{V},
  \qquad
  \Lbb
  \coloneqq
  \Psd{V} \oplus \Lp{E} \oplus \Reals^V,
  \\
  \Acal
  \colon \mu \oplus Y \in \Reals^1 \oplus \Sym{V}
  \mapsto
  \paren{Y - \mu\eps I}
  \oplus
  \paren[\big]{\tfrac{1}{4}\Laplacian_G^*(Y)}
  \oplus
  \paren[\big]{\mu\ones-\diag(Y)},
  \\
  b \coloneqq 0 \oplus z \oplus 0,
  \qquad
  c \coloneqq 1 \oplus 0.
\end{gather*}

Some IPMs proceed by producing a sequence of iterates tracking the
so-called central path.
We will encode the procedure that updates an iterate to the next one
by a function~\(\Xi\), so that from an iterate \((x_t,y_t)\) of
primal-dual solutions, the next iterate will be
\(\Xi(x_t,y_t) \eqqcolon (x_{t+1},y_{t+1})\).

The number of iterations of IPMs so that the duality gap for a pair of
solutions \((x_t,y_t)\) is a \(\delta\)-fraction of the inital duality
gap can be bounded by a function \(\psi\) on the initial Slater points
\((x_0,y_0) \coloneqq (\slater{x},\slater{y})\).
Set
\begin{equation}
  \label{eq:sdp-N-def}
  N \coloneqq n_1+n_2+m_1+m_2.
\end{equation}
For each pair \((x,y)\) of primal-dual feasible solutions, where
\begin{alignat*}{4}
  x & \eqqcolon X_1 && \oplus x_2 && \oplus x_3 && \in \Xbb,
  \\*
  y & \eqqcolon Y_1 && \oplus y_2 && \oplus y_3 && \in \Ybb,
  \\
  \shortintertext{and with corresponding slacks}
  \Acal(x) - b & \eqqcolon U_1 && \oplus u_2 && \oplus u_3 &&\in \Ybb,
  \\*
  c - \Acal^*(y) & \eqqcolon V_1 && \oplus v_2 && \oplus v_3 && \in \Xbb,
\end{alignat*}
define
\begin{multline}
  \label{eq:14}
  \psi(x,y) \coloneqq
  N\ln\paren[\bigg]{
    \frac{1}{N}
    \iprod[\Big]{
      X_1 \oplus x_2 \oplus Y_1 \oplus y_2
    }{
      V_1 \oplus v_2 \oplus U_1 \oplus u_2
    }
  }
  \\
  -
  \ln \paren[\bigg]{
    \det(X_1)\det(V_1)\det(Y_1)\det(U_1)
    \paren[\Big]{\prod x_2}
    \paren[\Big]{\prod v_2}
    \paren[\Big]{\prod y_2}
    \paren[\Big]{\prod u_2}
  },
\end{multline}
where, for each vector \(a\), we denote
\(\prod a \coloneqq \det(\Diag(a))\).
The function \(\psi\) takes into account two important factors that
affect the number of required iterations: the first term in the RHS
of~\cref{eq:14} depends on the initial duality gap
\begin{inlinemath}
  \iprod[\big]{
    X_1 \oplus x_2 \oplus Y_1 \oplus y_2
  }{
    V_1 \oplus v_2 \oplus U_1 \oplus u_2
  },
\end{inlinemath}
whereas the second term is related to the centrality of the initial
Slater point.
Indeed, it is intuitive that in IPMs, a good initial point should have
a reasonably good duality gap and at the same time not being too close
to the boundary of the feasible region.

The next result is adapted from~\cite[Theorem~4.5]{Tuncel10a} for our
format of SDPs:
\begin{theorem}
  \label{thm:sdp-central}
  Let \(\delta \in (0,1)\) and let \((x_0,y_0)\) be a primal-dual pair
  of feasible solutions for~\cref{eq:16} and~\cref{eq:19}, respectively,
  such that
  \begin{equation*}
    \psi(x_0,y_0) \leq \sqrt{N} \ln \left(1/\delta\right),
  \end{equation*}
  where \(N\) is defined as in~\cref{eq:sdp-N-def} and~\(\psi\) as
  in~\cref{eq:14}.
  Define the sequence \((x_t,y_t)_{t=0}^{\infty}\) by
  \((x_{t+1},y_{t+1}) \coloneqq \Xi(x_t,y_t)\) for each
  \(t \in \Naturals\).
  Define the sequence \((u_t,v_t)_{t=0}^{\infty}\) by
  \(u_t \coloneqq \Acal(x_t)-b\) and \(v_t \coloneqq c-\Acal^*(y_t)\)
  for each \(t \in \Naturals\).
  Then
  \begin{equation*}
    \iprod{x_t \oplus u_t}{v_t \oplus y_t}
    \leq
    \delta
    \iprod{x_0 \oplus u_0}{v_0 \oplus y_0}
    \qquad
    \text{for each }
    t \geq \bar{t} \coloneqq 24\sqrt{N}\ln(1/\delta).
  \end{equation*}
\end{theorem}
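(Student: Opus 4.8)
The plan is to derive \cref{thm:sdp-central} by recasting the six-block program \cref{eq:16}--\cref{eq:19} into the standard self-scaled (symmetric-cone) primal-dual format to which \cite[Theorem~4.5]{Tuncel10a} applies, and then to check that the two ``free'' blocks $\Reals^{n_3}$ and $\Reals^{m_3}$ are inert, which is precisely why $N$ in \cref{eq:sdp-N-def} counts only the semidefinite and nonnegative blocks. First I would write $\Kbb = \Kbb' \oplus \Reals^{n_3}$ with $\Kbb' \coloneqq \Psd{n_1}\oplus\Lp{n_2}$ and $\Lbb = \Lbb' \oplus \Reals^{m_3}$ with $\Lbb' \coloneqq \Psd{m_1}\oplus\Lp{m_2}$, observing that $\Kbb'$ and $\Lbb'$ are self-dual symmetric cones with barrier parameters $n_1+n_2$ and $m_1+m_2$, respectively, while the dual cone of $\Reals^{n_3}$ is $\set{0}$ and likewise for $\Reals^{m_3}$. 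Hence for any primal-dual feasible pair $(x,y)$ one has $v_3 = 0$ (since $c-\Acal^*(y) \in \Kbb^*$ forces its $\Reals^{n_3}$-component into $\set{0}$) and $u_3 = 0$ (since $\Acal(x)-b \in \Lbb^*$ forces the corresponding constraints to hold with equality).

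Consequently the complementarity product collapses onto the symmetric part:
\begin{equation*}
  \iprod{x \oplus u}{v \oplus y}
  = \iprod{X_1 \oplus x_2 \oplus U_1 \oplus u_2}{V_1 \oplus v_2 \oplus Y_1 \oplus y_2},
\end{equation*}
so the whole duality gap lives on $\Kbb'\oplus\Lbb'$, whose total barrier parameter is exactly $N$. This also explains the choice of the four slack blocks appearing inside $\psi$ in \cref{eq:14}: they are exactly the slacks of the symmetric part.

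Next I would identify $\psi$ from \cref{eq:14} with (a constant shift of) the primal-dual potential function of the self-scaled cone $\Kbb'\oplus\Lbb'$ evaluated at the slack pair of $(x,y)$: the first term is $N$ times the logarithm of the normalized duality gap, and the second term is the logarithmic self-concordant barrier of $\Kbb'\oplus\Lbb'$ applied to $(X_1,x_2,Y_1,y_2,U_1,u_2,V_1,v_2)$, which is precisely the quantity controlled in short-step path-following. With this dictionary, the update map $\Xi$ is the feasible Nesterov--Todd short-step iteration underlying \cite[Theorem~4.5]{Tuncel10a}: it preserves primal-dual feasibility, and starting from a point with $\psi(x_0,y_0) \le \sqrt N\,\ln(1/\delta)$ it contracts the centered duality gap geometrically, so that $\iprod{x_t \oplus u_t}{v_t \oplus y_t} \le \delta\,\iprod{x_0 \oplus u_0}{v_0 \oplus y_0}$ once $t \ge 24\sqrt N\,\ln(1/\delta)$. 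The absolute constant $24$ and the form of the hypothesis on $\psi(x_0,y_0)$ are inherited verbatim from the step-length and potential-decrease bounds of the cited theorem, so after the translation no further estimation is needed.

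I expect the main obstacle to be expository rather than mathematical: faithfully matching the normalization conventions of \cite[Theorem~4.5]{Tuncel10a} (stated for a single symmetric cone with a generic $\nu$-self-concordant barrier) to the present six-block layout, and verifying in detail that the free blocks can be carried along without affecting the iteration count --- equivalently, that the reduction to $\Kbb'\oplus\Lbb'$ leaves the Slater pair $(x_0,y_0)$, the value of $\psi$, and the value of the complementarity product unchanged. A secondary point requiring care is that $\Xi$ is only referenced, not written out, in the excerpt, so the write-up must pin down that $\Xi$ denotes the specific feasible-start short-step map analyzed in \cite{Tuncel10a}, after which \cref{thm:sdp-central} follows by directly quoting the cited bound.
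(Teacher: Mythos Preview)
The paper does not prove \cref{thm:sdp-central} at all: it is stated as ``adapted from~\cite[Theorem~4.5]{Tuncel10a}'' and used as a black box, with no argument given. Your proposal correctly identifies the content of that adaptation---namely, that the free blocks $\Reals^{n_3}$ and $\Reals^{m_3}$ contribute nothing to the duality gap or the barrier parameter, so that the problem reduces to a standard symmetric-cone pair on $\Kbb'\oplus\Lbb'$ with barrier parameter~$N$, after which the cited theorem applies verbatim---and this is exactly the right way to fill in the missing justification.
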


\begin{proposition}
  \label{prop:SDP-polynomial-time}
  Let \(\sigma \in (0,2/3)\).
  There exists a polynomial-time algorithm that, given a graph
  \(G = (V,E)\) with \(n\)~vertices and \(m\)~edges, a nonzero vector
  \(z \in \Lp{E}\), and a number \(\eps \in [0,1/3]\), computes
  feasible solutions \((w^*,x^*)\) for~\cref{eq:GW-eps-polar-supf} and
  \((\mu^*,Y^*)\) for \cref{eq:GW-eps-polar-gaugef} such that
  \(\mu^* \leq \iprodt{z}{w^*} +\sigma \norm[\infty]{z}\).
  The algorithm consists of applying an interior-point method for
  \(T \coloneqq 24\paren{n+m+1}\ln(8/\sigma)\)
  iterations; each iteration encoding one application of the
  function~\(\Xi\) can be made to run in time \(O((n+m)^3)\).
\end{proposition}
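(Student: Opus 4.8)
The plan is to recognize the statement as a quantitative instance of the central-path iteration bound \Cref{thm:sdp-central} applied to the standard-form cast of \cref{eq:GW-eps-polar-gaugef} displayed just before the proposition. With that cast of \cref{eq:GW-eps-polar-gaugef} as the primal \cref{eq:16} and \cref{eq:GW-eps-polar-supf} as its dual \cref{eq:19}, one has $n_1 = 0$, $n_2 = 1$, $m_1 = \card{V} = n$ and $m_2 = \card{E} = m$, so the quantity $N$ from \cref{eq:sdp-N-def} equals $n + m + 1$; this is where the factor $n+m+1$ in $T$ comes from. Since scaling $z$ by a positive constant scales the whole primal--dual pair and leaves both the centrality potential $\psi$ of \cref{eq:14} and the relative duality gap unchanged, I would first normalize so that $\norm[\infty]{z} = 1$.

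Next I would write down explicit strictly feasible starting points $(x_0,y_0)$. For the primal \cref{eq:GW-eps-polar-gaugef}, a scaled-identity point $(\mu_0,Y_0) \coloneqq (\rho_0,\rho_0 I)$ with $\rho_0$ an absolute constant ($\rho_0 > 2$ suffices) is strictly feasible for every $\eps \in [0,\tfrac13]$: then $Y_0 - \mu_0\eps I = \rho_0(1-\eps)I \succ 0$, $\diag(Y_0) = \mu_0\ones$, and $\tfrac14\Laplacian_G^*(Y_0) = \tfrac{\rho_0}{2}\ones > z$. For the dual \cref{eq:GW-eps-polar-supf} I would strengthen the relaxed Slater point of \cref{eq:GW-polar-def}: take $w_0$ a small positive multiple of $\ones$ (small enough that $\tfrac14\Laplacian_G(w_0) \prec \Diag(x_0)$ and $(1-\eps)\iprodt{\ones}{x_0} + \tfrac\eps2\iprodt{\ones}{w_0} < 1$), $x_0$ a positive vector dominating $\tfrac14\Laplacian_G(w_0)$, and a small scaled identity for the dual positive semidefinite variable. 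One then checks that the initial duality gap $\iprod{x_0 \oplus u_0}{v_0 \oplus y_0}$, which equals the primal minus the dual objective $\mu_0 - \iprodt{z}{w_0}$, is at most an absolute constant (say $8$) after the normalization $\norm[\infty]{z}=1$.

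With these in hand I would invoke \Cref{thm:sdp-central} with $\delta \coloneqq (\sigma/8)^{\sqrt{n+m+1}}$, so that $\sqrt{N}\,\ln(1/\delta) = (n+m+1)\ln(8/\sigma)$ and hence $\bar t = 24\sqrt{N}\,\ln(1/\delta) = 24(n+m+1)\ln(8/\sigma) = T$. The hypothesis of the theorem then reads $\psi(x_0,y_0) \le (n+m+1)\ln(8/\sigma)$, and since $\sigma < \tfrac23$ forces $\ln(8/\sigma) > \ln 12 > 2$, it is enough to verify the scale-free bound $\psi(x_0,y_0) \le 2(n+m+1)$; moreover $\delta \le \sigma/8$ guarantees that after $T$ iterations the gap $\mu^\ast - \iprodt{z}{w^\ast}$ has dropped below $\delta \cdot 8\norm[\infty]{z} \le \sigma\norm[\infty]{z}$, which is the claimed inequality. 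Because the method is feasible-started, all iterates, in particular the output $(\mu^\ast,Y^\ast)$ for \cref{eq:GW-eps-polar-gaugef} and $(w^\ast,x^\ast)$ for \cref{eq:GW-eps-polar-supf}, are exactly feasible in the real-number model. Finally, for the per-iteration cost: one application of the update map $\Xi$ forms and solves a Newton system whose dominant operations are a single $n\times n$ factorization together with the assembly and solution of the $(n+m+1)\times(n+m+1)$ Schur complement, whose entries are traces of products of the current $n\times n$ scaling matrix with the sparse, rank-$\le 1$ constraint matrices $e_i e_i^{\transp}$ (for $\diag(Y) = \mu\ones$) and $\Laplacian_G(e_{ij})$ (for the edge inequalities); exploiting this structure the iteration runs in $O((n+m)^3)$ time.

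The step I expect to be the main obstacle is verifying the centrality bound $\psi(x_0,y_0) = O(n+m)$. The potential \cref{eq:14} penalizes imbalance among the complementarity products, and the natural magnitudes for the positive semidefinite block (size $n$), the edge-inequality block (size $m$), and the scalar $\mu$-block do not automatically place all products within a bounded ratio of their average; in particular the normalization $(1-\eps)\iprodt{\ones}{x} + \tfrac\eps2\iprodt{\ones}{w} \le 1$ forces $w_0$ small when $m$ is large, which deflates the edge-block products. Resolving this cleanly will likely require either a carefully tuned choice of the magnitudes of $w_0$, $x_0$, $Y_0$ and the dual positive semidefinite variable, or else passing to a homogeneous self-dual (``big-$M$'') reformulation of the pair \cref{eq:16}--\cref{eq:19}, still of size $O(n+m)$, whose canonical start is perfectly centered and hence has $\psi = 0$, with the gap-reduction conclusion of \Cref{thm:sdp-central} transferred back. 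Everything else---the cast, strict feasibility of the Slater points, the initial-gap bound, and the linear-algebra count---is routine.
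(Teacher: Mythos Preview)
Your overall architecture matches the paper's proof exactly: cast~\cref{eq:GW-eps-polar-gaugef} in the form~\cref{eq:16} with \(N=n+m+1\), normalize so that \(\norm[\infty]{z}=1\), exhibit explicit primal and dual Slater points, bound the initial gap and the potential~\(\psi\), and apply \Cref{thm:sdp-central} with \(\delta \coloneqq (\sigma/8)^{\sqrt{N}}\). The per-iteration cost analysis is also along the right lines.

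The obstacle you flag is genuine, and you have not resolved it. With the dual normalization \((1-\eps)\iprodt{\ones}{x}+\tfrac{\eps}{2}\iprodt{\ones}{w}\le 1\) kept as is, any feasible \(w_0\) must have \(\iprodt{\ones}{w_0}=O(1)\), so the per-edge entries of \(w_0\) are forced to be \(O(1/m)\) while the primal edge slacks \(\tfrac{1}{4}\Laplacian_G^*(Y_0)-z\) are \(\Theta(1)\); the resulting edge-block complementarity products are \(\Theta(1/m)\), whereas the scalar \(\mu\)-block product is \(\Theta(1)\), and this imbalance drives \(\psi\) up by an \(m\ln m\) term. Careful tuning does not obviously fix this, and invoking a self-dual embedding would change the statement's explicit iteration count.

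The paper's resolution is a single, clean trick you should adopt: before choosing Slater points, \emph{scale the primal objective by \(N\)}, i.e., minimize \(N\mu\) instead of \(\mu\). This replaces the dual normalization by \(\iprodt{\ones}{x}-\eps\trace(S)\le N\), so one may take \(\slater{w}=\ones\), \(\slater{x}=\tfrac{1}{2}\deg_G+\ones\), and \(\slater{S}=I+\tfrac{1}{4}D_G+\tfrac{1}{4}A_G\succeq I\) on the dual side, together with \(\slater{\mu}=4\), \(\slater{Y}=4I\) on the primal side. With these choices the initial duality gap is at most \(8N\) (not an absolute constant), and the product of determinants in~\cref{eq:14} is bounded below by \(4^n(1-\eps)^n\ge 1\), yielding \(\psi\le N\ln 8\le N\ln(8/\sigma)=\sqrt{N}\ln(1/\delta)\). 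After \(T\) iterations the scaled gap is at most \(\delta\cdot 8N\le \sigma N\), and undoing the two scalings (\(N\) on the cost, \(\norm[\infty]{z}\) on the data) gives the claimed \(\mu^*\le \iprodt{z}{w^*}+\sigma\norm[\infty]{z}\).
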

\begin{remark}
  Note that, since \(\sigma\) is constant, the
  SDPs~\cref{eq:GW-eps-polar-def} can be nearly solved (in the sense
  of the \(\sigma\norm[\infty]{z}\) additive error) in strongly
  polynomial time.
\end{remark}
\begin{proof}[Proof of \cref{prop:SDP-polynomial-time}]
  First we write \cref{eq:GW-eps-polar-gaugef} and its dual.
  We scale the cost function of \cref{eq:GW-eps-polar-gaugef} by
  \(N \coloneqq n+m+1\), and we normalize the edge weights by setting
  \(\bar{z} \coloneqq z/\norm[\infty]{z}\):
  \begin{equation*}
    \begin{array}{rrl}
      \textrm{(P)} & \inf & N\mu\\[2pt]
      & \text{subject to}&  Y - \mu \eps I \succeq 0,\\[2pt]
      &&  \tfrac{1}{4}\Laplacian_G^*(Y) \geq \bar{z},\\[2pt]
      &&  -\diag(Y) + \mu \ones = 0,\\[2pt]
      && Y \in \Sym{V},\\[2pt]
      && \mu \in \Reals_{+},
    \end{array}
    \quad
    \begin{array}{rrl}
      \textrm{(D)} & \sup & \iprodt{\bar{z}}{w}\\[2pt]
      & \text{subject to}&
      S\in \Psd{V},\\[2pt]
      & & w\in \Reals_+^E,\\[2pt]
      & & x\in \Reals^V,\\[2pt]
      & & S + \tfrac{1}{4}\Laplacian_G(w) - \Diag(x) = 0,\\[2pt]
      & & \iprodt{\ones}{x} - \eps \trace(S) \leq N.
    \end{array}
  \end{equation*}
  Set
  \begin{alignat*}{3}
    & \slater{\mu} \coloneqq 4,
    & \qquad
    & \slater{Y} \coloneqq 4I,
    \\
    & \slater{w}\coloneqq\ones,
    & \qquad
    & \slater{x}\coloneqq \tfrac{1}{2} \deg_G + \ones,
    & \qquad
    & \slater{S} \coloneqq I + \tfrac{1}{4}D_G + \tfrac{1}{4}A_G,
  \end{alignat*}
  where \(\deg_G \colon V \to \Naturals\) is the degree function
  of~\(G\) and \(D_G \coloneqq \Diag(\deg_G)\).
  It is straightforward to check that \(\slater{Y}\oplus\slater{\mu}\)
  and \(\slater{S}\oplus \slater{w}\oplus \slater{x}\) are Slater
  points for (P) and (D), resp., with corresponding slacks
  \begin{align*}
    \slater{U} &\coloneqq \slater{Y} - \slater{\mu} \eps I = 4(1-\eps) I,\\
    \slater{u} &\coloneqq \tfrac{1}{4}\Laplacian_G^*(\slater{Y}) - \bar{z} = 2\ones-\bar{z},\\
    \slater{\nu} &\coloneqq N - \iprodt{\ones}{\slater{x}} + \eps \trace(\slater{S})
    = 1+\tfrac{\eps}{2}m + \eps n.
  \end{align*}
  The duality gap between \(\slater{Y} \oplus \slater{\mu}\) and
  \(\slater{S} \oplus \slater{w} \oplus \slater{x}\) is
  \begin{equation}
    \label{eq:initial-duality-gap}
    \begin{split}
    \slater{\mu} \slater{\nu}
    +
    \iprod{\slater{U}}{\slater{S}}
    +
    \iprodt{\slater{u}}{\slater{w}}
    &=
    4(1+\tfrac{\eps}{2}m + \eps n)
    +
    4(1-\eps)\trace(I + \tfrac{1}{4}D_G + \tfrac{1}{4}A_G)
    +
    2\iprodt{\ones}{\ones}-\iprodt{\ones}{\bar{z}}
    \\
    &\leq
    4(1+\tfrac{\eps}{2}m + \eps n) + 4n + 4m
    \leq
    4(1+m/2+n) + 4n+4m
    \\
    &\leq
    8(1+m+n)
    = 8N.
    \end{split}
  \end{equation}
  We will now compute an upper bound for the value of the function
  \(\psi\) at \(\slater{Y} \oplus \slater{\mu}\) and
  \(\slater{S} \oplus \slater{w} \oplus \slater{x}\).
  Since we already computed an upper bound for the duality gap in
  \cref{eq:initial-duality-gap}, we will now lower bound the
  determinants:
  \begin{multline*}
    \slater{\mu} \cdot \det(\slater{U}) \cdot \paren[\Big]{\prod \slater{u}}
    \cdot
    \slater{\nu} \cdot \det(\slater{S})\cdot \paren[\Big]{\prod \slater{w}}
    \\
    =
    4 \cdot \det\paren[\big]{4(1-\eps)I} \cdot \prod (2\ones-\bar{z})
    \cdot
    (1+\tfrac{\eps}{2}m + \eps n)
    \cdot \det\paren[\big]{I + \tfrac{1}{4}D_G + \tfrac{1}{4}A_G}
    \cdot\prod \ones
    \\
    \geq
    4\det\paren[\big]{4(1-\eps)I} \det(I)
    \geq
    4^n (1-\eps)^n,
  \end{multline*}
  where in the first inequality we used that \(\norm[\infty]{\bar{z}} = 1\)
  and \(I + \tfrac{1}{4}D_G + \tfrac{1}{4}A_G \succeq I\).
  Thus,
  \begin{equation*}
    \psi(\slater{\mu}\oplus \slater{Y}, \slater{S}\oplus
    \slater{w}\oplus  \slater{x})
    \leq
    N\ln \paren[\Big]{\frac{1}{N}(8N)} - \ln(4^n (1-\eps)^n)
    \leq
    N\ln(8).
  \end{equation*}
  Set
  \(\delta \coloneqq \paren{\sigma/8}^{\sqrt{N}} \leq
  \min\set[\big]{\sigma/8, 8^{-\sqrt{N}}}\).
  By \cref{thm:sdp-central}, after \(24\sqrt{N}\ln(1/\delta) = O(N)\)
  iterations of \(\Xi\), the~duality gap is at most
  \(8 \delta N \leq \sigma N\).
  That is, we obtain \(\tilde{Y} \oplus \tilde{\mu}\) and
  \(\tilde{S} \oplus \tilde{w} \oplus \tilde{x}\), feasible for (P)
  and (D), resp., such that
  \(N \tilde{\mu} \leq \iprodt{\bar{z}}{\tilde{w}} + N\sigma\).
  Hence,
  \begin{inlinemath}
    (\mu^*,Y^*)
    \coloneqq
    \paren[\big]{
      \norm[\infty]{z}\tilde{\mu}
      ,
      \norm[\infty]{z}\tilde{Y}
    }
  \end{inlinemath}
  is
  feasible for \cref{eq:GW-eps-polar-gaugef}
  and
  \begin{inlinemath}
    (w^*,x^*)
    \coloneqq
    \paren[\big]{
      \tfrac{1}{N} \tilde{w}
      ,
      \tfrac{1}{N} \tilde{x}
    }
  \end{inlinemath}
  is feasible for \cref{eq:GW-eps-polar-supf},
  and their objective values satisfy
  \begin{equation*}
    \mu^*
    =
    \norm[\infty]{z}\tilde{\mu}
    \leq
    \norm[\infty]{z}\frac{\iprodt{\bar{z}}{\tilde{w}}}{N}
    +
    \sigma\norm[\infty]{z}
    =
    \iprodt{z}{w^*} + \sigma\norm[\infty]{z}.\qedhere
  \end{equation*}
\end{proof}

\end{document}